\theoremstyle{plain}
\newtheorem{thm}{Theorem}
\newtheorem{lem}[thm]{Lemma}
\newtheorem{prop}[thm]{Proposition}
\theoremstyle{plain}
\newtheorem{defn}[thm]{Definition}
\theoremstyle{remark}
\newtheorem{rem}[thm]{Remark}
\numberwithin{thm}{section} 
\numberwithin{equation}{section} 
\newenvironment{AMS}{\textbf{\textit{MSC 2020 Subject Classification:}}}{}
\newenvironment{keywords}{\textbf{\textit{Keywords:}}}{}
\newenvironment{acknowledgements}{\textbf{Acknowledgements}}{}
\newenvironment{headline}{\textbf{\textit{Running headline:}}}{}
\newcommand{\rcal}{\mathcal{R}}
\newcommand{\bcal}{\mathcal{B}}
\newcommand{\esp}{\mathbb{E}}
\newcommand{\proba}{\mathbb{P}}
\newcommand{\rd}{\mathbb{R}^{d}}
\newcommand{\bxr}{B(x,\rcal)}
\newcommand{\lcal}{\mathcal{L}}
\newcommand{\mcal}{\mathcal{M}}
\newcommand{\gcal}{\mathcal{G}}
\newcommand{\fcal}{\mathcal{F}}
\newcommand{\egras}{\mathbf{E}}
\title{Stochastic measure-valued models for populations expanding in a continuum}
\date{}
\author[,1,2]{Apolline Louvet \thanks{E-mail address : apolline.louvet@polytechnique.edu}}
\affil[1]{MAP5, Universit{\'e} de Paris Cit{\'e}, 45 rue des Saints-P{\`e}res, 75006 Paris, France.} 
\affil[2]{CMAP, Ecole Polytechnique, Route de Saclay, 91128 Palaiseau Cedex, France}
\begin{document}
\maketitle

\begin{abstract}
We model spatially expanding populations by means of twospatial $\Lambda$-Fleming Viot processes (or SLFVs) with selection: the $k$-parent SLFV and the $\infty$-parent SLFV. In order to do so, we fill empty areas with type~$0$ "ghost" individuals witha strong selective disadvantage against "real" type~$1$ individuals, quantified by a parameter~$k$. The reproduction of ghost individuals is interpreted as local extinction events due to stochasticity in reproduction. When $k \to + \infty$, the limiting process, corresponding to the $\infty$-parent SLFV, is reminiscent of stochastic growth models from percolation theory,
but is associated to tools making it possible to investigate the genetic diversity in a population sample.
In this article, we provide a rigorous construction of the $\infty$-parent SLFV, and show that it corresponds to the limit of the $k$-parent SLFV when $k \to + \infty$. In order to do so, we introduce an alternative construction of the $k$-parent SLFV which allows us to couple SLFVs with different selection strengths and is of interest in its own right. We exhibit three different characterizations of the $\infty$-parent SLFV, which are valid in different settings and link together population genetics models and stochastic growth models. 
\end{abstract}

\begin{headline}
Spatial Lambda-Fleming-Viot processes for expanding populations
\end{headline}

\begin{keywords}
   spatial Lambda-Fleming-Viot process, range expansion, duality, genealogies, population genetics, limiting process
\end{keywords}

\begin{AMS}
  \textit{Primary:} 60G57, 60J25, 92D10,
  \textit{Secondary:} 60J76, 92D25
\end{AMS}

\tableofcontents

\section{Introduction}\label{sec:1}
Population expansions are common events occurring at all biological scales. The growth of a population in a new environment generates interfaces with distinctive features \cite{huergo2010morphology,kardar1986dynamic} 
and specific patterns of genetic variation \cite{gracia2013surfing,hallatschek2007genetic,hallatschek2010life}, both being a consequence of the stochasticity of reproduction at the front, where local population sizes are small. The models which are used to study expanding populations can be divided in two main categories: growth models, mostly used to investigate the front features, and models coming from population genetics, which are more suited to study genetic diversity patterns. 

Experimental approaches suggest that the dynamics of fronts of real expanding populations belongs to the universality class of the Kardar-Parisi-Zhang (KPZ) equation introduced in \cite{kardar1986dynamic} (see e.g \cite{huergo2010morphology}). It has been conjectured (and demonstrated in the case of the solid-on-solid (SOS) growth model \cite{bertini1997stochastic}) that many growth models generate similar interfaces. Among all stochastic growth models, the most well-known one is probably the Eden model, initially introduced on a lattice in \cite{eden1961two} (see also \cite{jullien1987aggregation}
for alternative update rules or
\cite{wang1995offlattice}
for off-lattice variants). This model belongs to a wider family of discrete stochastic growth models known as \textit{first-passage percolation processes}. More generally, percolation theory is very suited to the modeling of population expansions: see e.g \textit{last-passage percolation processes} (such as the corner growth model, see 
\cite{seppalainen2009lecture}
), or \textit{continuous first-passage percolation processes}, such as the space-continuous equivalent of the Richardson model introduced in 
\cite{deijfen2003asymptotic,deijfen2004stochastic}. However, these models are generally harder to use to study genetic diversity patterns in expanding populations analytically (but see e.g 
\cite{ahlberg2019competition,carstens2011two,deijfen2004coexistence,deijfen2007two}). 

Conversely, models used in population genetics are generally associated with tools allowing one to investigate these patterns. The analysis of the genetic diversity of a population often involves modeling the ancestral lineages of a subset of individuals, and studying how these lineages coalesce into common ancestors \cite{etheridge2011saintflour}. In order to do so, most classical population genetics models assume that populations have constant sizes and that individuals are uniformly distributed over the area of interest. Therefore, they
usually focus on the spread of a genetic type favored by natural selection in an already established population. Such a question was already studied by means of different models including a stochastic component, mostly in one dimension (see e.g \cite{barton2013genetic,etheridge2020genealogies,korolev2010genetic}). 
The most classical one is based on the Fisher-KPP equation \cite{fisher1937wave,kolmogorov1937FKPP}, in which stochasticity is introduced through a Wright-Fisher noise term. If $0 \leq p(t,x) \leq 1$ represents the proportion of individuals of the favoured type at location $x \in \mathbb{R}$ at time $t \geq 0$, then $p(t,x)$ solves the stochastic Fisher-KPP equation if for $x \in \mathbb{R}$ and $t > 0$,
\begin{equation}\label{eqn:fisher_kpp}
\frac{\partial p}{\partial t}(t,x) = \frac{m}{2}\Delta p(t,x)dt + s_{0}p(t,x)(1-p(t,x))
+ \sqrt{\frac{1}{p_{e}}p(t,x)(1-p(t,x))} W(dt,dx)
\end{equation}
where W is a space-time white noise and $p_{e}$ an effective population density. In one dimension, the stochastic Fisher-KPP equation exhibits traveling wave solutions \cite{mueller1995random}, which describe how the advantageous type spreads through space. However, Eq. (\ref{eqn:fisher_kpp}) has no solution in higher dimensions. 
Many variants of the deterministic version of the Fisher-KPP equation have been studied, including versions with individuals having different motilities \cite{bouin2012invasion,calvez2018non}, different growth rates (see e.g \cite{deforet2019evolution}), other diffusion kernels, or other choices for the non-linearity \cite{berestycki2015asymptotic}. 

Other stochastic population genetics models with selection and a spatial structure can broadly be divided into two categories: "individual-based" models and "reproduction event-based" models. The first category comprises adaptations of classical population genetics models, such as the Moran model \cite{durrett2016genealogies,etheridge2020genealogies,hallatschek2008gene} 
or the stepping-stone model \cite{austerlitz1997evolution,barton2013genetic,peischl2015expansion}.
They generally require to divide the space into subunits called \textit{demes}, to which reproduction events are limited and which are connected by migration. 

On the other hand, "reproduction event-based" models, which are based on the spatial $\Lambda$-Fleming Viot process (or SLFV, see \cite{barton2010new,etheridge2008drift}), allow us to keep the spatial continuum. Their main feature is that they model reproduction events affecting whole areas rather than reproduction individual by individual, by means of a Poisson point process of reproduction events. 
Whenever reproduction occurs, a parent is chosen in the area affected by the event, all or part of the other individuals die, while the descendants of the chosen parent refill the now empty area. The Poisson point process encodes both the affected area and the fraction of individuals dying during the reproduction event. The original version of the SLFV does not account for the presence of a selectively favored genetic type, and the parent is chosen uniformly at random in the affected area. However, it is possible to incorporate selection, for instance by sampling several \textit{potential parents} uniformly at random, and then choosing the actual parent among them. See \cite{forien2017central} for different forms of fixed selection mechanisms, \cite{biswas2018spatial,chetwynd2019rare,klimek2020spatial} for ways to introduce fluctuating selection, and 
\cite{etheridge2020rescaling}
for a rigorous construction of the SLFV with selection. Most of the work on the SLFV with selection involved investigating scaling limits under different forms of \textit{weak} selection: in addition to the references mentioned above, see also \cite{etheridge2017branchingbrownian, etheridge2017browniannet} for selection against a specific type, and \cite{etheridge2017branching}
for selection against hybrids in a diploid population.

As one of the underlying assumptions of the SLFV is that each spatial location contains a very large number of individuals, this model can seem ill-suited at first to the study of expansion events. In order to overcome this issue, we adapt an approach originating from interacting particle systems and first applied to population genetics models in \cite{durrett2016genealogies,hallatschek2008gene}
: fill empty areas with "ghost" individuals, which can reproduce but have a very strong selective disadvantage against "real" individuals. In other words, we model a population expansion as the spread of the selectively advantageous "real individual" genetic type in a resident population of individuals with the "ghost individual" genetic type. As a result, and contrary to other works on variants of the SLFV, we will focus on introducing \textit{strong selection} in the SLFV. In particular, we will consider a different limit, when selection goes stronger and stronger, and neither time nor space are rescaled.

In order to modify the SLFV and incorporate strong selection, we consider that the population contains individuals with a "ghost" type, and individuals with potentially different "real" types. The "real" type is selectively advantaged against ghost individuals, but does not give a selective advantage against another real type. In order to incorporate strong selection against ghost individuals, we will consider the two following modifications to the original SLFV:
\begin{itemize}
\item ($k$-parent SLFV) Whenever reproduction occurs, $k$ \textit{potential parents} are chosen uniformly at random in the affected area. If at least one of them is real, then the actual parent is the first real potential parent chosen. Otherwise, the actual parent is the last (ghost) potential parent chosen. 
\item ($\infty$-parent SLFV) Whenever reproduction occurs, if the affected area contains a non-zero fraction of real individuals, then the parent is chosen uniformly at random among real individuals. Otherwise, the parent is chosen uniformly at random among ghost individuals. 
\end{itemize}
In both case, if the parent is real, then it was sampled uniformly at random among all real individuals in the affected area. However, in the $k$-parent SLFV, if the area in which reproduction occurs is partly empty, then the actual parent is not guaranteed to be a real individual. In other words, the $k$-parent SLFV features local extinction events at the front due to stochasticity in reproduction, which become less frequent for increasing values of $k$. Such extinction events no longer occur in the $\infty$-parent SLFV, and this process can be seen as the limit of the $k$-parent SLFV when $k \to + \infty$. 

The $k$-parent SLFV corresponds to a special case of the SLFV with selection introduced in 
\cite{forien2017central}. In this article, we consider two different ways of constructing it:
\begin{enumerate}
\item As the unique solution to a martingale problem 
(see Section~\ref{subsubsec:pb_martingale_k_SLFV}). The corresponding result is a direct generalization to the case $k \geq 2$ of the construction of the SLFV with selection considered in 
\cite{etheridge2020rescaling}. 
\item Adapting the concept of \textit{parental skeleton} from \cite{veber2015spatial}, and sampling parental locations \textit{along with} reproduction events (see Section~\ref{subsec:2.1}). One of the main contributions of this paper is to show that this construction is equivalent to the one based on the martingale problem, and that it can be used to couple $k$-parent SLFVs for different values of~$k$. 
\end{enumerate}

While this alternative construction of the $k$-parent SLFV is interesting in its own right, most of the paper focuses on the construction of the $\infty$-parent SLFV. Indeed, the growth dynamics of the area occupied by real individuals in the $\infty$-parent SLFV is reminiscent of a space-continuous version of the Eden model, and bears resemblance to the space-continuous equivalent of the Richardson model introduced by Deijfen in
\cite{deijfen2003asymptotic,deijfen2004stochastic}. However, in the $\infty$-parent SLFV, compared to
\cite{deijfen2003asymptotic}, the area initially occupied can have infinite Lebesgue measure, and the growth rate at the front is higher. In particular, the $\infty$-parent SLFV is not guaranteed to be well-defined. Moreover, in the model introduced in 
\cite{deijfen2004stochastic}, reproduction no longer occurs in occupied areas, which can potentially lead to different genetic diversity patterns. 

Therefore, regarding the $\infty$-parent SLFV, our goals are twofold:
\begin{enumerate}
\item Provide a rigorous construction of the $\infty$-parent SLFV. 
We actually provide three different characterizations of the $\infty$-parent SLFV, which are valid under different conditions on the area initially occupied and the distribution of reproduction events parameters. 
\item Show to what extend the $\infty$-parent SLFV can be considered as the limit of the $k$-parent SLFV when $k \to + \infty$. 
This convergence results sheds light on the strong selection limit of SLFVs with selection, and links together population genetics models and stochastic growth models. 
\end{enumerate}
In order to do so, we will focus on the area occupied by the population, but future works will include genetic diversity inside the expanding population, using for instance tracer dynamics \cite{durrett2016genealogies,hallatschek2008gene}. 

The article is structured as follows. 
In Section~\ref{sec:nouvelle_section_2}, we define the $k$-parent SLFV and its associated dual process, the $k$-parent ancestral process. We also give an informal construction of the $\infty$-parent SLFV and state our main results. In Section~\ref{sec:2}, we provide a first construction of the $\infty$-parent SLFV and its dual, based on a coupling between $k$-parent SLFV processes with the same initial conditions. This construction is valid for any initial condition  
and for all distributions of reproduction event parameters which satisfy a classical condition for SLFVs
(see Condition~(\ref{eqn:condition_intensite}) in Section~\ref{sec:nouvelle_section_2}). In order to do so, we introduce an alternative construction of the $k$-parent SLFV based on the concept of \textit{parental skeleton} from \cite{veber2015spatial}.
In Section~\ref{sec:3}, we first demonstrate that the $\infty$-parent ancestral process is well defined, and the unique solution to a specific martingale problem. We use this characterization of the dual process to provide an alternative construction of the $\infty$-parent SLFV, which is valid under a stricter condition on the distribution of reproduction event parameters (see Condition~(\ref{eqn:condition_infty_SLFV}) in Section~\ref{sec:nouvelle_section_2})
and if the area initially occupied has finite Lebesgue measure (conditionally on the martingale problem-based construction being valid). We also show a convergence result regarding the dual processes associated to the $k$-parent SLFV and the $\infty$-parent SLFV.  
Section~\ref{sec:4} is devoted to the proof of the duality relation between the $\infty$-parent SLFV and its dual, which is then used to provide yet another characterization of the $\infty$-parent SLFV, this time as the unique solution to a martingale problem. Section~\ref{sec:5} contains technical lemmas used throughout the paper. 

\section{Definitions and main results}\label{sec:nouvelle_section_2}
\subsection{The k-parent SLFV and its dual}\label{subsec:1.1}
\subsubsection{The $k$-parent SLFV}\label{subsubsec:pb_martingale_k_SLFV}
All the random objects introduced in this section will be defined over some probability space $(\Omega, \mathcal{F}, \proba)$. Before introducing the $k$-parent SLFV, we need to set some notation. 

\paragraph{Notation}
Let $d \geq 1$. Let $C_{c}(\rd)$ be the space of all continuous and compactly supported functions $\rd \to \mathbb{R}$, let $C^{1}(\mathbb{R})$ be the space of all continuously differentiable functions on $\mathbb{R}$, let $C_{b}^{1}(\mathbb{R})$ be the space of all bounded functions $\mathbb{R} \to \mathbb{R}$ that are $C^{1}$ and whose first derivative is also bounded, and let $\bcal(\rd)$ be the space of all measurable functions $\rd \to \mathbb{R}$. 

We start by introducing the state space over which the variant of the SLFV with selection we consider is defined. Let $\widetilde{\mcal}_{\lambda}$ be the space of all measures $M$ on $\mathbb{R}^{d} \times \{0,1\}$ such that for all $f \in C_{c}(\mathbb{R}^{d})$,
\begin{equation*}
\int_{\rd \times \{0,1\}} f(x)M(dx,d\kappa) = \int_{\rd} f(x)dx.
\end{equation*}
In other words, $\widetilde{M}_{\lambda}$ is the space of all measures on $\rd \times \{0,1\}$ whose marginal over $\rd$ is Lebesgue measure. 
By a standard decomposition theorem (see e.g \cite{kallenberg2006foundations}, p.561), for all $M \in \widetilde{\mcal}_{\lambda}$, there exists $\omega : \rd \to [0,1]$ measurable such that 
\begin{equation}\label{eqn:decomposition_thm}
M(dx,d\kappa) = ((\omega(x)\delta_{0}(d\kappa) + (1-\omega(x))\delta_{1}(d\kappa))dx.
\end{equation}
Such a $\omega$ is not unique, but defined up to a Lebesgue null set. 
The state space we consider is the set $\mcal_{\lambda}$ of all measures $M \in \widetilde{\mcal}_{\lambda}$ such that there exists a measurable function $\omega : \rd \to \{0,1\}$ (instead of $\omega : \rd \to [0,1]$) satisfying (\ref{eqn:decomposition_thm}). 

We endow $\widetilde{\mcal}_{\lambda}$ 
and $\mcal_{\lambda}$ with the topology of vague convergence. Moreover, let $D_{\mcal_{\lambda}}[0,+\infty)$ 
(resp. $D_{\widetilde{\mcal}_{\lambda}}[0,+\infty)$) 
denote the space of all càdlàg $\mcal_{\lambda}$-valued paths (resp. $\widetilde{\mcal}_{\lambda}$-valued paths), endowed with the standard Skorokhod topology.

Let $M \in \mcal_{\lambda}$, and let $\omega : \rd \to \{0,1\}$ be a measurable function satisfying Eq. (\ref{eqn:decomposition_thm}). The function $\omega$ can be interpreted as the indicator function of a measurable set $E \subset \rd$ corresponding to the area occupied by what will be called "type $0$" individuals, while $\rd \backslash E$ corresponds to the area occupied by "type $1$" individuals. We will consider that type $0$ individuals correspond to the "ghost" individuals mentioned in the introduction, and type $1$ individuals to the "real" individuals. Therefore, type $0$ individuals have a strong selective disadvantage against type $1$ individuals, and $E$ corresponds to the area not yet invaded by the real population (up to a Lebesgue null set). In all that follows, any $\omega : \rd \to \{0,1\}$ such that (\ref{eqn:decomposition_thm}) is true will be called a \textit{density} of $M$, and the notation $\omega_{M}$ will be used to denote an arbitrarily chosen density of $M$.

For all $f \in C_{c}(\mathbb{R}^{d})$, $F \in C^{1}(\mathbb{R})$ and $\omega : \rd \to \{0,1\}$ measurable, we set~:
\begin{equation*}
\langle \omega, f \rangle := \int_{\rd} f(x)\omega(x) dx 
\end{equation*}
and we define the function $\Psi_{F,f} \in C_{b}(\mcal_{\lambda})$ as~:
\begin{align}
\forall M \in \mathcal{M}_{\lambda}, \Psi_{F,f}(M) 
&:= F\left(\int_{\rd \times \{0,1\}} f(x) \mathds{1}_{\{\kappa = 0\}}M(dx,d\kappa)\right) \label{eqn:eqn_fonction_pb_martingale}\\
&= F\left(\int_{\mathbb{R}^{d}} f(x) \omega_{M}(x)dx\right) \nonumber \\
&= F\left(\langle \omega_{M},f\rangle\right). \nonumber
\end{align}

For all $f \in C_{c}(\mathbb{R}^{d})$, we denote the support of $f$ by $Supp(f)$, and for all $\rcal \in \mathbb{R}_{+}^{*}$, we set :
\begin{align*}
Supp^{\mathcal{R}}(f) &:= \{y \in \mathbb{R}^{d} : \exists x \in Supp(f), ||y-x|| \leq \mathcal{R}\} \\
\text{and} \quad \quad \quad \quad \quad V_{\rcal} &:= \mathrm{Vol}(\bcal(0,\rcal)).
\end{align*}
In other words, $V_{\rcal}$ is the volume of a ball of radius $\rcal$, and $Supp^{\rcal}(f)$ is the set of all points which are at a distance of at most $\rcal$ of a point in the support of $f$.  

For all $\omega : \rd \to \{0,1\}$, $\rcal \in \mathbb{R}_{+}^{*}$ and $x \in \rd$, we define the functions $\Theta_{x,\rcal}^{+}(\omega) : \rd \to \{0,1\}$ and $\Theta_{x, \rcal}^{-}(\omega) : \rd \to \{0,1\}$ by~:
\begin{align*}
\Theta_{x,\rcal}^{+}(\omega) &:= \mathds{1}_{\bxr^{c}} \, \omega + \mathds{1}_{\bxr}, \\
\Theta_{x,\rcal}^{-}(\omega) &:= \mathds{1}_{\bxr^{c}}\, \omega.
\end{align*}
$\Theta_{x,\rcal}^{+}(\omega)$ corresponds to filling the ball $\bxr$ with type $0$ individuals (or equivalently, emptying the ball $\bxr$ of all real individuals), while $\Theta_{x,\rcal}^{-}(\omega)$ can be interpreted as filling the ball $\bxr$ with type $1$ individuals. Notice that if $M \in \mcal_{\lambda}$, then $\Theta_{x,\rcal}^{+}(\omega_{M}) \in \mcal_{\lambda}$ and $\Theta_{x,\rcal}^{-}(\omega_{M}) \in \mcal_{\lambda}$. 

\paragraph{Martingale problem}
We now introduce the operator which will be used to define the specific SLFV with selection we will consider as the solution to a well-posed martingale problem. Let $k \in \mathbb{N}\backslash \{0,1\}$, and let $µ$ be a $\sigma$-finite measure on $\mathbb{R}_{+}^{*}$ such that 
\begin{equation}\label{eqn:condition_intensite}
\int_{0}^{\infty}\rcal^{d}µ(d\rcal) < + \infty. 
\end{equation}
Let $\lcal_{µ}^{k}$ be the operator acting on functions of the form $\Psi_{F,f}$ with $f \in C_{c}(\rd)$ and $F \in C^{1}(\mathbb{R})$, defined in the following way. Let $f \in C_{c}(\mathbb{R}^{d})$ and $F \in C^{1}(\mathbb{R})$. Then, for all $M \in \mcal_{\lambda}$, 
\begin{align*}
\lcal_{µ}^{k} \Psi_{F,f}(M) 
:= \int_{\rd}\int_{0}^{\infty}\int_{\bxr^{k}} \frac{1}{V_{\rcal}^{k}} \,  &\left[\left(\prod_{j = 1}^{k} \omega_{M}(y_{j})\right) \, F(\langle\Theta_{x,\rcal}^{+}(\omega_{M}),f\rangle) \right.\\
&+ \left(1-\prod_{j = 1}^{k}\omega_{M}(y_{j})\right) \, F(\langle\Theta_{x,\rcal}^{-}(\omega_{M}),f\rangle)  \\
&- \left. F(\langle\omega_{M},f\rangle) \vphantom{\prod_{j = 1}^{k}}
\right]
dy_{1}...dy_{k}µ(d\rcal)dx. 
\end{align*}
In Section \ref{sec:5}, it is shown that this operator is well-defined, and that it can be rewritten as 
\begin{align*}
\lcal_{µ}^{k} \Psi_{F,f}(M) = \int_{0}^{\infty}\int_{Supp^{\rcal}(f)}\int_{\bxr^{k}} \frac{1}{V_{\rcal}^{k}}\, 
& \left[
\prod_{j = 1}^{k} \omega_{M}(y_{j}) \, F(\langle\Theta_{x,\rcal}^{+}(\omega_{M}),f\rangle)
\right. \\ &+ (1-\prod_{j = 1}^{k}\omega_{M}(y_{j})) \, F(\langle\Theta_{x,\rcal}^{-}(\omega_{M}),f\rangle)
\\
&-  \left. \vphantom{\prod_{j = 1}^{k}} F(\langle\omega_{M},f\rangle)\right]
dy_{1}...dy_{k}dx µ(d\rcal).
\end{align*}

Intuitively, an interpretation of this operator in terms of reproduction events is the following. Whenever a reproduction event affects the ball $\bxr$, $k$ positions $y_{1},...,y_{k}$ are sampled inside the ball, and we take $k$ individuals occupying each one of these positions. Since the density of type $0$ individuals $\omega_{M}$ is $\{0,1\}$-valued, we can consider that all the individuals occupying the position $y_{1}$ (resp. $y_{2},...,y_{k}$) are of type $1 - \omega_{M}(y_{1})$ (resp. $1 - \omega_{M}(y_{2})$, ..., $1 - \omega_{M}(y_{k})$). If $\prod_{j = 1}^{k} \omega_{M}(y_{j}) = 1$, then all the individuals are of type $0$, and we fill the ball $\bxr$ with type $0$ individuals. Conversely, if $1 - \prod_{j = 1}^{k} \omega_{M}(y_{j}) = 1$, then at least one individual is of type $1$, and this time we fill the ball $\bxr$ with type $1$ individuals. Since type $0$ individuals model "ghost" individuals, they are supposed to have a selective disadvantage against "real" type $1$ individuals, hence the formal exclusion of the case $k = 1$ which would not give any advantage to type $1$ individuals. Moreover, $k$ can be interpreted as measuring the strength of the selective advantage of "real" individuals against "ghost" individuals, or in other words, the capacity of "real" individuals to invade an empty environment. 

If $k = 2$, $\lcal_{µ}^{2}$ is the operator introduced in \cite{etheridge2020rescaling} to define and characterize the "selection part" of the SLFV with selection, in the special case for which there are no neutral events and all the individuals in the area affected by a reproduction event are replaced by descendants of the individual reproducing (that is, all reproduction events have an impact of $u = 1$). The proof in \cite{etheridge2020rescaling} of the existence and uniqueness of the $D_{\widetilde{\mcal}_{\lambda}}[0,+\infty)$-valued solution to the martingale problem associated to $\lcal_{µ}^{2}$ can easily be extended to the case $k \geq 2$, by restricting the martingale problem to an increasing sequence of compact subsets of $\rd$ converging to $\rd$. In Section \ref{sec:2}, we will show that this unique solution is in fact $D_{\mcal_{\lambda}}[0,+\infty)$-valued if the initial value belongs to $\mcal_{\lambda}$. 
\begin{thm}\label{thm:forwards}
Let $k \geq 2$, and let $µ$ be a $\sigma$-finite measure on $(0,+\infty)$ satisfying condition (\ref{eqn:condition_intensite}). 
For all $M^{0} \in \mcal_{\lambda}$, there exists a unique $D_{\mcal_{\lambda}}[0,+\infty)$-valued process $(M_{t}^{k})_{t \geq 0}$ such that $M_{0}^{k} = M^{0}$ and, for all $F \in C^{1}(\mathbb{R})$ and $f \in C_{c}(\rd)$, 
\begin{equation*}
\left(\Psi_{F,f}(M_{t}^{k}) - \Psi_{F,f}(M_{0}^{k}) - \int_{0}^{t}\lcal_{µ}^{k}\Psi_{F,f}(M_{s}^{k})ds \right)_{t \geq 0}
\end{equation*}
is a martingale. 
Moreover, the process $(M_{t}^{k})_{t \geq 0}$ is Markovian, and the corresponding semigroup is Feller. 
\end{thm}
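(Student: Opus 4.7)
The strategy is to combine the existence and uniqueness result of \cite{etheridge2020rescaling}, suitably extended from $k=2$ to arbitrary $k \geq 2$ in the larger state space $\widetilde{\mcal}_{\lambda}$, with a direct Poisson point process (PPP) construction that manifestly preserves the $\{0,1\}$-valued structure of the densities. Uniqueness will come from the ambient-space result; existence of a solution actually staying in $\mcal_{\lambda}$ will come from the PPP construction.

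I would first argue that the proof in \cite{etheridge2020rescaling} extends from $k=2$ to any $k \geq 2$ essentially formally. For each compact $K \subset \rd$, the reproduction events whose affected ball intersects $K$ have finite total rate under condition~(\ref{eqn:condition_intensite}), so the dynamics restricted to $K$ can be driven by a Poisson point process of intensity $dt \, dx \, µ(d\rcal)$ on $\mathbb{R}_{+} \times \rd \times \mathbb{R}_{+}^{*}$, augmented with independent uniform draws $(y_{1},\dots,y_{k}) \in \bxr^{k}$. Patching these constructions along an exhausting sequence $K_{n} \uparrow \rd$, with condition~(\ref{eqn:condition_intensite}) controlling the contribution of large-radius events to $\langle \omega_{M_{t}}, f \rangle$ for any fixed $f \in C_{c}(\rd)$, yields both existence and uniqueness in $D_{\widetilde{\mcal}_{\lambda}}[0,+\infty)$. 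The exponent $k$ only enters through the bounded factor $\prod_{j=1}^{k}\omega_{M}(y_{j}) \in [0,1]$, so no new estimate is needed to accommodate $k \geq 3$.

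Next I would show that when $M^{0} \in \mcal_{\lambda}$ the same PPP construction produces trajectories in $\mcal_{\lambda}$. Each jump replaces $\omega_{M_{t-}}$ on $\bxr$ by the constant $0$ or $1$, i.e.\ applies either $\Theta_{x,\rcal}^{-}(\omega_{M_{t-}})$ or $\Theta_{x,\rcal}^{+}(\omega_{M_{t-}})$; both operators preserve the class of $\{0,1\}$-valued densities. Hence the PPP-constructed path is $D_{\mcal_{\lambda}}[0,+\infty)$-valued, and a direct application of the compensation formula for Poisson point processes shows that it solves the martingale problem associated to $\lcal_{µ}^{k}$. Since solutions are unique in the ambient space $D_{\widetilde{\mcal}_{\lambda}}[0,+\infty) \supset D_{\mcal_{\lambda}}[0,+\infty)$, this PPP process must coincide with the unique solution, which is therefore $\mcal_{\lambda}$-valued.

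The Markov property is inherited from the independence, for any $t \geq 0$, of the restriction of the driving PPP to $[t,+\infty)$ from $\fcal_{t}$, together with its stationarity in law. For the Feller property it suffices to establish continuity on $\mcal_{\lambda}$ (endowed with the vague topology) of $M \mapsto \esp[\Psi_{F,f}(M_{t}^{k}) \mid M_{0}^{k} = M]$ for a point-separating family of pairs $(F,f)$: coupling two initial conditions through the same PPP and using dominated convergence -- with the integrable envelope provided by condition~(\ref{eqn:condition_intensite}) on the set of reproduction events that affect $\mathrm{Supp}(f)$ before time $t$ -- yields the required continuity. The main technical obstacle will be the patching step: verifying that the process obtained as the limit of the compactly-truncated PPP constructions still solves the martingale problem on the whole of $\rd$, which reduces to checking that the tail contribution of large-radius reproduction events to $\lcal_{µ}^{k}\Psi_{F,f}(M)$ is negligible -- precisely what condition~(\ref{eqn:condition_intensite}) is designed to guarantee.
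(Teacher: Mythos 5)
Your overall architecture coincides with the paper's: existence and uniqueness in the ambient space $D_{\widetilde{\mcal}_{\lambda}}[0,+\infty)$ are delegated to Theorem~1.2 of \cite{etheridge2020rescaling}, extended from $k=2$ to general $k\geq 2$, and the theorem is then completed by exhibiting one explicit solution whose paths stay in $\mcal_{\lambda}$. Where you genuinely diverge is in how that explicit solution is built and identified. The paper does it \emph{backwards in time}: it defines the quenched $k$-parent SLFV by $\omega_{k,t}^{\Pi^{c},\omega}(x) = \prod_{y \in A(\Xi_{k,t}^{\Pi^{c},t,\delta_{x}})}\omega(y)$, i.e.\ by running the dual ancestral process from $\delta_{x}$ through the extended Poisson point process, and then identifies this process with the martingale-problem solution through the duality relation of Proposition~\ref{prop:dualite} (this is Lemma~\ref{lem:bon_espace_SLFV}); you instead propose a \emph{forward} Poissonian construction identified via the compensation formula. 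Both routes are viable, but the backward one buys two things for free that your route must pay for separately. First, well-posedness: tracing ancestries back from a single point involves a.s.\ finitely many events under Condition~(\ref{eqn:condition_intensite}), whereas your forward construction must rule out percolation of influence from infinity in finite time when patching over $K_{n}\uparrow\rd$ (to determine the state on $K_{n}$ at time $t$ one needs the pre-jump state on balls that protrude from $K_{n}$, whose state depends on earlier, farther events, and so on); you correctly flag this as the main obstacle, but resolving it essentially amounts to proving the dual is well-defined, so the paper's order of presentation is the more economical one. Second, the issue of Remark~\ref{rem:type_undefined}: a density $\omega_{M_{t-}}$ is defined only up to a Lebesgue null set, so the decision between $\Theta_{x,\rcal}^{+}$ and $\Theta_{x,\rcal}^{-}$ (which requires evaluating $\omega_{M_{t-}}$ at the sampled points $y_{j}$) is not a function of the state $M_{t-}\in\mcal_{\lambda}$; your construction must therefore carry a concrete representative along the whole trajectory and check a posteriori that the resulting law is independent of that choice, which the backward formula avoids by only ever evaluating the fixed initial representative $\omega$.

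One further caveat: your argument for the Feller property (coupling two initial conditions through the same Poisson process and invoking dominated convergence) is the weakest step. With a common driving process, two vaguely close initial conditions can still resolve the very first reproduction event differently, and the discrepancy then propagates through subsequent jump decisions; dominated convergence does not address this propagation. The argument can be repaired — on $\mcal_{\lambda}$ vague convergence of $\{0,1\}$-densities implies $L^{1}_{loc}$ convergence, so the probability of a discrepant resolution is small and can be controlled over the a.s.\ finitely many events affecting $\mathrm{Supp}(f)$ — but the cleaner route, and the one implicit in the reference the paper adapts, is again via duality: $\esp_{M}[\Psi_{F,f}(M_{t}^{k})]$ for the generating family of test functions is an integral of $\prod_{j}\omega_{M}(\xi_{t}^{k,j})$ against the law of the dual's atoms, which is manifestly continuous in $M$ for the vague topology.
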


The proof of Theorem~\ref{thm:forwards} is a straightforward adaptation of the proof of Theorem~1.2 from  
\cite{etheridge2020rescaling}, combined with Lemma~\ref{lem:bon_espace_SLFV}. Indeed, by Theorem~1.2 from 
\cite{etheridge2020rescaling}, the martingale problem $(\lcal_{µ}^{k},\delta_{M^{0}})$ admits a unique $D_{\widetilde{\mcal}_{\lambda}}[0,+\infty)$-valued solution, and Lemma~\ref{lem:bon_espace_SLFV} from Section~\ref{sec:2} gives a $D_{\mcal_{\lambda}}[0,+\infty)$-valued solution. 

\paragraph{Definition of the $k$-parent SLFV} We can now define the $k$-parent SLFV using the characterization provided by Theorem~\ref{thm:forwards}. 

\begin{defn}[Definition of the $k$-parent SLFV]\label{defn:forwards_k_finite}
Let $k \geq 2$, let $µ$ be a $\sigma$-finite measure on $(0,\infty)$ satisfying (\ref{eqn:condition_intensite}), and let $M^{0} \in \mcal_{\lambda}$. Then, the k-parent spatial $\Lambda$-Fleming-Viot process (or k-parent SLFV) with initial condition $M^{0}$  and associated to $µ$ is the unique solution to the martingale problem $(\lcal_{µ}^{k},\delta_{M^{0}})$ stated in Theorem \ref{thm:forwards}. In particular, the 
k-parent SLFV is a strong Markov process with càdlàg paths a.s. 

Similarly, for all $\omega : \rd \to \{0,1\}$ measurable, the $k$-parent SLFV with initial density $\omega$ and associated to $µ$ is the unique solution to the martingale problem $(\lcal_{µ}^{k},\delta_{M(\omega)})$ with
\begin{equation*}
M(\omega)(dx, d\kappa) := ((\omega(x)\delta_{0}(d\kappa) + (1-\omega(x))\delta_{1}(d\kappa))dx.
\end{equation*}
\end{defn}

Intuitively, the $k$-parent SLFV can be constructed in the following way. Let $M^{0} \in \mcal_{\lambda}$, and let $µ$ be a $\sigma$-finite measure on $(0,\infty)$ satisfying (\ref{eqn:condition_intensite}). Moreover, let $\Pi$ be a Poisson point process on $\mathbb{R} \times \mathbb{R}^{d} \times (0,+\infty)$ with intensity $dt \otimes dx \otimes µ(dr)$. 
Initially, the $k$-parent SLFV is equal to $M^{0}$. The dynamics of the $k$-parent SLFV $(M_{t}^{k})_{t \geq 0}$ is then as follows. If $(t,x,\rcal) \in \Pi$, a reproduction event happens at time $t$ in the ball $\bcal(x,\rcal)$. We sample $k$ types according to the type distribution in the ball $\bcal(x,\rcal)$ at the time $t-$. We interpret these types as the types of $k$ potential "parents". 
With probability
\begin{equation*}
\frac{1}{V_{\rcal}^{k}}\int_{\bxr^{k}}\left[\prod_{j = 1}^{k}\omega_{M_{t-}^{k}}(y_{j})\right]dy_{1}...dy_{k},
\end{equation*}
the $k$ types sampled are $0$, so the $k$ potential parents are of type $0$. In this case, all the individuals in the ball $\bxr$ die, the $k$-th potential parent (of type $0$) fills the ball $\bxr$ with its descendants, which means that we set :
\begin{equation*}
\forall z \in \bxr, \omega_{M_{t}^{k}}(z) = 1.
\end{equation*}
Conversely, with probability
\begin{equation*}
1 - \frac{1}{V_{\rcal}^{k}}\int_{\bxr^{k}}\left[\prod_{j = 1}^{k}\omega_{M_{t-}^{k}}(y_{j})\right]dy_{1}...dy_{k},
\end{equation*}
at least one of the $k$ types sampled is $1$. As in the other case, all the individuals in the ball $\bxr$ die, but this time the first potential parent to be of type $1$ fills the ball $\bxr$ with its descendants, which amounts to setting
\begin{equation*}
\forall z \in \bxr, \omega_{M_{t}^{k}}(z) = 0.
\end{equation*}
The value taken by the density out of the ball $\bxr$ at time $t$ is not affected by this reproduction event. 
We repeat this for each $(t,x,\rcal) \in \Pi$. 

This construction can be made rigorous using arguments adapted from \cite{veber2015spatial}, and will be used in Section \ref{sec:2} to complete the proof of Theorem \ref{thm:forwards}. 

\begin{rem}
The $k$-parent SLFV process is a special case of the SLFV with selection from \cite{forien2017central}, with impact parameter $u = 1$, selection parameter $s = 1$, and selection function $F : x \to x - x^{k}$. It is possible to modify the definition of the $k$-parent SLFV in order to include an impact parameter. However, while this is straightforward for the construction based on the martingale problem, this is not the case for the alternative construction introduced in Section~\ref{sec:2}, which is the one we use to construct the limiting process. 
Indeed, this alternative construction relies on the observation that when $u = 1$ a.s., for appropriate initial conditions, sampling a parent is more or less equivalent to sampling a location. This is no longer true when $u \neq 1$ with positive probability. 
Moreover, in this case, the limiting process obtained when $k \to + \infty$ is expected to exhibit significantly different properties. Therefore, the introduction of an impact parameter is deferred to future work.  
\end{rem}

\begin{rem}\label{rem:tx_pt_affecte}
The condition (\ref{eqn:condition_intensite}) on $µ$ matches the standard condition for the existence of the SLFV \cite{barton2010new}. 
It comes from the fact that a point $x \in \mathbb{R}^{d}$ is affected by a reproduction event at rate :
\begin{equation*}
\int_{\mathbb{R}^{d}}\int_{0}^{+ \infty} \mathds{1}_{y \in \bxr}µ(d\rcal)dy = \int_{0}^{+ \infty} V_{\rcal}µ(d\rcal) \propto \int_{0}^{+ \infty} \rcal^{d}µ(d\rcal).
\end{equation*}
\end{rem}

\begin{rem}\label{rem:type_undefined}
Since the density $\omega_{M}$ is only defined up to a Lebesgue null set, the type of individuals present in a given position $y \in \rd$ cannot be uniquely defined. Therefore, even though intuitively we can first sample parental positions, and deduce parental types from $\omega_{M}$, we cannot formally sample positions in order to sample parental types. 
\end{rem}

A particularly interesting feature of this model is that there exists a dual process of potential ancestors associated to it, which follows the locations of the potential ancestors of a set of individuals. In other words, the genetic diversity in a sample of the population can be determined by going \textit{backwards in time}, and reconstructing the genealogical tree of the sample. For $k = 2$, the dual process is analogous to the Ancestral Selection Graph (ASG) \cite{krone1997ancestral,neuhauser1997genealogy}, but with a spatial structure. We now introduce this dual process, called the $k$-parent ancestral process. 

\subsubsection{The $k$-parent ancestral process}

All the new objects introduced in relation with the dual process will be defined on a new probability space $(\boldsymbol{\Omega}, \boldsymbol{\mathcal{F}}, \boldsymbol{P})$. Let $\egras$ denote the expectation with respect to $\boldsymbol{P}$. As before, we let $µ$ be a $\sigma$-finite measure on $(0,+\infty)$ satisfying condition (\ref{eqn:condition_intensite}), and we let $\overleftarrow{\Pi}$ be a Poisson point process on $\mathbb{R} \times \mathbb{R}^{d} \times (0,+\infty)$ with intensity $dt \otimes dx \otimes µ(d\rcal)$.

\paragraph{Definition}
Let $\mcal_{p}(\rd)$ denote the set of all finite point measures on $\rd$, equipped of the topology of weak convergence. For all $\Xi = \sum_{i = 1}^{l} \delta_{\xi_{i}}  \in \mcal_{p}(\rd)$, for all $x \in \rd$ and $\rcal > 0$, we define 
\begin{align*}
I_{x,\rcal}(\Xi) &= \{i \in \llbracket1,l \rrbracket : ||x-\xi_{i}|| \leq \rcal\} \\
\text{and }S^{\rcal}(\Xi) &= \{x \in \rd : \exists i \in \llbracket 1,l \rrbracket : ||x-\xi_{i}|| \leq \rcal\}.
\end{align*}
In other words, $I_{x,\rcal}(\Xi)$ is the set of all the indices of the points in $\Xi$ which are at distance at most $\rcal$ of $x$, while $S^{\rcal}(\Xi)$ is the set of all the points in $\rd$ which are at distance at most $\rcal$ of a point of $\Xi$. 

\begin{defn}
($k$-parent ancestral process)
Let $\Xi^{0} \in \mcal_{p}(\rd)$. The $k$-parent ancestral process $(\Xi_{t}^{k})_{t \geq 0}$ associated to $µ$ (or equivalently to $\overleftarrow{\Pi}$) and with initial condition $\Xi^{0}$ is the $\mcal_{p}(\rd)$-valued Markov jump process defined as follows. 
\begin{itemize}
\item First, we set $\Xi_{0}^{k} = \Xi^{0}$. 
\item Then, for all $(t,x,\rcal) \in \overleftarrow{\Pi}$, if $I_{x,\rcal}(\Xi_{t-}^{k}) \neq \emptyset$ and if we write
\begin{equation*}
\Xi_{t-}^{k} = \sum_{i = 1}^{N_{t-}^{k}} \delta_{\xi_{t-}^{k,i}},
\end{equation*}
we sample $k$ points $y_{1},...,y_{k}$ independently and uniformly at random in $\bxr$, and we set
\begin{equation*}
\Xi_{t}^{k} := \sum_{i = 1}^{N_{t-}^{k}}\delta_{\xi_{t-}^{k,i}} - \sum_{i \in I_{x,\rcal}(\Xi_{t-}^{k})} \delta_{\xi_{t-}^{k,i}} + \sum_{j = 1}^{k}\delta_{y_{j}}.
\end{equation*}
In other words, we remove all the atoms of $\Xi_{t-}^{k}$ sitting in $\bxr$, and we add $k$ atoms at locations that are i.i.d and uniformly distributed over the ball $\bxr$. 
\end{itemize}
\end{defn}

This process is well-defined, since $N_{t}$ is stochastically bounded by the number $(Y_{t}^{k})_{t \geq 0}$ of particles in a Yule process with $k$ children and with individual branching rate $\int_{0}^{\infty}V_{\rcal}µ(d\rcal) < +\infty$ (see \cite{etheridge2020rescaling} for a proof in the case $k = 2$, which can be generalized to the case $k \geq 2$). 

\paragraph{Martingale problem}
The $k$-parent ancestral process solves a martingale problem that we now introduce. 
For all $F \in C_{b}^{1}(\mathbb{R})$ and $f \in \mathcal{B}(\rd)$, we define the function $\Phi_{F,f} : \mcal_{p}(\rd) \to \mathbb{R}$ by : 
\begin{equation*}
\forall \Xi \in \mcal_{p}(\rd), \Phi_{F,f}(\Xi) = F\left(\int_{\rd} f(x)\Xi(dx)\right) = F(\langle\Xi,f\rangle).
\end{equation*}

We now define the operator $\mathcal{G}_{µ}^{k}$ on the set of functions of the form $\Phi_{F,f}$, which will be at the basis of the martingale problem satisfied by $(\Xi_{t})_{t \geq 0}$. Let $F \in C_{b}^{1}(\mathbb{R})$ and $f \in \mathcal{B}(\rd)$, then for all $\Xi = \sum_{i = 1}^{l}\delta_{\xi_{i}} \in \mcal_{p}(\rd)$, we set~:
\begin{align*}
\mathcal{G}_{µ}^{k}\Phi_{F,f}(\Xi) := \int_{\rd}\int_{0}^{+ \infty}\int_{\bxr^{k}} \frac{\mathds{1}_{x \in S^{\rcal}(\Xi)}}{V_{\rcal}^{k}}&\left[ F\left(\langle\Xi,f\rangle - \sum_{i \in I_{x,\rcal}(\Xi)}f(x_{i}) + \sum_{j = 1}^{k}f(y_{j})\right)\right. \\
& \quad \left.- F(\langle\Xi,f\rangle)\vphantom{\sum_{i = 1}^{k}}\right]
dy_{1}...dy_{k}µ(d\rcal)dx 
\end{align*}
This operator is well defined. Indeed, for all $\Xi \in \mcal_{p}(\rd)$, by Fubini's theorem,
\begin{align*}
|\gcal_{µ}^{k}\Phi_{F,f}(\Xi)| &\leq \int_{0}^{+ \infty}\int_{S^{\rcal}(\Xi)}\int_{\bxr^{k}} \frac{2}{V_{\rcal}^{k}}\, ||F||_{\infty}dy_{1}...dy_{k}dxµ(d\rcal) \\
&\leq 2||F||_{\infty} \,  \int_{0}^{+ \infty}\mathrm{Vol}(S^{\rcal(\Xi)})µ(d\rcal) \\
&\leq 2||F||_{\infty} \, \Xi(\rd) \, \int_{0}^{+ \infty}V_{\rcal}µ(d\rcal)\\
&< + \infty 
\end{align*}
by Condition (\ref{eqn:condition_intensite}).

\begin{prop}\label{prop:backwards_martingale}
Let $\Xi^{0} \in \mcal_{p}(\rd)$, and let $(\Xi_{t})_{t \geq 0}$ be the $k$-parent ancestral process of initial condition $\Xi^{0}$ associated to $µ$. Then, for all $F \in C_{b}^{1}(\mathbb{R})$ and for all $f \in \mathcal{B}(\rd)$, the process
\begin{equation*}
\left(
\Phi_{F,f}(\Xi_{t}) - \Phi_{F,f}(\Xi_{0}) - \int_{0}^{t}\gcal_{µ}^{k} \Phi_{F,f}(\Xi_{s})ds
\right)_{t \geq 0}
\end{equation*}
is a martingale. 
\end{prop}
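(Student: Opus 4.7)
The plan is to express the jumps of $(\Xi_t)$ as a Poisson stochastic integral driven by an enlargement of $\overleftarrow{\Pi}$, and then identify the compensator with $\int_0^t \gcal_\mu^k \Phi_{F,f}(\Xi_s)\, ds$. Concretely, I would enlarge $\overleftarrow{\Pi}$ into a Poisson point process $\widetilde{\Pi}$ on $\mathbb{R}_{+} \times \rd \times (0,\infty) \times (\rd)^{k}$ with intensity
\[
ds \otimes dx \otimes \mu(d\rcal) \otimes V_{\rcal}^{-k}\, \mathbf{1}_{B(x,\rcal)^{k}}(y_{1},\dots,y_{k})\, dy_{1}\cdots dy_{k},
\]
by attaching to each atom $(s,x,\rcal)$ of $\overleftarrow{\Pi}$ an independent $k$-tuple of uniform samples on $B(x,\rcal)^{k}$ encoding the parental locations drawn at that jump. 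The process $(\Xi_s)_{s \geq 0}$ is piecewise constant between the atoms of $\widetilde{\Pi}$ affecting it (namely those with $x \in S^{\rcal}(\Xi_{s-})$), so for every $t \geq 0$ one has the telescoping identity
\begin{align*}
\Phi_{F,f}(\Xi_{t}) - \Phi_{F,f}(\Xi_{0}) = \sum_{\substack{(s,x,\rcal,\bar y) \in \widetilde{\Pi}\\ s \leq t}} \mathbf{1}_{x \in S^{\rcal}(\Xi_{s-})} & \left[F\!\left(\langle\Xi_{s-},f\rangle - \sum_{i \in I_{x,\rcal}(\Xi_{s-})} f(\xi^{i}_{s-}) + \sum_{j=1}^{k} f(y_{j})\right)\right. \\
& \quad \left.- F(\langle\Xi_{s-}, f\rangle)\vphantom{\sum_{j=1}^{k}}\right],
\end{align*}
where $\xi^i_{s-}$ denotes the $i$-th atom of $\Xi_{s-}$.

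Once this integrand is shown to be $\widetilde{\Pi}$-integrable, the standard Poisson compensation formula yields that subtracting the integral against the intensity gives a martingale; that integral is a direct Fubini calculation against the product intensity of $\widetilde{\Pi}$ and matches exactly $\int_0^t \gcal_\mu^k \Phi_{F,f}(\Xi_s)\, ds$ (one can freely replace $\Xi_{s-}$ by $\Xi_s$ under $ds$ since jumps form a countable set). For integrability, each summand has absolute value at most $2\|F\|_\infty$ since $F$ is bounded, and
\[
\esp\!\left[\int_{0}^{t}\int_{0}^{+\infty}\mathrm{Vol}(S^{\rcal}(\Xi_{s}))\, \mu(d\rcal)\, ds\right] \leq \left(\int_{0}^{+\infty} V_{\rcal}\, \mu(d\rcal)\right) \int_{0}^{t} \esp[\Xi_{s}(\rd)]\, ds,
\]
using $S^{\rcal}(\Xi) \subset \bigcup_i B(\xi^i,\rcal)$. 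The stochastic domination of $\Xi_s(\rd)$ by the size of a Yule process with $k$ children and branching rate $\int_0^\infty V_\rcal \mu(d\rcal)$, noted in the excerpt just after the definition of the $k$-parent ancestral process, gives $\esp[\Xi_s(\rd)] \leq \esp[Y^k_s] < +\infty$, so the right-hand side is finite by Condition~(\ref{eqn:condition_intensite}).

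The main and essentially only delicate step is this integrability estimate; everything else is routine Poisson-calculus bookkeeping. If one prefers to avoid a pathwise absolute-value bound, an equivalent approach is to localize at the stopping times $\tau_n := \inf\{t \geq 0 : \Xi_t(\rd) \geq n\}$, establish the martingale identity on each $[0,\, t \wedge \tau_n]$, on which the expected number of jumps is deterministically bounded, and then pass to the limit $n \to +\infty$ by dominated convergence, again using the Yule bound to dominate. Either route delivers the claimed martingale.
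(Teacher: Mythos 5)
Your proof is correct and is essentially the argument the paper relies on: the paper gives no details here, simply citing a direct adaptation of Proposition~1.5 of \cite{etheridge2020rescaling}, and that adaptation rests on exactly the two ingredients you use, namely compensation of the (marked) Poisson point process driving the jumps and integrability of the jump terms via the boundedness of $F$ together with the domination of the number of potential ancestors by a Yule process with $k$ offspring and branching rate $\int_{0}^{\infty} V_{\rcal}\,\mu(d\rcal)$. The only point worth making explicit is that the telescoping identity also requires the a.s.\ finiteness of the number of relevant atoms on $[0,t]$, which follows from the same Yule bound, so your argument goes through as written.
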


The proof of Proposition~\ref{prop:backwards_martingale} is a direct adaptation of the proof of Proposition~1.5 from \cite{etheridge2020rescaling}.

\paragraph{Duality relation}
Intuitively, the $k$-parent ancestral process records the locations of the potential ancestors of a given sample of individuals. However, because densities are only defined up to a Lebesgue null set, it is not possible to assign uniquely a type to an individual located at $x \in \rd$ looking at the value of the density at this point. Therefore, as in \cite{etheridge2020rescaling}, in order to give a duality relation between the $k$-parent SLFV and the $k$-parent ancestral process, we will need to consider a distribution of sampling locations, rather than fixed locations.

More specifically, for all $l \geq 1$ and $x_{1},...,x_{l} \in (\rd)^{l}$, we define :
\begin{equation*}
\Xi[x_{1},...,x_{l}] := \sum_{i = 1}^{l}\delta_{x_{i}} \in \mcal_{p}(\rd).
\end{equation*}
If $\Psi$ is a density function on $(\rd)^{l}$ with respect to Lebesgue measure, let $µ_{\Psi}$ be the law of the random point measure $\sum_{i = 1}^{l}\delta_{X_{i}}$, where $(X_{1},...,X_{l})$ is sampled according to $\Psi$. 
If $M \in \mcal_{\lambda}$ and $\Xi = \sum_{i = 1}^{l}\delta_{x_{i}} \in \mcal_{p}(\rd)$, we set :
\begin{equation*}
D(M, \Xi) := \prod_{i = 1}^{l} \omega_{M}(x_{i}).
\end{equation*}
Notice that for all $l \in \mathbb{N}^{*}$ and for all density functions $\Psi$ on $(\rd)^{l}$, 
\begin{align*}
\int_{\mcal_{p}(\rd)}D(M,\Xi)µ_{\Psi}(d\Xi)
&= \int_{(\rd)^{l}}\Psi(x_{1},...,x_{l})\left\{\prod_{j = 1}^{l}\omega_{M}(x_{j})\right\}dx_{1}...dx_{l} \\
&= \int_{(\rd\times\{0,1\})^{l}}\Psi(x_{1},...,x_{l})\left\{\prod_{j = 1}^{l}\mathds{1}_{0}(\kappa_{j})\right\}M(dx_{1},d\kappa_{1})...M(dx_{l},d\kappa_{l})
\end{align*}
does not depend on the choice of a density $\omega_{M}$ of $M$. 

A straightforward adaptation of the proof of Proposition 1.7 in \cite{etheridge2020rescaling} to the case $k \geq 2$ leads to the following proposition.

\begin{prop}\label{prop:dualite}
Let $k \geq 2$.
Let $M^{0} \in \mcal_{\lambda}$, let $l \in \mathbb{N}^{*}$, and let $\Psi$ be a density function on $(\rd)^{l}$. Let $µ$ be a $\sigma$-finite measure on $(0,+\infty)$ satisfying (\ref{eqn:condition_intensite}). Let $(M_{t}^{k})_{t \geq 0}$ be the k-SLFV with initial condition $M^{0}$ associated to $µ$ and $(\Xi_{t}^{k})_{t \geq 0}$ be the $k$-parent ancestral process and associated to $µ$. Then, for all $t \geq 0$,
\begin{equation*}
\int_{\mcal_{p}(\rd)}\esp[D(M_{t}^{k},\xi)|M_{0}^{k} = M^{0}]µ_{\Psi}(d\xi)
= \egras[D(M^{0},\Xi_{t}^{k})|\Xi_{0}^{k} \sim µ_{\Psi}].
\end{equation*}
Equivalently, for all $t \geq 0$, 
\begin{equation*}
\esp_{M^{0}}\left[
\int_{(\rd)^{l}}\Psi(x_{1},...,x_{l})\left\{\prod_{j = 1}^{l}\omega_{M_{t}^{k}}(x_{j})\right\}dx_{1}...dx_{l}
\right]
= \int_{(\rd)^{l}}\Psi(x_{1},...,x_{l})\egras_{\Xi[x_{1},...,x_{l}]} \left[\prod_{j = 1}^{N_{t}^{k}}\omega_{M^{0}}(\xi_{t}^{k,j})\right]dx_{1}...dx_{l}.
\end{equation*}
\end{prop}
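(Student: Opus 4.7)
The plan is to follow the standard generator-based duality argument used for Proposition~1.7 of \cite{etheridge2020rescaling}, adapted from $k = 2$ to arbitrary $k \geq 2$. The underlying idea is that the generators $\lcal_{µ}^{k}$ and $\gcal_{µ}^{k}$ act in a ``dual'' fashion on the function $D(M,\Xi) = \prod_{i}\omega_{M}(x_{i})$: applying $\lcal_{µ}^{k}$ in the $M$-variable produces the same expression as applying $\gcal_{µ}^{k}$ in the $\Xi$-variable. Once this generator identity is in hand, a standard duality argument converts it into the stated equality of expectations.

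The heart of the proof is the pointwise identity $\lcal_{µ}^{k}D(\cdot,\Xi)(M) = \gcal_{µ}^{k}D(M,\cdot)(\Xi)$. Fix $\Xi = \sum_{i = 1}^{l}\delta_{x_{i}}$, $M \in \mcal_{\lambda}$, and a candidate reproduction event $(x,\rcal)$; write $I := I_{x,\rcal}(\Xi)$. When $I = \emptyset$ neither side is affected, so assume $I \neq \emptyset$. On the forward side, the event replaces $\omega_{M}$ by $\Theta_{x,\rcal}^{+}(\omega_{M})$ with integrated weight $V_{\rcal}^{-k}\int_{\bxr^{k}}\prod_{j}\omega_{M}(y_{j})dy_{1}\ldots dy_{k}$, turning $D$ into $\prod_{i \notin I}\omega_{M}(x_{i})$; and by $\Theta_{x,\rcal}^{-}(\omega_{M})$ with the complementary weight, turning $D$ into $0$. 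On the backward side, the same event resamples the atoms of $\Xi$ lying in $\bxr$ as $k$ i.i.d.\ uniform points $y_{1},\ldots,y_{k}$ in $\bxr$, so $D$ becomes $\prod_{i \notin I}\omega_{M}(x_{i})\prod_{j = 1}^{k}\omega_{M}(y_{j})$. Averaging over the uniform law of $(y_{1},\ldots,y_{k})$ reproduces the $\Theta^{+}$-contribution from the forward side exactly, and integration against $µ(d\rcal)dx$ yields the identity.

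The main obstacle is that $M \mapsto D(M,\Xi)$ is a product of pointwise evaluations of $\omega_{M}$: for fixed $\Xi$ it is not well-defined as a function on $\mcal_{\lambda}$, and it does not lie in the class of test functions $\{\Psi_{F,f}\}$ used in Theorem~\ref{thm:forwards}. This is precisely why the duality is stated in its integrated form against $µ_{\Psi}$. The remedy is to first integrate in $\Xi$: the resulting function
\begin{equation*}
\phi_{\Psi}(M) \, := \, \int_{(\rd)^{l}}\Psi(x_{1},\ldots,x_{l})\prod_{j = 1}^{l}\omega_{M}(x_{j})\,dx_{1}\ldots dx_{l}
\end{equation*}
depends on $M$ only through integrals against continuous test functions and is independent of the choice of density $\omega_{M}$. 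Approximating $\Psi$ in $L^{1}$ by finite sums of tensor products $f_{1}\otimes\cdots\otimes f_{l}$ with $f_{i} \in C_{c}(\rd)$ writes $\phi_{\Psi}$ as a limit of polynomials in quantities of the form $\langle \omega_{M},f_{i}\rangle$, a class of test functions to which the martingale problem of Theorem~\ref{thm:forwards} extends as in \cite{etheridge2020rescaling}; the pointwise generator identity then carries over to $\phi_{\Psi}$ by linearity and dominated convergence, using $0 \leq \omega_{M} \leq 1$.

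To conclude, I would combine this with Proposition~\ref{prop:backwards_martingale} and apply a standard duality criterion: both $t \mapsto \esp_{M^{0}}[\phi_{\Psi}(M_{t}^{k})]$ and $t \mapsto \egras[D(M^{0},\Xi_{t}^{k}) \mid \Xi_{0}^{k} \sim µ_{\Psi}]$ satisfy the same linear evolution equation with the same value at $t = 0$, hence coincide for every $t \geq 0$. The right-hand side is finite thanks to the Yule-process domination of $N_{t}^{k}$ recalled right after the definition of $\Xi_{t}^{k}$, and Fubini's theorem converts between the two equivalent forms displayed in the statement.
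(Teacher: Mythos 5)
Your proposal is correct and follows essentially the same route as the paper, which simply invokes the adaptation to $k\geq 2$ of the proof of Proposition~1.7 in \cite{etheridge2020rescaling}: a generator-duality computation showing that $\lcal_{µ}^{k}$ and $\gcal_{µ}^{k}$ act identically on $D$, made rigorous by integrating against $µ_{\Psi}$ (to deal with the density being defined only up to Lebesgue-null sets) and by extending the martingale problem to products of linear functionals, then concluding via the standard Ethier--Kurtz duality criterion with the Yule-process bound ensuring integrability. Your generator identity, including the observation that the $\Theta^{-}$ term contributes zero when $I_{x,\rcal}(\Xi)\neq\emptyset$ and that the uniform resampling of the $k$ parental locations reproduces the $\Theta^{+}$ weight, is exactly the computation required.
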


In other words, informally, the individuals living in locations $x_{1}$,~...,~$x_{l}$ at time~$t$ are all of type~$0$ if, and only if all the potential ancestors of this sample of individuals at time~$0$ are of type~$0$.

\subsection{The $\infty$-parent SLFV}\label{subsec:1.2}
When $k \to + \infty$, the $k$-parent SLFV is expected to converge to the $\infty$-parent SLFV. Before defining this process rigorously in Theorem~\ref{thm:first_construction_infty_slfv}, we first introduce it somewhat informally. 

\paragraph{Definition}
Let $µ$ be a $\sigma$-finite measure on $(0,+\infty)$ satisfying Condition (\ref{eqn:condition_intensite}), and let $\Pi$ be a Poisson point process on $\mathbb{R}\times\rd\times (0,+\infty)$ with intensity $dt \otimes dx \otimes µ(d\rcal)$. Let also $M^{0} \in \mcal_{\lambda}$. We start the $\infty$-parent spatial $\Lambda$-Fleming Viot process $(M_{t}^{\infty})_{t \geq 0}$, or $\infty$-parent SLFV, at $M_{0}^{\infty} = M^{0}$. Then, if $(t,x,\rcal) \in \Pi$, as before, we consider that a reproduction event occurs in the ball $\bxr$ at time $t$. However, this time we do not sample a finite number of potential parents. Instead, we look at the value of the integral
\begin{equation*}
\int_{\bxr} \left(1 - \omega_{M_{t-}^{\infty}}(z)\right) dz,
\end{equation*}
which amounts to sampling an infinite number of potential parents over the ball $\bxr$ and looking at the proportion of them which are of the "existing" type (i.e, type $1$). 

If $\int_{\bxr} \left(1 - \omega_{M_{t-}^{\infty}}(z)\right) dz = 0$, we consider that the parent which reproduces is of type $0$, and we set~:
\begin{equation*}
\forall z \in \bxr, \omega_{M_{t}^{\infty}}(z) = 1.
\end{equation*}
Note that in this case, the "parent" which reproduces was "sampled" at a location which is uniformly distributed over the ball $\bxr$. 

Conversely, if $\int_{\bxr} \left(1 - \omega_{M_{t-}^{\infty}}(z)\right) dz \neq 0$, there is a non negligible amount of individuals of type $1$ in $\bxr$. We impose that it is one of them which reproduces, and in such a way that its offspring invades the whole region. That is, we set :
\begin{equation*}
\forall z \in \bxr, \omega_{M_{t}^{\infty}}(z) = 0.
\end{equation*}

Formally, the $\infty$-parent SLFV is constructed in Section~\ref{sec:2} using coupled $k$-parent SLFVs, leading to the following characterization. 
\begin{thm}\label{thm:first_construction_infty_slfv}
Let $\omega : \mathbb{R}^{d} \to \{0,1\}$ be measurable, and for all $k \geq 2$, let $(M_{t}^{k})_{t \geq 0}$ be the $k$-parent SLFV with initial density $\omega$ and associated to $µ$. Then, it is possible to couple the random variables $(M^{k})_{k \geq 2}$ in such a way that each admits a density $(\omega_{t}^{k})_{t \geq 0}$ such that
\begin{equation*}
\forall t \geq 0, \forall x \in \mathbb{R}^{d}, (\omega_{t}^{k}(x))_{t \geq 0} \text{ is nonincreasing with respect to $k$}. 
\end{equation*}
For all $t \geq 0$ and $x \in \mathbb{R}^{d}$, we then set
\begin{equation*}
\omega_{t}^{\infty}(x) = \lim\limits_{k \to + \infty} \omega_{t}^{k}(x),
\end{equation*}
and we define the $\infty$-parent SLFV $(M_{t}^{\infty})_{t \geq 0}$ as the unique $\mcal_{\lambda}$-valued process with density $(\omega_{t}^{\infty})_{t \geq 0}$. 
\end{thm}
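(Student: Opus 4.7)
The strategy is a pathwise coupling of all the $k$-parent SLFVs using a single enriched Poisson point process, together with a pointwise monotonicity argument.

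First, I would use the parental-skeleton construction of the $k$-parent SLFV announced in Section~\ref{sec:2}: reproduction events come equipped with a whole i.i.d.\ sequence of parental locations $(y_j)_{j \geq 1}$, uniform in the reproduction ball, and the $k$-parent SLFV updates by inspecting only the first $k$ of them. Concretely, one enlarges the Poisson point process $\Pi$ of Section~\ref{subsec:1.1} into a process $\widetilde{\Pi}$ on $\mathbb{R}\times\rd\times(0,+\infty)\times(\rd)^{\mathbb{N}^*}$ with intensity $dt\otimes dx\otimes\mu(d\rcal)\otimes\bigotimes_{j \geq 1}V_\rcal^{-1}\mathds{1}_{\bxr}(y_j)\,dy_j$, and all the processes $(M_t^k)_{k \geq 2}$ are built as deterministic functionals of this single $\widetilde{\Pi}$ and the common initial condition $\omega$. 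Equivalence between this pathwise construction and the martingale-problem definition of Theorem~\ref{thm:forwards} is the content of Section~\ref{sec:2}. In particular, this construction provides an explicit, pointwise-defined density $\omega_t^k$ for each $k$.

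Next, I would establish the monotonicity $\omega_t^k(x) \leq \omega_t^{k'}(x)$ for all $k \geq k' \geq 2$ and all $x$, by induction over the locally finitely many reproduction events affecting a neighborhood of $x$. Suppose the inequality holds at time $t-$, and consider an event $(t,x,\rcal,(y_j)_{j \geq 1}) \in \widetilde{\Pi}$. Outside $\bxr$ nothing changes, so it suffices to analyze the update inside the ball. If $\prod_{j=1}^{k'}\omega_{t-}^{k'}(y_j) = 0$, there exists $j_0 \leq k' \leq k$ with $\omega_{t-}^{k'}(y_{j_0}) = 0$, so by the induction hypothesis $\omega_{t-}^{k}(y_{j_0}) = 0$ as well, and both SLFVs fill $\bxr$ with type~$1$; the inequality is preserved. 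Otherwise $\prod_{j=1}^{k'}\omega_{t-}^{k'}(y_j) = 1$, the $k'$-parent SLFV fills $\bxr$ with type~$0$, so $\omega_t^{k'} \equiv 1$ on $\bxr$ and $\omega_t^k \leq 1 = \omega_t^{k'}$ is automatic. Since the densities agree at $t=0$, monotonicity propagates almost surely to all times. I would then define $\omega_t^\infty(x) := \lim_{k \to \infty}\omega_t^k(x)$, which exists in $\{0,1\}$ as the nonincreasing limit of $\{0,1\}$-valued quantities and is measurable in $x$ as a pointwise limit of measurable functions; setting $M_t^\infty$ to be the element of $\mcal_\lambda$ with density $\omega_t^\infty$ finishes the construction.

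The principal obstacle is not the monotonicity step, which is essentially a one-line case analysis, but the underlying pathwise construction of the $k$-parent SLFV. The full intensity of $\widetilde{\Pi}$ is infinite because reproduction events accumulate spatially, so one cannot directly order them in time; one must define $\omega_t^k(x)$ by restricting to events whose reproduction ball meets a compact neighborhood of $x$, use Condition~(\ref{eqn:condition_intensite}) (see Remark~\ref{rem:tx_pt_affecte}) to ensure that only finitely many such events occur in any bounded time window, and then verify that the local updates glue into a globally defined density solving the martingale problem of Theorem~\ref{thm:forwards}. A further subtlety flagged by Remark~\ref{rem:type_undefined} is that one must work throughout with the \emph{canonical} pointwise density produced by the parental-skeleton scheme rather than an arbitrary representative, since the monotonicity inequality holds for every $x$, not merely almost everywhere. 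Once this framework is in place, the coupling via a single $\widetilde{\Pi}$ and the passage to the monotone limit are direct.
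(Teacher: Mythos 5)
Your coupling device is the same as the paper's: a single Poisson point process enriched with an i.i.d.\ sequence of potential parental locations attached to each event, with every $k$-parent SLFV read off from the first $k$ terms of that sequence and from the common initial density $\omega$. Where you diverge is in how the density and the monotonicity are obtained. You propose a \emph{forward} pathwise construction and prove $\omega_{t}^{k'}\leq\omega_{t}^{k}$ ($k'\geq k$) by induction over reproduction events; the paper instead never runs the forward dynamics event by event. It defines the coupled density \emph{backwards}, through the quenched $k$-parent ancestral process started from $\delta_{x}$ at time $t$ (Definitions~\ref{defn:quenched_k_dual} and~\ref{defn:quenched_k_SLFV}), setting $\omega_{k,t}^{\Pi^{c},\omega}(x)=\prod_{y\in A(\Xi_{k,t}^{\Pi^{c},t,\delta_{x}})}\omega(y)$; monotonicity in $k$ is then immediate from the inclusion of the sets of potential ancestors $A(\Xi_{k,t})\subseteq A(\Xi_{k',t})$ (Lemmas~\ref{lem:couplage_k_k'} and~\ref{lem:property_coupling}), and the identification in law with the $k$-parent SLFV of Definition~\ref{defn:forwards_k_finite} is made through the duality relation of Proposition~\ref{prop:dualite} (Lemma~\ref{lem:bon_espace_SLFV}). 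Your case analysis at a single event is correct, and it buys a more intuitive, dynamics-level explanation of the monotonicity; the paper's route buys well-definedness for free, since for each fixed $(t,x)$ only the a.s.\ finite ancestral cluster of events matters.

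The weak point of your version is precisely the step you flag but resolve incorrectly: defining $\omega_{t}^{k}(x)$ ``by restricting to events whose reproduction ball meets a compact neighborhood of $x$'' does not work. The value at $x$ at time $t$ depends on the values at the sampled parental locations just before the event, which in turn depend on earlier events possibly outside that neighborhood; the influence propagates through chains of events that exit any fixed compact set, so no deterministic spatial truncation yields the correct forward dynamics. The clean repair is exactly the paper's backward definition of the density via the quenched ancestral process (a.s.\ finite for each space--time point because it is dominated by a Yule-type branching process, using Condition~(\ref{eqn:condition_intensite})); once the density is defined that way, your forward induction is subsumed by the ancestor-set inclusion, and one additional consistency step (the decomposition of ancestries across an intermediate time, Lemma~\ref{lem:appendixC_2}) replaces your ``gluing'' argument. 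With that substitution your proposal matches the paper's proof; without it, the forward construction you rely on is not actually provided by Section~\ref{sec:2}, which only supplies the dual-based one.
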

Theorem~\ref{thm:first_construction_infty_slfv} will be a direct consequence of Definition~\ref{defn:infty_SLFV} and Lemma~\ref{lem:infty_markovian} from Section~\ref{sec:2}.

This first construction of the $\infty$-parent SLFV shows that this process can be considered as the limit of the $k$-parent SLFV when $k \to + \infty$, in the sense that the finite dimensional distributions of the sequence of coupled $k$-parent SLFVs converge almost surely to the ones of $(M_{t}^{\infty})_{t \geq 0}$ (see Lemma~\ref{lem:cvg_vague_processus_tps}). In Section~\ref{subsec:convergence_result_forward}, we establish the following convergence result, which states that the $k$-parent SLFV converges in distribution towards the $\infty$-parent SLFV in $D_{\mcal_{\lambda}}[0,\infty)$. 

\begin{thm}\label{thm:convergence_result}
Let $M^{0} \in \mcal_{\lambda}$, and let $µ$ be a $\sigma$-finite measure on $(0,+\infty)$ satisfying Condition~(\ref{eqn:condition_intensite}). Let $(M_{t}^{\infty})_{t \geq 0}$ be the $\infty$-parent SLFV with initial condition $M^{0}$ and associated to $µ$. For all $k \geq 2$, let $(M_{t}^{k})_{t \geq 0}$ be the $k$-parent SLFV with initial condition~$M^{0}$ and associated to~$µ$. Then, $(M^{k})_{k \geq 2}$ converges in distribution towards $M^{\infty}$ in $D_{\mcal_{\lambda}}[0,\infty)$. 
\end{thm}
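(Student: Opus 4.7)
The plan is to combine (a) almost-sure convergence of the finite-dimensional distributions, which comes directly from the monotone coupling of Theorem~\ref{thm:first_construction_infty_slfv}, with (b) tightness of $(M^k)_{k \geq 2}$ in the Skorokhod space, obtained from the martingale problem of Theorem~\ref{thm:forwards} via the Aldous-Rebolledo criterion. Because $\widetilde{\mcal}_{\lambda}$ is vaguely compact while $\mcal_{\lambda}$ is not closed in it, I would first establish convergence in $D_{\widetilde{\mcal}_{\lambda}}[0,\infty)$ and then transfer to $D_{\mcal_{\lambda}}[0,\infty)$, using that all paths involved lie in $\mcal_{\lambda}$ almost surely.

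For (a), under the coupling of Theorem~\ref{thm:first_construction_infty_slfv}, $\omega_t^k(x) \downarrow \omega_t^{\infty}(x)$ a.s.\ for every $t \geq 0$ and $x \in \rd$. By dominated convergence, $\langle \omega_t^k, f \rangle \to \langle \omega_t^{\infty}, f \rangle$ a.s.\ for every $f \in C_c(\rd)$, so $M_t^k \to M_t^{\infty}$ vaguely. Applying this at any finite collection of times yields almost-sure joint convergence, hence convergence of finite-dimensional distributions to those of $M^{\infty}$; in particular this uniquely identifies the law of any subsequential limit of $(M^k)$.

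For (b), I would invoke Jakubowski's criterion: the compact containment condition is immediate because $\widetilde{\mcal}_{\lambda}$ is vaguely compact, and the family $\{\Psi_{F,f} : F \in C_b^1(\mathbb{R}),\ f \in C_c(\rd)\}$ separates points. For each such $\Psi_{F,f}$, the real-valued processes $(\Psi_{F,f}(M_t^k))_{t \geq 0}$ are tight in $D_{\mathbb{R}}[0,\infty)$ by Aldous-Rebolledo applied to the semimartingale decomposition from Theorem~\ref{thm:forwards}. The crucial input is the sharper increment bound
$$|F(\langle \Theta_{x,\rcal}^{\pm}(\omega_M), f\rangle) - F(\langle \omega_M, f\rangle)| \leq \|F'\|_{\infty}\, \|f\|_{\infty}\, \mathrm{Vol}(\bxr \cap Supp(f)),$$
which, combined with Fubini's theorem to trade the $x$-integration for an integration against $f$, yields the uniform-in-$k$ estimate
$$|\lcal_{µ}^{k} \Psi_{F,f}(M)| \leq \|F'\|_{\infty}\, \|f\|_{\infty}\, \mathrm{Vol}(Supp(f)) \int_{0}^{\infty} V_{\rcal}\, µ(d\rcal),$$
finite by condition~(\ref{eqn:condition_intensite}). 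An analogous computation bounds $\lcal_{µ}^{k}(\Psi_{F,f}^{2}) - 2 \Psi_{F,f}\, \lcal_{µ}^{k} \Psi_{F,f}$, and thereby the predictable quadratic variation of the martingale part, also uniformly in $k$.

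The main obstacle is obtaining these uniform-in-$k$ estimates on the generator and the quadratic variation: the naive bound $|\Delta \Psi_{F,f}| \leq \|F'\|_{\infty}\, \|f\|_{\infty}\, V_{\rcal}$ would require $\int \rcal^{2d}\, µ(d\rcal) < \infty$, which is strictly stronger than condition~(\ref{eqn:condition_intensite}); the improvement comes from exploiting the compact support of $f$ to tightly localize the $x$-integration. Once (a) and (b) are combined, $(M^k)$ converges in distribution to $M^{\infty}$ in $D_{\widetilde{\mcal}_{\lambda}}[0,\infty)$, and since $M^{\infty}$ takes values in $\mcal_{\lambda}$ while the vague topology on $\mcal_{\lambda}$ is the restriction of that on $\widetilde{\mcal}_{\lambda}$, this lifts to $D_{\mcal_{\lambda}}[0,\infty)$.
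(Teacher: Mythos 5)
Your proposal is correct and follows essentially the same route as the paper: almost-sure convergence of the finite-dimensional distributions from the monotone coupling of Theorem~\ref{thm:first_construction_infty_slfv}, tightness via the Aldous--Rebolledo criterion applied to $\Psi_{F,f}(M^k)$ with uniform-in-$k$ bounds on $\lcal_{µ}^{k}\Psi_{F,f}$ and on the quadratic variation that exploit the compact support of $f$ to get an integrand of order $\rcal^d$ rather than $\rcal^{2d}$ (this is exactly Lemma~\ref{lem:Lk_well_defined} and Lemma~\ref{lem:appendix_A_2} combined with the bound on $\mathrm{Vol}(Supp^{\rcal}(f))$), and then Prokhorov's theorem. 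The only cosmetic differences are that the paper reduces to the real-valued projections via Theorem~3.9.1 of Ethier--Kurtz using compactness of $\mcal_{\lambda}$ rather than Jakubowski's criterion on $\widetilde{\mcal}_{\lambda}$, so your extra step of transferring from $D_{\widetilde{\mcal}_{\lambda}}[0,\infty)$ to $D_{\mcal_{\lambda}}[0,\infty)$ is not needed in the paper's formulation.
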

The proof of this result can be found at the end of Section~\ref{subsec:convergence_result_forward}.  

\begin{rem}
While this article focuses on the link between the $k$-parent SLFV and the $\infty$-parent SLFV, this process is also conjectured to appear as the limit of SFLVs featuring different forms of strong selection against ghost individuals, for instance if real individuals only reproduce when at least $k_{1} \leq k$ potential parents sampled are real (with $k_{1}$ constant when $k \to + \infty$). 
Other forms of selection are expected to yield different limiting processes. For instance, if real individuals only reproduce when at least $\alpha k$, $\alpha \in (0,1)$ potential parents sampled are real, then we expect local extinctions to be still visible in the limit, when the density of real individuals in the affected area is too low. The approach used in this article can possibly be applied to the study of these other limiting regimes as well. 
\end{rem}

\begin{rem}
Since the density of type~$0$ individuals is $\{0,1\}$-valued, the $\infty$-parent SLFV can also be seen as taking its values in the equivalence class of all the measurable subsets of $\mathbb{R}^{d}$, where two measurable sets $S_{1}, S_{2} \subseteq \mathbb{R}^{d}$ are equivalent if, and only if $S_{1} \Delta S_{2}$ has null Lebesgue measure. 
In this case, it will actually be easier to consider that the subset represents the area occupied by \textit{real} individuals. This corresponds to the characterization of the $\infty$-parent SLFV which will be introduced in Section~\ref{sec:3} and Proposition~\ref{prop:stochastic_growth_model}. 
\end{rem}

\paragraph{Martingale problem}
Like the $k$-parent SLFV, the $\infty$-parent SLFV (as constructed in Theorem~\ref{thm:first_construction_infty_slfv}) is solution to a martingale problem. However, and in contrast with the case of the $k$-parent SLFV, the condition (\ref{eqn:condition_intensite}) on $µ$ will not be sufficient to ensure that this solution is unique. Instead, we will need the following stronger condition.

\begin{defn}
Let $a_{d} > 0$ be such that the minimal number of d-dimensional balls of radius $1$ needed to cover the boundary of an hypersphere of radius $n$ in $d$ dimensions is bounded from above by $a_{d}\times n^{d-1}$ for every $n \geq 1$. 
A $\sigma$-finite measure $µ$ on $\mathbb{R}_{+}^{*}$ is said to satisfy Condition~(\ref{eqn:condition_infty_SLFV}) if it satisfies Condition~(\ref{eqn:condition_intensite}), and if there exists $\rcal > 0$ such that 
\begin{equation}\label{eqn:condition_infty_SLFV}
\sum_{n = 1}^{+ \infty} \left(\int_{(n-1)\rcal}^{n\rcal} (\rcal + r)^{d} µ(dr)\right) \left(a_{d} \, n^{d-1} + 1\right) < + \infty.
\end{equation}
\end{defn}
This stronger condition appears in Section~\ref{subsec:3.1}, and is needed to ensure that the dual of the $\infty$-parent SLFV is well-defined. 

Examples of $\sigma$-finite measures $µ$ on $\mathbb{R}_{+}^{*}$ satisfying Condition (\ref{eqn:condition_infty_SLFV}) are the following :
\begin{enumerate}
\item Measures $µ$ on $\mathbb{R}_{+}^{*}$ having a bounded support.
\item Measures $µ$ on $\mathbb{R}_{+}^{*}$ of the form $\alpha \times (1+r)^{-3d-1} dr$, with $\alpha > 0$.
\end{enumerate}

We recall that for all $F \in C^{1}(\mathbb{R})$ and $f \in C_{c}(\rd)$, the function $\Psi_{F,f}$ is defined as
\begin{equation*}
\forall M \in \mathcal{M}_{\lambda}, \Psi_{F,f}(M) = F(\langle \omega_{M},f \rangle). 
\end{equation*}
We define the operator $\lcal_{µ}^{\infty}$ on functions of the form $\Psi_{F,f}$ where $F \in C^{1}(\mathbb{R})$ and $f \in C_{c}(\rd)$ in the following way. For all $M \in \mcal_{\lambda}$, we set~:
\begin{align*}
\lcal_{µ}^{\infty}\Psi_{F,f}(M) := \int_{0}^{+ \infty}\int_{Supp^{\rcal}(f)}
&\left[ \delta_{0}\left(\int_{\bxr}\left(1-\omega_{M}(z)\right)dz\right) \,  F(\langle\Theta_{x,\rcal}^{+}(\omega_{M}),f\rangle)\right. \\
&+  \left(1-\delta_{0}\left(\int_{\bxr}\left(1-\omega_{M}(z)\right)dz\right)\right) \, F(\langle\Theta_{x,\rcal}^{-}(\omega_{M}),f\rangle) \\
&\left.- F(\langle\omega_{M},f\rangle) \vphantom{\int_{\bxr}}
\right]dxµ(d\rcal), 
\end{align*}
where the function $\delta_{0}$ is defined as $\delta_{0} : x \to \mathds{1}_{\{0\}}(x)$. 
Note that if $\delta_{0}\left(\int_{\bxr}\left(1-\omega_{M}(z)\right)dz\right)= 1$, then for all $y \in \bxr$ except possibly on a Lebesgue null set, 
\begin{equation*}
\Theta_{x,\rcal}^{+}(\omega_{M})(y) = \omega_{M}(y).
\end{equation*}
In other words, the ball $\bxr$ is already completely void of "existing" individuals, and filling it with "ghost" individuals does not change anything. 
Therefore, we also have
\begin{align*}
\lcal_{µ}^{\infty}\Psi_{F,f}(M) = \int_{0}^{+ \infty}\int_{Supp^{\rcal}(f)}&\left(1-\delta_{0}\left(\int_{\bxr}\left(1-\omega_{M}(z)\right)dz\right)\right) \\
&\times \Big[F(\langle\Theta_{x,\rcal}^{-}(\omega_{M}),f\rangle) - F(\langle\omega_{M},f\rangle)
\Big]dxµ(d\rcal).
\end{align*}
In Section \ref{sec:5}, we show that this operator is well-defined, even if $µ$ satisfies Condition (\ref{eqn:condition_intensite}) rather than Condition (\ref{eqn:condition_infty_SLFV}). 
If $µ$ satisfies Condition (\ref{eqn:condition_infty_SLFV}), then the associated martingale problem can be used to define and fully characterize the $\infty$-parent SLFV. If $µ$ satisfies only Condition (\ref{eqn:condition_intensite}), then the $\infty$-parent SLFV is still solution to the martingale problem, but we no longer know whether this solution is unique, as stated below.

\begin{thm}\label{thm:characterization_infty_SLFV}
Let $\omega : \rd \to \{0,1\}$, let $M^{0} \in \mcal_{\lambda}$, and let $µ$ be a $\sigma$-finite measure on $(0,+\infty)$ satisfying Condition (\ref{eqn:condition_intensite}). Then, the $\infty$-parent SLFV with initial condition $M^{0}$ and associated to $µ$  
is a solution to the martingale problem for $(\lcal_{µ}^{\infty},\delta_{M^{0}})$.

Moreover, if $µ$ satisfies Condition (\ref{eqn:condition_infty_SLFV}), the martingale problem associated to $(\lcal_{µ}^{\infty},\delta_{M^{0}})$is well-posed, and the $\infty$-parent SLFV with initial condition $M^{0}$ and associated to $µ$ is the unique solution to it in $D_{\mcal_{\lambda}}[0,+\infty)$.
\end{thm}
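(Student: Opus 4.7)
The plan is to handle existence and uniqueness separately, using Theorem~\ref{thm:first_construction_infty_slfv} for existence and a duality argument for uniqueness.

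\emph{Existence.} I would start from the coupling-based construction of Theorem~\ref{thm:first_construction_infty_slfv}: for every $k \geq 2$, Theorem~\ref{thm:forwards} ensures that $(M_t^k)_{t\geq 0}$ solves the martingale problem for $(\lcal_\mu^k, \delta_{M^0})$, and the coupling yields the a.s.\ pointwise, monotone convergence $\omega_s^k(x) \downarrow \omega_s^\infty(x)$. The strategy is to pass to the limit $k \to +\infty$ in the martingale identity. The key pointwise limit is
\[
\frac{1}{V_\rcal^k}\int_{\bxr^k}\prod_{j=1}^k \omega_M(y_j)\,dy_1\cdots dy_k = \left(\frac{1}{V_\rcal}\int_\bxr \omega_M(y)\,dy\right)^k \xrightarrow[k\to\infty]{} \delta_0\!\left(\int_\bxr \left(1-\omega_M(z)\right)dz\right),
\]
since the averaged quantity lies in $[0,1]$ and its $k$-th power tends to $0$ unless the ball is entirely void of real individuals (up to a Lebesgue null set), in which case the limit equals $1$. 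Combined with the monotone convergence of $\omega_s^k$ to $\omega_s^\infty$, this gives $\lcal_\mu^k \Psi_{F,f}(M_s^k) \to \lcal_\mu^\infty \Psi_{F,f}(M_s^\infty)$ pointwise. Dominated convergence---the relevant integrand is bounded by $2\|F\|_\infty/V_\rcal^k$ with integration region reducing to $(0,+\infty) \times Supp^\rcal(f) \times \bxr^k$, and $\mathrm{Vol}(Supp^\rcal(f))$ grows at most polynomially in $\rcal$, so Condition~(\ref{eqn:condition_intensite}) yields integrability---then allows one to pass to the limit in the expectation of the martingale identity for $M^k$ to obtain the desired property for $M^\infty$.

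\emph{Uniqueness under Condition~(\ref{eqn:condition_infty_SLFV}).} The natural approach is duality. Under this stronger condition, Section~\ref{sec:3} guarantees that the $\infty$-parent ancestral process $(\Xi_t^\infty)_{t\geq 0}$ is well-defined, and Section~\ref{sec:4} establishes, via a generator intertwining computation, a duality relation analogous to Proposition~\ref{prop:dualite}: for any solution $(\tilde M_t)_{t \geq 0}$ of the martingale problem $(\lcal_\mu^\infty, \delta_{M^0})$,
\[
\int_{\mcal_p(\rd)}\esp[D(\tilde M_t, \xi) \mid \tilde M_0 = M^0]\,\mu_\Psi(d\xi) = \egras[D(M^0, \Xi_t^\infty) \mid \Xi_0^\infty \sim \mu_\Psi].
\]
The right-hand side depends only on $M^0$ and $\mu$, so this identity fixes the values of $\esp_{M^0}[\int \Psi(x_1,\ldots,x_l)\prod_{j=1}^l \omega_{\tilde M_t}(x_j)\,dx_1 \cdots dx_l]$ as $l \geq 1$ and density functions $\Psi$ on $(\rd)^l$ vary. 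A standard monotone class argument then shows that this family of functionals is measure-determining on $\mcal_\lambda$, so the one-dimensional marginals of any solution coincide. By classical martingale problem theory (Ethier--Kurtz), uniqueness of one-dimensional distributions for a martingale problem implies uniqueness of the whole law on $D_{\mcal_\lambda}[0,+\infty)$, which concludes.

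\emph{Main obstacle.} The delicate point is the duality step. Unlike $\lcal_\mu^k$, the operator $\lcal_\mu^\infty$ involves the discontinuous indicator $\delta_0\!\left(\int_\bxr(1-\omega_M)\,dz\right)$, so the intertwining identity $\lcal_\mu^\infty D(\cdot, \Xi)(M) = \gcal_\mu^\infty D(M, \cdot)(\Xi)$ must be derived with care, keeping track of the ``fully ghost ball'' event and of the non-uniqueness of $\omega_M$. Condition~(\ref{eqn:condition_infty_SLFV}) enters precisely here: it controls the growth of $(\Xi_t^\infty)_{t\geq 0}$ well enough that both sides of the intertwining identity are integrable and the duality argument closes. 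Without this stronger condition the dual side may blow up, which is why existence survives under Condition~(\ref{eqn:condition_intensite}) alone, but uniqueness does not.
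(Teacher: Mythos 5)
Your existence step is essentially the paper's own argument (Proposition~\ref{prop:char_infty_slfv} and Lemma~\ref{lem:cvg_pb_martingale}): pass to the limit in the martingale identity for the coupled $k$-parent SLFVs, using the convergence of $\bigl(V_{\rcal}^{-1}\int_{\bxr}\omega\bigr)^{k}$ to $\delta_{0}\bigl(\int_{\bxr}(1-\omega)\bigr)$, the monotonicity of the coupling, and dominated convergence; the only point you gloss over (handled in the paper by a three-term decomposition, since the density itself depends on $k$) is minor. The genuine gap is in the uniqueness step: the duality relation you invoke is the $k$-parent one of Proposition~\ref{prop:dualite} transplanted verbatim, with dual initial condition $\Xi_{0}^{\infty}\sim\mu_{\Psi}$ and duality function $D(M,\Xi)=\prod_{i}\omega_{M}(x_{i})$. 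This cannot be formulated for the $\infty$-parent ancestral process: by Definition~\ref{defn:infty_parents_ancestors} that process takes values in $\mcal^{cf}$ (measures $m(E)$ with $E$ a finite union of sets of positive Lebesgue measure), it cannot be started from a finite point measure, and $D(M^{0},\Xi_{t}^{\infty})$ as a product over atoms is meaningless when $\Xi_{t}^{\infty}=m(E_{t}^{\infty})$. Indeed, no finite-point-measure dual exists in the limit (after one event the set of potential ancestors fills a whole ball), which is precisely why the paper introduces the set-valued dual and the duality function $\widetilde{D}(M,m(E))=\delta_{0}\bigl(\int_{E}(1-\omega_{M}(x))\,dx\bigr)$ of Proposition~\ref{prop:dualite_kinf}; knowing $\widetilde{D}(M,m(E))$ for all $E\in\mathcal{E}^{cf}$ determines the $\{0,1\}$-valued density up to a null set, which replaces your monotone-class step.

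A second, related omission: even with the correct duality function, the functions $\Psi_{F^{\mathrm{Vol}(E)},f^{E}}$ and $\Phi_{\delta_{0},f}$ are not in the stated domains of $\lcal_{\mu}^{\infty}$ and $\gcal_{\mu}^{\infty}$ (which require $F\in C^{1}$ or $C_{b}^{1}$ and, on the forward side, $f\in C_{c}(\rd)$), so the ``generator intertwining computation'' cannot be run directly. The bulk of the paper's Section~\ref{sec:4} consists of extending both martingale problems to these indicator-type test functions by monotone approximation (Lemmas~\ref{lem:extension_pb_forwards_linf} and~\ref{lem:extension_pb_backwards_ginf}), and only then applying the Ethier--Kurtz Lemma~4.4.10 scheme and checking $\lcal_{\mu}^{\infty}\Psi_{F^{\mathrm{Vol}(E)},f^{E}}=\gcal_{\mu}^{\infty}\Phi_{\delta_{0},1-\omega}$ on the relevant configurations. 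Your localization of where Condition~(\ref{eqn:condition_infty_SLFV}) enters is broadly right in spirit, but its precise role is to make the $\infty$-parent ancestral process well-defined (non-explosive, via the covering/branching bound of Section~\ref{subsec:3.1}) so that the dual side of the argument exists at all.
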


The proof of Theorem~\ref{thm:characterization_infty_SLFV} will be carried out in Sections~\ref{sec:3} and~\ref{sec:4}, and comprises three steps. First, in Section~\ref{subsec:2.2}, we show that the martingale problem $(\lcal_{µ}^{\infty},\delta_{M^{0}})$ admits at least one solution: the $\infty$-parent SLFV constructed in Theorem~\ref{thm:first_construction_infty_slfv}. 
Then, in Section~\ref{sec:3}, we construct a candidate for the dual process associated to the $\infty$-parent SLFV (see Definition~\ref{defn:infty_parents_ancestors}), and in Section~\ref{subsec:4.3}, we establish the corresponding duality relation (which is stated in Proposition~\ref{prop:dualite_kinf}). 

Theorems \ref{thm:characterization_infty_SLFV} and \ref{thm:first_construction_infty_slfv} give us two distinct characterizations of the $\infty$-parent SLFV. If $µ$ satisfies Condition~(\ref{eqn:condition_infty_SLFV}) and if the area occupied by real individuals has finite Lebesgue measure, then we can provide a third construction of the $\infty$-parent SLFV, which is closer to stochastic growth models, and which can be considered as self-dual. This third construction is introduced in Proposition~\ref{prop:stochastic_growth_model}. 

\subsection{Dual of the $\infty$-parent SLFV}\label{subsec:1.3}
As for the $k$-parent SLFV, the $\infty$-parent SLFV also has a dual process of potential ancestors, that we now introduce.

\paragraph{Definition}
Let $\mathcal{E}^{c}$ be the set of Lebesgue measurable and connected subsets of $\rd$ whose Lebesgue measure is finite and strictly positive. Let $\mathcal{E}^{cf}$ be the set of all finite unions of elements of $\mathcal{E}^{c}$. If $E \in \mathcal{E}^{cf}$ can be written as $E = \cup_{i = 1}^{l} E^{i}$ where for all $1 \leq i \leq l$, $E^{i} \in \mathcal{E}^{c}$, we let $m(E) = m(E^{1},...,E^{l})$ be the measure on $\rd$ defined by $m(E)(dx) := \mathds{1}_{x \in E} dx$, and we set :
\begin{equation*}
\mcal^{cf} := \{m(E) : E \in \mathcal{E}^{cf}\}. 
\end{equation*}

\begin{defn}[$\infty$-parent ancestral process]\label{defn:infty_parents_ancestors}  
Let $µ$ be a $\sigma$-finite measure on $(0,+\infty)$ satisfying Condition (\ref{eqn:condition_infty_SLFV}). Let $\overleftarrow{\Pi}$ be a Poisson point process on $\mathbb{R}_{+} \times \rd \times (0,+\infty)$ with intensity $ dt \otimes dx \otimes µ(d\rcal)$, defined on the probability space $(\boldsymbol{\Omega}, \boldsymbol{\mathcal{F}}, \boldsymbol{P})$. \\
Let $\Xi^{0} \in \mathcal{M}^{cf}$. Then, the $\mcal^{cf}$-valued $\infty$-parent ancestral process $(\Xi_{t}^{\infty})_{t \geq 0}$ with initial condition $\Xi^{0}$ and associated to $µ$ (or equivalently to $\overleftarrow{\Pi}$) is defined in the following way.
\begin{itemize}
\item First, we set $\Xi_{0}^{\infty} = \Xi^{0}$. 
\item Then, if for all $t \geq 0$, we write $\Xi_{t}^{\infty}$ as
\begin{equation*}
\Xi_{t}^{\infty} = m(E_{t}^{\infty}),
\end{equation*}
and for all $(t,x, \rcal) \in \overleftarrow{\Pi}$, if $E_{t-}^{\infty} \cap \bxr$ has a non zero Lebesgue measure,
\begin{equation*}
\Xi_{t}^{\infty} = m(E_{t-}^{\infty} \cup \bxr), 
\end{equation*}
while nothing happens if $\mathrm{Vol}(E_{t-}^{\infty} \cap \bxr) = 0$. 
\end{itemize}
Moreover, this process is Markovian. 
\end{defn}

In Section~\ref{sec:3}, we will show that the jump rate of this process is finite. Therefore, this definition gives us an explicit construction of the $\infty$-parent ancestral process. 

Informally, the process evolves as follows. Initially, the area covered by the $\infty$-parent ancestral process is $E_{0}^{\infty}$. Then, whenever a new reproduction event intersects the covered area, it is added to the process. 

\begin{rem}
Note that the case $E_{t-}^{\infty} \cap \bxr = \bxr$ is equivalent to $E_{t-}^{\infty} \cup \bxr = E_{t-}^{\infty}$, and hence does not correspond to a visible jump of $(\Xi_{t}^{\infty})_{t \geq 0}$. 
\end{rem}

In Section~\ref{subsec:convergence_dual_process} (see Proposition~\ref{prop:cvg_dual}), we will explain to what extent this process can be considered as the limit of the $k$-parent ancestral process when $k \to + \infty$, even though both ancestral processes are defined on different state spaces. 

\begin{rem}
Like the $\infty$-parent SLFV, the $\infty$-parent ancestral process can also be seen as taking its values in the equivalence class of measurable subsets of $\mathbb{R}^{d}$. However, in order to keep the analogy with the $k$-parent ancestral process, we consider it as a measure-valued process. 
\end{rem}

\paragraph{A third characterization of the $\infty$-parent SLFV}
The definition of the $\infty$-parent ancestral process is very close to the informal definition of the $\infty$-parent SLFV, the main difference (apart from the state space) being the initial condition. And indeed, 
if the area initially occupied by real individuals in the $\infty$-parent SLFV has finite Lebesgue measure, then the $\infty$-parent SLFV follows the same dynamics as the $\infty$-parent ancestral process, in the following sense. 

\begin{prop}\label{prop:stochastic_growth_model}
Let $E^{0} \in \mathcal{E}^{cf}$, and let $µ$ be a $\sigma$-finite measure on $(0,+\infty)$ satisfying Condition~(\ref{eqn:condition_infty_SLFV}). Let $(\Xi_{t}^{\infty})_{t \geq 0} = (m(E_{t}^{\infty}))_{t \geq 0}$ be the $\infty$-parent ancestral process with initial condition $m(E^{0})$ and associated to $µ$. For all $t \geq 0$, we set
\begin{equation*}
\widehat{\omega}^{\infty}_{t} = 1 - \mathds{1}_{E^{\infty}_{t}}, 
\end{equation*}
and let $(\widehat{M}^{\infty}_{t})_{t \geq 0}$ be the $\mcal_{\lambda}$-valued process with density $(\widehat{\omega}^{\infty}_{t})_{t \geq 0}$. Then, $(\widehat{M}^{\infty}_{t})_{t \geq 0}$ is solution to the martingale problem $(\lcal_{µ}^{\infty}, \delta_{M_{0}})$. 
\end{prop}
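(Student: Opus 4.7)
The plan is to verify directly that the dynamics of $(\widehat{M}_t^\infty)_{t \geq 0}$ matches the generator $\lcal_{\mu}^{\infty}$ by reading off its jumps from the driving Poisson point process $\overleftarrow{\Pi}$ and then compensating.

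First I would translate the update rule of $(\Xi_t^\infty)$ into the dual variable $\widehat{\omega}_t^\infty = 1 - \mathds{1}_{E_t^\infty}$. A jump of $(\Xi_t^\infty)$ occurs at a point $(s,x,\rcal) \in \overleftarrow{\Pi}$ precisely when $\mathrm{Vol}(E_{s-}^{\infty} \cap \bxr) > 0$, and replaces $E_{s-}^{\infty}$ by $E_{s-}^{\infty}\cup\bxr$. Since
\begin{equation*}
1 - \mathds{1}_{E_{s-}^{\infty} \cup \bxr} \;=\; \mathds{1}_{\bxr^{c}}\,(1-\mathds{1}_{E_{s-}^{\infty}}) \;=\; \Theta_{x,\rcal}^{-}(\widehat{\omega}_{s-}^{\infty})
\end{equation*}
a.e., each jump of $(\widehat{M}_t^\infty)$ is exactly an application of $\Theta_{x,\rcal}^{-}$, triggered by the indicator $\mathds{1}\{\mathrm{Vol}(E_{s-}^{\infty}\cap\bxr)>0\} = 1 - \delta_{0}\bigl(\int_{\bxr}(1-\widehat{\omega}_{s-}^{\infty}(z))\,dz\bigr)$, which is precisely the weight appearing in $\lcal_{µ}^{\infty}$.

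Next, fixing $F \in C^1(\mathbb{R})$ and $f \in C_c(\rd)$, I would write $\Psi_{F,f}(\widehat{M}_{t}^{\infty}) - \Psi_{F,f}(\widehat{M}_{0}^{\infty})$ as the sum over atoms of $\overleftarrow{\Pi}$ of the associated jumps, i.e.
\begin{equation*}
\Psi_{F,f}(\widehat{M}_{t}^{\infty}) - \Psi_{F,f}(\widehat{M}_{0}^{\infty}) = \int_{0}^{t}\!\!\int_{\rd}\!\!\int_{0}^{\infty}\!\! \mathds{1}\{\mathrm{Vol}(E_{s-}^{\infty}\cap\bxr)>0\} \bigl[F(\langle\Theta_{x,\rcal}^{-}\widehat{\omega}_{s-}^{\infty},f\rangle) - F(\langle\widehat{\omega}_{s-}^{\infty},f\rangle)\bigr]\,\overleftarrow{\Pi}(ds,dx,d\rcal).
\end{equation*}
Here I would use that the integrand vanishes for $x \notin Supp^{\rcal}(f)$ (since then $\Theta_{x,\rcal}^{-}(\widehat{\omega})$ agrees with $\widehat{\omega}$ on $Supp(f)$), so the $x$-integral is effectively restricted to $Supp^{\rcal}(f)$.

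To pass to the compensated form, I need integrability of the intensity measure. Writing $L(s) := \mathrm{Vol}(E_{s}^{\infty})$, I would bound the instantaneous rate at which $\widehat{M}^\infty$ jumps by
\begin{equation*}
\int_{0}^{\infty}\mathrm{Vol}\bigl(E_{s}^{\infty} \oplus B(0,\rcal)\bigr)\,µ(d\rcal) \;\leq\; C\,(L(s) + 1)\int_{0}^{\infty}(1+\rcal^{d})\,µ(d\rcal),
\end{equation*}
where the bound on the Minkowski sum uses a covering-type argument of the kind that underlies Condition~(\ref{eqn:condition_infty_SLFV}). Since $E^{0} \in \mathcal{E}^{cf}$ has finite Lebesgue measure and the growth of $L(s)$ is controlled by the same argument that shows the $\infty$-parent ancestral process is well defined in Section~\ref{sec:3}, this rate is a.s.\ locally integrable in~$s$. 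Mean value theorem bounds on $F$ together with $\|f\|_{L^1}<\infty$ then yield absolute integrability of the integrand, allowing the compensation formula for the Poisson measure $\overleftarrow{\Pi}$ to produce
\begin{equation*}
\Psi_{F,f}(\widehat{M}_{t}^{\infty}) - \Psi_{F,f}(\widehat{M}_{0}^{\infty}) - \int_{0}^{t}\lcal_{µ}^{\infty}\Psi_{F,f}(\widehat{M}_{s}^{\infty})\,ds
\end{equation*}
as a martingale, as required.

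The main obstacle is precisely this integrability bookkeeping: while morally the matching of the generators is immediate, justifying the use of the compensation formula requires controlling the growth of $E_{s}^{\infty}$ uniformly enough that the relevant intensities are locally integrable. This is exactly the role of Condition~(\ref{eqn:condition_infty_SLFV}) and the finite-measure initial condition $E^0 \in \mathcal{E}^{cf}$, and would rely on the finite-jump-rate estimates developed in Section~\ref{sec:3} for the $\infty$-parent ancestral process.
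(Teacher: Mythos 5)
Your argument is correct in substance, but it takes a genuinely different route from the paper. You verify the martingale property directly: you read off each jump of $(\widehat{M}^{\infty}_{t})_{t\geq 0}$ from the driving Poisson point process, identify it with an application of $\Theta_{x,\rcal}^{-}$ weighted by the indicator $1-\delta_{0}\bigl(\int_{\bxr}(1-\widehat{\omega}^{\infty}_{s-}(z))\,dz\bigr)$, and then compensate. The paper instead uses a transfer argument: setting $\widetilde{F}_{f}(x)=F\bigl(\int f(z)\,dz-x\bigr)$ (extended to lie in $C^{1}_{b}(\mathbb{R})$), it checks the algebraic identities $\Psi_{F,f}(\widehat{M}^{\infty}_{t})=\Phi_{\widetilde{F}_{f},f}(\Xi^{\infty}_{t})$ and $\lcal_{µ}^{\infty}\Psi_{F,f}(\widehat{M}^{\infty}_{s})=\gcal_{µ}^{\infty}\Phi_{\widetilde{F}_{f},f}(\Xi^{\infty}_{s})$, and then simply invokes Proposition~\ref{prop:backwards_kinf}, which already asserts the martingale property for the dual generator. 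The paper's route is shorter because all the Poisson bookkeeping is absorbed into the (already established) martingale problem for $\gcal_{µ}^{\infty}$; your route is more self-contained but essentially re-proves Proposition~\ref{prop:backwards_kinf} in the forward variables.

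One step of your integrability discussion is shakier than it needs to be: the bound $\mathrm{Vol}(E^{\infty}_{s}\oplus B(0,\rcal))\leq C\,(L(s)+1)(1+\rcal^{d})$ with a universal constant is false for general sets of finite measure (a set of small, even zero, volume can have an $\rcal$-neighbourhood of arbitrarily large volume), and the actual control of the total jump rate in Section~\ref{subsec:3.1} goes through the border-covering process rather than such a Minkowski-sum estimate. Fortunately this is inessential for your compensation step: the integrand vanishes for $x\notin Supp^{\rcal}(f)$, so by Lemma~\ref{lem:appendix_A_2} and Eq.~(\ref{eqn:eqn_1}) the compensator is bounded deterministically by $t\,C_{1}C_{2}\,\|f\|_{\infty}\,C(F,f)\int_{0}^{\infty}\rcal^{d}µ(d\rcal)<+\infty$ under Condition~(\ref{eqn:condition_intensite}) alone, and the a.s.\ local finiteness of the number of jumps (needed for the telescoping identity) is exactly what Lemma~\ref{lem:border_covering_bounded} provides. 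With that correction, your proof goes through.
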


By Theorem~\ref{thm:characterization_infty_SLFV}, this means that $(\widehat{M}^{\infty}_{t})_{t \geq 0}$ is then the $\infty$-parent SLFV with initial condition $1 - \mathds{1}_{E^{0}}$ and associated to $µ$. This provides a third characterization of the $\infty$-parent SLFV, which corresponds to its original informal definition and is close to models of continuous first-passage percolation. 

The proof of Proposition~\ref{prop:stochastic_growth_model} can be found at the end of Section~\ref{sec:3}, and uses the characterizations of the $\infty$-parent SLFV and its dual as the unique solutions to martingale problems. 

\paragraph{Duality relation}
For all $M \in \mcal_{\lambda}$ with density $\omega$ and for all $\Xi = m(E) \in \mcal^{cf}$, we set :
\begin{equation*}
\widetilde{D}(M, \Xi) := \delta_{0}\left( \int_{E}\left(1 - \omega(x)\right)dx\right).
\end{equation*}
If we know the value of $\widetilde{D}(M,\Xi)$ for all $\Xi \in \mcal^{cf}$, since $\omega$ is $\{0,1\}$-valued, we know the value of $\omega$ everywhere up to a Lebesgue null set, and so we have completely characterized $M$. Therefore, the following duality result shows that the solution to the martingale problem associated to $\lcal_{µ}^{\infty}$ is unique. 

\begin{prop}\label{prop:dualite_kinf}
Let $µ$ be a $\sigma$-finite measure on $(0,+\infty)$ satisfying Condition (\ref{eqn:condition_infty_SLFV}). 
Let $M^{0} \in \mcal_{\lambda}$, and let $(M_{t}^{\infty})_{t \geq 0}$ be a solution to the martingale problem associated to $(\lcal_{µ}^{\infty},\delta_{M^{0}})$. Then, for all $t \geq 0$ and for all $E^{0} \in \mcal^{cf}$, 
\begin{equation*}
\esp_{M^{0}}\left[
\widetilde{D}(M_{t}^{\infty},m(E^{0}))
\right] = \mathbf{E}_{m(E^{0})} \left[
\widetilde{D}(M^{0},\Xi_{t}^{\infty})\right],
\end{equation*}
where $(\Xi_{t}^{\infty})$ is the $\infty$-parent ancestral process of initial condition $m(E^{0})$ and associated to $µ$. Equivalently, for every $t \geq 0$, if $\omega_{t}^{\infty}$ and $\omega_{0}^{\infty}$ are $\{0,1\}$-valued densities of $M_{t}^{\infty}$ and $M^{0}$,
\begin{equation*}
\esp\left[
\delta_{0}\left(\int_{E^{0}}\left(1-\omega_{t}^{\infty}(x)\right)dx
\right)\right] = \mathbf{E}_{m(E^{0})}\left[
\delta_{0}\left(\int_{E_{t}^{\infty}}\left(1 - \omega_{0}^{\infty}(x)\right)dx
\right)\right].
\end{equation*}
\end{prop}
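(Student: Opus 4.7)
The plan is to prove Proposition~\ref{prop:dualite_kinf} by the classical generator-duality strategy. The central formal identity to establish is
\begin{equation*}
\lcal_{µ}^{\infty} \widetilde{D}(\cdot, m(E))(M) = \gcal_{µ}^{\infty} \widetilde{D}(M, \cdot)(m(E)) \quad \text{for every } M \in \mcal_{\lambda}, \; E \in \mathcal{E}^{cf},
\end{equation*}
where $\gcal_{µ}^{\infty}$ denotes the (bounded) jump generator of the $\infty$-parent ancestral process read off from Definition~\ref{defn:infty_parents_ancestors}. I verify this by a direct case analysis of the jump produced by a reproduction event $(x, \rcal)$. On the forward side, the jump is nonzero only when $\mathrm{Vol}(E \cap \bxr) > 0$ and $\widetilde{D}(M, m(E)) = 1$ (i.e.\ $\omega_{M} = 1$ a.e.\ on $E$); in that case $\widetilde{D}$ drops from $1$ to $0$ precisely when $\int_{\bxr\setminus E}(1 - \omega_{M}) > 0$, which coincides with $\int_{\bxr}(1-\omega_{M}) > 0$ since $\omega_{M} = 1$ a.e.\ on $E \cap \bxr$. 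On the backward side, $\widetilde{D}(M, m(E \cup \bxr)) - \widetilde{D}(M, m(E))$ is nonzero under the same conditions and takes the same value $-\widetilde{D}(M, m(E))$, so the two integrands match termwise.

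Since $\widetilde{D}(\cdot, m(E))$ is not in the natural domain of $\lcal_{µ}^{\infty}$ (the outer function $\delta_{0}$ is not $C^{1}$), I approximate it by a sequence $\Psi_{F_{n}, f_{n}}$ where $f_{n} \in C_{c}(\rd)$ converges pointwise to $\mathds{1}_{E}$ with supports contained in a fixed compact neighborhood of $E$, and $F_{n} \in C^{1}(\mathbb{R})$ is chosen so that $F_{n}(\langle \omega_{M}, f_{n} \rangle) \to \widetilde{D}(M, m(E))$ pointwise while $\Psi_{F_{n}, f_{n}}$ remains uniformly bounded by $1$; a convenient explicit choice is $F_{n}(y) = \exp(-n(\mathrm{Vol}(E) - y))$. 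Writing the martingale identity for $\Psi_{F_{n}, f_{n}}$, taking expectations, and passing to the limit by dominated convergence (uniform control on the integrand defining $\lcal_{µ}^{\infty} \Psi_{F_{n}, f_{n}}$ follows from Condition~(\ref{eqn:condition_intensite}), since the $x$-integration is confined to $Supp^{\rcal}(f_{n})$, itself contained in a fixed $\rcal$-thickening of a compact set), I obtain
\begin{equation*}
\esp_{M^{0}}\bigl[\widetilde{D}(M_{t}^{\infty}, m(E))\bigr] = \widetilde{D}(M^{0}, m(E)) + \int_{0}^{t} \esp_{M^{0}}\bigl[\lcal_{µ}^{\infty} \widetilde{D}(\cdot, m(E))(M_{s}^{\infty})\bigr]\, ds.
\end{equation*}
Applying the generator matching identity and Fubini's theorem (the ancestral jump rate at $m(E)$ is finite thanks to Condition~(\ref{eqn:condition_intensite}) and the boundedness of $E$), the right-hand side becomes $u(0, m(E)) + \int_{0}^{t} \gcal_{µ}^{\infty} u(s, \cdot)(m(E))\, ds$ with $u(s, \Xi) := \esp_{M^{0}}[\widetilde{D}(M_{s}^{\infty}, \Xi)]$.

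The conclusion is a clean martingale argument on the ancestral side. The integral equation just derived is equivalent to $\partial_{r} u(r, m(E)) = \gcal_{µ}^{\infty} u(r, \cdot)(m(E))$, so letting $h(s, \Xi) := u(t-s, \Xi)$ act on the space-time process $(s, \Xi_{s}^{\infty})$ gives $(\partial_{s} + \gcal_{µ}^{\infty}) h \equiv 0$. By Dynkin's formula applied to the pure jump Markov process $\Xi^{\infty}$, whose jump rate is finite at every state (established in Section~\ref{sec:3}), the bounded process $Z_{s} := u(t-s, \Xi_{s}^{\infty})$ is a martingale on $[0, t]$; equating $\esp[Z_{0}]$ with $\esp[Z_{t}]$ gives
\begin{equation*}
\esp_{M^{0}}\bigl[\widetilde{D}(M_{t}^{\infty}, m(E^{0}))\bigr] = u(t, m(E^{0})) = \mathbf{E}_{m(E^{0})}\bigl[u(0, \Xi_{t}^{\infty})\bigr] = \mathbf{E}_{m(E^{0})}\bigl[\widetilde{D}(M^{0}, \Xi_{t}^{\infty})\bigr],
\end{equation*}
which is the stated duality. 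The hard part is the approximation step: producing a uniform $L^{1}(dx\, µ(d\rcal))$-bound on $\lcal_{µ}^{\infty} \Psi_{F_{n}, f_{n}}$ that both dominates for Lebesgue's theorem and is sharp enough that the limit coincides pointwise with the formal expression for $\lcal_{µ}^{\infty} \widetilde{D}(\cdot, m(E))$. The factor $1 - \delta_{0}(\int_{\bxr}(1-\omega_{M}))$ in $\lcal_{µ}^{\infty}$, which confines contributions to reproduction events that genuinely modify $M$, is what makes this domination possible.
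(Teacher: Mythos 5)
Your proposal is correct in outline, and its two load-bearing ingredients are the same as in the paper: an approximation argument that extends the forward martingale problem to the non-smooth duality function $\widetilde{D}(\cdot,m(E))$ (this is exactly the content of Lemma~\ref{lem:extension_pb_forwards_linf}, proved there via approximating sequences $(F_n^{\mathrm{Vol}(E)})$, $(f_n^E)$ subject to hypotheses (A)--(G) and the bounds of Lemma~\ref{lem:bound_linfty}), and the pointwise generator-matching computation, which in the paper appears at the end of the proof of Proposition~\ref{prop:dualite_kinf} in the form $\lcal_{\mu}^{\infty}\Psi_{F^{\mathrm{Vol}(E_{t-u})},f^{E_{t-u}}}(M_u^{\infty})=\gcal_{\mu}^{\infty}\Phi_{\delta_0,1-\omega_u}(\Xi_{t-u}^{\infty})$, using the same factorization $\delta_0(\int_{E\cup\bxr}(1-\omega))=\delta_0(\int_{E}(1-\omega))\,\delta_0(\int_{\bxr}(1-\omega))$ that underlies your case analysis. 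Where you genuinely diverge is the concluding step: the paper symmetrizes, proving a separate extension of the dual martingale problem to $\Phi_{\delta_0,f}$ (Lemma~\ref{lem:extension_pb_backwards_ginf}) and then running the two-variable function $F(s,t)=\esp_{M^0}[\mathbf{E}_{m(E^0)}[\widetilde{D}(M_s^{\infty},\Xi_t^{\infty})]]$ through Lemma~4.4.10 of Ethier--Kurtz, whereas you work one-sidedly: you derive the Kolmogorov-type integral equation for $u(s,\Xi)=\esp_{M^0}[\widetilde{D}(M_s^{\infty},\Xi)]$ and then apply Dynkin's formula to $Z_s=u(t-s,\Xi_s^{\infty})$ along the dual. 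Your route saves the explicit backward extension lemma by exploiting the pure-jump structure of $\Xi^{\infty}$, and it is a legitimate, arguably leaner path; the price is that the Dynkin step for a time-dependent function that is merely bounded and measurable in the state, and known in $s$ only through an integral equation, is precisely the kind of statement the paper justifies by its approximation machinery, so a complete writeup would still have to supply the integrability input $\mathbf{E}[\int_0^t\int_0^{\infty}\mathrm{Vol}(S^{\rcal}(E_s^{\infty}))\,\mu(d\rcal)\,ds]<\infty$ coming from the branching bound of Lemma~\ref{lem:border_covering_bounded} under Condition~(\ref{eqn:condition_infty_SLFV}) -- not from Condition~(\ref{eqn:condition_intensite}) alone, as your parenthetical suggests.

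Two smaller points on the approximation step you flag as the hard part. First, with $F_n(y)=\exp(-n(\mathrm{Vol}(E)-y))$ the claim that $\Psi_{F_n,f_n}$ is uniformly bounded by $1$ fails: whenever $\langle\omega_M,f_n\rangle>\mathrm{Vol}(E)$ the value exceeds $1$, and it stays bounded only if you impose a quantitative shrinking of the overshoot, e.g. $f_n\geq\mathds{1}_E$ together with $\mathrm{Vol}(\mathrm{Supp}(f_n)\setminus E)\leq n^{-1}$ as in the paper's hypotheses (D) and (F); ``supports contained in a fixed compact neighborhood'' is not enough either for the uniform bound or for the pointwise convergence $F_n(\langle\omega_M,f_n\rangle)\to\widetilde{D}(M,m(E))$ in the case where $\omega_M=1$ a.e.\ on $E$. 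Second, the passage to the limit inside the operator, $\lcal_{\mu}^{\infty}\Psi_{F_n,f_n}(M)\to\lcal_{\mu}^{\infty}\widetilde{D}(\cdot,m(E))(M)$, needs its own dominated-convergence argument (the analogue of Lemma~\ref{lem:cvg_linf}); your sketch asserts it but a full proof should spell it out. These are fixable details rather than gaps in the strategy.
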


In particular, this result implies that if the area initially occupied by real individuals in the $\infty$-parent SLFV has finite Lebesgue measure, then the process can be considered as self-dual. 
The proof of Proposition \ref{prop:dualite_kinf} can be found in Section~\ref{subsec:4.3}. 

\section{First characterization of the $\infty$-parent SLFV}\label{sec:2}
In this section, we provide the rigorous construction of the process introduced in Section~\ref{subsec:1.2}. 

\subsection{Alternative construction of the k-parent SLFV}\label{subsec:2.1}
In order to construct the $\infty$-parent SLFV rigorously, we start by introducing an alternative construction of the $k$-parent SLFV, based on a variant of its dual. It relies on the sampling of parental locations \textit{along with} reproduction events, and is an adaptation of the concept of \textit{parental skeleton} presented in Section 2.3.1 of \cite{veber2015spatial}. This construction allows to couple $k$-parent SLFVs for different values of $k$, and is therefore also of interest in its own right. 

In all that follows, let $µ$ be a $\sigma$-finite measure on $(0,+\infty)$ satisfying Condition (\ref{eqn:condition_intensite}).
Let $U = \bcal(0,1)^{\mathbb{N}}$, and let $\tilde{u}$ be the law of a sequence of i.i.d random variables $(\mathcal{P}_{n})_{n \geq 1}$ uniformly distributed over $\bcal(0,1)$. We will call an element of $U$ a \textit{sequence of potential parents}. 
Let us now extend the Poisson point process $\Pi$ considered earlier by adding to each event a countable sequence of locations of potential parents. Indeed, let $\Pi^{c}$ be a Poisson point process on $\mathbb{R} \times \rd \times (0,+\infty) \times U$ with intensity 
\begin{equation*} 
dt \otimes dx \otimes µ(d\rcal) \otimes \tilde{u}(d(p_{n})_{n \geq 1}).
\end{equation*}
Then for all $(t,x,\rcal,(p_{n})_{n \geq 1}) \in \Pi^{c}$,
\begin{itemize}
\item as before, $t$ can be interpreted as the time at which the reproduction event occurs, and we can see $\bxr$ as being the area affected by the reproduction event.
\item For all $n \geq 1$, $x + \rcal \times p_{n}$
is uniformly distributed over the ball $\bxr$, and can be interpreted as the location of the $n$-th potential parent sampled, if at least $n$ potential parents have to be sampled.\end{itemize}

We start by defining the variant of the $k$-parent ancestral process, on which the alternative construction of the $k$-parent SLFV is based. 

\begin{defn}[Quenched $k$-parent ancestral process] \label{defn:quenched_k_dual}
Let $k \geq 2$, let $\Xi^{0} \in M_{p}(\rd)$, and let $\tilde{t} \geq 0$. The $k$-parent ancestral process $(\Xi_{k,t}^{\Pi^{c},\tilde{t},\Xi^{0}})_{t \geq 0}$ associated to $\Pi^{c}$, started at time $\tilde{t}$ and with initial condition $\Xi^{0}$ is the $\mcal_{p}(\rd)$-valued Markov jump process defined as follows. 
\begin{itemize}
\item First, we set $\Xi_{k,0}^{\Pi^{c},\tilde{t},\Xi^{0}} = \Xi^{0}$. 
\item Then, for all $(t,x,\rcal,(p_{n})_{n \geq 1}) \in \Pi^{c}$ such that $t \leq \tilde{t}$, recalling that for $\Xi = \sum_{i = 1}^{l} \delta_{\xi_{i}} \in \mcal_{p}(\rd)$, $I_{x,\rcal}(\Xi) = \{i \in \llbracket 1,l \rrbracket : ||x - \xi_{i}|| \leq \rcal\}$, if 
\begin{equation*}
I_{x,\rcal}(\Xi_{k,(\tilde{t}-t)_{-}}^{\Pi^{c},\tilde{t},\Xi^{0}}) \neq \emptyset,
\end{equation*}
then for all $1 \leq l \leq k$, we set
\begin{align*}
y_{l} &:= x + \rcal  p_{l}\\
\text{and \quad\quad   }
\Xi_{k,\tilde{t}-t}^{\Pi^{c},\tilde{t},\Xi^{0}} 
&:= \Xi_{k,(\tilde{t}-t)-}^{\Pi^{c},\tilde{t},\Xi^{0}} 
- \sum_{x' \in I_{x,\rcal}(\Xi_{k,(\tilde{t}-t)-}^{\Pi^{c},\tilde{t},\Xi^{0}})} \delta_{x'}
+ \sum_{l = 1}^{k} \delta_{y_{l}}.
\end{align*}
\end{itemize}
\end{defn}

It is straightforward to check that this process has the same distribution as the $k$-parent ancestral process associated to $µ$ and with initial condition $\Xi^{0}$. Its interest is twofold. First, conditionally on $\Pi^{c}$, $(\Xi_{k,t}^{\Pi^{c},\tilde{t},\Xi^{0}})_{t \geq 0}$ is deterministic. Moreover, if for all $\Xi = \sum_{i = 1}^{l} \delta_{x_{i}} \in \mcal_{p}(\rd)$, we denote the set of atoms of $\Xi$ by
\begin{equation*}
A(\Xi) := \{x_{i} : i \in \llbracket 1,l \rrbracket\},
\end{equation*}
then the process satisfies the following property, which will be useful in the coupling that we will introduce later.

\begin{lem}\label{lem:couplage_k_k'}
Let $2 \leq k \leq k'$, let $\Xi^{0} \in \mcal_{p}(\rd)$, let $\tilde{t} \geq 0$, and let $\Pi^{c}$ be a Poisson point process on $\mathbb{R} \times \rd \times (0,+\infty) \times U$ with intensity $dt \otimes dx \otimes µ(d\rcal) \otimes \tilde{u}(d(p_{n})_{n \geq 1})$. 

Then, for all $t \geq 0$,
\begin{align*}
A(\Xi_{k,t}^{\Pi^{c},\tilde{t},\Xi^{0}}) &\subseteq A(\Xi_{k',t}^{\Pi^{c},\tilde{t},\Xi^{0}}). \\
\intertext{In particular, for all $t \geq 0$ and $x \in \rd$,}
A(\Xi_{k,t}^{\Pi^{c},\tilde{t},\delta_{x}}) &\subseteq A(\Xi_{k',t}^{\Pi^{c},\tilde{t},\delta_{x}}).
\end{align*}
\end{lem}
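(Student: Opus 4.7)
The plan is to exploit that both quenched processes are driven by the same Poisson point process $\Pi^{c}$ and, in particular, share the same parental skeleton $(p_n)_{n \geq 1}$ attached to each event. Since $k \leq k'$, the first $k$ potential parent locations used by $(\Xi_{k,t}^{\Pi^{c},\tilde{t},\Xi^{0}})_{t \geq 0}$ at an event $(t,x,\rcal,(p_n)) \in \Pi^{c}$ are precisely the first $k$ entries of the list $(x+\rcal p_1, x+\rcal p_2, \ldots)$ also used by $(\Xi_{k',t}^{\Pi^{c},\tilde{t},\Xi^{0}})_{t \geq 0}$. Consequently, whenever both processes jump simultaneously, the set of new atoms added to the $k$-parent process is automatically contained in the one added to the $k'$-parent process, and the only subtlety is checking that the inclusion survives the asymmetric case in which one process jumps but the other does not.

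To make this rigorous I would proceed by induction on the jump times of the pair of processes. Although $\Pi^{c}$ has almost surely infinitely many atoms in $[0,\tilde{t}]$, each ancestral process is stochastically dominated by a Yule process with $k$ (resp.\ $k'$) children and individual branching rate $\int_{0}^{+\infty} V_{\rcal}\mu(d\rcal) < +\infty$, as recalled in Section~\ref{subsec:1.1}. Therefore, on any bounded process-time interval only finitely many events of $\Pi^{c}$ actually move either process, and they can be enumerated in increasing process time as $0 < s_1 < s_2 < \cdots$. The base case is immediate since $A(\Xi_{k,0}^{\Pi^{c},\tilde{t},\Xi^{0}}) = A(\Xi_{k',0}^{\Pi^{c},\tilde{t},\Xi^{0}}) = A(\Xi^{0})$.

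For the inductive step, assume $A(\Xi_{k,s_j-}^{\Pi^{c},\tilde{t},\Xi^{0}}) \subseteq A(\Xi_{k',s_j-}^{\Pi^{c},\tilde{t},\Xi^{0}})$, and let $(t,x,\rcal,(p_n)) \in \Pi^{c}$ be the event that produces the jump at process time $s_j = \tilde{t} - t$. If $I_{x,\rcal}(\Xi_{k,s_j-}^{\Pi^{c},\tilde{t},\Xi^{0}}) \neq \emptyset$, then by the induction hypothesis $I_{x,\rcal}(\Xi_{k',s_j-}^{\Pi^{c},\tilde{t},\Xi^{0}}) \neq \emptyset$ as well, so both processes jump; each erases its atoms lying in $\bxr$ and adds the new atoms $\{x+\rcal p_1,\ldots,x+\rcal p_k\}$ (resp.\ $\{x+\rcal p_1,\ldots,x+\rcal p_{k'}\}$), which preserves the inclusion because $k \leq k'$. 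If instead $I_{x,\rcal}(\Xi_{k,s_j-}^{\Pi^{c},\tilde{t},\Xi^{0}}) = \emptyset$, the $k$-parent process does not move, while the $k'$-parent one may still jump; but then $A(\Xi_{k,s_j-}^{\Pi^{c},\tilde{t},\Xi^{0}})$ is already disjoint from $\bxr$, so it remains included in $\bigl(A(\Xi_{k',s_j-}^{\Pi^{c},\tilde{t},\Xi^{0}}) \setminus \bxr \bigr) \cup \{x+\rcal p_1,\ldots,x+\rcal p_{k'}\} \subseteq A(\Xi_{k',s_j}^{\Pi^{c},\tilde{t},\Xi^{0}})$. Between consecutive $s_j$ neither set of atoms changes, so the inclusion propagates to every $t \geq 0$, and the particular case $\Xi^{0} = \delta_{x}$ follows as an immediate specialization.

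The main potential obstacle is essentially a bookkeeping one: verifying that the relevant jump times admit a well-defined increasing enumeration on every bounded process-time interval (ensured by the Yule domination), and that the case analysis at each $s_j$ is exhaustive. Once both points are settled, the containment property is a direct consequence of the observation that truncating the common parental skeleton at index $k$ rather than $k'$ selects a sub-list of candidate locations.
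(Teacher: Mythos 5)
Your proof is correct and follows exactly the idea the paper relies on: the paper itself gives no formal argument for this lemma, only the intuitive remark that both ancestral processes are driven by the same extended Poisson point process and that the first $k$ potential parents of each event are shared, so the $k$-parent process is embedded in the $k'$-parent one. Your induction on the (locally finite) union of jump times, including the careful treatment of the asymmetric case where only the $k'$-parent process jumps, is a sound and complete formalization of that sketch.
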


\begin{rem}
Since $A(\Xi)$ is a set, if there exists $i \neq j$ such that $x_{i} = x_{j}$, then $x_{i}$ appears only once in $A(\Xi)$. 
\end{rem}

Intuitively, the idea behind this lemma is the following. Since the coupled $k$-parent and $k'$-parent ancestral processes are based on the same extended Poisson point process of reproduction events, their evolutions are determined by the same reproduction events. Moreover, since $k' \geq k$, all the potential parents which are involved in the dynamics of the $k$-parent ancestral process are also potential parents for the $k'$-ancestral process. Therefore, we can consider that the $k$-parent ancestral process is embedded in the $k'$-parent ancestral process. 

We now introduce an alternative way of constructing the $k$-parent SLFV, by associating it to the extended Poisson point process $\Pi^{c}$.

\begin{defn}[Quenched $k$-parent SLFV]\label{defn:quenched_k_SLFV}
Let $k \geq 2$, and let $\omega : \rd \to \{0,1\}$ be a measurable function. The $k$-parent SLFV $(M_{k,t}^{\Pi^{c},\omega})_{t \geq 0}$ associated to $\Pi^{c}$ and of initial density $\omega$ is the $\mcal_{\lambda}$-valued Markov process defined as follows. 
\begin{itemize}
\item First, we set $\omega_{k,0}^{\Pi^{c},\omega} = \omega$.
\item Then, for all $t \geq 0$ and for all $x \in \rd$, we set
\begin{equation}\label{eqn:densite_quenched_k_SLFV}
\omega_{k,t}^{\Pi^{c},\omega}(x) := \prod_{y \in A(\Xi_{k,t}^{\Pi^{c},t,\delta_{x}})} \omega(y).
\end{equation}
\item We conclude by setting for all $t \geq 0$, 
\begin{equation*}
M_{k,t}^{\Pi^{c},\omega}
:= ((\omega_{k,t}^{\Pi^{c},\omega}(x) \delta_{0}(d\kappa)
+ (1-\omega_{k,t}^{\Pi^{c},\omega}(x)) \delta_{1}(d\kappa))dx.
\end{equation*}
$(\omega_{k,t}^{\Pi^{c},\omega})_{t \geq 0}$ will be called the density of the $k$-parent SLFV associated to $\Pi^{c}$ and of initial condition~$\omega$.
\end{itemize}
\end{defn}

Note that $\omega_{k,t}^{\Pi^{c},\omega}(x)$ in Eq. (\ref{eqn:densite_quenched_k_SLFV}) is thus equal to $1$ if and only if all potential ancestors at time $0$ of the individuals at $x$ at time $t$ are of type $0$, i.e are all ghosts.

We show below that this process corresponds to another way of constructing the $k$-parent SLFV using the parental skeleton, and in particular, that $(M_{k,t}^{\Pi^{c},\omega})_{t \geq 0} \in D_{M_{\lambda}}[0,+\infty)$. This alternative construction will allow us to couple SLFV processes with different numbers of potential parents, using the same Poisson process. However, even though it is possible to define the $k$-parent SLFV for an \textit{initial condition} $M \in \mcal_{\lambda}$ instead of an initial density $\omega$ of $M$, this coupling can only be used if all processes are constructed using the same initial density. 

\begin{proof}
In order for the process to have a chance to correspond to the $k$-parent SLFV, we first need to check that
\begin{equation*}
(M_{k,t}^{\Pi^{c},\omega})_{t \geq 0} \in D_{M_{\lambda}}[0,+\infty).
\end{equation*}

Let $t \geq 0$. Since $\omega$ is $\{0,1\}$-valued, by definition $\omega_{k,t}^{\Pi^{c},\omega}$ is $\{0,1\}$-valued. Moreover, the values taken by $\omega$ are changed over balls of the form $\bxr$ in order to compute $\omega_{k,t}^{\Pi^{c},\omega}$. Therefore, as $\omega$ is measurable, $\omega_{k,t}^{\Pi^{c},\omega}$ is measurable as well, and we obtain that for all $t \geq 0$, $M_{k,t}^{\Pi^{c},\omega} \in \mcal_{\lambda}$. 

We now show that the process is càdlàg. 
That is, we want to show that for all $t \geq 0$, 
\begin{enumerate}
\item $\lim\limits_{s \uparrow t} M_{k,s}^{\Pi^{c},\omega}$ exists,
\item $\lim\limits_{s \downarrow t} M_{k,s}^{\Pi^{c},\omega}$ exists and is equal to $M_{k,t}^{\Pi^{c},\omega}$. 
\end{enumerate}

Let $(f_{m})_{m \in \mathbb{N}}$ be a convergence determining class. It is then sufficient to show that for all $m \in \mathbb{N}$ and $n \in \mathbb{N} \backslash \{0\}$, we almost surely have that for all $t \in [0,n]$, 
\begin{enumerate}
\item $\lim\limits_{s \uparrow t} \langle \omega_{k,s}^{\Pi^{c},\omega},f_{m} \rangle$ exists,
\item $\lim\limits_{s \downarrow t} \langle \omega_{k,s}^{\Pi^{c},\omega},f_{m} \rangle$ exists and is equal to $\langle \omega_{k,t}^{\Pi^{c},\omega},f_{m} \rangle$.
\end{enumerate}
Therefore, let $m \in \mathbb{N}$ and $n \in \mathbb{N}\backslash \{0\}$. For all $t \in [0,n]$, we set 
\begin{align*}
T_{-}^{(m)}(t) &:=  \sup\left\{
t' < t : \exists \rcal > 0, \exists x \in \mathrm{Supp}^{\rcal}(f_{m}), \exists (p_{i})_{i \geq 1} \in U, (t',x,\rcal,(p_{i})_{i \geq 1}) \in \Pi^{c}
\right\} \\
\text{and } \quad T_{+}^{(m)}(t) &:= \inf \left\{
t' > t : \exists \rcal > 0, \exists x \in \mathrm{Supp}^{\rcal}(f_{m}), \exists (p_{n})_{n \geq 1} \in U, (t',x,\rcal,(p_{i})_{i \geq 1}) \in \Pi^{c}
\right\}. 
\end{align*}
Observe that if $T_{-}^{(m)}(t) < t$ and $T_{+}^{(m)}(t) > t$, then 
\begin{align*}
\lim\limits_{s \uparrow t} \langle \omega_{k,s}^{\Pi^{c},\omega},f_{m} \rangle &= \langle \omega_{k,\max(0,T_{-}^{(m)}(t))}^{\Pi^{c},\omega},f_{m} \rangle  \\
\text{and } \quad \lim\limits_{s \downarrow t} \langle \omega_{k,s}^{\Pi^{c},\omega},f_{m} \rangle &= \langle \omega_{k,t}^{\Pi^{c},\omega},f_{m} \rangle .
\end{align*}
Moreover, if the support of $f_{m}$ is affected by a finite number of reproduction events over the time interval $[0,n]$, then for all $t \in [0,n]$, $T_{-}^{(m)}(t) < t < T_{+}^{(m)}(t)$. 
As there exists $C^{(m)} > 0$ such that for all $\rcal > 0$, 
\begin{equation*}
\mathrm{Vol}(\mathrm{Supp}^{\rcal}(f_{m})) \leq C^{(m)}(\rcal^{d} \vee 1), 
\end{equation*}
the support of $f_{m}$ is affected by reproduction events at rate
\begin{align*}
\int_{0}^{\infty} \mathrm{Vol}(\mathrm{Supp}^{\rcal}(f_{m})) &\leq C^{(m)} \int_{0}^{\infty} (\rcal^{d} \vee 1) µ(d\rcal) \\
&< + \infty
\end{align*}
since $µ$ satisfies Condition~(\ref{eqn:condition_intensite}).
Therefore, the number of reproduction events affecting the support of $f_{m}$ is almost surely finite, allowing us to conclude. 

\end{proof}

\begin{lem}\label{lem:bon_espace_SLFV}
Under the notation of Definition \ref{defn:quenched_k_SLFV}, $(M_{k,t}^{\Pi^{c},\omega})_{t \geq 0}$ has the same distribution as the $k$-parent SLFV associated to $µ$ and with initial density $\omega$. 
\end{lem}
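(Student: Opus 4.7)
The plan is to identify $(M_{k,t}^{\Pi^{c}, \omega})_{t \geq 0}$ with the unique solution to the martingale problem $(\lcal_{\mu}^{k}, \delta_{M(\omega)})$ provided by Theorem~\ref{thm:forwards}. Since the càdlàg and $\mcal_{\lambda}$-valued properties have already been established in the proof of Definition~\ref{defn:quenched_k_SLFV}, what remains is the Markov property and the generator identity. I would establish the Markov property via a flow identity for the quenched dual: splitting $\Pi^{c}$ into its restrictions to the disjoint time strips $[0,t]$ and $(t,t+h]$, one has
\begin{equation*}
A\bigl(\Xi_{k,t+h}^{\Pi^{c},\, t+h,\, \delta_{x}}\bigr) = \bigcup_{y \in A(\Xi_{k,h}^{\Pi^{c},\, t+h,\, \delta_{x}})} A\bigl(\Xi_{k,t}^{\Pi^{c},\, t,\, \delta_{y}}\bigr),
\end{equation*}
obtained by induction on the (almost surely finite) number of reproduction events processed in $(t, t+h]$. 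Since $\omega$ is $\{0,1\}$-valued, taking products is unaffected by repeated atoms on the right-hand side, and this yields
\begin{equation*}
\omega_{k,t+h}^{\Pi^{c},\omega}(x) = \prod_{y \in A(\Xi_{k,h}^{\Pi^{c},\, t+h,\, \delta_{x}})} \omega_{k,t}^{\Pi^{c},\omega}(y).
\end{equation*}
Independence of the two restrictions of $\Pi^{c}$ then gives the Markov property.

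I would next read off the jumps of $(M_{k,t}^{\Pi^{c},\omega})_{t\geq 0}$ directly from Definition~\ref{defn:quenched_k_dual}: when $(t_{0}, x_{0}, \rcal_{0}, (p_{n})_{n \geq 1}) \in \Pi^{c}$ is encountered at forward time $t_{0}$, applying the definition with $\tilde{t} = t_{0}$ shows that $\omega_{k,\cdot}^{\Pi^{c},\omega}$ is unchanged outside $\bcal(x_{0}, \rcal_{0})$, while inside the ball it is replaced by the constant value $\prod_{l=1}^{k} \omega_{k, t_{0}-}^{\Pi^{c},\omega}(y_{l})$, where $y_{l} := x_{0} + \rcal_{0} p_{l}$. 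This is exactly the informal dynamics of the $k$-parent SLFV. Plugging this jump rule into the Poisson-driven stochastic integral representation of $\Psi_{F,f}(M_{k,t}^{\Pi^{c},\omega}) - \Psi_{F,f}(M_{k,0}^{\Pi^{c},\omega})$, integrating $(p_{l})_{l\geq 1}$ against $\tilde{u}$ (only the first $k$ coordinates appear, producing i.i.d.\ uniform samples in $\bcal(x_{0},\rcal_{0})^{k}$), and observing that the integrand vanishes unless $x_{0} \in Supp^{\rcal_{0}}(f)$, I recover exactly $\lcal_{\mu}^{k}\Psi_{F,f}$ as defined in Section~\ref{subsubsec:pb_martingale_k_SLFV}.

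The integrability required to promote the resulting local martingale to a true martingale is already available from the rate estimate used in the càdlàg argument: Condition~(\ref{eqn:condition_intensite}) ensures that $Supp(f)$ is touched by reproduction events at finite rate, and $F \in C^{1}(\mathbb{R})$ is bounded on the (bounded) range of $\langle \omega, f \rangle$ for $\{0,1\}$-valued $\omega$. The uniqueness part of Theorem~\ref{thm:forwards} then forces $(M_{k,t}^{\Pi^{c},\omega})_{t \geq 0}$ to have the same law as the $k$-parent SLFV with initial condition $M(\omega)$. I expect the main technical obstacle to be the flow identity for the quenched dual, because the starting time $\tilde{t}$ is embedded in the indexing of the dual process and one must check that rerouting through the intermediate forward time $t$ yields the same set of potential ancestors; once phrased in terms of the finite ordered sequence of reproduction events in $(t, t+h]$ that intersect the lineage of $x$, however, the verification is essentially combinatorial.
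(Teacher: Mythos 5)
Your proof is correct, but it takes a genuinely different route from the paper's. The paper does not verify the martingale problem for $(M_{k,t}^{\Pi^{c},\omega})_{t \geq 0}$ directly; instead it checks that the quenched process satisfies the duality identity of Proposition~\ref{prop:dualite}. Concretely, it writes $\prod_{j=1}^{l}\omega_{k,t}^{\Pi^{c},\omega}(x_{j})$ as a single product of $\omega$ over the atoms of the quenched dual started from $\sum_{j}\delta_{x_{j}}$ (using that $\omega$ is $\{0,1\}$-valued, so repeated atoms are harmless), observes that this quenched dual has the law of the $k$-parent ancestral process, and applies Fubini; since the right-hand side of the duality relation determines the one-dimensional laws of the $k$-parent SLFV, the identification follows in a few lines. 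Your argument --- cocycle identity for the quenched dual, Markov property from independence of the restrictions of $\Pi^{c}$ to disjoint time strips, a compensator computation recovering $\lcal_{µ}^{k}$, then uniqueness --- is longer but more self-contained on the forward side: it does not presuppose Proposition~\ref{prop:dualite}, and it yields the Markov property and the generator identification as intermediate steps rather than as after-the-fact corollaries (the paper in fact proves your flow identity separately as Lemma~\ref{lem:appendixC_1}, so that ingredient is consistent with its framework). One point to phrase carefully when you close the argument: the paper's proof of Theorem~\ref{thm:forwards} itself cites Lemma~\ref{lem:bon_espace_SLFV} to produce an $\mcal_{\lambda}$-valued solution, so you should invoke uniqueness of the solution in the larger space $D_{\widetilde{\mcal}_{\lambda}}[0,+\infty)$ (which comes from the external reference and does not depend on this lemma) rather than the $\mcal_{\lambda}$-uniqueness stated in Theorem~\ref{thm:forwards}; since $\mcal_{\lambda} \subset \widetilde{\mcal}_{\lambda}$ this costs nothing and removes any appearance of circularity.
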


\begin{proof}
We set $M^{0} = M_{k,0}^{\Pi^{c},\omega}$, and we use the characterization of the $k$-parent SLFV by the duality relation in Proposition~\ref{prop:dualite}. 

Let $l \in \mathbb{N}^{*}$, let $\Psi$ be a density function on $(\rd)^{l}$, and let $t \geq 0$. Then,
\begin{align*}
&\esp_{M^{0}}\left[
\int_{(\rd)^{l}} \Psi(x_{1},...,x_{l})\left(
\prod_{j = 1}^{l} \omega_{k,t}^{\Pi^{c},\omega}(x_{j})
\right)dx_{1}...dx_{l}
\right]\\
= &\int_{(\rd)^{l}} \Psi(x_{1},...,x_{l}) \esp_{M^{0}}
\left[
\prod_{j = 1}^{l} \omega_{k,t}^{\Pi^{c},\omega}(x_{j})
\right]dx_{1}...dx_{l} \\
= &\int_{(\rd)^{l}} \Psi(x_{1},...,x_{l})
\esp_{M^{0}}\left[
\prod_{j = 1}^{l} \prod_{y \in A(\Xi_{k,t}^{\Pi^{c},t,\delta_{x_{j}}})} \omega(y)
\right]dx_{1}...dx_{l}
\\
= &\int_{(\rd)^{l}} \Psi(x_{1},...,x_{l})
\esp_{M^{0}}\left[
\prod_{y \in A(\Xi_{k,t}^{\Pi^{c},t,\sum_{j = 1}^{l}\delta_{x_{j}}})} \omega(y)
\right]dx_{1}...dx_{l} 
\\
= &\int_{(\rd)^{l}} \Psi(x_{1},...,x_{l}) 
\egras_{\Xi[x_{1},...,x_{l}]} \left[
\prod_{y \in A(\Xi_{t}^{k})} \omega(y)
\right]dx_{1}...dx_{l},
\end{align*}
with $(\Xi_{t}^{k})_{t \geq 0}$ the $k$-parent ancestral process associated to $µ$ with initial condition $\Xi[x_{1},...,x_{l}]$. We used the definition of the quenched $k$-parent SLFV to pass from line $2$ to line $3$, and the fact that $\omega$ is $\{0,1\}$-valued to pass from line $3$ to line $4$.

Writing $\Xi_{t}^{k} = \sum_{j = 1}^{N_{t}^{k}}\xi_{t}^{k,j}$, we obtain
\begin{align*}
&\esp_{M^{0}}\left[
\int_{(\rd)^{l}} \Psi(x_{1},...,x_{l})\left(
\prod_{j = 1}^{l} \omega_{k,t}^{\Pi^{c},\omega}(x_{j})
\right)dx_{1}...dx_{l}
\right]\\
= &\int_{(\rd)^{l}} \Psi(x_{1},...,x_{l}) 
\egras_{\Xi[x_{1},...,x_{l}]} \left[
\prod_{j  =1}^{N_{t}^{k}} \omega(\xi_{t}^{k,j})
\right]dx_{1}...dx_{l}.
\end{align*}
This concludes the proof.
\end{proof}

This lemma has two direct consequences. First, $(M_{k,t}^{\Pi^{c},\omega})_{t \geq 0}$ is Markovian. Moreover, since this process is $\mcal_{\lambda}$-valued, we have proved the second part of Theorem \ref{thm:forwards}, that is, that the unique solution to the martingale problem characterizing the $k$-parent SLFV is $\mcal_{\lambda}$-valued. 

The interest of the coupling lies in the fact that given a sequence of coupled $k$-parent SLFV constructed using the same extended Poisson point process $\Pi^{c}$, their corresponding densities, as constructed in Definition \ref{defn:quenched_k_SLFV}, satisfy the following property. 

\begin{lem}\label{lem:property_coupling}
Let $2 \leq k < k'$, and let $\omega : \rd \to \{0,1\}$ be a measurable function. Let $\Pi^{c}$ be a Poisson point process on $\mathbb{R} \times \rd \times (0,+\infty) \times U$ with intensity $dt \otimes dx \otimes µ(d\rcal) \otimes \tilde{u}((p_{n})_{n \geq 1})$. 

Then, for all $t \geq 0$ and $x \in \rd$, 
\begin{equation*}
\omega_{k',t}^{\Pi^{c},\omega}(x) \leq \omega_{k,t}^{\Pi^{c},\omega}(x).
\end{equation*}
In particular, for all $t \geq 0$ and $x \in \rd$, $(\omega_{k,t}^{\Pi^{c},\omega}(x))_{k \geq 2}$ converges to some $\omega_{t}^{\infty}(x) \in \{0,1\}$ as $k \to + \infty$.
\end{lem}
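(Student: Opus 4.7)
The plan is to read this off directly from the definition of the quenched $k$-parent SLFV together with the inclusion of atom sets provided by Lemma~\ref{lem:couplage_k_k'}. By Definition~\ref{defn:quenched_k_SLFV}, for each $t \geq 0$ and $x \in \rd$,
\begin{equation*}
\omega_{k,t}^{\Pi^{c},\omega}(x) = \prod_{y \in A(\Xi_{k,t}^{\Pi^{c},t,\delta_{x}})} \omega(y), \qquad
\omega_{k',t}^{\Pi^{c},\omega}(x) = \prod_{y \in A(\Xi_{k',t}^{\Pi^{c},t,\delta_{x}})} \omega(y),
\end{equation*}
so the monotonicity statement reduces to comparing the two products over different index sets, evaluated against the same $\{0,1\}$-valued function $\omega$.

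Since both ancestral processes are built from the same extended Poisson point process $\Pi^{c}$ and both start from the single atom $\delta_{x}$, Lemma~\ref{lem:couplage_k_k'} (applied with $\tilde t = t$ and $\Xi^{0} = \delta_{x}$) gives $A(\Xi_{k,t}^{\Pi^{c},t,\delta_{x}}) \subseteq A(\Xi_{k',t}^{\Pi^{c},t,\delta_{x}})$. Because $\omega$ takes values in $\{0,1\}$, each factor in either product lies in $\{0,1\}$, and enlarging the index set in such a product can only make it smaller or leave it unchanged: concretely, if the smaller product equals $0$ then some factor $\omega(y)$ with $y \in A(\Xi_{k,t}^{\Pi^{c},t,\delta_{x}})$ vanishes, and this same factor appears in the larger product via the inclusion, while if the smaller product equals $1$ there is nothing to check. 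This yields
\begin{equation*}
\omega_{k',t}^{\Pi^{c},\omega}(x) \leq \omega_{k,t}^{\Pi^{c},\omega}(x).
\end{equation*}

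For the second claim, fix $t \geq 0$ and $x \in \rd$. The sequence $(\omega_{k,t}^{\Pi^{c},\omega}(x))_{k \geq 2}$ is then a nonincreasing sequence of elements of $\{0,1\}$, hence it is eventually constant and in particular converges to some $\omega_{t}^{\infty}(x) \in \{0,1\}$. No additional integrability or measurability argument is needed for this pointwise statement; the genuinely substantive ingredient is Lemma~\ref{lem:couplage_k_k'}, which is where the coupling via the shared sequence of potential parent locations $(p_{n})_{n \geq 1}$ is exploited. I expect no real obstacle in the proof of the present lemma; the main work has already been done upstream in establishing the atom-set inclusion.
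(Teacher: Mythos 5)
Your proof is correct and follows essentially the same route as the paper's: invoke Lemma~\ref{lem:couplage_k_k'} for the atom-set inclusion, use that a product of $\{0,1\}$-valued factors can only decrease when the index set is enlarged, and conclude convergence from the sequence being nonincreasing and $\{0,1\}$-valued. No gaps.
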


\begin{proof}
Let $t \geq 0$ and $x \in \rd$. By Lemma \ref{lem:couplage_k_k'}, 
\begin{equation*}
A(\Xi_{k,t}^{\Pi^{c},t,\delta_{x}}) \subseteq A(\Xi_{k',t}^{\Pi^{c},t,\delta_{x}}).
\end{equation*}
Therefore, as $\omega$ is $\{0,1\}$-valued,
\begin{align*}
\omega_{k',t}^{\Pi^{c},\omega}(x) &= \prod_{y \in A(\Xi_{k',t}^{\Pi^{c},t,\delta_{x}})}\omega(y) \\
&\leq \prod_{y \in A(\Xi_{k,t}^{\Pi^{c},t,\delta_{x}})} \omega(y) \\
&\leq \omega_{k,t}^{\Pi^{c},\omega}(x).
\end{align*}

The second part of the lemma is a consequence of the fact that $(\omega_{k,t}^{\Pi^{c},\omega}(x))_{k \geq 2}$ is a non-increasing $\{0,1\}$-valued sequence.
\end{proof}

\subsection{Definition of the $\infty$-parent SLFV}\label{subsec:2.11}

We can now construct the $\infty$-parent SLFV as in Theorem~\ref{thm:first_construction_infty_slfv}.

\begin{defn}\label{defn:infty_SLFV}
Let $M^{0} \in \mcal_{\lambda}$ with density $\omega : \rd \to \{0,1\}$. The $\infty$-parent spatial $\Lambda$-Fleming Viot process, or $\infty$-parent SLFV, with initial density $\omega$ and associated to the extended Poisson point process $\Pi^{c}$ is the $\mcal_{\lambda}$-valued process $(M_{t}^{\infty})_{t \geq 0}$ defined the following way. 

First, we set $M_{0}^{\infty} = M^{0}$. Then, for all $t \geq 0$ and $x \in \rd$, we set
\begin{align*}
\omega_{t}^{\infty}(x) &:= \lim\limits_{k \to + \infty} \omega_{k,t}^{\Pi^{c},\omega}(x) \\
\intertext{and we set}
M_{t}^{\infty}(dx, d\kappa)
&:= (\omega_{t}^{\infty}(x)\delta_{0}(d\kappa) + (1-\omega_{t}^{\infty}(x))\delta_{1}(d\kappa))dx.
\end{align*}
$\Pi^{c}$ will be called the associated extended Poisson point process, and $(\omega_{t}^{\infty})_{t \geq 0}$ will be called the density of the $\infty$-parent SLFV associated to $\Pi^{c}$ and of initial density $\omega$.
\end{defn}

In its more general form, the $\infty$-parent SLFV is defined for an initial condition $M^{0} \in \mcal_{\lambda}$ and a $\sigma$-finite measure $µ$. However, we construct it using a density $\omega$ of $M^{0}$, and an extended Poisson point process $\Pi^{c}$, and in the following, we will need both the initial density and the extended Poisson process used in order to prove some properties satisfied by the $\infty$-parent SLFV. Therefore, we considered two complementary definitions of the process, one based on the initial condition and the measure $µ$, and the other one based on the initial density and the extended Poisson point process, both definitions corresponding to the same process. In the following, we will use one or the other of the two definitions, depending on whether the initial density and extended Poisson point process used to construct the process are needed or not. 

As in the proof of Definition \ref{defn:quenched_k_SLFV}, we can show that $(M_{t}^{\infty})_{t \geq 0} \in D_{\mcal_{\lambda}}[0,+\infty)$. 

\begin{lem}\label{lem:infty_markovian}
Under the notation of Definition \ref{defn:infty_SLFV}, $(M_{t}^{\infty})_{t \geq 0}$ is Markovian.
\end{lem}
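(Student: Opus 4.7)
The plan is to deduce the Markov property from a cocycle identity for the coupled quenched construction, combined with the time-shift invariance and independence properties of the extended Poisson process $\Pi^{c}$. Let $\theta_{s}\Pi^{c}$ denote the time-shift of $\Pi^{c}$ by $s$, and set $\mathcal{G}_{s} := \sigma(\Pi^{c}\cap([0,s]\times\rd\times(0,+\infty)\times U))$. The aim is to show that, for all $s, t \geq 0$ and $x \in \rd$, $\omega_{t+s}^{\infty}(x)$ coincides almost surely with the density at $(t, x)$ of the $\infty$-parent SLFV driven by $\theta_{s}\Pi^{c}$ and of initial density $\omega_{s}^{\infty}$. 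Since $\theta_{s}\Pi^{c}$ has the same law as $\Pi^{c}$ and is independent of $\mathcal{G}_{s}$, while $\omega_{s}^{\infty}$ is $\mathcal{G}_{s}$-measurable, this identity forces
\begin{equation*}
\esp\bigl[G(M_{t+s}^{\infty}) \,\big|\, \mathcal{G}_{s}\bigr] = \Phi_{G}(t, M_{s}^{\infty})
\end{equation*}
for every bounded measurable $G : \mcal_{\lambda} \to \mathbb{R}$, where $\Phi_{G}(t, M)$ is the expectation of $G$ against an $\infty$-parent SLFV of initial condition $M$ driven by an independent copy of $\Pi^{c}$. As the natural filtration of $(M_{t}^{\infty})_{t \geq 0}$ is contained in $(\mathcal{G}_{s})_{s \geq 0}$, the tower property yields the Markov property.

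The first step establishes the cocycle identity at finite $k$. Using the natural flow structure of the quenched $k$-parent ancestral process, atoms traced backwards from $(t+s, x)$ through $\Pi^{c}$ decompose as: first trace from $(t+s, x)$ for time $t$ to obtain a finite set of atoms $y_{1}, \ldots, y_{n}$ at time $s$; then trace each $y_{i}$ for time $s$ down to time $0$. Applying Eq.~(\ref{eqn:densite_quenched_k_SLFV}) to both pieces yields
\begin{equation*}
\omega_{k, t+s}^{\Pi^{c}, \omega}(x) \;=\; \prod_{i=1}^{n} \omega_{k, s}^{\Pi^{c}, \omega}(y_{i}) \;=\; \omega_{k, t}^{\theta_{s}\Pi^{c},\, \omega_{k, s}^{\Pi^{c}, \omega}}(x),
\end{equation*}
with possible overlaps of the intermediate ancestor sets being harmless because $\omega$ is $\{0,1\}$-valued.

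The step I expect to be the main obstacle is passing to the limit $k \to \infty$ in the identity above, because the product at level $k$ depends on $k$ both through the atom set $A_{k} := A(\Xi_{k, t}^{\theta_{s}\Pi^{c}, t, \delta_{x}})$, which is nondecreasing in $k$ by Lemma~\ref{lem:couplage_k_k'}, and through the factors $\omega_{k, s}^{\Pi^{c}, \omega}(y)$, which are nonincreasing in $k$ and converge down to $\omega_{s}^{\infty}(y)$. The $\{0,1\}$-valuedness resolves the apparent tension: either some $y^{\star} \in \bigcup_{k} A_{k}$ satisfies $\omega_{s}^{\infty}(y^{\star}) = 0$, in which case for all sufficiently large $k$ both $y^{\star} \in A_{k}$ and $\omega_{k, s}^{\Pi^{c}, \omega}(y^{\star}) = 0$ hold, so both products vanish in the limit; or else all relevant factors equal $1$ for every $k$, and both products are constantly $1$. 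In both cases $\lim_{k}\omega_{k, t}^{\theta_{s}\Pi^{c}, \omega_{k, s}^{\Pi^{c}, \omega}}(x)$ equals $\lim_{k}\omega_{k, t}^{\theta_{s}\Pi^{c}, \omega_{s}^{\infty}}(x)$, giving the cocycle identity at level $\infty$ pointwise in $x$.

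A Fubini argument then promotes this ``for every $x$, almost surely'' identity to ``almost surely, for Lebesgue-a.e.\ $x$'', which yields equality of the measures $M_{t+s}^{\infty}$ and $M_{t}^{\infty, \theta_{s}\Pi^{c}, \omega_{s}^{\infty}}$ in $\mcal_{\lambda}$. One should also verify that $\Phi_{G}(t, M)$ does not depend on the density chosen to represent $M$: two such densities differ on a Lebesgue-null set, which is almost surely avoided by the finitely many atoms of the quenched ancestral process, whose positions are uniform on balls and hence absolutely continuous with respect to Lebesgue measure. Combined with the independence and stationarity of $(\theta_{s}\Pi^{c})_{s \geq 0}$, this concludes the Markov property as described in the first paragraph.
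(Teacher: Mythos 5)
Your proof is correct and follows essentially the same route as the paper: the cocycle identity for the quenched ancestral processes (Lemma~\ref{lem:appendixC_2}), the monotonicity in $k$ of both the atom sets (Lemma~\ref{lem:couplage_k_k'}) and the densities (Lemma~\ref{lem:property_coupling}), and the $\{0,1\}$-valuedness to interchange the limit with the product, followed by independence of the Poisson increments on $[s,t]$ from the past. The only cosmetic difference is that you organize the limit interchange as a dichotomy, whereas the paper uses a two-sided squeeze via Lemmas~\ref{lem:property_coupling} and~\ref{lem:appendixC_3}.
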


\begin{proof}
First, notice that the definition of $(M_{t}^{\infty})_{t \geq 0}$ implies that we only need to demonstrate that $(\omega_{t}^{\infty})_{t \geq 0}$ is Markovian. 

Let $0 \leq s \leq t$ and let $x \in \rd$. Our goal is to show that
\begin{equation*}
\omega_{t}^{\infty}(x) = \lim\limits_{\tilde{k} \to + \infty} \prod_{x' \in A\left(
\Xi_{\tilde{k},t-s}^{\Pi^{c},t,\delta_{x}}
\right)} \omega_{s}^{\infty}(x').
\end{equation*}
Indeed, if this result is true, since $A\left(
\Xi_{\tilde{k},t-s}^{\Pi^{c},t,\delta_{x}}
\right)$ depends on events occuring during the interval $[s,t]$, it is independent from $(\omega_{s'}^{\infty})_{0 \leq s' \leq s}$ and we can conclude. 

By definition of the $\infty$-parent SLFV,
\begin{align*}
\omega_{t}^{\infty}(x) &= \lim\limits_{k \to + \infty} \omega_{k,t}^{\Pi^{c},\omega}(x). \\
\intertext{Using Lemma \ref{lem:appendixC_2} from Section \ref{sec:5}, we obtain}
\omega_{t}^{\infty}(x) &= \lim\limits_{k \to + \infty}
\underset{x' \in A\left(\Xi_{k,t-s}^{\Pi^{c},t,\delta_{x}}
\right)}{\prod} \omega_{k,s}^{\Pi^{c},\omega}(x'). \\
\intertext{Let $\tilde{k} \geq 2$. By Lemma \ref{lem:property_coupling} and since for all $k \geq 2$, $\omega_{k,s}^{\Pi^{c},\omega}$ is $\{0,1\}$-valued,}
\omega_{t}^{\infty}(x) &\leq \lim\limits_{k \to + \infty}
\underset{x' \in A\left(\Xi_{\tilde{k},t-s}^{\Pi^{c},t,\delta_{x}}
\right)}{\prod} \omega_{k,s}^{\Pi^{c},\omega}(x') \\
&\leq \underset{x' \in A\left(\Xi_{\tilde{k},t-s}^{\Pi^{c},t,\delta_{x}}
\right)}{\prod} \lim\limits_{k \to + \infty} \omega_{k,s}^{\Pi^{c},\omega}(x')  \\
&\leq \underset{x' \in A\left(\Xi_{\tilde{k},t-s}^{\Pi^{c},t,\delta_{x}}
\right)}{\prod} \omega_{s}^{\infty}(x').
\end{align*}
Here we used Lemma \ref{lem:appendixC_3} to pass from the first to the second line, and the definition of the $\infty$-parent SLFV to pass from the second to the third line.  

Since this is true for all $\tilde{k} \geq 2$, 
\begin{align*}
\omega_{t}^{\infty}(x) &\leq \lim\limits_{\tilde{k} \to + \infty} \underset{x' \in A\left(\Xi_{\tilde{k},t-s}^{\Pi^{c},t,\delta_{x}}
\right)}{\prod} \omega_{s}^{\infty}(x'). \\
\intertext{Then, starting back from the equation}
\omega_{t}^{\infty}(x) &= \lim\limits_{k \to + \infty}
\underset{x' \in A\left(\Xi_{k,t-s}^{\Pi^{c},t,\delta_{x}}
\right)}{\prod} \omega_{k,s}^{\Pi^{c},\omega}(x'),
\intertext{as for all $x \in \rd$, $\left(\omega_{k,s}^{\Pi^{c},\omega}(x')\right)_{k \geq 2}$ is decreasing, we obtain that}
\omega_{t}^{\infty}(x) &\geq \lim\limits_{k \to + \infty}
\underset{x' \in A\left(\Xi_{k,t-s}^{\Pi^{c},t,\delta_{x}}
\right)}{\prod} \omega_{s}^{\infty}(x')
\end{align*}
and we can conclude.
\end{proof}

\subsection{Convergence of the $k$-parent SLFV towards the $\infty$-parent SLFV}\label{subsec:convergence_result_forward}
We now show Theorem~\ref{thm:convergence_result}, that is,
that the $\infty$-parent SLFV can be considered as the limit in distribution of the $k$-parent SLFV in $D_{\mcal_{\lambda}}[0,\infty)$ when $k \to + \infty$.  

In all that follows, let $(M_{t}^{\infty})_{t \geq 0}$ be the $\infty$-parent SLFV with initial density $\omega$ associated to $\Pi^{c}$. By definition, for all $t \geq 0$, $(M_{k,t}^{\Pi^{c},\omega})_{k \geq 2}$ converges almost surely to $M_{t}^{\infty}$, as stated below. 

\begin{lem}\label{lem:cvg_vague_processus_tps}
Conditionally on $\Pi^{c}$, for all $t \geq 0$ and $k \geq 2$, $M_{k,t}^{\Pi^{c},\omega}$ is deterministic, 
and $(M_{k,t}^{\Pi^{c},\omega})_{k \geq 2}$ converges vaguely to $M_{t}^{\infty}$ as $k \to + \infty$. 
Moreover, as a sequence of random variables, $(M_{k,t}^{\Pi^{c},\omega})_{k \geq 2}$ converges almost surely to $M_{t}^{\infty}$ as $k \to + \infty$. 
\end{lem}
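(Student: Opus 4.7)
\bigskip

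\noindent\textbf{Proof plan.} The determinism claim is essentially built into the construction. Definition~\ref{defn:quenched_k_dual} describes the quenched $k$-parent ancestral process $(\Xi_{k,s}^{\Pi^{c},t,\delta_{x}})_{0 \leq s \leq t}$ as a deterministic functional of $\Pi^{c}$, $t$, and $\delta_{x}$: conditionally on the realization of $\Pi^{c}$, the jump times, affected balls, and potential parent locations are all fixed, so the trajectory is fixed as well. Consequently, $A(\Xi_{k,t}^{\Pi^{c},t,\delta_{x}})$ is a deterministic finite subset of $\mathbb{R}^{d}$, and by Eq.~(\ref{eqn:densite_quenched_k_SLFV}), so is $\omega_{k,t}^{\Pi^{c},\omega}(x)$ for every $x$ (since $\omega$ is a fixed deterministic function). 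This gives the first assertion.

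For the vague convergence, I would fix a realization of $\Pi^{c}$ (outside a null set on which everything below fails) and work pointwise. Recall that vague convergence in $\mathcal{M}_{\lambda}$ is tested against functions $g \in C_{c}(\mathbb{R}^{d} \times \{0,1\})$. Since $\{0,1\}$ is discrete, it suffices to test against $g(x,\kappa) = f(x)\mathds{1}_{\{\kappa = 0\}}$ and $g(x,\kappa) = f(x)\mathds{1}_{\{\kappa = 1\}}$ with $f \in C_{c}(\mathbb{R}^{d})$. For the first family,
\begin{equation*}
\int_{\mathbb{R}^{d} \times \{0,1\}} g \, dM_{k,t}^{\Pi^{c},\omega} = \int_{\mathbb{R}^{d}} f(x) \, \omega_{k,t}^{\Pi^{c},\omega}(x) \, dx.
\end{equation*}
Lemma~\ref{lem:property_coupling} shows that for every $x \in \mathbb{R}^{d}$ the sequence $(\omega_{k,t}^{\Pi^{c},\omega}(x))_{k \geq 2}$ is nonincreasing in $k$ and converges to $\omega_{t}^{\infty}(x)$. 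Since $|f(x)\omega_{k,t}^{\Pi^{c},\omega}(x)| \leq |f(x)|$ and $f \in C_{c}(\mathbb{R}^{d})$ is integrable, dominated convergence (or monotone convergence applied to $f^{+}$ and $f^{-}$) yields
\begin{equation*}
\int_{\mathbb{R}^{d}} f(x)\,\omega_{k,t}^{\Pi^{c},\omega}(x)\, dx \xrightarrow[k \to \infty]{} \int_{\mathbb{R}^{d}} f(x)\,\omega_{t}^{\infty}(x)\, dx.
\end{equation*}
The same argument, with $\omega_{k,t}^{\Pi^{c},\omega}$ replaced by $1 - \omega_{k,t}^{\Pi^{c},\omega}$, handles the family $g(x,\kappa) = f(x)\mathds{1}_{\{\kappa = 1\}}$. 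Combining the two gives $M_{k,t}^{\Pi^{c},\omega} \to M_{t}^{\infty}$ vaguely.

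Finally, the almost sure convergence as random variables is immediate: the conditional (on $\Pi^{c}$) vague convergence established above holds $\boldsymbol{P}$-almost surely, because Lemma~\ref{lem:property_coupling} itself holds almost surely, and the argument above used only a single fixed realization of $\Pi^{c}$. I do not anticipate a real obstacle here; the only mildly delicate point is that vague convergence in $\mathcal{M}_{\lambda}$ is not a priori the same as pointwise a.e.\ convergence of the densities, which is why the reduction to testing against $C_{c}(\mathbb{R}^{d})$-integrals and invoking dominated convergence is needed.
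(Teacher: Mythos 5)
Your proof is correct and follows essentially the same route as the paper's: pointwise (monotone) convergence of the coupled densities from Lemma~\ref{lem:property_coupling} and the definition of $\omega_{t}^{\infty}$, then dominated convergence against $C_{c}$ test functions decomposed over the two types $\kappa \in \{0,1\}$, with the almost sure statement following because the argument holds for almost every realization of $\Pi^{c}$. The only difference is cosmetic: you spell out the determinism claim explicitly (the paper leaves it implicit in the quenched construction), which is a harmless addition.
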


\begin{proof}
Let $t \geq 0$, and let $\omega_{t}^{\infty}$ be the density of the $\infty$-parent SLFV with initial density $\omega$ associated to $\Pi^{c}$, considered at time $t$. Let $f \in C_{c}(\rd)$. Then $f$ is integrable and
\begin{align*}
\forall x \in \rd, f(x) \omega_{k,t}^{\Pi^{c},\omega}(x) & \xrightarrow[k \to + \infty]{} f(x) \omega_{t}^{\infty}(x) \\
\forall x \in \rd, \left|f(x) \omega_{k,t}^{\Pi^{c},\omega}(x)\right| &\leq |f(x)|.
\end{align*}
Therefore, by the dominated convergence theorem,
\begin{align*}
\lim\limits_{k \to + \infty} \int_{\rd}f(x)\omega_{k,t}^{\Pi^{c},\omega}(x)dx &= \int_{\rd}f(x)\omega_{t}^{\infty}(x)dx \\
\text{and \quad} \lim\limits_{k \to + \infty} \int_{\bxr}f(y)\omega_{k,t}^{\Pi^{c},\omega}(y)dx &= \int_{\bxr}f(y)\omega_{t}^{\infty}(y)dy.
\end{align*}

We now consider $\tilde{f} \in C_{c}(\rd \times \{0,1\})$. Then, there exists $f_{0},f_{1} \in C_{c}(\rd)$ such that
\begin{equation*}
\forall (x,\kappa) \in \rd \times \{0,1\}, 
\tilde{f}(x,\kappa) = f_{0}(x)\mathds{1}_{\{0\}}(\kappa) + f_{1}(x) \mathds{1}_{\{1\}}(\kappa).
\end{equation*}
Therefore, for all $k \geq 2$, 
\begin{align*}
\int_{\rd \times \{0,1\}} \tilde{f}(x,\kappa)M_{k,t}^{\Pi^{c},k}(dx,d\kappa) 
&= \int_{\rd}f_{0}(x) \omega_{k,t}^{\Pi^{c},k}(x)dx
+ \int_{\rd}f_{1}(x) \left(1-\omega_{k,t}^{\Pi^{c},k}(x)\right)dx \\
&\xrightarrow[k \to + \infty]{} \int_{\rd}f_{0}(x) \omega_{t}^{\infty}(x)dx 
+ \int_{\rd} f_{1}(x)\left(1-\omega_{t}^{\infty}(x)\right)dx \\
&= \int_{\rd \times \{0,1\}} \tilde{f}(x,\kappa)M_{t}^{\infty}(dx,d\kappa)
\end{align*}
and we conclude that $(M_{k,t}^{\Pi^{c},\omega})_{k \geq 2}$ converges (almost surely) vaguely to $M_{t}^{\infty}$ when $k \to + \infty$.
\end{proof}

Combined with Lemma~\ref{lem:bon_espace_SLFV}, a direct generalization of Lemma~\ref{lem:cvg_vague_processus_tps} gives that the finite dimensional distributions of the $k$-parent SLFV converge (weakly) towards the ones of the $\infty$-parent SLFV. 

We now focus on the convergence of the $k$-parent SLFV as a càdlàg process towards the $\infty$-parent SLFV. 
In order to do so, let $(M^{k})_{k \geq 2}$ be a sequence of $k$-parent SLFVs with initial density $\omega$ and associated to $µ$, which are independent from $(M_{k,t}^{\Pi^{c},\omega})_{t \geq 0}$. Moreover, let $(M_{t}^{\infty})_{t \geq 0}$ be the $\infty$-parent SLFV with initial density $\omega$ associated to $\Pi^{c}$. 

\begin{lem}\label{lem:sequence_tight}
The sequence $(M^{k})_{k \geq 2}$ is tight in $D_{\mcal_{\lambda}}[0,\infty)$.
\end{lem}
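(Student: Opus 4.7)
The plan is to verify Jakubowski's tightness criterion in $D_{\mcal_{\lambda}}[0,\infty)$ against the separating family $\mathcal{F} = \{\Psi_{F,f} : F \in C_{b}^{1}(\mathbb{R}),\, f \in C_{c}(\rd)\}$. This reduces the problem to two checks: compact containment of the sequence, and tightness in $D_{\mathbb{R}}[0,\infty)$ of the real-valued projections $(\Psi_{F,f}(M_{t}^{k}))_{t \geq 0}$ for every $\Psi_{F,f} \in \mathcal{F}$.

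Compact containment is essentially free in this setting. Because every $M \in \mcal_{\lambda}$ has Lebesgue measure as its marginal on $\rd$, for every compact $K \subset \rd \times \{0,1\}$ we have $M(K) \leq \mathrm{Vol}(\pi_{\rd}(K))$ uniformly in $M$. Hence $\mcal_{\lambda}$ sits inside a vaguely precompact subset of $\widetilde{\mcal}_{\lambda}$, so every trajectory of every $M^{k}$ stays in the same compact set with probability one, and the first Jakubowski condition is satisfied trivially.

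For the real-valued tightness, I would apply Aldous' criterion through the martingale decomposition supplied by Theorem~\ref{thm:forwards}:
\[
\Psi_{F,f}(M_{t}^{k}) = \Psi_{F,f}(M_{0}^{k}) + \int_{0}^{t} \lcal_{\mu}^{k}\Psi_{F,f}(M_{s}^{k})\,ds + N_{t}^{k},
\]
where $N^{k}$ is a martingale. A Fubini swap between $dx$ and $dy$ inside the definition of $\lcal_{\mu}^{k}\Psi_{F,f}$ yields the uniform bound
\[
\sup_{k \geq 2,\, M \in \mcal_{\lambda}} |\lcal_{\mu}^{k} \Psi_{F,f}(M)| \leq 2\|F'\|_{\infty}\|f\|_{L^{1}} \int_{0}^{\infty} V_{\rcal}\,\mu(d\rcal),
\]
finite by Condition~(\ref{eqn:condition_intensite}). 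For any stopping time $\tau \leq T$ and $\delta > 0$ this gives $|A^k_{\tau+\delta} - A^k_\tau| \leq C\delta$. For the martingale part, I would bound the predictable bracket $\langle N^{k}\rangle$ via the carré-du-champ $\Gamma^{k}(\Psi) = \lcal_{\mu}^{k}(\Psi^{2}) - 2\Psi \,\lcal_{\mu}^{k}\Psi$ and the linearization $(\Psi(\Theta_{x,\rcal}^{\pm}(\omega)) - \Psi(\omega))^{2} \leq 2\|F\|_{\infty} \cdot |\Psi(\Theta_{x,\rcal}^{\pm}(\omega)) - \Psi(\omega)|$; this reduces $\Gamma^{k}$ to the same integral expression as above, so $\esp[\langle N^{k}\rangle_{\tau+\delta} - \langle N^{k}\rangle_{\tau}] \leq C'\delta$. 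Markov's inequality then closes Aldous' condition uniformly in $k$ and $\tau$.

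The main obstacle is keeping the quadratic variation controlled under only the classical Condition~(\ref{eqn:condition_intensite}): a naive bound $|\Delta \Psi|^{2} \leq (\|F'\|_{\infty}\|f\|_{\infty}V_{\rcal})^{2}$ leads to an integrand of order $\rcal^{2d}$, which is not tame for~(\ref{eqn:condition_intensite}). The trick above, which absorbs one factor into $\|F\|_{\infty}$ using the boundedness of $F$ on the bounded range of $\langle \omega_{M},f\rangle \subset [-\|f\|_{L^{1}},\|f\|_{L^{1}}]$, is what makes the quadratic estimate collapse to the same $L^{1}$-type integral as the drift. Once Aldous' criterion is verified for every $\Psi_{F,f} \in \mathcal{F}$, Jakubowski's theorem delivers tightness of $(M^{k})_{k\geq 2}$ in $D_{\mcal_{\lambda}}[0,\infty)$.
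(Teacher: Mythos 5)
Your proposal is correct and follows essentially the same route as the paper: the paper reduces tightness to the real-valued processes $(\Psi_{F,f}(M^{k}_{t}))_{t\geq 0}$ using compactness of $\mcal_{\lambda}$, density of the functions $\Psi_{F,f}$ in $C(\mcal_{\lambda})$ and Theorem~3.9.1 of \cite{ethier1986} (playing the role of your Jakubowski criterion, with compact containment equally trivial there), and then verifies the Aldous--Rebolledo criterion with exactly your two estimates on the drift and on the predictable quadratic variation, the latter being the explicit carré-du-champ integrand. The only cosmetic difference is how the square is tamed: the paper squares the bound $|\Delta\Psi_{F,f}| \leq C\,(\rcal^{d}\wedge 1)$ and uses $(\rcal^{d}\wedge 1)^{2} \leq \rcal^{d}\wedge 1$, whereas you absorb one factor into the bound on $F$ over the bounded range of $\langle \omega_{M},f\rangle$; both collapse the integrand to order $\rcal^{d}$, which is integrable under Condition~(\ref{eqn:condition_intensite}).
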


\begin{proof}
We adapt the proof of Theorem 1.2 (step (i), item (c)) from \cite{etheridge2020rescaling}. Since $\mcal_{\lambda}$ equipped with the topology of vague convergence is compact and since the set of functions of the form $\Psi_{F,f}$, $F \in C^{1}(\mathbb{R})$, $f \in^ C_{c}(\mathbb{R}^{d})$ is dense in $C(\mcal_{\lambda})$ (see e.g Lemma~1.1 from \cite{etheridge2020rescaling}), by Theorem~3.9.1 from \cite{ethier1986}, the relative compactness of $(M^{k})_{k \geq 2}$ is equivalent to the sequence $(\Psi_{F,f}(M^{k}))_{k \geq 2}$ being relatively compact for all $F \in C^{1}(\mathbb{R})$ and $f \in C_{c}(\mathbb{R}^{d})$. 

Therefore, let $F \in C^{1}(\mathbb{R})$ and $f \in C_{c}(\mathbb{R}^{d})$. By the Aldous-Rebolledo criterion 
\cite{aldous1978stopping,rebolledo1980existence}, it is sufficient to show that
\begin{enumerate}
\item For all $t \geq 0$, the sequence $(\Psi_{F,f}(M_{t}^{k}))_{k \geq 2}$ is tight. 
\item For all $T > 0$, given a sequence of stopping times $(\tau_{k})_{k \geq 2}$ bounded by $T$, for all $\epsilon > 0$, there exists $\delta > 0$ such that
\begin{equation*}
\limsup\limits_{k \to + \infty} \sup\limits_{\theta \in [0,\delta]} \proba\left(
\int_{\tau_{k}}^{\tau_{k} + \theta} \lcal_{µ}^{k} \Psi_{F,f}(M_{s}^{k}) ds > \epsilon
\right) \leq \epsilon. 
\end{equation*}
\item In the same notation as in 2., for all $\epsilon > 0$, there exists $\delta > 0$ such that
\begin{align*}
\limsup\limits_{k \to + \infty} \sup\limits_{\theta \in [0,\delta]} \proba\left(
\int_{\tau_{k}}^{\tau_{k}+\theta} \int_{\mathbb{R}^{d}} \int_{0}^{\infty} \int_{\bcal(x,\rcal)^{k}}\right. & \frac{1}{V_{\rcal}^{k}}\, \left[\vphantom{\prod_{j = 1}^{k}}\left(
F(\langle \Theta_{x,\rcal}^{-}(\omega_{M_{s}^{k}}),f\rangle) - F(\langle \omega_{M_{s}^{k}},f \rangle)
\right)^{2} \right. \\
&\quad \quad \quad
\times \left.
\left(
1-\prod_{j = 1}^{k}\omega_{M_{s}^{k}}(y_{j})
\right) \right.\\
&\quad \quad 
+ \left.
\left(
F(\langle \Theta_{x,\rcal}^{+}(\omega_{M_{s}^{k}}),f\rangle) - F(\langle \omega_{M_{s}^{k}},f \rangle)
\right)^{2} \right.\\
&\quad \quad \quad
\times \left.
\prod_{j = 1}^{k}\omega_{M_{s}^{k}}(y_{j})
\right]\left. 
\vphantom{\int_{\tau_{k}}^{\tau_{k}+\theta}}
dy_{1}...dy_{k}µ(d\rcal)dx ds > \epsilon\right) \leq \epsilon.
\end{align*}
\end{enumerate}

\paragraph{Point 1.} Let $t \geq 0$. Then, for all $k \geq 2$, 
\begin{equation*}
\left|
\langle \omega_{M_{t}^{k}},f \rangle 
\right| \leq ||f||_{\infty} \mathrm{Vol}(\mathrm{Supp}(f)). 
\end{equation*}
Since $F$ is continuous, this implies that
\begin{equation*}
\left|
\Psi_{F,f}(M_{t}^{k})
\right| \leq \max\limits_{z \in [-||f||_{\infty} \mathrm{Vol}(\mathrm{Supp}(f)),||f||_{\infty} \mathrm{Vol}(\mathrm{Supp}(f))]} |F(z)|,
\end{equation*}
from which we deduce that the sequence $(\Psi_{F,f}(M_{t}^{k}))_{k \geq 2}$ is tight. 

\paragraph{Point 2.} Let $T > 0$, let $(\tau_{k})_{k \geq 2}$ be a sequence of stopping times bounded by $T$, and let $\epsilon > 0$. In the proof of Lemma~\ref{lem:Lk_well_defined}, we show the existence of a constant $C_{F,f}$ depending only on the choice of $F$ and $f$ such that for all $t \geq 0$, $k \geq 2$ and $M \in \mcal_{\lambda}$, 
\begin{equation*}
\left|
\lcal_{µ}^{k} \Psi_{F,f}(M)
\right| \leq C_{F,f} \int_{0}^{\infty} \rcal^{d} µ(d\rcal).
\end{equation*}
Therefore, for all $k \geq 2$ and $\theta \geq 0$, 
\begin{align*}
\left|
\int_{\tau_{k}}^{\tau_{k}+\theta} \lcal_{µ}^{k} \Psi_{F,f}(M_{s}^{k}) ds
\right| &\leq \int_{\tau_{k}}^{\tau_{k}+\theta} \left|
\lcal_{µ}^{k} \Psi_{F,f} (M_{s}^{k})
\right| ds \\
&\leq \theta C_{F,f} \, \int_{0}^{\infty} \rcal^{d} µ(d\rcal),
\end{align*} 
and setting $\delta = (\epsilon / 2) \, C_{F,f}^{-1}\, (\int_{0}^{\infty} \rcal^{d} µ(d\rcal))^{-1}$ yields the desired result. 

\paragraph{Point 3.} By Lemma~\ref{lem:appendix_A_2}, there exists $\tilde{C}_{F,f}$ depending only on the choice of $F$ and $f$ such that for all $k \geq 2$, $s \geq 0$, $x \in \mathbb{R}^{d}$ and $\rcal > 0$, 
\begin{align*}
\left(
F(\langle \Theta_{x,\rcal}^{+}(\omega_{M_{s}^{k}}),f \rangle) - F(\langle \omega_{M_{s}^{k}},f \rangle)
\right)^{2} &\leq \tilde{C}_{F,f} \left(\rcal^{d} \wedge 1\right)^{2} \\
\text{and } \quad \left(
F(\langle \Theta_{x,\rcal}^{-}(\omega_{M_{s}^{k}}),f \rangle) - F(\langle \omega_{M_{s}^{k}},f \rangle)
\right)^{2} &\leq \tilde{C}_{F,f} \left(\rcal^{d} \wedge 1\right)^{2}. 
\end{align*}
Therefore, for all $k \geq 2$ and $\theta \geq 0$, we can bound from above
\begin{align*}
\int_{\tau_{k}}^{\tau_{k}+\theta} \int_{\mathbb{R}^{d}} \int_{0}^{\infty} \int_{\bcal(x,\rcal)^{k}}
& \frac{1}{V_{\rcal}^{k}}\, \left[\vphantom{\prod_{j = 1}^{k}}\left(
F(\langle \Theta_{x,\rcal}^{-}(\omega_{M_{s}^{k}}),f\rangle) - F(\langle \omega_{M_{s}^{k}},f \rangle)
\right)^{2} \right. \\
&\quad \quad \quad
\times \left.
\left(
1-\prod_{j = 1}^{k}\omega_{M_{s}^{k}}(y_{j})
\right) \right.\\
&\quad \quad 
+ \left.
\left(
F(\langle \Theta_{x,\rcal}^{+}(\omega_{M_{s}^{k}}),f\rangle) - F(\langle \omega_{M_{s}^{k}},f \rangle)
\right)^{2} \right.
\\
&\quad \quad \quad
\times \left.
\prod_{j = 1}^{k}\omega_{M_{s}^{k}}(y_{j})
\right]
dy_{1}...dy_{k}µ(d\rcal)dx ds
\end{align*}
by 
\begin{align*}
& \int_{\tau_{k}}^{\tau_{k}+\theta} \int_{0}^{\infty} \int_{\mathrm{Supp}^{\rcal}(f)} \int_{\bcal(x,\rcal)^{k}}
\frac{1}{V_{\rcal}^{k}} \, \tilde{C}_{F,f} (\rcal^{d} \wedge 1)^{2} dy_{1}...dy_{k} dx µ(d\rcal)ds \\
&\leq \int_{\tau_{k}}^{\tau_{k}+\theta} \int_{0}^{\infty} \int_{\mathrm{Supp}^{\rcal}(f)} \tilde{C}_{F,f} (\rcal^{d} \wedge 1) dx µ(d\rcal)ds \\
&\leq \int_{\tau_{k}}^{\tau_{k}+\theta} \int_{0}^{\infty} \tilde{C}_{F,f} \,(\rcal^{d} \wedge 1) \, C_{2} \, (\rcal^{d} \vee 1) ds µ(d\rcal) \\
&\leq \theta \, C_{2} \, \tilde{C}_{F,f} \, \int_{0}^{\infty} \rcal^{d}µ(d\rcal). 
\end{align*}
Here we used the fact that $0 \leq \rcal^{d} \wedge 1 \leq 1$ to pass from the first to the second line, and Eq.~(\ref{eqn:eqn_1}) to pass from the second to the third line. Setting $\delta = (3/2) \, C_{2}^{-1} \, \tilde{C}_{F,f}^{-1} \, (\int_{0}^{\infty} \rcal^{d}µ(d\rcal))^{-1}$ allows us to  conclude. 
\end{proof}

We can now show Theorem~\ref{thm:convergence_result}. 

\begin{proof}(Theorem~\ref{thm:convergence_result})
As the finite-dimensional distributions of $(M^{k})_{k \geq 2}$ converge to the ones of $M^{\infty}$ and as $(M^{k})_{k \geq 2}$ is tight (by Lemma~\ref{lem:sequence_tight}), we can use Prokhorov's theorem 
\cite{prokhorov1956convergence}
and conclude. 
\end{proof}

\section{The $\infty$-parent ancestral process : definition and characterization}\label{sec:3}
The goal of this section is to rigorously construct and study the dual process associated to the $\infty$-parent SLFV, called the $\infty$-parent ancestral process. In Section~\ref{subsec:3.1}, we exhibit sufficient conditions under which the $\infty$-parent ancestral process is well-defined, and state that it is the unique solution to a martingale problem. 
In Section~\ref{subsec:convergence_dual_process}, we show to what extent the $\infty$-parent ancestral process can be considered as the limit of the $k$-parent ancestral process when $k \to + \infty$, the main difficulty being that the two processes are defined on different state spaces. 
Then, in Section~\ref{subsec:3.2}, we use the $\infty$-parent ancestral process and its associated martingale problem to provide another characterization of the $\infty$-parent SLFV. 

\subsection{Definition and first properties}\label{subsec:3.1}
In order to show that the $\infty$-parent ancestral process $(\Xi_{t}^{\infty})_{t \geq 0}$ introduced in Definition \ref{defn:infty_parents_ancestors} is well-defined, we start by observing that the only reproduction events affecting $\Xi_{t}^{\infty}$ are the ones intersecting its boundary $\overline{\Xi_{t}^{\infty}}\backslash \mathring{\Xi_{t}^{\infty}}$. Therefore, it is sufficient to consider only the reproduction events affecting its border, or the ones affecting a well-chosen space containing it. 

\begin{figure}[ht]
\centering
\includegraphics[width = 0.6\linewidth]{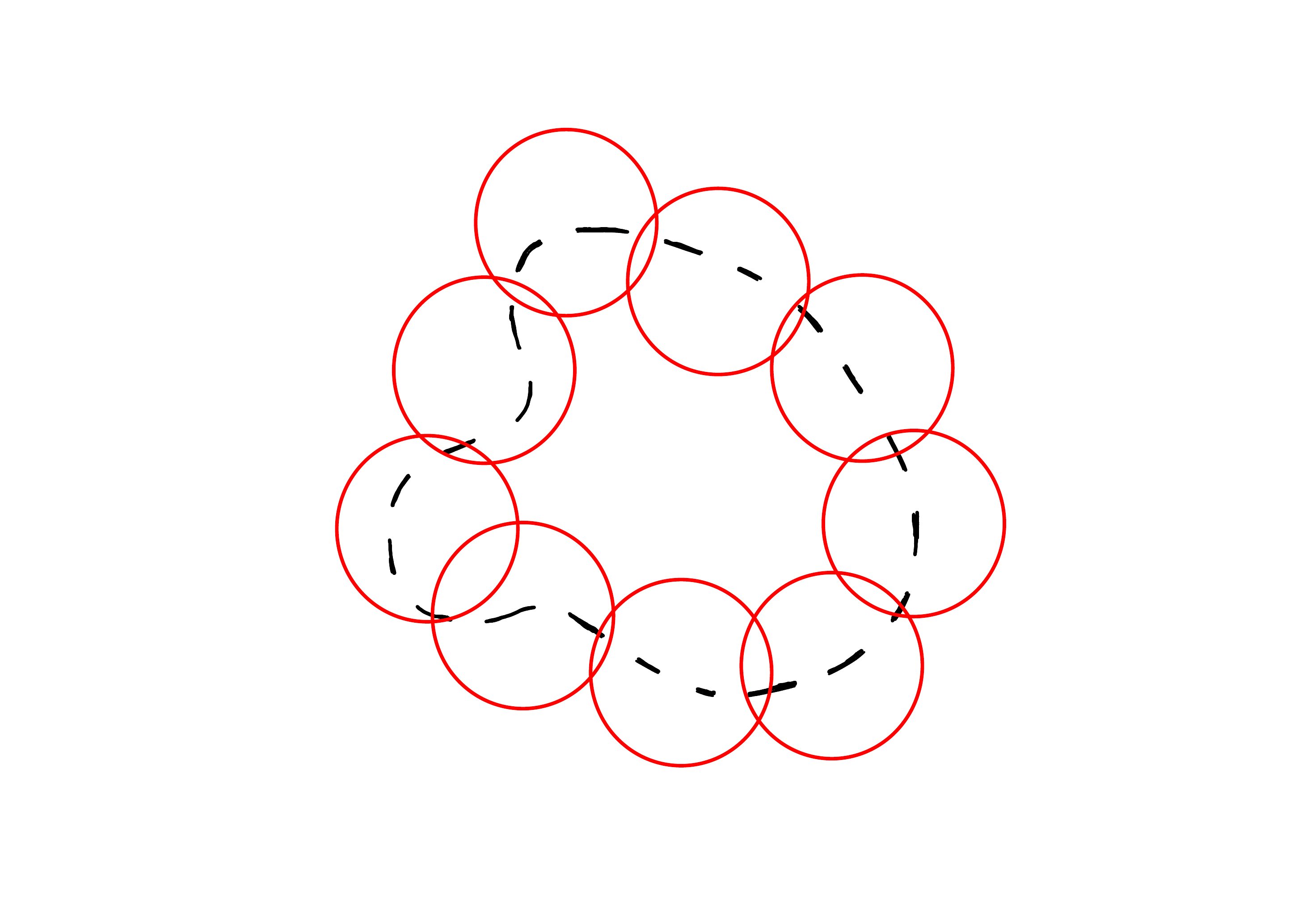}
\caption{Initial state of the $\infty$-parent ancestral process (dashed line), and a covering of its border by balls of radius $\widetilde{\rcal}$.}\label{fig:border_covering_t_0}
\end{figure}

In order to control the rate at which the $\infty$-parent ancestral process jumps, we start by taking $\widetilde{\rcal} > 0$ satisfying some condition which will be introduced later, and we cover the border $\overline{\Xi_{0}^{\infty}}\backslash \mathring{\Xi_{0}^{\infty}}$ of $\Xi_{0}^{\infty}$ with balls of radius $\widetilde{\rcal}$ (see Figure \ref{fig:border_covering_t_0}). Then, informally, whenever a reproduction event overlaps what we will call the $\widetilde{\rcal}$-covering :
\begin{itemize}
\item if this reproduction event has a radius of at most $\widetilde{\rcal}$, it is included in the ball of same center but of radius $\widetilde{\rcal}$. We add this ball of radius $\widetilde{\rcal}$ to the covering. 
\item Otherwise, we cover the border of the area of the reproduction event by balls of radius $\widetilde{\rcal}$, and we add these balls to the covering.
\end{itemize}
See Figure \ref{fig:border_covering_t_sup_0} for an illustration of this dynamics. 

\begin{figure}[ht]
\centering
\begin{subfigure}[b]{0.45\textwidth}
\includegraphics[width = \linewidth]{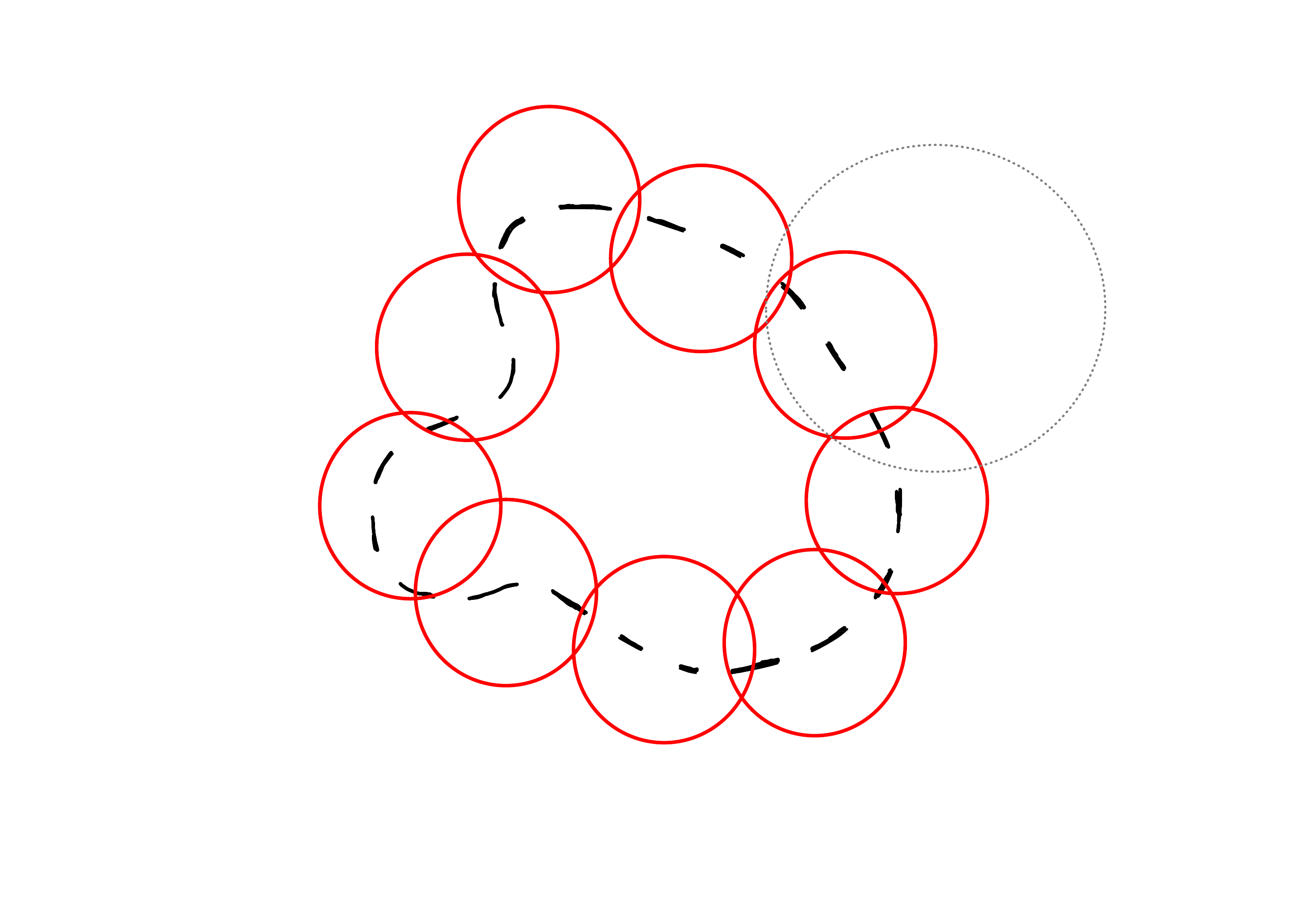}
\subcaption{Reproduction event (grey disk) affecting the $\infty$-parent ancestral process at time $t > 0$.}
\end{subfigure}
\hfill
\begin{subfigure}[b]{0.45\textwidth}
\includegraphics[width = \linewidth]{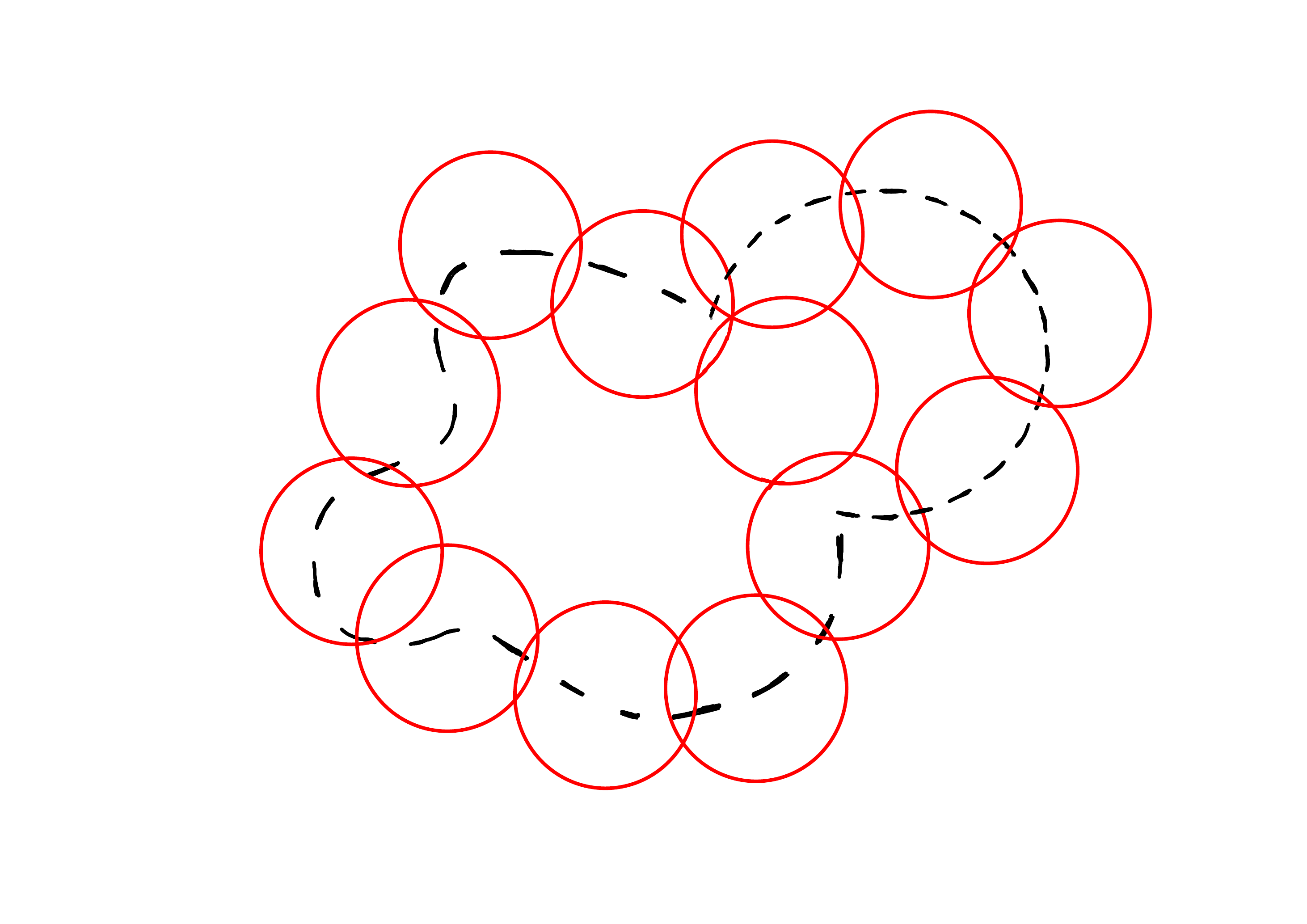}
\subcaption{The $\infty$-parent ancestral process is updated, and a covering of the border of the reproduction event by balls of radius $\widetilde{\rcal}$ is added to the $\widetilde{\rcal}$-covering process.}
\end{subfigure}

\begin{subfigure}[b]{0.45\textwidth}
\includegraphics[width = \linewidth]{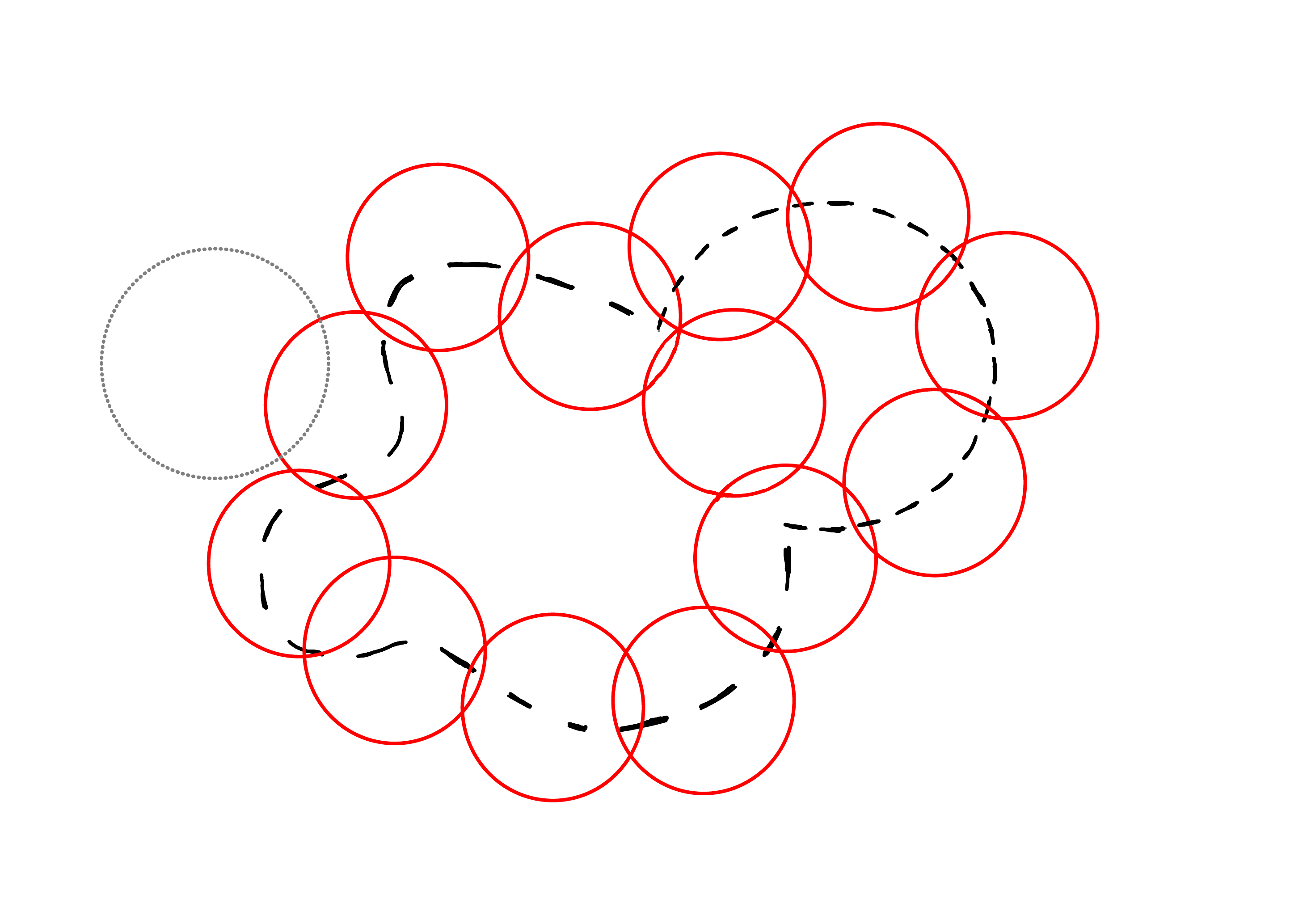}
\subcaption{Since the $\widetilde{\rcal}$-covering process is bigger than the border of the $\infty$-parent ancestral process, it can be affected by reproduction events (grey disk) which do not intersect the $\infty$-parent ancestral process.}
\end{subfigure}
\hfill
\begin{subfigure}[b]{0.45\textwidth}
\includegraphics[width = \linewidth]{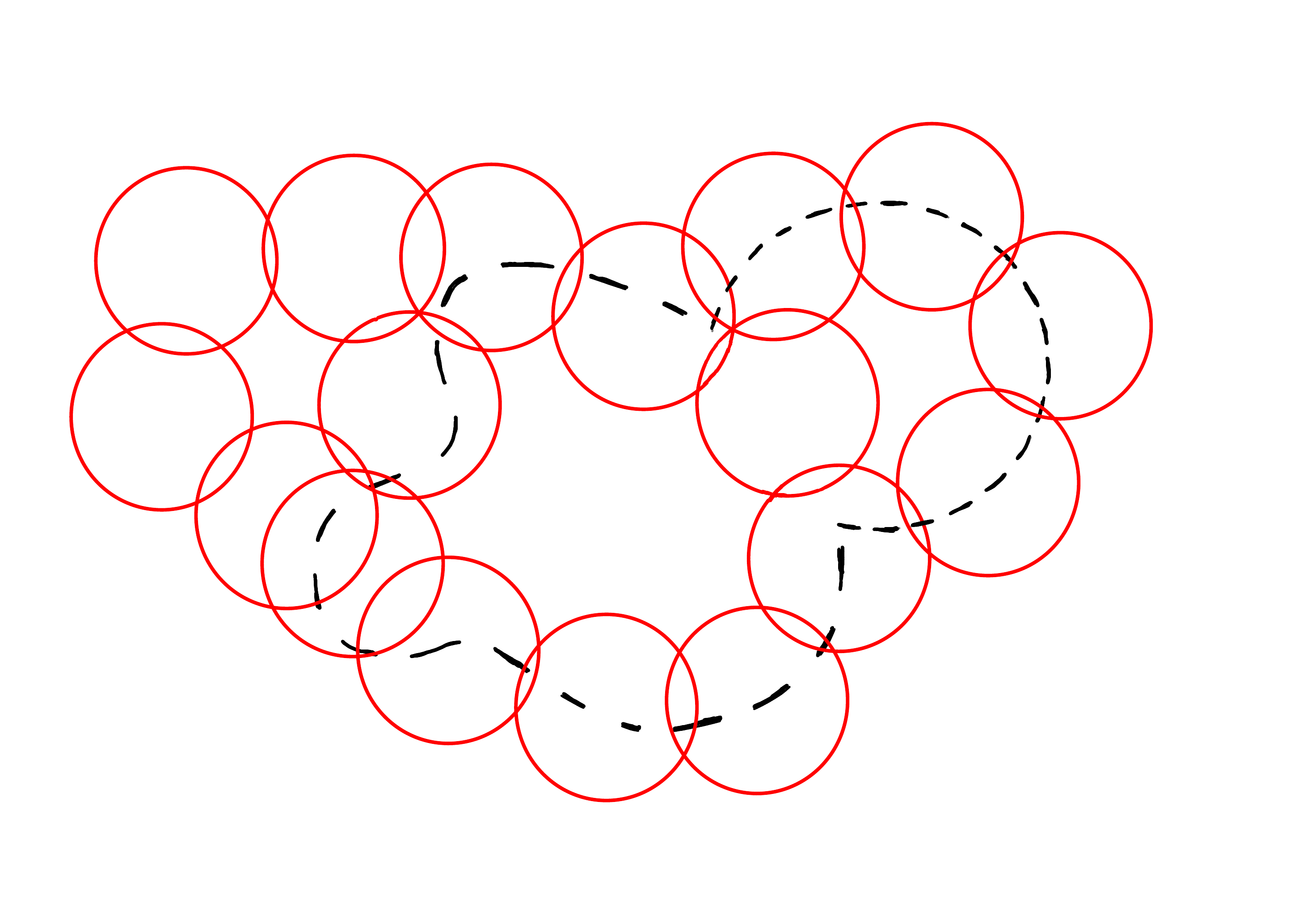}
\subcaption{Updated $\widetilde{\rcal}$-covering process after a reproduction event affecting it while not intersecting the $\infty$-parent ancestral process.}
\end{subfigure}
\caption{Illustration of the dynamics of the $\infty$-parent ancestral process (dashed line) and its associated $\widetilde{\rcal}$-covering process.}\label{fig:border_covering_t_sup_0}
\end{figure}
Note that since the covering contains the border $\overline{\Xi_{t}^{\infty}}\backslash \mathring{\Xi_{t}^{\infty}}$ of $\Xi_{t}^{\infty}$ but is not equal to it, there are more reproduction events affecting the $\widetilde{\rcal}$-covering than reproduction events affecting $\Xi_{t}^{\infty}$. 

Constructed this way, the $\widetilde{\rcal}$-covering contains only balls of radius $\widetilde{\rcal}$, each one being overlapped by a reproduction event at rate 
\begin{equation*}
\int_{0}^{\infty} V_{1} (\widetilde{\rcal} + r)^{d}µ(dr).
\end{equation*}
Moreover, since the covering is constructed using the same Poisson point process as for $(\Xi_{t}^{\infty})_{t \geq 0}$, at any time $t$ the current state of the covering contains the border $\overline{\Xi_{t}^{\infty}}\backslash \mathring{\Xi_{t}^{\infty}}$ of $\Xi_{t}^{\infty}$. Since the rate at which $(\Xi_{t}^{\infty})_{t \geq 0}$ jumps is bounded by the rate at which the covering we just constructed is updated, we can show that $(\Xi_{t}^{\infty})_{t \geq 0}$ is well-defined by controlling the rate at which new balls are added to the $\widetilde{\rcal}$-covering.

Let us now define the border covering process we just introduced rigorously. 
\begin{defn}[Border covering process]\label{defn:border_covering} 
In the notation of Definition \ref{defn:infty_parents_ancestors}, let $\widetilde{\rcal} > 0$ be such that $µ$ satisfies Condition (\ref{eqn:condition_infty_SLFV}). Let $x_{1},...,x_{N} \in \rd$, $N \geq 1$ be such that initially the border of $\Xi^{0}$ is entirely covered by the $N$ balls of radius $\widetilde{\rcal}$ $(B(x_{i},\widetilde{\rcal}))_{1 \leq i \leq N}$. Then, the $\widetilde{\rcal}$-covering process $(C_{t})_{t \geq 0}$ associated to $(\Xi_{t}^{\infty})_{t \geq 0}$ is constructed in the following way. 

Let $\widetilde{\Pi}$ be a Poisson point process on $\mathbb{R} \times \mathbb{R}^{d} \times (0,+\infty)$ with intensity $dt \otimes dx \otimes µ(dr)$. 
First, we set $C_{0} = \{x_{1},...,x_{N} : 1 \leq i \leq N\}$. Then, for all $(t, x, \rcal) \in \widetilde{\Pi}$, if $C_{t-} \cap \bxr \neq \emptyset$, let $n \in \mathbb{N}^{*}$ such that $(n-1)\widetilde{\rcal} \leq \rcal \leq n \widetilde{\rcal}$. We construct a covering of the border of $\bxr$ by at most $a_{d}\times n^{d-1}$ balls of radius $\widetilde{\rcal}$, and $C_{t}$ is obtained by adding the center of these balls to $C_{t-}$. 
\end{defn}

The interest of the border covering process lies in the fact that, as we argued earlier, for all $t \geq 0$, 
\begin{equation*}
\overline{\Xi_{t}^{\infty}}\backslash \mathring{\Xi_{t}^{\infty}} \subseteq C_{t}.
\end{equation*}
Therefore, the jump rate of $\Xi_{t}^{\infty}$ is bounded above by 
\begin{equation*}
Card(C_{t}) \times \int_{0}^{\infty} V_{1} (\widetilde{\rcal} + r)^{d}µ(dr).
\end{equation*}

\begin{lem}\label{lem:border_covering_bounded}
In the notation of Definitions \ref{defn:infty_parents_ancestors} and \ref{defn:border_covering}, $(Card(C_{t}))_{t \geq 0}$ is bounded from above by $(Y_{t})_{t \geq 0}$ the number of particles in a branching process in which each particle branches independently of the others at rate
\begin{equation*}
\int_{0}^{\infty} V_{1}(\widetilde{\rcal} + r)^{d} µ(dr),
\end{equation*}
and in which at each branching event, the number of descendants is equal to $a_{d} n^{d-1} + 1$, $n \geq 1$ with probability
\begin{equation*}
\frac{\int_{(n-1)\widetilde{\rcal}}^{n\widetilde{\rcal}} (\widetilde{\rcal} + r)^{d}µ(dr)}{\int_{0}^{\infty} (\widetilde{\rcal}+r)^{d} µ(dr)}.
\end{equation*}
Moreover, for all $t \geq 0$, $Y_{t} < + \infty$ a.s, and $\esp[Y_{t}] < + \infty$. 
\end{lem}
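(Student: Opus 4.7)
The plan is to compare $(\mathrm{Card}(C_{t}))_{t \geq 0}$ to the branching process $(Y_{t})_{t \geq 0}$ via a stochastic domination argument, and then invoke elementary properties of continuous-time Markov branching processes. First, I would observe that a given ball $B(x_{i},\widetilde{\rcal})$ of the covering is intersected by an event $(t,x,r) \in \widetilde{\Pi}$ precisely when $|x - x_{i}| \leq \widetilde{\rcal} + r$, so each individual ball is hit at rate
\begin{equation*}
\lambda := \int_{0}^{+\infty} V_{1}\,(\widetilde{\rcal} + r)^{d}\, µ(dr),
\end{equation*}
which is finite under Condition~(\ref{eqn:condition_infty_SLFV}) (since each term of the series already dominates $\int_{(n-1)\widetilde{\rcal}}^{n\widetilde{\rcal}}(\widetilde{\rcal}+r)^{d}µ(dr)$). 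Moreover, by the very definition of $a_{d}$ together with a rescaling by $\widetilde{\rcal}$, an event of radius $r \in ((n-1)\widetilde{\rcal}, n\widetilde{\rcal}]$ triggers the addition of at most $a_{d} n^{d-1}$ new balls, regardless of how many existing balls of $C_{t-}$ it simultaneously intersects.

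Next, I would construct the coupling on an enlarged probability space. Equip each particle of the comparison process with its own independent copy of a Poisson point process with intensity $dt \otimes dx \otimes µ(dr)$, restricted to events intersecting the ball associated with that particle. In this ``independent'' system, each particle branches independently at rate $\lambda$ and produces $a_{d} n^{d-1} + 1$ offspring whenever hit by an event of radius in $((n-1)\widetilde{\rcal}, n\widetilde{\rcal}]$; by definition this is the branching process $(Y_{t})_{t \geq 0}$ of the lemma, with the offspring distribution stated. The process $(C_{t})_{t \geq 0}$ is then realized on the same space by declaring that every event of $\widetilde{\Pi}$ intersecting $C_{t-}$ is assigned to the first (in some prescribed ordering) ball it hits, which branches in the independent system and whose offspring prescribe the new balls added to $C$; the concurrent hits of the same event on other balls of $C_{t-}$ are taken from their own independent copies and do not contribute to $C$. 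Since each event of $\widetilde{\Pi}$ adds at most $a_{d} n^{d-1}$ balls to $C$ irrespective of multiplicity of hits, whereas every such hit contributes $a_{d}n^{d-1}$ particles to $Y$, one obtains the almost sure domination $\mathrm{Card}(C_{t}) \leq Y_{t}$ for all $t \geq 0$.

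For the finiteness claims, I would compute the mean offspring
\begin{equation*}
m := \sum_{n \geq 1} (a_{d} n^{d-1} + 1)\, \frac{\int_{(n-1)\widetilde{\rcal}}^{n\widetilde{\rcal}} (\widetilde{\rcal}+r)^{d}\, µ(dr)}{\int_{0}^{+\infty} (\widetilde{\rcal}+r)^{d}\, µ(dr)},
\end{equation*}
and notice that, up to the fixed normalization $\int_{0}^{+\infty}(\widetilde{\rcal}+r)^{d}µ(dr) \in (0,+\infty)$, its numerator summed over $n$ is exactly the series in Condition~(\ref{eqn:condition_infty_SLFV}) specialized to $\rcal = \widetilde{\rcal}$, hence finite. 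Since $\lambda < +\infty$ as well, the standard ODE $\frac{d}{dt}\esp[Y_{t}] = \lambda(m-1)\esp[Y_{t}]$ for the mean of a continuous-time Markov branching process gives
\begin{equation*}
\esp[Y_{t}] = N\, e^{\lambda(m-1)t} < + \infty
\end{equation*}
for every $t \geq 0$, which in turn forces $Y_{t} < + \infty$ almost surely.

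The main obstacle I expect is making the coupling pathwise rigorous, because a single event of $\widetilde{\Pi}$ may intersect several balls of $C_{t-}$ at once, destroying the natural independence of the per-ball event streams. The fix sketched above, namely decoupling the ``extra'' hits by feeding them through independent auxiliary Poisson events that contribute to the branching in $Y$ but not to $C$, preserves the correct distribution of $Y$ while ensuring the pathwise inequality; once this coupling is in place the rest of the proof is a short calculation.
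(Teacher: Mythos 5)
Your proof is correct and takes essentially the same approach as the paper: the same per-ball hitting rate $\int_0^\infty V_1(\widetilde{\rcal}+r)^d\mu(dr)$, the same bound of $a_d n^{d-1}$ new balls per reproduction event, and the same use of Condition~(\ref{eqn:condition_infty_SLFV}) to bound the offspring mean. The only differences are cosmetic: the paper declares the construction of the dominating branching process to be ``clear'' and cites Theorem~III.2.1 of Athreya and Ney for non-explosion, whereas you spell out a coupling (correctly flagging the simultaneous-hit subtlety that the paper glosses over) and conclude via the ODE for the mean, which amounts to the same standard fact.
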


\begin{proof}
How to construct the branching process $(Y_{t})_{t \geq 0}$ from $(C_{t})_{t \geq 0}$ is clear. The jump rates and transition probabilities come from the fact that for any point $x \in C_{t}$ and for all $n \geq 1$, the ball $\mathcal{B}(x,\widetilde{\rcal})$ is affected by a reproduction event of radius $(n-1)\widetilde{\rcal} \leq \rcal \leq n\widetilde{\rcal}$ at rate 
\begin{equation*}
\int_{(n-1)\widetilde{\rcal}}^{n\widetilde{\rcal}} V_{1}(\widetilde{\rcal} + \rcal)^{d} µ(d\rcal),
\end{equation*}
and such a reproduction event generates 
$a_{d} \, n^{d-1}$ new balls in the border covering process. 

Then, if $\Phi$ is the probability generating function of the number of descendants, 
\begin{equation*}
\Phi'(1) = \sum_{n = 1}^{+ \infty} \left(\int_{(n-1)\widetilde{\rcal}}^{n \widetilde{\rcal}} V_{1} (\widetilde{\rcal} + r)^{d}µ(dr)\right) \times (a_{d} \, n^{d-1} + 1) < + \infty
\end{equation*}
since $µ$ satisfies Condition (\ref{eqn:condition_infty_SLFV}). Therefore, by Theorem III.2.1 in \cite{athreya1972}, $Y_{t}$ is finite for all $t \geq 0$ a.s, and $\esp[Y_{t}] < + \infty$ for all $t \geq 0$. 
\end{proof}

We can then conclude that $(\Xi_{t}^{\infty})_{t \geq 0}$ is well-defined using the fact that the jump rate of $\Xi_{t}^{\infty}$ is bounded from above by
\begin{equation*}
Y_{t}  \, \int_{0}^{\infty} V_{1}  \, (\widetilde{\rcal} + \rcal)^{d}µ(d\rcal) < + \infty \text{ a.s }.
\end{equation*}

We now introduce how to characterize the $\infty$-parent ancestral process as the unique solution to a martingale problem. 

In all that follows, let $F \in C_{b}^{1}(\mathbb{R})$ and $f \in \mathbb{B}(\rd)$. We extend the definition of the function $\Phi_{F,f}$ to the space of measures $m(E) \in \mcal^{cf}$, setting
\begin{align*}
\Phi_{F,f}(m(E)) := &F\left(\int_{\rd} f(x)m(E)dx \right) \\
= &F\left(\int_{E} f(x)dx \right).
\end{align*}
Moreover, for all $E \in \mathcal{E}^{cf}$ and $\rcal > 0$, we set
\begin{equation*}
S^{\rcal}(E) := \{x \in \rd : \exists y \in E, ||x-y|| \leq \rcal\}.
\end{equation*}
Note that this definition is reminiscent of the definition of $S^{\rcal}(\Xi)$ with $\Xi \in \mcal_{p}(\rd)$. 

Let $µ$ be a $\sigma$-finite measure on $\mathbb{R}_{+}^{*}$ satisfying Condition (\ref{eqn:condition_infty_SLFV}). We define the operator $\gcal_{µ}^{\infty}$ on functions of the form $\Phi_{F,f}$ the following way. For all $m(E) \in \mcal^{cf}$, we set
\begin{equation*}
\gcal_{µ}^{\infty} \Phi_{F,f} (m(E)) := \int_{0}^{\infty}\int_{S^{\rcal}(E)}
F\left(\langle m(E \cup \bxr), f \rangle \right) - F \left( \langle m(E),f \rangle \right) dx µ(d\rcal). 
\end{equation*}
We show in Section \ref{sec:5} that this operator is well-defined, and give some properties that it satisfies. 
The $\infty$-parent ancestral process is then solution to the following martingale problem. 
\begin{prop}\label{prop:backwards_kinf}
Let $µ$ be a $\sigma$-finite measure on $(0,+\infty)$ satisfying Condition (\ref{eqn:condition_infty_SLFV}). Let $\Xi^{0} \in \mathcal{M}^{cf}$, and let $(\Xi_{t}^{\infty})_{t \geq 0}$ be the $\infty$-parent ancestral process associated to $µ$ with initial condition $\Xi^{0}$. 

Then, for all $F \in C_{b}^{1}(\mathbb{R})$ and for all measurable function $f : \rd \to \{0,1\}$, the process
\begin{equation*}
\left(
\Phi_{F,f}(\Xi_{t}^{\infty}) - \Phi_{F,f}(\Xi_{0}^{\infty}) - \int_{0}^{t}\gcal_{µ}^{\infty}\Phi_{F,f}(\Xi_{s}^{\infty})ds
\right)_{t \geq 0}
\end{equation*}
is a martingale.
\end{prop}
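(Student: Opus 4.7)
\medskip

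\textbf{Proof proposal.} The plan is to exploit the fact that $(\Xi_t^\infty)_{t \geq 0}$ is a pure jump Markov process whose jumps are in bijection with a selected sub-family of the atoms of $\overleftarrow{\Pi}$, and then to apply the compensation formula for Poisson point measures. By construction of $(\Xi_t^\infty)_{t \geq 0}$ in Definition~\ref{defn:infty_parents_ancestors}, a point $(s,x,\rcal) \in \overleftarrow{\Pi}$ triggers a jump if and only if $\mathrm{Vol}(E_{s-}^\infty \cap B(x,\rcal)) > 0$, and one checks at once that this is equivalent to $x \in S^{\rcal}(E_{s-}^\infty)$ up to a Lebesgue-null set of centres (which does not affect the integrals to come). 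When such a jump occurs, $\Xi_{s-}^\infty = m(E_{s-}^\infty)$ is replaced by $m(E_{s-}^\infty \cup B(x,\rcal))$, producing the increment
\begin{equation*}
\Delta_s^{x,\rcal}\Phi_{F,f} := \Phi_{F,f}\bigl(m(E_{s-}^\infty \cup B(x,\rcal))\bigr) - \Phi_{F,f}\bigl(m(E_{s-}^\infty)\bigr).
\end{equation*}

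\medskip

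Summing over jump times and rewriting as an integral against the random measure $\overleftarrow{\Pi}$ gives the telescoping identity
\begin{equation*}
\Phi_{F,f}(\Xi_t^\infty) - \Phi_{F,f}(\Xi_0^\infty) = \int_{[0,t] \times \rd \times (0,\infty)} \mathds{1}_{\{x \in S^{\rcal}(E_{s-}^\infty)\}}\, \Delta_s^{x,\rcal}\Phi_{F,f}\; \overleftarrow{\Pi}(ds,dx,d\rcal).
\end{equation*}
The compensator of $\overleftarrow{\Pi}$ being $ds\otimes dx\otimes µ(d\rcal)$, the standard compensation formula for integrals against Poisson point measures (see e.g.~\cite{kallenberg2006foundations}) implies that, once integrability is secured, the compensated integral
\begin{equation*}
\int_{[0,t] \times \rd \times (0,\infty)} \mathds{1}_{\{x \in S^{\rcal}(E_{s-}^\infty)\}}\, \Delta_s^{x,\rcal}\Phi_{F,f}\; \bigl(\overleftarrow{\Pi} - ds\,dx\,µ(d\rcal)\bigr)
\end{equation*}
is a martingale, and its drift part coincides exactly with $\int_0^t \gcal_{µ}^{\infty}\Phi_{F,f}(\Xi_s^\infty)ds$ by definition of $\gcal_{µ}^{\infty}$.

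\medskip

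The main obstacle is therefore to verify the integrability required to apply the compensation formula directly, since $E_s^\infty$ can grow unboundedly and a priori $\mathrm{Vol}(S^{\rcal}(E_s^\infty))$ need not be uniformly bounded. I would handle this by a localisation argument based on the border covering process $(C_t)_{t\geq 0}$ of Definition~\ref{defn:border_covering}. Since $F \in C_b^1(\mathbb{R})$, the increments $|\Delta_s^{x,\rcal}\Phi_{F,f}|$ are bounded by $2\|F\|_\infty$, and every jump-triggering centre $x$ lies within distance $\rcal$ of $\overline{E_{s-}^\infty}\setminus \mathring{E_{s-}^\infty} \subseteq C_{s-}$. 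Hence the instantaneous rate of accepted reproduction events is bounded by $\mathrm{Card}(C_{s-}) \int_0^\infty V_1(\widetilde{\rcal} + r)^d µ(dr)$, which is dominated by the branching process $(Y_s)_{s\geq 0}$ of Lemma~\ref{lem:border_covering_bounded}, in turn integrable under Condition~(\ref{eqn:condition_infty_SLFV}). Introducing the stopping times $\tau_n := \inf\{t \geq 0 : Y_t \geq n\}$, the compensation formula applies on $[0, t\wedge \tau_n]$ and yields that the stopped process is a martingale; since $\tau_n \to +\infty$ almost surely and the martingale increments are $L^1$-bounded uniformly in $n$ by the same $Y$-based estimate combined with the bound $|\gcal_µ^\infty\Phi_{F,f}(\Xi_s^\infty)| \leq 2\|F\|_\infty \int_0^\infty V_1(\widetilde{\rcal}+r)^d µ(dr)\,\mathrm{Card}(C_s)$, a uniform integrability argument (or dominated convergence using $\esp[Y_t]<\infty$) allows one to pass to the limit and obtain the martingale property unstopped, thereby proving the proposition.
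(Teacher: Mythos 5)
Your argument is correct, but it takes a genuinely different route from the paper. The paper disposes of this proposition in one line, by invoking Proposition~4.1.7 of \cite{ethier1986}: since $(\Xi_{t}^{\infty})_{t \geq 0}$ is a Markov jump process whose jump rate is a.s. finite (thanks to the border covering process), the pair $(\Phi_{F,f}, \gcal_{µ}^{\infty}\Phi_{F,f})$ belongs to its full generator and the martingale property is the abstract Dynkin-type statement. You instead work directly with the driving Poisson point process: you write the increment of $\Phi_{F,f}(\Xi_{t}^{\infty})$ as an integral of a predictable integrand against $\overleftarrow{\Pi}$, apply the compensation formula, identify the compensator with $\int_{0}^{t}\gcal_{µ}^{\infty}\Phi_{F,f}(\Xi_{s}^{\infty})ds$, and secure integrability by dominating the state-dependent (and unbounded over the state space) jump rate with the branching process $(Y_{t})_{t \geq 0}$ of Lemma~\ref{lem:border_covering_bounded}, followed by localisation at $\tau_{n}$ and dominated convergence using $\esp[Y_{t}]<+\infty$. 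This is more work but more self-contained: the semigroup route silently requires exactly the same control of the unbounded jump rate to make the adaptation of \cite{ethier1986} legitimate, and your proof makes that step explicit. In fact, since $\esp[Y_{t}]<+\infty$ already gives $\esp\int_{0}^{t}\int\int \mathds{1}_{\{x \in S^{\rcal}(E_{s-}^{\infty})\}}|\Delta_{s}^{x,\rcal}\Phi_{F,f}|\,dx\,µ(d\rcal)\,ds<+\infty$, you could apply the compensation formula directly and skip the stopping times altogether.

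Two small points to tighten. First, a centre $x$ can trigger a jump (i.e. $\mathrm{Vol}(E_{s-}^{\infty}\cap \bxr)>0$) while $B(x,\rcal)\subseteq E_{s-}^{\infty}$ up to a null set, in which case the jump is invisible; your covering bound really controls the centres producing a \emph{nonzero} increment, which is all that is needed since $\Delta_{s}^{x,\rcal}\Phi_{F,f}=0$ otherwise — worth saying explicitly. Second, the identification of $\{x:\mathrm{Vol}(E\cap\bxr)>0\}$ with $S^{\rcal}(E)$ up to a null set of centres, which you correctly flag as needed to match the drift with the definition of $\gcal_{µ}^{\infty}$, is exactly the identification the paper itself makes in the proof of Proposition~\ref{prop:stochastic_growth_model}; your proof is at the same level of rigour as the paper on this point, so no further action is required, but note that $C_{t}$ is a set of ball centres, so the inclusion you want is into $\bigcup_{c\in C_{s-}}B(c,\widetilde{\rcal})$ rather than into $C_{s-}$ itself.
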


The proof is a direct adaptation of the proof of Proposition 4.1.7. from 
\cite{ethier1986}.

\subsection{Convergence of the $k$-parent ancestral process towards the $\infty$-parent ancestral process}\label{subsec:convergence_dual_process}
As in the case of the $\infty$-parent SLFV, the $\infty$-parent ancestral process can be considered as the limit of the $k$-parent ancestral process when $k \to + \infty$, though since the dual processes are defined on different state spaces (that is, $\mcal_{p}(\mathbb{R}^{d})$ in the case of the $k$-parent ancestral process and $\mcal^{cf}$ in the case of the $\infty$-parent ancestral process), the result we obtain is weaker than the corresponding one on the $\infty$-parent SLFV.

In all that follows, let $µ$ be a $\sigma$-finite measure on $(0,+\infty)$ satisfying Condition~(\ref{eqn:condition_infty_SLFV}). We recall that for all $\Xi = \sum_{i = 1}^{l} \delta_{x_{i}} \in \mcal_{p}(\mathbb{R}^{d})$, the set of atoms of $\Xi$ is denoted $A(\Xi)$. Our goal is to show the following result. 

\begin{prop}\label{prop:cvg_dual}
Let $E^{0} \in \mathcal{E}^{cf}$, and let $(p_{n}^{0})_{n \geq 1}$ be a sequence of i.i.d. random variables sampled uniformly at random in $E^{0}$. Let $(\Xi_{t}^{\infty})_{t \geq 0} = (m(E_{t}^{\infty}))_{t \geq 0}$ be the $\infty$-parent ancestral process with initial condition $m(E^{0})$ and associated to $µ$, and for all $k \geq 2$, let $(\Xi_{t}^{k})_{t \geq 0}$ be the $k$-parent ancestral process with initial condition
\begin{equation*}
\Xi_{0}^{k} = \sum_{i = 1}^{k} \delta_{p_{n}^{0}}
\end{equation*}
and associated to $µ$. Then, it is possible to couple $(\Xi_{t}^{\infty})_{t \geq 0}$ and $(\Xi_{t}^{k})_{t \geq 0}$, $k \geq 2$ in such a way that for all $t \geq 0$, 
\begin{equation*}
A(\Xi_{t}^{k}) \xrightarrow[k \to + \infty]{} E_{t}^{\infty} \quad \text{ a.s.}
\end{equation*}
in the sense of Painlevé-Kuratowski and as compact subsets of $\mathbb{R}^{d}$. 
\end{prop}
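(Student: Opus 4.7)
The plan is to construct all the processes on a common probability space from the extended Poisson point process $\Pi^c$ introduced in Section~\ref{subsec:2.1}, and then analyse the convergence inductively on the jumps of the $\infty$-parent ancestral process. Specifically, I would take a single sequence $(p_n^0)_{n \geq 1}$ of i.i.d.\ uniform samples on $E^0$ and set $\Xi_0^k = \sum_{i=1}^k \delta_{p_i^0}$ for every $k \geq 2$. For every event $(t, x, \rcal, (p_n)_{n\geq 1}) \in \Pi^c$, the $k$-parent ancestral process uses the first $k$ potential parents $x + \rcal p_1, \dots, x + \rcal p_k$ as in Definition~\ref{defn:quenched_k_dual}, whereas the $\infty$-parent ancestral process ignores the potential parents and simply adds $\mathcal{B}(x,\rcal)$ to the current region whenever its intersection with the latter has positive Lebesgue measure.

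A first preliminary step is to verify that $A(\Xi_0^k) \to E^0$ in the Painlev\'e--Kuratowski sense almost surely. This follows from the strong law of large numbers applied to the empirical measure of $(p_n^0)_{n \geq 1}$ together with the Lebesgue density theorem, which identifies the support of the uniform distribution on $E^0$ with $E^0$ modulo a Lebesgue-null set. The same kind of argument gives the fundamental building block used later: inside any fixed ball $\mathcal{B}(x,\rcal)$, the family of accumulation points of $(x+\rcal p_n)_{n \geq 1}$ fills out the whole ball.

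By Lemma~\ref{lem:border_covering_bounded}, the $\infty$-parent ancestral process has only finitely many jumps on each bounded time interval almost surely. I would enumerate these jump times as $0 < t_1 < t_2 < \dots$ and prove the convergence $A(\Xi_t^k) \to E_t^\infty$ by induction on $j$, showing that the statement holds on each interval $[t_j, t_{j+1})$. Between two consecutive jump times, two types of events in $\Pi^c$ have to be controlled: (i) those for which $\mathcal{B}(x,\rcal) \cap E_{t-}^\infty$ has positive measure, which trigger a simultaneous jump of both processes for $k$ large enough, since by the inductive hypothesis the atoms of $A(\Xi_{t-}^k)$ are dense in $E_{t-}^\infty$; and (ii) those for which $\mathcal{B}(x,\rcal) \cap E_{t-}^\infty$ has null measure, which never affect the $\infty$-parent process and, I claim, almost surely eventually stop affecting the $k$-parent process as $k \to \infty$. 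This last claim rests on the Lebesgue density theorem: every atom of $A(\Xi_t^k)$ lies a.s.\ at a density point of $E_t^\infty$, hence belongs to no ball intersecting $E_t^\infty$ in null measure. The induction step at $t_{j+1}$ is then performed by noticing that the two sets are updated by adding $\mathcal{B}(x_{j+1}, \rcal_{j+1})$ either entirely or as $k$ i.i.d.\ uniform samples, and invoking the density building block together with the stability of Painlev\'e--Kuratowski convergence under finite unions.

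The main obstacle is the control of the "spurious" $k$-parent jumps in item~(ii), which correspond to reproduction events touching an atom of $\Xi^k$ near the boundary of $E^\infty$. A naive pathwise argument cannot succeed since, for every finite $k$, such jumps do occur; the point is rather to isolate a single almost sure event on which, simultaneously for all $t \geq 0$, every atom of $\Xi_t^k$ sits at a density point of $E_t^\infty$ and the atoms accumulate densely in $E_t^\infty$. Transferring this simultaneous statement across all countably many jump times, while preserving both inclusions $\liminf_k A(\Xi_t^k) \supseteq E_t^\infty$ and $\limsup_k A(\Xi_t^k) \subseteq \overline{E_t^\infty}$ through the inductive construction, is where the bulk of the technical effort should be spent.
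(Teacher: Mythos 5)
Your proposal follows essentially the same route as the paper: couple all processes through the extended Poisson point process $\Pi^{c}$, obtain the initial Painlevé-Kuratowski convergence $A(\Xi_{0}^{k}) \to E^{0}$ from the law of large numbers, and induct over the almost surely finitely many jump times of the $\infty$-parent ancestral process, using the fact that every atom of $\Xi_{t}^{k}$ lies in $E_{t}^{\infty}$ at a density point, so that a reproduction event cannot move $\Xi^{k}$ without also moving $\Xi^{\infty}$. The only slip is your remark that spurious $k$-parent jumps ``do occur'' for every finite $k$: the paper's Lemma~\ref{lem:inclusion_sequence_duals} shows (and your own density-point argument in fact proves) that they almost surely never occur for any $k$, which is precisely what makes the induction clean.
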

The proof of Proposition~\ref{prop:cvg_dual} can be found at the end of the section. 

Looking at the convergence of the support of the measures allows us to circumvent the facts that the $k$-parent and $\infty$-parent ancestral processes are defined on different state spaces, and that the mass of the $k$-parent ancestral process becomes infinite when $k \to + \infty$. We also need to consider specific initial conditions for the sequence of $k$-parent ancestral processes in order to obtain a limiting set with nonzero Lebesgue measure. 

In order to show Proposition~\ref{prop:cvg_dual}, we first construct the corresponding coupling, which relies on the use of the same extended Poisson point process to construct all processes. Let $\Pi^{c}$ be a Poisson point process on $\mathbb{R} \times \mathbb{R}^{d} \times (0,+\infty) \times U$ with intensity
$dt \otimes dx \otimes µ(d\rcal) \otimes \tilde{u}\left(
d(p_{n})_{n \geq 1}
\right)$.
Let $(\Xi_{t}^{\infty,\Pi^{c}})_{t \geq 0}$ be the $\infty$-parent ancestral process with initial condition $E^{0}$ constructed using $\Pi^{c}$. Moreover, for all $k \geq 2$, let $(\Xi_{t}^{k,\Pi^{c}})_{t \geq 0}$ be the $k$-parent ancestral process with initial condition $\sum_{k = 1}^{k} \delta_{p_{n}^{0}}$ constructed using $\Pi^{c}$: whenever the process jumps, if the corresponding reproduction event is $(t,x,\rcal,(p_{n})_{n \geq 1})$, then the locations of the $k$ potential parents are given by $x+\rcal p_{1},...,x+\rcal p_{k}$. The following result is a direct consequence of the coupling and of the choice of initial conditions. 

\begin{lem}\label{lem:initial_condition_dual}
\begin{enumerate}
\item We have 
\begin{equation*}
A\left(
\Xi_{0}^{k,\Pi^{c}}
\right) \xrightarrow[k \to + \infty]{} E^{0} \quad \text{ a.s.}
\end{equation*}
in the sense of Painlevé-Kuratowski. 
\item For all $t \geq 0$, for all $k' \geq k \geq 2$, 
\begin{equation*}
A\left(
\Xi_{t}^{k,\Pi^{c}}
\right) \subseteq A \left(
\Xi_{t}^{k',\Pi^{c}}
\right).
\end{equation*}
\end{enumerate}
\end{lem}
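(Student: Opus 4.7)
The plan is to treat the two parts independently: part 2 is a quick coupling argument, already essentially contained in Lemma~\ref{lem:couplage_k_k'} (only the initial conditions differ), while part 1 requires a Borel--Cantelli-type argument on the i.i.d. sequence $(p_n^0)_{n \geq 1}$ together with basic properties of the Painlev\'e--Kuratowski topology. I expect part 1 to be the main obstacle, because the convergence statement involves the whole closure of $E^0$ while the definition of $\mathcal{E}^{c}$ does not assume $E^0$ is closed, so some care is needed.

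For part 1, I would fix a countable basis $(B_m)_{m \geq 1}$ of open balls of $\mathbb{R}^d$ with rational centres and radii, and restrict attention to those for which $|B_m \cap E^0| > 0$. For any such $B_m$, $\mathbb{P}(p_n^0 \in B_m) = |B_m \cap E^0|/|E^0| > 0$ and the events $\{p_n^0 \in B_m\}$ are independent, so the second Borel--Cantelli lemma gives that almost surely infinitely many $p_n^0$ fall in $B_m$. Intersecting over the countable family of such $B_m$, we obtain almost surely that $\{p_n^0 : n \geq 1\}$ is dense in the set of points $x$ every neighbourhood of which meets $E^0$ in positive Lebesgue measure; by a standard Lebesgue density argument this set coincides with $\overline{E^0}$ up to a negligible discrepancy, and in particular contains $E^0$. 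From the density, the Painlev\'e--Kuratowski limit of $A(\Xi_0^{k,\Pi^c}) = \{p_1^0, \ldots, p_k^0\}$ is computed directly: its limsup equals $\overline{\{p_n^0 : n \geq 1\}}$ (by definition), and its liminf coincides with the limsup here because for any accumulation point $y$ one can extract a subsequence $p_{n_j}^0 \to y$ and then set $x_k = p_{n_{j(k)}}^0$ for $k \geq n_{j(k)}$ with $j(k) \to \infty$. Both thus equal the closure of the i.i.d. sample, which is the $E^0$ appearing in the statement (the authors implicitly identify $E^0$ with its closure, as the boundary is Lebesgue-null and the result is stated ``as compact subsets of $\mathbb{R}^d$'').

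For part 2, I would argue by analysing the coupled dynamics event by event. At time $t = 0$ we have $A(\Xi_0^{k,\Pi^c}) = \{p_1^0, \ldots, p_k^0\} \subseteq \{p_1^0, \ldots, p_{k'}^0\} = A(\Xi_0^{k',\Pi^c})$. Let $(t,x,\rcal,(p_n)_{n \geq 1}) \in \Pi^c$ and assume the inclusion holds just before time $t$. Either (i) $A(\Xi_{t-}^{k,\Pi^c}) \cap B(x,\rcal) \neq \emptyset$, in which case by the inclusion hypothesis $A(\Xi_{t-}^{k',\Pi^c}) \cap B(x,\rcal)$ also contains these atoms, both processes jump, removing all of their atoms inside $B(x,\rcal)$ and inserting the atoms $x + \rcal p_1, \ldots, x + \rcal p_k$ (resp.\ $x + \rcal p_{k'}$) which form nested sets since $k \leq k'$; (ii) $A(\Xi_{t-}^{k,\Pi^c}) \cap B(x,\rcal) = \emptyset$ but $A(\Xi_{t-}^{k',\Pi^c}) \cap B(x,\rcal) \neq \emptyset$, in which case only the $k'$-process jumps, and the inclusion is preserved because the $k$-process is unchanged while the $k'$-process only loses atoms (all of which lie outside the unchanged set $A(\Xi_{t-}^{k,\Pi^c})$) and gains atoms; (iii) neither process jumps. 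The case where $\Xi_{t-}^{k,\Pi^c}$ has an atom in the ball while $\Xi_{t-}^{k',\Pi^c}$ has none is ruled out by the inclusion hypothesis. Combining with the fact that, by Lemma~\ref{lem:border_covering_bounded}, jump times almost surely do not accumulate on compact time intervals, a straightforward induction on the successive jump times gives the claim for every $t \geq 0$.

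The main difficulty is the Painlev\'e--Kuratowski identification in part 1, specifically justifying why the almost sure limit of the discrete sets $\{p_1^0, \ldots, p_k^0\}$ can legitimately be written as $E^0$ rather than $\overline{E^0}$; this is harmless because $E^0 \in \mathcal{E}^{cf}$ and $\overline{E^0}$ differ only on a Lebesgue-null boundary, but one has to state the convention explicitly. Part 2, by contrast, is a clean monotone coupling and should reduce almost verbatim to the argument given for Lemma~\ref{lem:couplage_k_k'}.
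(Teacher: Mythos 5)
Your proposal is correct and takes exactly the route the paper intends: the paper gives no written proof (the lemma is asserted to be ``a direct consequence of the coupling and of the choice of initial conditions''), and your second Borel--Cantelli argument for part~1 together with the event-by-event monotone coupling for part~2 (the same mechanism as Lemma~\ref{lem:couplage_k_k'}, extended to nested rather than equal initial conditions) supplies precisely those details. One small correction to your part~1: the almost sure Painlev\'e--Kuratowski limit is the support of the uniform law on $E^{0}$, which contains every Lebesgue-density point of $E^{0}$ but not necessarily every point of $E^{0}$, so ``in particular contains $E^{0}$'' should read ``contains $E^{0}$ up to a Lebesgue-null set''; the identification of this closed limit set with $E^{0}$ is a convention the statement itself leaves implicit, as you rightly note.
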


Moreover, we also have the following result.
\begin{lem}\label{lem:inclusion_sequence_duals}
Let $T \geq 0$. Then, the two following assertions are true a.s.:
\begin{align*}
& \forall 0 \leq t \leq T, \forall k \geq 2, A\left(
\Xi_{t}^{k,\Pi^{c}}
\right) \subseteq E_{t}^{\infty,\Pi^{c}}, \\
\text{and } \quad &\forall 0 \leq t \leq T, \forall k \geq 2, \text{ if } E_{t-}^{\infty,\Pi^{c}} = E_{t}^{\infty,\Pi^{c}}, \text{ then } \Xi_{t-}^{k,\Pi^{c}} = \Xi_{t}^{k,\Pi^{c}}. 
\end{align*}
\end{lem}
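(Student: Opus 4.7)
The plan is to argue by induction on the jumps of both processes (which are coupled through the same extended Poisson point process $\Pi^c$), carried out on a well-chosen almost sure event $\Omega_0$. On $\Omega_0$ I will require: (a) each initial atom $p_n^0$ is a Lebesgue density point of $E^0$ (which holds a.s. by Lebesgue's density theorem since $p_n^0$ is sampled uniformly in $E^0$); (b) for every reproduction event $(t, x, \rcal, (p_n)_{n \geq 1}) \in \Pi^c$ and every $n \geq 1$, the parental location $x + \rcal p_n$ lies in the open ball $\mathring{B}(x, \rcal)$ and is a Lebesgue density point of $B(x, \rcal)$; (c) no atom produced in any of the processes $(\Xi_t^{k,\Pi^c})_{t \in [0,T]}$, $k \geq 2$, ever lies on the boundary of any reproduction ball of $\Pi^c$ (by absolute continuity of the atom positions); and (d) both processes have only finitely many jumps in $[0, T]$ (for the $\infty$-parent process via Lemma~\ref{lem:border_covering_bounded}; for the $k$-parent process via the classical Yule-process upper bound). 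Each requirement is a countable conjunction of probability-one events and hence holds almost surely.

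I will then prove by induction on the (a.s.\ finite) sequence of jump times of either process in $[0, T]$ the strengthened statement: (P1) $A(\Xi_t^{k,\Pi^c}) \subseteq E_t^{\infty,\Pi^c}$, and (P2) every atom of $\Xi_t^{k,\Pi^c}$ is a Lebesgue density point of $E_t^{\infty,\Pi^c}$. The base case at $t = 0$ follows from the initial conditions and (a). For the inductive step at a reproduction event $(t, x, \rcal, (p_n)_{n \geq 1})$, three subcases are straightforward: if only the $\infty$-parent process jumps, then $E_t^{\infty,\Pi^c} \supseteq E_{t-}^{\infty,\Pi^c}$ while the atoms do not move, so (P1) and (P2) are preserved; if neither process jumps there is nothing to check; and if the $k$-parent process jumps (in which case the $\infty$-parent also jumps, by the key case below), the surviving old atoms still satisfy (P1) and (P2) in the larger set $E_t^{\infty,\Pi^c}$, while the newly created atoms $x + \rcal p_n$, $n \leq k$, lie in $B(x, \rcal) \subseteq E_t^{\infty,\Pi^c}$ and are Lebesgue density points of it by (b).

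The key case, which simultaneously yields the second assertion of the lemma, is showing that on $\Omega_0$ the $k$-parent process can only jump when the $\infty$-parent does. Suppose it jumps at $(t, x, \rcal, \cdot)$: then some atom $a$ of $\Xi_{t-}^{k,\Pi^c}$ lies in $B(x, \rcal)$. By (c), $a \in \mathring{B}(x, \rcal)$, and by (P2), $a$ is a Lebesgue density point of $E_{t-}^{\infty,\Pi^c}$. Picking $\varepsilon > 0$ small enough that $B(a, \varepsilon) \subset B(x, \rcal)$, the density point property forces $\mathrm{Vol}\bigl(E_{t-}^{\infty,\Pi^c} \cap B(a, \varepsilon)\bigr) > 0$, whence $\mathrm{Vol}\bigl(E_{t-}^{\infty,\Pi^c} \cap B(x, \rcal)\bigr) > 0$, and the $\infty$-parent process must also jump. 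This is exactly the contrapositive of the second assertion, and combined with the induction it also gives the first.

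The main obstacle is the inductive preservation of (P2): without it one only knows that the atoms of $\Xi_{t-}^{k,\Pi^c}$ are points of $E_{t-}^{\infty,\Pi^c}$, which is a measure-zero fact and makes the contradiction argument in the key case collapse. Securing (P2) for freshly sampled parental locations is what motivates adding (b) to $\Omega_0$, and this density-point bookkeeping is the technical heart of the argument.
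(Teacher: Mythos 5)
Your argument is correct and its skeleton is the same as the paper's: couple all processes through $\Pi^{c}$, check the inclusion at $t=0$, and then propagate it along the (a.s.\ finitely many) relevant jump times by showing that a jump of the $k$-parent ancestral process at an event $(t,x,\rcal)$ forces $\mathrm{Vol}\bigl(E_{t-}^{\infty,\Pi^{c}}\cap B(x,\rcal)\bigr)>0$, so that the $\infty$-parent process is updated as well. Where you differ is in how that last implication is justified. The paper simply asserts that an event with $B(x,\rcal)\cap E_{t-}^{\infty,\Pi^{c}}\neq\emptyset$ but $\mathrm{Vol}\bigl(B(x,\rcal)\cap E_{t-}^{\infty,\Pi^{c}}\bigr)=0$ almost surely never occurs, i.e.\ it appeals to an a.s.\ property of the locations of the reproduction events relative to the random set $E_{t-}^{\infty,\Pi^{c}}$. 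You instead carry the strengthened invariant (P2) that every atom of every $\Xi_{t}^{k,\Pi^{c}}$ is a Lebesgue density point of $E_{t}^{\infty,\Pi^{c}}$, which makes the implication deterministic on your event $\Omega_{0}$. This buys genuine robustness: since $E^{0}$ is only assumed measurable, it may carry Lebesgue-null appendages, and reproduction balls meeting $E_{t-}^{\infty,\Pi^{c}}$ only in a null set can then occur at positive rate; what actually matters is that no atom ever sits in such a null part, which is exactly what (P2) encodes. So your density-point bookkeeping is a legitimate (and arguably cleaner) way to make rigorous the step the paper dispatches in one clause.

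One caveat on your last claim: what your key case proves is that whenever the $k$-parent process jumps, the $\infty$-parent process is \emph{updated}, i.e.\ $E_{t}^{\infty,\Pi^{c}}=E_{t-}^{\infty,\Pi^{c}}\cup B(x,\rcal)$. This is not literally the contrapositive of the second assertion, because the update is invisible when $B(x,\rcal)\subseteq E_{t-}^{\infty,\Pi^{c}}$, in which case $E_{t-}^{\infty,\Pi^{c}}=E_{t}^{\infty,\Pi^{c}}$ while the $k$-parent process may still rearrange its atoms inside the ball. This ambiguity is inherited from the formulation of the lemma itself (compare the remark on non-visible jumps following Definition~\ref{defn:infty_parents_ancestors}), and the paper's own proof establishes exactly the same ``jump implies update'' statement as yours, so it is not a gap specific to your argument; but you should phrase the conclusion as matching the intended reading of assertion two (no update of the $\infty$-parent process at $t$ implies no jump of the $k$-parent process at $t$) rather than as its exact contrapositive.
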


\begin{proof}
For $t = 0$, by definition, $A(\Xi_{0}^{k,\Pi^{c}}) \subseteq E_{0}^{\infty,\Pi^{c}}$ for all $k \geq 2$. Then, since all ancestral processes are constructed using the same underlying Poisson point process, each jump of a $k$-parent ancestral process corresponds to a jump of the $\infty$-parent ancestral process, unless the corresponding reproduction event affects an area $\bcal(x,\rcal)$ such that
\begin{equation*}
\bcal(x,\rcal) \cap E_{t-}^{\infty,\Pi^{c}} \neq \emptyset \quad \text{ but } \quad 
\mathrm{Vol}(\bcal(x,\rcal) \cap E_{t-}^{\infty,\Pi^{c}}) = 0,
\end{equation*}
which almost surely never occurs. 
\end{proof}

Therefore, due to the coupling, each jump of the $k$-parent ancestral process corresponds to a jump of the $\infty$-parent ancestral process, as well as of each $k'$-parent ancestral process, $k' \geq k$ (by Lemma~\ref{lem:initial_condition_dual}). Moreover, since $µ$ satisfies Condition~(\ref{eqn:condition_infty_SLFV}), for all $T \geq 0$, the number of jumps of the $\infty$-parent ancestral process over the time interval $[0,T]$ is finite a.s. As a result, in order to show Proposition~\ref{prop:cvg_dual}, we can show that the convergence property is preserved whenever $(\Xi_{t}^{\infty,\Pi^{c}})_{t \geq 0}$ jumps. 

\begin{lem}\label{lem:transfert_cvg_property}
For all $t \geq 0$, if
\begin{align}
&A\left(
\Xi_{t-}^{k,\Pi^{c}}
\right) \xrightarrow[k \to + \infty]{} E_{t-}^{\infty,\Pi^{c}}\quad \text{in the sense of Painlevé-Kuratowski}, \label{eqn:CVG_lem_1} \\
\text{then we also have} \quad & A\left(
\Xi_{t}^{k,\Pi^{c}}
\right) \xrightarrow[k \to + \infty]{} E_{t}^{\infty,\Pi^{c}} \quad \text{ a.s.}
\end{align}
\end{lem}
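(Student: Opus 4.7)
The plan is to split on the nature of the event at time $t$ under $\Pi^c$ and to analyze the two Painlevé--Kuratowski inclusions separately. If no point of $\Pi^{c}$ has time coordinate $t$, or if the unique such point $(t,x,\rcal,(p_{n})_{n \geq 1})$ satisfies $\mathrm{Vol}(E_{t-}^{\infty,\Pi^{c}} \cap \bxr) = 0$, then by Lemma~\ref{lem:inclusion_sequence_duals} neither the $\infty$-parent ancestral process nor any $k$-parent ancestral process jumps at time~$t$, and the conclusion follows immediately from~\eqref{eqn:CVG_lem_1}. So in the rest of the argument I would assume the unique event $(t,x,\rcal,(p_{n}))$ satisfies $\mathrm{Vol}(E_{t-}^{\infty,\Pi^{c}} \cap \bxr) > 0$, which gives $E_{t}^{\infty,\Pi^{c}} = E_{t-}^{\infty,\Pi^{c}} \cup \bxr$.

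I would first check that for $k$ sufficiently large, the $k$-parent ancestral process also jumps at time~$t$, i.e.\ $I_{x,\rcal}(\Xi_{t-}^{k,\Pi^{c}}) \neq \emptyset$. Since $\partial \bxr$ has Lebesgue measure zero, $E_{t-}^{\infty,\Pi^{c}} \cap \mathring{\bxr}$ still has positive Lebesgue measure, hence is nonempty; pick any $y$ in it. By the lower Painlevé--Kuratowski inclusion supplied by~\eqref{eqn:CVG_lem_1}, there exist $y_{k} \in A(\Xi_{t-}^{k,\Pi^{c}})$ with $y_{k} \to y$, and since $\mathring{\bxr}$ is open we have $y_{k} \in \bxr$ for $k$ large. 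For all such $k$, the jump formula of Definition~\ref{defn:quenched_k_dual} gives
\[
A(\Xi_{t}^{k,\Pi^{c}}) = \bigl(A(\Xi_{t-}^{k,\Pi^{c}}) \setminus \bxr\bigr) \cup P_{k}, \qquad P_{k} := \{x + \rcal p_{1}, \ldots, x + \rcal p_{k}\}.
\]

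I would then establish both Painlevé--Kuratowski inclusions at $t$. The upper inclusion $\limsup_{k} A(\Xi_{t}^{k,\Pi^{c}}) \subseteq E_{t}^{\infty,\Pi^{c}}$ is immediate from $\limsup_{k} A(\Xi_{t-}^{k,\Pi^{c}}) \subseteq E_{t-}^{\infty,\Pi^{c}}$ combined with $P_{k} \subseteq \bxr$. For the lower inclusion, fix $y \in E_{t}^{\infty,\Pi^{c}}$. If $y \in E_{t-}^{\infty,\Pi^{c}} \setminus \bxr$, the hypothesis yields $y_{k} \in A(\Xi_{t-}^{k,\Pi^{c}})$ with $y_{k} \to y$, and $y_{k} \notin \bxr$ for $k$ large (since $\bxr$ is closed and $y$ lies in its open complement), so $y_{k} \in A(\Xi_{t}^{k,\Pi^{c}})$. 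If $y \in \bxr$, I would use that, almost surely, $(x + \rcal p_{n})_{n \geq 1}$ is dense in $\bxr$ (a classical Borel--Cantelli argument applied to a countable basis of balls intersecting $\bxr$) to extract $y_{k} \in P_{k} \subseteq A(\Xi_{t}^{k,\Pi^{c}})$ converging to $y$.

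The routine facts involved --- namely that $P_{\infty} := \{x + \rcal p_{n} : n \geq 1\}$ is a.s.\ dense in $\bxr$ --- are standard. The one subtle point is the handling of $\partial \bxr$, where the characteristic function of $\bxr$ is discontinuous: I sidestep it in the jump-occurrence check by choosing the approximating point in the open ball $\mathring{\bxr}$, and in the lower inclusion by splitting strictly on the dichotomy $y \in \bxr$ versus $y \in E_{t-}^{\infty,\Pi^{c}} \setminus \bxr$, the second case leveraging that the complement of $\bxr$ is open. Combining the two inclusions yields the almost sure Painlevé--Kuratowski convergence $A(\Xi_{t}^{k,\Pi^{c}}) \to E_{t}^{\infty,\Pi^{c}}$ as $k \to +\infty$.
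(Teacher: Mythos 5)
Your proposal is correct and takes essentially the same route as the paper's proof: treat separately the case where the $\infty$-parent ancestral process does not jump at time $t$ (where Lemma~\ref{lem:inclusion_sequence_duals} shows nothing changes), and otherwise use the hypothesis~(\ref{eqn:CVG_lem_1}) to guarantee that for all $k$ large enough the $k$-parent ancestral processes also jump, then conclude from the jump formula together with the almost sure density of $(x+\rcal p_{n})_{n \geq 1}$ in $\bxr$. The only difference is one of detail: you spell out the two Painlev\'e--Kuratowski inclusions, the density argument and the treatment of $\partial \bxr$, which the paper's proof leaves implicit in its final display.
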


\begin{proof}
Let $t \geq 0$. Assume that (\ref{eqn:CVG_lem_1}) is true. The result is clear if $E_{t}^{\infty,\Pi^{c}} = E_{t-}^{\infty,\Pi^{c}}$. Otherwise, let $(t,x,\rcal,(p_{n}^{t})_{n \geq 1})$ be the corresponding reproduction event. This implies that 
\begin{equation*}
\mathrm{Vol}\left(
E_{t-}^{\infty,\Pi^{c}} \cap \bcal(x,\rcal)
\right) \neq 0. 
\end{equation*}
By (\ref{eqn:CVG_lem_1}), this means that there exists $k^{t} \geq 2$ such that $A(\Xi_{t-}^{k^{t},\Pi^{c}}) \cap \bcal(x,\rcal) \neq 0$. Such a result can be extended to all $k \geq k^{t}$ by Lemma~\ref{lem:initial_condition_dual}. Moreover, for all $k \geq k^{t}$, 
\begin{align*}
A\left(
\Xi_{t}^{k,\Pi^{c}}
\right) 
&= \left(
A\left(
\Xi_{t-}^{k,\Pi^{c}}
\right) \backslash I_{x,\rcal} \left(
\Xi_{t-}^{k,\Pi^{c}}
\right)\right) \bigcup \left\{
x + \rcal p_{n}^{t}: n \in \llbracket 1,k \rrbracket
\right\} \\
&\xrightarrow[k \to + \infty]{} E_{t-}^{\infty,\Pi^{c}} \cup \bcal(x,\rcal) 
= E_{t}^{\infty,\Pi^{c}}.
\end{align*}
\end{proof}

We can now show Proposition~\ref{prop:cvg_dual}.

\begin{proof}
(Proposition~\ref{prop:cvg_dual}) Let $t \geq 0$. We set $t_{0} = 0$, and let $0 < t_{1} < t_{2}  < ... < t_{N} \leq t$ be the times at which $(\Xi_{s}^{\infty,\Pi^{c}})_{s \geq 0}$ jumps (such a sequence of jump times exist almost surely). First, we observe that by Lemma~\ref{lem:inclusion_sequence_duals}, it is sufficient to show that
\begin{equation*}
A\left(
\Xi_{t_{N}}^{k,\Pi^{c}}
\right) \xrightarrow[k \to + \infty]{} E_{t_{N}}^{\infty,\Pi^{c}} \quad \text{ a.s.}
\end{equation*}
in the sense of Painlevé-Kuratowski. 
In order to do so, we will show by induction that
\begin{equation*}
\forall n \in \llbracket 0,N \rrbracket, 
A\left(
\Xi_{t_{n}}^{k,\Pi^{c}}
\right) \xrightarrow[k \to + \infty]{} E_{t_{n}}^{\infty,\Pi^{c}} \quad \text{ a.s.}
\end{equation*}
The case $n = 0$ corresponds to Lemma~\ref{lem:initial_condition_dual}. Then, let $n \in \llbracket 0,N-1 \rrbracket$, and assume that
\begin{equation*}
A\left(
\Xi_{t_{n}}^{k,\Pi^{c}}
\right) \xrightarrow[k \to + \infty]{} E_{t_{n}}^{\infty,\Pi^{c}} \quad \text{ a.s.}
\end{equation*}
Again by Lemma~\ref{lem:inclusion_sequence_duals}, we obtain that
\begin{equation*}
A\left(
\Xi_{t_{n+1}-}^{k,\Pi^{c}}
\right) \xrightarrow[k \to + \infty]{} E_{t_{n+1}-}^{\infty,\Pi^{c}} \quad \text{ a.s.}
\end{equation*}
and we conclude using Lemma~\ref{lem:transfert_cvg_property}. 
\end{proof}

\subsection{Another characterization of the $\infty$-parent SLFV}\label{subsec:3.2}
We can now show Proposition~\ref{prop:stochastic_growth_model}, which provides another characterization of the $\infty$-parent SLFV. 

\begin{proof}(Proposition~\ref{prop:stochastic_growth_model})
Let $F \in C^{1}(\mathbb{R})$ and $f \in C_{c}(\mathbb{R}^{d})$. Our goal is to show that 
\begin{equation*}
\left(
\Psi_{F,f}(\widehat{M}_{t}^{\infty})-\Psi_{F,f}(\widehat{M}_{0}^{\infty}) - \int_{0}^{t} \lcal_{µ}^{\infty} \Psi_{F,f}(\widehat{M}_{s}^{\infty})ds
\right)_{t \geq 0}
\end{equation*}
is a martingale. In order to do so, for all $x \in [-||f|| \mathrm{Vol}(\mathrm{Supp}(f)),||f|| \mathrm{Vol}(\mathrm{Supp}(f))]$, we set
\begin{equation*}
\widetilde{F}_{f}(x) = F\left(
\int_{\mathbb{R}^{d}} f(z)dz - x
\right).
\end{equation*}
We extend $\widetilde{F}_{f}$ to $\mathbb{R}$ in such a way that $\widetilde{F}_{f} \in C_{b}^{1}(\mathbb{R})$. 
Then, let $t \geq 0$. We observe that
\begin{align*}
\Psi_{F,f}(\widehat{M}^{\infty}_{t}) &= F\left(
\int_{\mathbb{R}^{d}} f(z)\hat{\omega}^{\infty}_{t}(z) dz 
\right) \\
&= F\left(
\int_{\mathbb{R}^{d}} f(z) \left(
1-\mathds{1}_{E_{t}^{\infty}}(z)
\right)dx
\right) \\
&= F\left(
\int_{\mathbb{R}^{d}} f(z)dz - \int_{E_{t}^{\infty}}f(z)dz
\right) \\
&= \widetilde{F}_{f}\left(
\int_{E_{t}^{\infty}} f(z)dz
\right) \\
&= \Phi_{\widetilde{F}_{f},f} (\Xi_{t}^{\infty}).
\end{align*}
Similarly, 
\begin{equation*}
\Psi_{F,f}(\widehat{M}_{0}^{\infty}) = \Phi_{\widetilde{F}_{f},f}(\Xi_{0}^{\infty}). 
\end{equation*}
Moreover, for all $x \in \mathbb{R}^{d}$ and $s \in [0,t]$, 
\begin{align*}
F\left(
\langle \hat{\omega}^{\infty}_{s}, f \rangle
\right) &= \widetilde{F}_{f}\left(
\int_{E_{s}^{\infty}} f(z)dz
\right) \\
&= \widetilde{F}_{f}\left(
\langle m(E_{s}^{\infty}),f \rangle
\right) \\
\text{and } \quad F\left(
\langle \Theta_{x,\rcal}^{-}(\hat{\omega}_{s}^{\infty}),f \rangle 
\right) 
&= F\left(
\int_{\mathbb{R}^{d}} f(z) \left(
1 - \mathds{1}_{\mathcal{B}(x,\rcal)}(z)
\right) \, \left(
1 - \mathds{1}_{E^{\infty}_{s}}(z)
\right)dz
\right) \\
&= F\left(
\int_{\mathbb{R}^{d}} f(z)dz - \int_{\bcal(x,\rcal)} f(z)dz - \int_{E_{s}^{\infty}}f(z)dz + \int_{\bcal(x,\rcal) \cap E_{s}^{\infty}} f(z)dz
\right) \\
&= F\left(
\int_{\mathbb{R}^{d}} f(z)dz - \int_{\bcal(x,\rcal) \cup E_{s}^{\infty}}f(z)dz
\right) \\
&= \widetilde{F}_{f}\left(
\langle 
m\left(
\bcal(x, \rcal) \cup E_{s}^{\infty}
\right), f
\rangle\right).
\end{align*}
We also have
\begin{align*}
1 - \delta_{0}\left(
\int_{\bcal(x,\rcal)} (1-\hat{\omega}_{s}^{\infty}(z))dz
\right) &= 1 - \delta_{0}\left(
\int_{\bcal(x,\rcal)} \mathds{1}_{E_{s}^{\infty}}(z)dz
\right) \\
&= 1 - \delta_{0}\left(
\int_{\mathbb{R}^{d}} \mathds{1}_{\bcal(x,\rcal)}(z) \mathds{1}_{E_{s}^{\infty}}(z)dz
\right),
\end{align*}
which is equal to $\mathds{1}_{S^{\rcal}(E_{s}^{\infty})}(x)$ for Lebesgue-almost all $x \in \mathbb{R}^{d}$. Therefore, 
\begin{equation*}
\lcal_{µ}^{\infty} \Psi_{F,f} (\widehat{M}_{s}^{\infty}) = \gcal_{µ}^{\infty} \Phi_{\widetilde{F}_{f},f} (\Xi_{s}^{\infty}).
\end{equation*}
This implies that for all $t \geq 0$,
\begin{align*}
&\Psi_{F,f}(\widehat{M}_{t}^{\infty}) - \Psi_{F,f}(\widehat{M}_{0}^{\infty}) - \int_{0}^{t} \lcal_{µ}^{\infty} \Psi_{F,f}(\widehat{M}_{s}^{\infty})ds \\
&= \Phi_{\widetilde{F}_{f},f}(\Xi_{t}^{\infty}) - \Phi_{\widetilde{F}_{f},f}(\Xi_{0}^{\infty}) 
- \int_{0}^{t} \gcal_{µ}^{\infty} \Phi_{\widetilde{F}_{f},f}(\Xi_{s}^{\infty})ds,
\end{align*}
and we conclude using Proposition~\ref{prop:backwards_kinf}. 
\end{proof}

\section{Uniqueness of the solution to the martingale problem characterizing the $\infty$-parent SLFV}\label{sec:4}
The goal of this section is to provide another characterization of the $\infty$-parent SLFV as the unique solution to a martingale problem, as stated in Theorem~\ref{thm:characterization_infty_SLFV}. In order to do so, we show that the $\infty$-parent SLFV (as constructed in Definition~\ref{defn:infty_SLFV}) is a solution to the martingale problem associated to $\lcal_{µ}^{\infty}$. Then, we will extend the set of functions over which the operators $\lcal_{µ}^{\infty}$ and $\gcal_{µ}^{\infty}$ are defined. 

In all that follows, let $µ$ be a $\sigma$-finite measure on $(0,+\infty)$ satisfying Condition (\ref{eqn:condition_intensite}), and let $M^{0} \in \mcal_{\lambda}$ with density $\omega : \mathbb{R}^{d} \to \{0,1\}$. 
We recall that $U = \bcal(0,1)^{\mathbb{N}}$ and that $\tilde{u}$ is the law of a sequence of i.i.d random variables $(\mathcal{P}_{n})_{n \geq 1}$ uniformly distributed over $\bcal(0,1)$.
Let $\Pi^{c}$ be a Poisson point process on $\mathbb{R} \times \mathbb{R}^{d} \times (0,+\infty) \times U$ with intensity 
\begin{equation*}
dt \otimes dx \otimes µ(d\rcal) \otimes \tilde{u}\left(
d(p_{n})_{n \geq 1}
\right). 
\end{equation*}

\subsection{Existence of a solution to the martingale problem associated to $\lcal_{µ}^{\infty}$}\label{subsec:2.2}
We recall that the operator $\lcal_{µ}^{\infty}$ is defined by
\begin{align*}
\lcal_{µ}^{\infty}\Psi_{F,f}(M) = \int_{0}^{+ \infty}\int_{Supp^{\rcal}(f)}&\left(1-\delta_{0}\left(\int_{\bxr}\left(1-\omega_{M}(z)\right)dz\right)\right) \\
&\times \Big[F(\langle\Theta_{x,\rcal}^{-}(\omega_{M}),f\rangle) - F(\langle\omega_{M},f\rangle)
\Big]dxµ(d\rcal).
\end{align*}
The goal of this section is to demonstrate the following result, which is also the first part of Theorem~\ref{thm:characterization_infty_SLFV}.
\begin{prop}\label{prop:char_infty_slfv}
Let $(M_{t}^{\infty})_{t \geq 0}$ be the $\infty$-parent SLFV with initial density $\omega$, associated to $\Pi^{c}$. Then, for all $F \in C^{1}(\mathbb{R})$ and $f \in C_{c}(\rd)$,
\begin{equation*}
\left(
\Psi_{F,f}(M_{t}^{\infty}) - \Psi_{F,f}(M_{0}^{\infty}) - \int_{0}^{t} \lcal_{µ}^{\infty} \Psi_{F,f}(M_{s}^{\infty})ds
\right)_{t \geq 0}
\end{equation*}
is a martingale.
\end{prop}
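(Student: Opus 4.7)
The plan is to pass to the limit $k \to \infty$ in the martingale problem for $\lcal_{µ}^{k}$ using the monotone coupling of quenched SLFVs from Section~\ref{subsec:2.1}. By Lemma~\ref{lem:bon_espace_SLFV}, each $M_{k,\cdot}^{\Pi^{c},\omega}$ is distributed as the solution of the martingale problem for $\lcal_{µ}^{k}$, and because its evolution up to time $t$ depends only on $\Pi^{c}$ restricted to $[0,t] \times \mathbb{R}^{d} \times (0,\infty) \times U$, the process
$$N_{t}^{k} := \Psi_{F,f}(M_{k,t}^{\Pi^{c},\omega}) - \Psi_{F,f}(M_{k,0}^{\Pi^{c},\omega}) - \int_{0}^{t} \lcal_{µ}^{k} \Psi_{F,f}(M_{k,s}^{\Pi^{c},\omega})\, ds$$
is a martingale with respect to the filtration $(\mathcal{F}_{t}^{\Pi^{c}})_{t \geq 0}$ generated by $\Pi^{c}$. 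My goal is to establish the $L^{1}$-convergence $N_{t}^{k} \to N_{t}^{\infty}$, where $N_{t}^{\infty}$ is defined analogously with $\lcal_{µ}^{\infty}$ and $M_{s}^{\infty}$, and then transfer the martingale identity to the limit.

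The heart of the argument is the pointwise convergence, for almost every $(s,x,\rcal)$ and almost every realization of $\Pi^{c}$, of the integrand defining $\lcal_{µ}^{k} \Psi_{F,f}(M_{k,s}^{\Pi^{c},\omega})$ to the corresponding integrand for $\lcal_{µ}^{\infty} \Psi_{F,f}(M_{s}^{\infty})$. Writing
$$a_{k}(s,x,\rcal) := \frac{1}{V_{\rcal}} \int_{\bxr} \omega_{k,s}^{\Pi^{c},\omega}(y)\, dy,$$
Fubini identifies the $k$-fold product $V_{\rcal}^{-k}\int_{\bxr^{k}} \prod_{j=1}^{k}\omega_{k,s}^{\Pi^{c},\omega}(y_{j})\, dy_{1}\ldots dy_{k}$ with $a_{k}(s,x,\rcal)^{k}$. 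By Lemma~\ref{lem:property_coupling} the densities $\omega_{k,s}^{\Pi^{c},\omega}(y)$ decrease in $k$ to $\omega_{s}^{\infty}(y)$, so $a_{k}$ decreases to $a_{\infty} := V_{\rcal}^{-1}\int_{\bxr}\omega_{s}^{\infty}(y)\, dy$ by bounded convergence. Crucially, since all densities are $\{0,1\}$-valued, $\omega_{k,s}^{\Pi^{c},\omega} \geq \omega_{s}^{\infty}$ pointwise: if $\omega_{s}^{\infty} = 1$ almost everywhere on $\bxr$, then $\omega_{k,s}^{\Pi^{c},\omega} = 1$ almost everywhere on $\bxr$ for every $k$, whence $a_{k}^{k} \equiv 1$; otherwise $a_{\infty} < 1$, and for $k$ sufficiently large $a_{k} \leq (1+a_{\infty})/2 < 1$, which forces $a_{k}^{k} \to 0$. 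In both cases $a_{k}^{k} \to \delta_{0}\bigl(\int_{\bxr}(1-\omega_{s}^{\infty}(z))\, dz\bigr)$, which is exactly the indicator appearing in $\lcal_{µ}^{\infty}$. The convergence of $F(\langle \Theta_{x,\rcal}^{\pm}(\omega_{k,s}^{\Pi^{c},\omega}), f\rangle)$ and $F(\langle \omega_{k,s}^{\Pi^{c},\omega}, f\rangle)$ to their $\omega_{s}^{\infty}$-analogues is a routine consequence of dominated convergence and continuity of $F$.

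With this pointwise convergence in hand, I apply dominated convergence in $(x,\rcal)$ against $\mathds{1}_{\mathrm{Supp}^{\rcal}(f)}(x)\, dx\, µ(d\rcal)$, using the bound on the integrand furnished by Lemma~\ref{lem:appendix_A_2} together with Condition~(\ref{eqn:condition_intensite}), and then in $s$ against Lebesgue measure on $[0,t]$, using the uniform bound $|\lcal_{µ}^{k}\Psi_{F,f}(M)| \leq C_{F,f}\int_{0}^{\infty}\rcal^{d}µ(d\rcal)$ supplied by Lemma~\ref{lem:Lk_well_defined}. Combined with the almost-sure and uniformly bounded convergence $\Psi_{F,f}(M_{k,t}^{\Pi^{c},\omega}) \to \Psi_{F,f}(M_{t}^{\infty})$ from Lemma~\ref{lem:cvg_vague_processus_tps} (boundedness coming from the compactness of the range of $\langle \omega_{M}, f\rangle$ and continuity of $F$), this yields the $L^{1}$-convergence $N_{t}^{k} \to N_{t}^{\infty}$. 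Passing to the limit in $\esp[N_{t}^{k}\mathds{1}_{A}] = \esp[N_{s}^{k}\mathds{1}_{A}]$ for every $s \leq t$ and $A \in \mathcal{F}_{s}^{\Pi^{c}}$ establishes that $N^{\infty}$ is a martingale with respect to $(\mathcal{F}_{t}^{\Pi^{c}})_{t \geq 0}$, and hence with respect to the natural filtration of $M^{\infty}$, which is sub-$\sigma$-included in $(\mathcal{F}_{t}^{\Pi^{c}})$.

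The main obstacle is the subtle analysis of $a_{k}^{k}$, where both the base and the exponent vary with $k$: without the monotone coupling of Lemma~\ref{lem:property_coupling} and the $\{0,1\}$-valuedness of the densities, borderline regimes such as $a_{k} = 1 - c/k$ could generate spurious limits distinct from $\delta_{0}\bigl(\int_{\bxr}(1-\omega_{s}^{\infty})\bigr)$, and the identification of $\lcal_{µ}^{\infty}$ as the generator of $M^{\infty}$ would break down. The dichotomy $a_{\infty} \in \{1\} \cup [0,1)$ enforced by the coupling is precisely what makes the limiting generator the indicator-valued operator $\lcal_{µ}^{\infty}$.
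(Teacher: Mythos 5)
Your proposal is correct and follows essentially the same route as the paper: both rest on the $\Pi^{c}$-coupling of Lemma~\ref{lem:property_coupling}, the dichotomy for $a_{k}^{k}$ (either $\omega^{\infty}_{s}\equiv 1$ a.e.\ on $\bxr$ forces $a_{k}\equiv 1$ for all $k$ by monotonicity, or $a_{\infty}<1$ forces $a_{k}^{k}\to 0$), and dominated convergence justified by Lemmas~\ref{lem:appendix_A_2} and~\ref{lem:Lk_well_defined} to pass the $\lcal_{µ}^{k}$-martingale identity to the limit. The only (harmless) differences are that you condition on events of the Poisson filtration $\mathcal{F}_{s}^{\Pi^{c}}$ — which requires the brief extra observation that independence of increments of $\Pi^{c}$ upgrades the martingale property to that filtration — where the paper instead tests against products $\prod_{i}h_{i}(M_{t_{i}}^{k})$ with $h_{i}\in C_{b}(\mcal_{\lambda})$ and uses the a.s.\ vague convergence of Lemma~\ref{lem:cvg_vague_processus_tps}, and that you treat the convergence of $a_{k}^{k}$ directly rather than via the paper's three-term triangle inequality through $\omega_{s}^{\infty}$.
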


In other words, $(M_{t}^{\infty})_{t \geq 0}$ is a solution of the martingale problem $(\lcal_{µ}^{\infty},\delta_{M_{0}^{\infty}})$, but this solution is not necessarily unique. In fact, we will show in Section \ref{sec:4} that this solution is unique when $µ$ satisfies the stronger Condition (\ref{eqn:condition_infty_SLFV}), but the question of uniqueness when $µ$ does not satisfy Condition (\ref{eqn:condition_infty_SLFV}) remains open. 

We start by justifying why the operator $\lcal_{µ}^{\infty}$ is a suitable candidate for an operator characterizing the limit $k \to + \infty$ of the $k$-parent SLFV. 
\begin{lem}\label{lem:CVG_partie_linf}
Let $\omega : \rd \to \{0,1\}$ be measurable, and let $x \in \mathbb{R}$. Then, for all $\rcal > 0$,
\begin{equation*}
\delta_{0}\left(\int_{\bxr} \left(1 - \omega(z)\right)dz\right)
= \lim\limits_{k \to + \infty} \frac{1}{V_{\rcal}^{k}}\int_{\bxr^{k}}\left(\prod_{j = 1}^{k} \omega(y_{j})\right)dy_{1}...dy_{k}.
\end{equation*}
\end{lem}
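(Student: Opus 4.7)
The proof will be short and essentially calculational. The key observation is that the multiple integral on the right-hand side factors as a power, after which the limit is an elementary $0/1$ dichotomy.

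The plan is to first apply Fubini's theorem to write
\begin{equation*}
\frac{1}{V_{\rcal}^{k}}\int_{\bxr^{k}}\left(\prod_{j = 1}^{k} \omega(y_{j})\right)dy_{1}...dy_{k}
= \left(\frac{1}{V_{\rcal}}\int_{\bxr} \omega(y)\,dy\right)^{k} = p^{k},
\end{equation*}
where I set $p := \frac{1}{V_{\rcal}}\int_{\bxr}\omega(y)dy \in [0,1]$. This reduction is immediate since the integrand is a product of one-variable factors and $\omega$ is bounded.

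Next I will exploit that $\omega$ is $\{0,1\}$-valued to relate $p$ to the indicator appearing on the left-hand side. Indeed,
\begin{equation*}
\int_{\bxr}(1-\omega(z))\,dz = V_{\rcal} - V_{\rcal}\,p = V_{\rcal}(1-p),
\end{equation*}
so $\delta_{0}\!\left(\int_{\bxr}(1-\omega(z))dz\right) = 1$ if and only if $p = 1$, and equals $0$ otherwise. Since $p \in [0,1]$, the sequence $(p^{k})_{k \geq 1}$ converges to $1$ if $p = 1$ and to $0$ if $p < 1$, which matches exactly the value of the left-hand side. There is no subtlety or obstacle here: the lemma is essentially the identity $\lim_{k \to \infty} p^{k} = \mathds{1}_{\{p = 1\}}$ for $p \in [0,1]$, together with the Fubini factorization and the $\{0,1\}$-valued nature of $\omega$ which allows one to replace ``$p = 1$'' by ``$\omega = 1$ Lebesgue-a.e.\ on $\bxr$''.
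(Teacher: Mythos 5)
Your proof is correct and follows essentially the same route as the paper's: factor the $k$-fold integral into $p^{k}$ with $p = V_{\rcal}^{-1}\int_{\bxr}\omega(y)\,dy \in [0,1]$, then use the dichotomy $\lim_{k}p^{k}=\mathds{1}_{\{p=1\}}$ and the equivalence $p=1 \Leftrightarrow \int_{\bxr}(1-\omega(z))\,dz=0$. Nothing is missing.
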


\begin{proof}
For all $k \geq 2$,
\begin{equation*}
\frac{1}{V_{\rcal}^{k}}\int_{\bxr^{k}} \left[\prod_{j = 1}^{k} \omega(y_{j}) \right] dy_{1}...dy_{k}
= \left(\frac{1}{V_{\rcal}}\int_{\bxr} \omega(y)dy \right)^{k}.
\end{equation*}

As $V_{\rcal}^{-1}\int_{\bxr} \omega(y)dy  \in [0,1]$,
\begin{align*}
\lim\limits_{k \to + \infty} \frac{1}{V_{\rcal}^{k}}\int_{\bxr^{k}} \left[\prod_{j = 1}^{k} \omega(y_{j}) \right] dy_{1}...dy_{k} &= 1 \\
\Longleftrightarrow \frac{1}{V_{\rcal}}\int_{\bxr} \omega(y)dy  &= 1 \\
\Longleftrightarrow \int_{\bxr} \left(1 - \omega(z)\right)dz &= 0. \\
\intertext{Moreover,}
\lim\limits_{k \to + \infty} \frac{1}{V_{\rcal}^{k}}\int_{\bxr^{k}} \left[\prod_{j = 1}^{k} \omega(y_{j}) \right] dy_{1}...dy_{k} &= 0 \\
\Longleftrightarrow \frac{1}{V_{\rcal}}\int_{\bxr} \omega(y)dy &< 1 \\
\Longleftrightarrow \int_{\bxr} \left(1 - \omega(z)\right)dz &> 0, 
\end{align*}
and we can conclude.
\end{proof}

Let $F \in C^{1}(\mathds{R})$ and $f \in C_{c}(\rd)$. For all $M \in \mcal_{\lambda}$, 
\begin{equation}\label{eqn:bounded1_2_3}
\left|F\left(\langle \omega_{M},f \rangle \right) \right|
\leq \max\{|F(x)| : x \in [- \mathrm{Vol}(Supp(f)), \mathrm{Vol}(Supp(f))]\},
\end{equation}
which means in particular that for all $x \in \rd$ and for all $\rcal > 0$, 
\begin{align*}
|F(\langle \Theta_{x,\rcal}^{+}(\omega_{M}),f \rangle)| 
&\leq \max\{|F(x)| : x \in [- \mathrm{Vol}(Supp(f)), \mathrm{Vol}(Supp(f))]\} \\
\text{and \quad }
|F(\langle \Theta_{x,\rcal}^{-}(\omega_{M}),f \rangle)| 
&\leq \max\{|F(x)| : x \in [- \mathrm{Vol}(Supp(f)), \mathrm{Vol}(Supp(f))]\}.
\end{align*}
Therefore, a direct consequence of the dominated convergence theorem is the following lemma.

\begin{lem}\label{lem:lem1_2_3}
Let $M \in \mcal_{\lambda}$, and let $(M_{n})_{n \in \mathbb{N}} \in \mcal_{\lambda}$ be such that $M_{n}$ converges vaguely to $M$. Then, for all $x \in \rd$ and for all $\rcal > 0$, 
\begin{align*}
F(\langle \omega_{M_{n}},f \rangle) &\xrightarrow[n \to + \infty]{} F(\langle \omega_{M},f \rangle) \\
F(\langle \Theta_{x,\rcal}^{+}(\omega_{M_{n}}),f \rangle) &\xrightarrow[n \to + \infty]{}
F(\langle \Theta_{x,\rcal}^{+}(\omega_{M}),f \rangle) \\
F(\langle \Theta_{x,\rcal}^{-}(\omega_{M_{n}}),f \rangle) &\xrightarrow[n \to + \infty]{}
F(\langle \Theta_{x,\rcal}^{-}(\omega_{M}),f \rangle).
\end{align*}
\end{lem}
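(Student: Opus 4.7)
The plan is to reduce each of the three convergences to a convergence of real numbers inside the argument of $F$, and then to verify those via vague convergence together with a simple approximation argument for the discontinuous indicator $\mathds{1}_{\bxr}$. Since $F$ is continuous and by the uniform bound (\ref{eqn:bounded1_2_3}) all the arguments of $F$ considered here take values in the compact interval $[-||f||_{\infty}\mathrm{Vol}(\mathrm{Supp}(f)),||f||_{\infty}\mathrm{Vol}(\mathrm{Supp}(f))]$, it is enough to show the convergences of $\langle \omega_{M_n}, f \rangle$, $\langle \Theta_{x,\rcal}^{+}(\omega_{M_n}), f\rangle$ and $\langle \Theta_{x,\rcal}^{-}(\omega_{M_n}), f\rangle$ to their counterparts for $M$.

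For the first, I would observe that because $\{0,1\}$ carries the discrete topology, the function $(y,\kappa)\mapsto f(y)\mathds{1}_{\{\kappa=0\}}$ belongs to $C_{c}(\rd\times\{0,1\})$, so the definition of vague convergence applied to this test function gives $\langle \omega_{M_n}, f\rangle \to \langle \omega_M, f\rangle$ immediately. For the other two, using $\Theta_{x,\rcal}^{+}(\omega)-\omega = \mathds{1}_{\bxr}(1-\omega)$ and $\omega - \Theta_{x,\rcal}^{-}(\omega) = \mathds{1}_{\bxr}\omega$ reduces the problem to establishing
\[
\int_{\bxr} f(y)\,\omega_{M_n}(y)\,dy \xrightarrow[n\to+\infty]{} \int_{\bxr} f(y)\,\omega_{M}(y)\,dy.
\]

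To get this last convergence, for every $\epsilon>0$ I would pick $g_{\epsilon}^{-}, g_{\epsilon}^{+}\in C_{c}(\rd)$ with $0\leq g_{\epsilon}^{-}\leq \mathds{1}_{\bxr}\leq g_{\epsilon}^{+}\leq 1$ and $\int(g_{\epsilon}^{+}-g_{\epsilon}^{-})(y)dy<\epsilon$; this is possible by regularity of Lebesgue measure since $\partial\bxr$ has zero Lebesgue measure. Splitting $f=f^{+}-f^{-}$ into its positive and negative parts (both of which still lie in $C_{c}(\rd)$) and exploiting $0\leq \omega_{M_n}\leq 1$, the sandwich
\[
\int g_{\epsilon}^{-}(y) f^{\pm}(y)\omega_{M_n}(y)dy \;\leq\; \int \mathds{1}_{\bxr}(y) f^{\pm}(y)\omega_{M_n}(y)dy \;\leq\; \int g_{\epsilon}^{+}(y) f^{\pm}(y)\omega_{M_n}(y)dy,
\]
combined with the vague-convergence-driven limits of $\int g_{\epsilon}^{\pm}(y) f^{\pm}(y)\omega_{M_n}(y)dy$ (which do apply since $g_{\epsilon}^{\pm} f^{\pm}\in C_{c}(\rd)$), yields $|\int_{\bxr} f \omega_{M_n}dy-\int_{\bxr} f\omega_{M}dy|\leq 2||f||_{\infty}\epsilon$ for $n$ large enough, and letting $\epsilon\to 0$ closes the argument. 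Continuity of $F$ then delivers all three stated convergences.

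The main obstacle is precisely the fact that $\mathds{1}_{\bxr}\notin C_{c}(\rd)$, so that vague convergence cannot be invoked directly against it; the continuous sandwich $g_{\epsilon}^{-}\leq \mathds{1}_{\bxr}\leq g_{\epsilon}^{+}$ bridges this gap, and the uniform bound $\omega_{M_n}\in[0,1]$ is what lets one convert the $L^{1}$-smallness of $g_{\epsilon}^{+}-g_{\epsilon}^{-}$ into smallness of the corresponding integrals weighted by $\omega_{M_n}$. The rest is a routine application of continuity of $F$, as already suggested by the domination bound (\ref{eqn:bounded1_2_3}) stated just before the lemma.
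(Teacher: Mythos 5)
Your argument is correct, and it is in fact more careful than what the paper does: the paper dispenses with this lemma in one line, asserting that it is ``a direct consequence of the dominated convergence theorem'' given the uniform bound~(\ref{eqn:bounded1_2_3}), and never engages with the point you correctly isolate as the only real obstacle, namely that $f\,\mathds{1}_{\bxr}$ is not continuous and so cannot be used directly as a test function for vague convergence. Your sandwich $g_{\epsilon}^{-}\leq \mathds{1}_{\bxr}\leq g_{\epsilon}^{+}$, combined with the uniform domination $0\leq\omega_{M_{n}}\leq 1$ (so that the limit measure does not charge $\partial\bxr$ and the $L^{1}$-gap $\int(g_{\epsilon}^{+}-g_{\epsilon}^{-})<\epsilon$ controls the error uniformly in $n$), is exactly the Portmanteau-type argument needed to close this gap, and it establishes the lemma in the generality in which it is stated, i.e.\ under vague convergence alone. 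The paper's shortcut is defensible only because in every place the lemma is actually invoked (Lemma~\ref{lem:continuite_partielle_linf}, the proof of Lemma~\ref{lem:cvg_pb_martingale}, and Proposition~\ref{prop:char_infty_slfv}) the measures $M_{n}$ come from the monotone coupling, so their densities converge pointwise and dominated convergence applies directly to $\int_{\bxr}f\,\omega_{M_{n}}$; your proof does not need that extra structure. The one cosmetic point worth making explicit is that the quantities $\langle\omega_{M_{n}},f\rangle$ and $\langle\Theta_{x,\rcal}^{\pm}(\omega_{M_{n}}),f\rangle$ are independent of the choice of density of $M_{n}$ (densities agree up to Lebesgue-null sets), so the statement is well posed; otherwise nothing is missing.
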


In contrast with $\lcal_{µ}^{k}\Psi_{F,f}$, the function $\lcal_{µ}^{\infty}\Psi_{F,f}$ is not continuous. However, we have the following result.

\begin{lem}\label{lem:continuite_partielle_linf}
Let $M \in \mcal_{\lambda}$, and $(M_{n})_{n \in \mathbb{N}} \in \mcal_{\lambda}$ such that $M_{n}$ converges to $M$ in the topology of vague convergence. Assume that there exists a density $\omega$ of $M$ and densities $\omega_{n}$ of $M_{n}$ for all $n \in \mathbb{N}
$ such that:
\begin{equation*}
\forall n \in \mathbb{N}, \forall z \in \rd, \omega(z) \leq \omega_{n}(z).
\end{equation*}
Then, 
\begin{equation*}
\lim\limits_{n \to + \infty} \lcal_{µ}^{\infty}\Psi_{F,f}(M_{n}) = \lcal_{µ}^{\infty} \Psi_{F,f}(M).
\end{equation*}
\end{lem}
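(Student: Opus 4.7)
The plan is to apply the dominated convergence theorem to the integral defining $\lcal_{µ}^{\infty} \Psi_{F,f}$. Write the integrand as
\begin{equation*}
h_{M}(x,\rcal) := \mathds{1}_{Supp^{\rcal}(f)}(x) \, \Big(1 - \delta_{0}(g_{M}(x,\rcal))\Big) \, \Big[F(\langle \Theta_{x,\rcal}^{-}(\omega_{M}),f\rangle) - F(\langle \omega_{M},f\rangle)\Big],
\end{equation*}
where $g_{M}(x,\rcal) := \int_{\bxr}(1-\omega_{M}(z))dz$. I must show that $h_{M_{n}}(x,\rcal) \to h_{M}(x,\rcal)$ for a.e. $(x,\rcal)$ under $dx\otimes µ(d\rcal)$, and that all $|h_{M_{n}}|$ are bounded by a single integrable function.

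For the pointwise convergence of the bracket $F(\langle \Theta_{x,\rcal}^{-}(\omega_{M_{n}}),f\rangle) - F(\langle \omega_{M_{n}},f\rangle)$, I invoke Lemma~\ref{lem:lem1_2_3} directly. The subtle point is the indicator factor $1-\delta_{0}(g_{M_{n}}(x,\rcal)) = \mathds{1}_{\{g_{M_{n}}(x,\rcal) > 0\}}$, since the map $y \mapsto 1-\delta_{0}(y)$ is discontinuous at $0$. Here both hypotheses enter. From $\omega(z) \leq \omega_{n}(z)$ I get $g_{M_{n}}(x,\rcal) \leq g_{M}(x,\rcal)$ for every $(x,\rcal)$, which settles the case $g_{M}(x,\rcal) = 0$: then $g_{M_{n}}(x,\rcal) = 0$ and both indicators vanish. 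In the remaining case $g_{M}(x,\rcal) > 0$, I use the vague convergence $M_{n} \to M$: by inner regularity of Lebesgue measure, since $1-\omega$ is not a.e. zero on $\bxr$, I can pick $\phi \in C_{c}(\rd)$ with $0 \leq \phi \leq \mathds{1}_{\bxr}$ and $\int \phi(z)(1-\omega(z))dz > 0$. The function $(z,\kappa) \mapsto \phi(z)\mathds{1}_{\{\kappa=1\}}$ lies in $C_{c}(\rd \times \{0,1\})$, so vague convergence of $M_{n}$ to $M$ yields $\int \phi(z)(1-\omega_{n}(z))dz \to \int \phi(z)(1-\omega(z))dz > 0$. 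Since $g_{M_{n}}(x,\rcal) \geq \int \phi(z)(1-\omega_{n}(z))dz$, we have $g_{M_{n}}(x,\rcal) > 0$ eventually, giving the required convergence of the indicator.

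For the dominating function, I combine two bounds already available in the paper. By the argument used in the proof of Lemma~\ref{lem:sequence_tight} (invoking Lemma~\ref{lem:appendix_A_2}), there exists a constant $C_{F,f}' > 0$, depending only on $F$ and $f$, such that
\begin{equation*}
\Big|F(\langle \Theta_{x,\rcal}^{-}(\omega_{M_{n}}),f\rangle) - F(\langle \omega_{M_{n}},f\rangle)\Big| \leq C_{F,f}' \, (\rcal^{d} \wedge 1)
\end{equation*}
uniformly in $n$, $x$, $\rcal$, and by the standard volume estimate used in Section~\ref{sec:5} (see Eq.~(\ref{eqn:eqn_1}) as quoted in the proof of Lemma~\ref{lem:sequence_tight}), there exists $C_{2} > 0$ with $\mathrm{Vol}(Supp^{\rcal}(f)) \leq C_{2}(\rcal^{d} \vee 1)$. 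Hence $|h_{M_{n}}(x,\rcal)| \leq \mathds{1}_{Supp^{\rcal}(f)}(x) \, C_{F,f}'(\rcal^{d} \wedge 1)$, and this dominating function satisfies
\begin{equation*}
\int_{0}^{\infty}\int_{Supp^{\rcal}(f)} C_{F,f}'(\rcal^{d} \wedge 1) \, dx \, µ(d\rcal) \leq C_{F,f}' C_{2} \int_{0}^{\infty} (\rcal^{d} \wedge 1)(\rcal^{d} \vee 1) µ(d\rcal) = C_{F,f}' C_{2} \int_{0}^{\infty} \rcal^{d} µ(d\rcal),
\end{equation*}
which is finite by Condition~(\ref{eqn:condition_intensite}). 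The dominated convergence theorem then concludes the proof.

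The main obstacle is exactly the lack of continuity of the factor $1-\delta_{0}$ at $0$: this is why a mere vague convergence statement for $M_{n} \to M$ is not enough, and why the monotonicity hypothesis $\omega \leq \omega_{n}$ has been added — it rules out the pathological scenario where $g_{M}(x,\rcal) = 0$ but $g_{M_{n}}(x,\rcal) > 0$ fails to vanish, a scenario in which the indicator would not converge.
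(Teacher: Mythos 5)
Your proof is correct and follows essentially the same strategy as the paper's: pointwise convergence of the bracket via Lemma~\ref{lem:lem1_2_3}, treatment of the discontinuous factor $1-\delta_{0}(\cdot)$ by combining the monotonicity hypothesis (for the case $\int_{\bxr}(1-\omega(z))dz=0$) with the vague convergence (for the positive case), and then dominated convergence with the bound of Lemma~\ref{lem:appendix_A_2} and Eq.~(\ref{eqn:eqn_1}). The only (welcome) refinement is in the case $\int_{\bxr}(1-\omega(z))dz>0$: where the paper asserts $\int_{\bxr}(1-\omega_{n}(z))dz \to \int_{\bxr}(1-\omega(z))dz$ directly from vague convergence (which, since $\mathds{1}_{\bxr}$ is not continuous, implicitly requires approximating the ball indicator and using the domination of the marginals by Lebesgue measure), you only extract a lower bound through a single continuous compactly supported test function $\phi \leq \mathds{1}_{\bxr}$, which suffices to force $\int_{\bxr}(1-\omega_{n}(z))dz>0$ eventually and avoids that implicit step.
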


\begin{proof}
First, since $(M_{n})_{n \in \mathbb{N}}$ converges vaguely to $M$, by Lemma \ref{lem:lem1_2_3}, for all $\rcal > 0$ and for all $x \in Supp^{\rcal}(f)$, 
\begin{equation*}
F(\langle\Theta_{x,\rcal}^{-}(\omega_{n}),f\rangle) \xrightarrow[n \to + \infty]{} F(\langle\Theta_{x,\rcal}^{-}(\omega),f\rangle).
\end{equation*}

Then, let $\rcal > 0$ and $n \in \mathbb{N}$. Since for all $z \in \bcal(x,\rcal)$, $\omega(z) \leq \omega_{n}(z)$,
\begin{equation*}
\int_{\bxr}\left(1 - \omega(z)\right) dz \geq \int_{\bxr}\left(1 - \omega_{n}(z)\right)dz.
\end{equation*}
Moreover, since
\begin{align*}
\lim\limits_{n \to + \infty} \int_{\bxr} \left(1 - \omega_{n}(z)\right)dz &= \int_{\bxr} \left(1 - \omega(z)\right)dz \\
&\geq \int_{\bxr} \left(1 - \omega_{n}(z)\right)dz,
\end{align*}
if $\lim\limits_{n \to + \infty} \int_{\bxr} \left(1 - \omega_{n}(z)\right)dz = 0$, then for all $n \in \mathbb{N}$, $\int_{\bxr} \left(1 - \omega_{n}(z)\right)dz = 0$, and thus :
\begin{equation*}
\lim\limits_{n \to + \infty} \delta_{0}\left(\int_{\bxr}\left( 1 - \omega_{n}(z)\right)dz\right) = \delta_{0}\left(\int_{\bxr}\left( 1 - \omega(z)\right)dz \right).
\end{equation*}

Conversely, if $\lim\limits_{n \to + \infty} \int_{\bxr}\left( 1 - \omega_{n}(z)\right)dz \neq 0$, since $\delta_{0}(\bullet)$ is continuously equal to $0$ over $\mathbb{R}_{+}^{*}$, 
\begin{equation*}
\lim\limits_{n \to + \infty} \delta_{0}\left(\int_{\bxr} \left(1 - \omega_{n}(z)\right)dz\right) = \delta_{0}\left(\int_{\bxr}\left( 1 - \omega(z)\right)dz \right).
\end{equation*}
We conclude by using the dominated convergence theorem.
\end{proof}

\begin{lem}\label{lem:cvg_pb_martingale}
Let $(M_{t}^{\infty})_{t \geq 0}$ be the $\infty$-parent SLFV of initial condition $M^{0}$, constructed using the initial density $\omega$ and $\Pi^{c}$. For all $k \geq 2$, let $(M_{u}^{k})_{u \geq 0}$ be a sequence of coupled $k$-parent SLFVs associated to $\Pi^{c}$ and with initial condition $\omega$, constructed as in Definition~\ref{defn:quenched_k_SLFV}.
Then, for all $F \in C^{1}(\mathbb{R})$ and $f \in C_{c}(\rd)$, for all $l \geq 1$, for all $0 \leq t_{1} < ... < t_{l} \leq t < t+s$, for all $h_{1},...,h_{l} \in C_{b}(\mcal_{\lambda})$, 
\begin{equation*}
\lim\limits_{k \to + \infty} \esp\left[\left(
\Psi_{F,f}(M_{t+s}^{k}) - \Psi_{F,f}(M_{t}^{k}) - \int_{t}^{t+s}\lcal_{µ}^{\infty} \Psi_{F,f}(M_{u}^{k})du
\right) \times
\left(\prod_{i = 1}^{l} h_{i}(M_{t_{i}}^{k})\right) 
\right] = 0.
\end{equation*}
\end{lem}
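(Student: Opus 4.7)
The overall strategy is to leverage that each coupled $k$-parent SLFV $(M^{k}_{u})_{u \geq 0}$ already solves the martingale problem associated to $\lcal_{µ}^{k}$ (by Theorem~\ref{thm:forwards} together with Lemma~\ref{lem:bon_espace_SLFV}), so that the analogous expression with $\lcal_{µ}^{k}$ in place of $\lcal_{µ}^{\infty}$ vanishes identically for every $k \geq 2$. Subtracting, it suffices to show that
\begin{equation*}
\lim_{k \to \infty} \esp\left[\int_{t}^{t+s}\left(\lcal_{µ}^{\infty} - \lcal_{µ}^{k}\right)\Psi_{F,f}(M^{k}_{u})\,du \cdot \prod_{i=1}^{l} h_{i}(M^{k}_{t_{i}})\right] = 0.
\end{equation*}
Since the $h_{i}$'s are bounded, it is enough to establish the almost sure pointwise convergence $(\lcal_{µ}^{\infty} - \lcal_{µ}^{k})\Psi_{F,f}(M^{k}_{u}) \to 0$ for every fixed $u \in [t,t+s]$, together with a deterministic bound on this quantity that is uniform in $k$, $u$ and $\omega$ and integrable on $[t,t+s]$, so that a double application of dominated convergence concludes.

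Writing out the difference explicitly,
\begin{equation*}
\left(\lcal_{µ}^{\infty} - \lcal_{µ}^{k}\right)\Psi_{F,f}(M) = \int_{0}^{\infty}\!\!\int_{\mathrm{Supp}^{\rcal}(f)} \bigl(\tilde{p}^{k} - p^{k}\bigr)\bigl[F(\langle\Theta_{x,\rcal}^{+}(\omega_{M}), f\rangle) - F(\langle\Theta_{x,\rcal}^{-}(\omega_{M}), f\rangle)\bigr]\,dx\,µ(d\rcal),
\end{equation*}
with $p^{k}(x,\rcal,\omega) := \bigl(V_{\rcal}^{-1}\int_{\bxr}\omega\bigr)^{k}$ and $\tilde{p}^{k}(x,\rcal,\omega) := \delta_{0}\bigl(\int_{\bxr}(1-\omega)\bigr)$. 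Setting $q_{k}(x,\rcal) := V_{\rcal}^{-1}\int_{\bxr}\omega^{k}_{u}$ and exploiting the monotone coupling $\omega^{k}_{u} \downarrow \omega^{\infty}_{u}$ from Lemma~\ref{lem:property_coupling}, $q_{k}$ decreases to $q_{\infty} := V_{\rcal}^{-1}\int_{\bxr}\omega^{\infty}_{u}$ by monotone convergence. A short case analysis then yields the pointwise convergence $\tilde{p}^{k} - p^{k} \to 0$: if $q_{\infty} = 1$, then $q_{k} = 1$ for every $k \geq 2$ (since $q_{\infty} \leq q_{k} \leq 1$), hence $p^{k} = \tilde{p}^{k} = 1$; and if $q_{\infty} < 1$, then $q_{k} < 1$ for $k$ large enough, so $\tilde{p}^{k} = 0$, while $p^{k} = q_{k}^{k} \to 0$ because $q_{k}$ is eventually bounded above by $(1+q_{\infty})/2 < 1$. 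This extends the pointwise statement of Lemma~\ref{lem:CVG_partie_linf} to the present situation, in which the density itself depends on $k$.

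For the uniform bound, I invoke the estimate $|F(\langle\Theta_{x,\rcal}^{\pm}(\omega),f\rangle) - F(\langle\omega,f\rangle)| \leq C_{F,f}(\rcal^{d} \wedge 1)$ provided by Lemma~\ref{lem:appendix_A_2}, so that the integrand above is pointwise dominated by $2C_{F,f}(\rcal^{d} \wedge 1)$. Combining this with the volume estimate $\mathrm{Vol}(\mathrm{Supp}^{\rcal}(f)) \leq C'(\rcal^{d} \vee 1)$ and Condition~(\ref{eqn:condition_intensite}) yields a deterministic upper bound on $|(\lcal_{µ}^{\infty} - \lcal_{µ}^{k})\Psi_{F,f}(M^{k}_{u})|$ that does not depend on $k$, $u$ or $M^{k}_{u}$. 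Together with the boundedness of the $h_{i}$'s, this justifies passing the limit inside both the time integral and the expectation, concluding the proof. The main subtlety lies in the pointwise convergence step, since Lemma~\ref{lem:CVG_partie_linf} is stated for a fixed density whereas here $\omega^{k}_{u}$ evolves with $k$; the monotone coupling introduced in Section~\ref{subsec:2.1} is precisely what makes the case analysis above go through.
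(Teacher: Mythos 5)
Your proposal is correct and follows essentially the same route as the paper: you cancel the $\lcal_{µ}^{k}$ martingale problem satisfied by the coupled $k$-parent SLFVs, show that $(\lcal_{µ}^{k}-\lcal_{µ}^{\infty})\Psi_{F,f}(M_{u}^{k})\to 0$ pointwise via the monotone coupling of Lemma~\ref{lem:property_coupling}, and conclude with dominated convergence using the uniform bounds of Lemmas~\ref{lem:appendix_A_2}, \ref{lem:Lk_well_defined} and~\ref{lem:Linfty_well_defined}. The only (harmless) deviation is that your direct two-case analysis on $q_{\infty}$ replaces the paper's three-term splitting around $\omega_{u}^{\infty}$ (which invokes Lemmas~\ref{lem:CVG_partie_linf} and~\ref{lem:continuite_partielle_linf}) and makes the convergence of the $F$-difference factor unnecessary, its uniform boundedness sufficing.
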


\begin{proof}
For all $k \geq 2$ and $u \geq 0$, let $\omega_{u}^{k}$ be a density of $M_{u}^{k}$.

Let $l \geq 1$, $0 \leq t_{1} < ... < t_{l} \leq t < t+s$ and $h_{1},...,h_{l} \in C_{b}(\mcal_{\lambda})$. Then, for all $k \geq 2$, 
\begin{align*}
&\esp\left[
\left(
\Psi_{F,f}(M_{t+s}^{k}) - \Psi_{F,f}(M_{t}^{k}) - \int_{t}^{t+s}\lcal_{µ}^{\infty}\Psi_{F,f}(M_{u}^{k})du
\right) \times \left(\prod_{i = 1}^{l} h_{i}(M_{t_{i}}^{k})\right)
\right] \\
= &\esp\left[
\left(
\Psi_{F,f}(M_{t+s}^{k}) - \Psi_{F,f}(M_{t}^{k}) - \int_{t}^{t+s}\lcal_{µ}^{k}\Psi_{F,f}(M_{u}^{k})du
\right) \times \left(\prod_{i = 1}^{l} h_{i}(M_{t_{i}}^{k})\right)
\right] \\
&+ \esp\left[
\left(\int_{t}^{t+s}\lcal_{µ}^{k}\Psi_{F,f}(M_{u}^{k}) - \lcal_{µ}^{\infty}\Psi_{F,f}(M_{u}^{k})du
\right) \times \left(\prod_{i = 1}^{l} h_{i}(M_{t_{i}}^{k})\right)
\right]. \\
\intertext{Since $(M_{u}^{k})_{u \geq 0}$ is solution to the martingale problem associated to $(\lcal^{k}, \delta_{M_{0}^{k}})$, the above is equal to}
&0 + \esp\left[
\left(\int_{t}^{t+s}\lcal_{µ}^{k}\Psi_{F,f}(M_{u}^{k}) - \lcal_{µ}^{\infty}\Psi_{F,f}(M_{u}^{k})du
\right) \times \left(\prod_{i = 1}^{l} h_{i}(M_{t_{i}}^{k})\right)
\right]. \\
\intertext{From Lemmas \ref{lem:Lk_well_defined} and \ref{lem:Linfty_well_defined} in Section \ref{sec:5}, we can apply the dominated convergence theorem to}
& \esp\left[
\left(\int_{t}^{t+s}|\lcal_{µ}^{k}\Psi_{F,f}(M_{u}^{k}) - \lcal_{µ}^{\infty}\Psi_{F,f}(M_{u}^{k})|du
\right) \times \left(\prod_{i = 1}^{l} |h_{i}(M_{t_{i}}^{k})|\right)
\right], \\
\intertext{and we obtain}
& \lim\limits_{k \to + \infty} \esp\left[
\left(
\Psi_{F,f}(M_{t+s}^{k}) - \Psi_{F,f}(M_{t}^{k}) - \int_{t}^{t+s}\lcal_{µ}^{\infty}\Psi_{F,f}(M_{u}^{k})du
\right) \times \left(\prod_{i = 1}^{l} h_{i}(M_{t_{i}}^{k})\right)
\right] \\ 
= &\esp\left[
\left(\int_{t}^{t+s}\lim\limits_{k \to + \infty}\left(\lcal_{µ}^{k}\Psi_{F,f}(M_{u}^{k}) - \lcal_{µ}^{\infty}\Psi_{F,f}(M_{u}^{k})\right)du
\right) \times \left(\lim\limits_{k \to + \infty} \prod_{i = 1}^{l} h_{i}(M_{t_{i}}^{k})\right)
\right],
\end{align*}
assuming that the different limits exist. 

Now, let $k \geq 2$ and $u \in [t,t+s]$. We have
\begin{align*}
\lcal_{µ}^{k}\Psi_{F,f}(M_{u}^{k}) &- \lcal_{µ}^{\infty}\Psi_{F,f}(M_{u}^{k}) \\
= \int_{0}^{\infty}\int_{Supp^{\rcal}(f)} & \left(
F(\langle\Theta_{x,\rcal}^{+}(\omega_{u}^{k}),f\rangle) - F(\langle\omega_{u}^{k},f\rangle)
\right) \\
&\times \left[
\int_{\bxr^{k}}\prod_{j = 1}^{k} \left(\frac{\omega_{u}^{k}(y_{j})}{V_{\rcal}}\right)dy_{1}...dy_{k} - \delta_{0}\left(\int_{\bxr} \left(1 - \omega_{u}^{k}(y)\right)dy\right)
\right] \\
+& \left(
F(\langle\Theta_{x,\rcal}^{-}(\omega_{u}^{k}),f\rangle) - F(\langle\omega_{u}^{k},f\rangle)
\right) \\
&\times \left[
\delta_{0}\left(\int_{\bxr} \left(1 - \omega_{u}^{k}(y)\right)dy\right) - \int_{\bxr^{k}}\prod_{j = 1}^{k} \left(\frac{\omega_{u}^{k}(y_{j})}{V_{\rcal}}\right)dy_{1}...dy_{k}
\right]dxµ(d\rcal) \\
= \int_{0}^{\infty}\int_{Supp^{\rcal}(f)}& \left(
F(\langle\Theta_{x,\rcal}^{+}(\omega_{u}^{k}),f\rangle) - F(\langle\Theta_{x,\rcal}^{-}(\omega_{u}^{k}),f\rangle)
\right) \\
&\times
\left[
\int_{\bxr^{k}}\prod_{j = 1}^{k} \left(\frac{\omega_{u}^{k}(y_{j})}{V_{\rcal}}\right)dy_{1}...dy_{k} - \delta_{0}\left(\int_{\bxr}\left( 1 - \omega_{u}^{k}(y)\right)dy\right)
\right]dxµ(d\rcal).
\end{align*}
The term inside the integral is bounded in absolute value, by Lemma \ref{lem:appendix_A_1} in Section \ref{sec:5}. Moreover, as $(M_{u}^{k})_{k \geq 2}$ converges vaguely to $M_{u}^{\infty}$ by Lemma \ref{lem:cvg_vague_processus_tps}, we can apply Lemma \ref{lem:lem1_2_3} and we obtain
\begin{equation*}
\lim\limits_{k \to + \infty} F(\langle\Theta_{x,\rcal}^{+}(\omega_{u}^{k}),f\rangle) - F(\langle\Theta_{x,\rcal}^{-}(\omega_{u}^{k}),f\rangle) = F(\langle\Theta_{x,\rcal}^{+}(\omega_{u}^{\infty}),f\rangle) - F(\langle\Theta_{x,\rcal}^{-}(\omega_{u}^{\infty}),f\rangle).
\end{equation*}
Therefore, we have to show that
\begin{equation*}
\lim\limits_{k \to + \infty} \int_{\bxr^{k}}\prod_{j = 1}^{k} \left(\frac{\omega_{u}^{k}(y_{j})}{V_{\rcal}}\right)dy_{1}...dy_{k} - \delta_{0}\left(\int_{\bxr} \left(1 - \omega_{u}^{k}(y)\right)dy\right) = 0.
\end{equation*}
We cannot apply directly Lemma \ref{lem:CVG_partie_linf}, because the density also depends on $k$. However,
\begin{align*}
&\left|\int_{\bxr^{k}}\prod_{j = 1}^{k} \left(\frac{\omega_{u}^{k}(y_{j})}{V_{\rcal}}\right)dy_{1}...dy_{k} - \delta_{0}\left(\int_{\bxr}\left( 1 - \omega_{u}^{k}(y)\right)dy\right)\right| \\
\leq &\left|\int_{\bxr^{k}} \left(\prod_{j = 1}^{k} \frac{\omega_{u}^{k}(y_{j})}{V_{\rcal}} - \prod_{j = 1}^{k}\frac{\omega_{u}^{\infty}(y_{j})}{V_{\rcal}}\right) dy_{1}...dy_{n_{k}}\right| \\
+ &\left|\delta_{0}\left(\int_{\bxr}\left( 1 - \omega_{u}^{k}(y)\right)dy\right) - \delta_{0}\left(\int_{\bxr} \left(1 - \omega_{u}^{\infty}(y)\right)dy\right)\right| \\
+ &\left|\int_{\bxr^{k}}\prod_{j = 1}^{k} \left(\frac{\omega_{u}^{\infty}(y_{j})}{V_{\rcal}}\right)dy_{1}...dy_{k} - \delta_{0}\left(\int_{\bxr} \left(1 - \omega_{u}^{\infty}(y)\right)dy\right)\right|.
\end{align*}
We can apply Lemma \ref{lem:CVG_partie_linf} to the third term. Since for all $y \in \rd$, $\omega_{u}^{\infty}(y) \leq \omega_{u}^{k}(y)$, we showed in the proof of Lemma \ref{lem:continuite_partielle_linf} that
\begin{equation*}
\lim\limits_{k \to + \infty} \left|\delta_{0}\left(\int_{\bxr}\left( 1 - \omega_{u}^{k}(y)\right)dy\right) - \delta_{0}\left(\int_{\bxr}\left( 1 - \omega_{u}^{\infty}(y)\right)dy\right)\right| = 0.
\end{equation*}

Regarding the first term, we distinguish two cases. If $V_{\rcal}^{-1}\int_{\bxr} \omega_{u}^{\infty}(y) dy = 1$, since
\begin{equation*}
\int_{\bxr} \frac{\omega_{u}^{\infty}(y)}{V_{\rcal}}dy \leq \int_{\bxr} \frac{\omega_{u}^{k}(y)}{V_{\rcal}}dy \leq 1,
\end{equation*}
we obtain that in fact for every $k \geq 2$
\begin{equation*}
\int_{\bxr^{k}} \prod_{j = 1}^{k} \left(\frac{\omega_{u}^{k}(y_{j})}{V_{\rcal}} - \prod_{j = 1}^{k}\frac{\omega_{u}^{\infty}(y_{j})}{V_{\rcal}} \right)dy_{1}...dy_{n_{k}} = 0.
\end{equation*}
Conversely, assume $V_{\rcal}^{-1}\int_{\bxr} \omega_{u}^{\infty}(y)dy < 1$. Since,
\begin{equation*}
\int_{\bxr} \frac{\omega_{u}^{k}(y)}{V_{\rcal}}dy \xrightarrow[k \to + \infty]{} \int_{\bxr}\frac{\omega_{u}^{\infty}(y)}{V_{\rcal}}dy,
\end{equation*}
there exist $0 < M < 1$ and $k' \geq 2$ such that :
\begin{equation*}
\forall k \geq k', \int_{\bxr} \frac{\omega_{u}^{k}(y)}{V_{\rcal}}dy \leq M.
\end{equation*}
Therefore, 
\begin{align*}
\left|\int_{\bxr^{k}} \prod_{j = 1}^{k}\left(\frac{\omega_{u}^{k}(y_{j})}{V_{\rcal}}\right) - \prod_{j = 1}^{k}\left(\frac{\omega_{u}^{\infty}(y_{j})}{V_{\rcal}}\right) dy_{1}...dy_{k}\right| 
&= \left|\left(
\int_{\bxr}\frac{\omega_{u}^{k}(y)}{V_{\rcal}}dy
\right)^{k} - \left(
\int_{\bxr} \frac{\omega_{u}^{\infty}(y)}{V_{\rcal}}dy
\right)^{k}\right| \\
&\xrightarrow[k \to + \infty]{} 0,
\end{align*}
and we can conclude.
\end{proof}

We can now show that the $\infty$-parent SLFV is solution to the martingale problem introduced in Proposition \ref{prop:char_infty_slfv}.

\begin{proof}(Proposition \ref{prop:char_infty_slfv}) 
Let $F \in C^{1}(\mathbb{R})$ and $f \in C_{c}(\rd)$. For all $k \geq 2$, we set $(M_{t}^{k})_{t \geq 0} = (M_{k,t}^{\Pi^{c},\omega})_{t \geq 0}$. Let $l \geq 1$, let $0 \leq t_{1} < ... < t_{l} \leq t < t+s$, and let $h_{1},...,h_{l} \in C_{b}(\mcal_{\lambda})$. By Lemma \ref{lem:cvg_pb_martingale},
\begin{equation*}
\lim\limits_{k \to + \infty} \esp\left[\left(
\Psi_{F,f}(M_{t+s}^{k}) - \Psi_{F,f}(M_{t}^{k}) - \int_{t}^{t+s}\lcal_{µ}^{\infty} \Psi_{F,f}(M_{u}^{k})du
\right) \times
\left(\prod_{i = 1}^{l} h_{i}(M_{t_{i}}^{k})\right) 
\right] = 0.
\end{equation*}

Since $(M_{t+s}^{k})_{k \geq 2}$ (resp. $(M_{t}^{k})_{k \geq 2}$) converges vaguely to $M_{t+s}^{\infty}$ (resp. $M_{t}^{\infty}$) by Lemma \ref{lem:cvg_vague_processus_tps}, we can apply Lemma \ref{lem:lem1_2_3} and we obtain
\begin{align*}
\lim\limits_{k \to + \infty} \Psi_{F,f}(M_{t+s}^{k}) &= \Psi_{F,f}(M_{t+s}^{\infty}) \\
\text{and } \lim\limits_{k \to + \infty} \Psi_{F,f}(M_{t}^{k}) &= \Psi_{F,f}(M_{t}^{\infty}).
\end{align*}

Moreover, by Lemma \ref{lem:continuite_partielle_linf}, for all $u \in [t,t+s]$, 
\begin{equation*}
\lim\limits_{k \to + \infty} \lcal_{µ}^{\infty}\Psi_{F,f}(M_{u}^{k})
= \lcal_{µ}^{\infty}\Psi_{F,f}(M_{u}^{\infty}),
\end{equation*}
which is uniformly bounded in $M \in \mcal_{\lambda}$ by Lemma \ref{lem:Linfty_well_defined} in Section \ref{sec:5}. Since for all $i \in \llbracket 1, l \rrbracket$, $h_{i} \in C_{b}(\mcal_{\lambda})$, 
\begin{equation*}
\forall i \in \llbracket 1,l \rrbracket, \lim\limits_{k \to + \infty} h_{i}(M_{t_{i}}^{k}) = h_{i}(M_{t_{i}}^{\infty}).
\end{equation*}

Therefore, by Eq.(\ref{eqn:bounded1_2_3}) and by Lemmas \ref{lem:Lk_well_defined}, \ref{lem:Linfty_well_defined} in Section \ref{sec:5}, we can apply the dominated convergence theorem and obtain
\begin{equation*}
\esp\left[\left(
\Psi_{F,f}(M_{t+s}^{\infty}) - \Psi_{F,f}(M_{t}^{\infty}) - \int_{t}^{t+s}\lcal_{µ}^{\infty} \Psi_{F,f}(M_{u}^{\infty})du
\right) \times
\left(\prod_{i = 1}^{l} h_{i}(M_{t_{i}}^{\infty})\right) 
\right] = 0.
\end{equation*}

We conclude that
\begin{equation*}
\left(
\Psi_{F,f}(M_{t}^{\infty}) - \Psi_{F,f}(M_{0}^{\infty}) - \int_{0}^{t} \lcal_{µ}^{\infty} \Psi_{F,f} (M_{u}^{\infty})du
\right)_{t \geq 0}
\end{equation*}
is indeed a martingale.
\end{proof}

\subsection{Extended martingale problem for the $\infty$-parent SLFV}\label{subsec:4.1}
For all $\alpha \in \mathbb{R}$, we set $F^{\alpha} : x \to \mathds{1}_{\{\alpha\}}(x)$, and for all $E \in \mathcal{E}^{cf}$, we set $f^{E} : x \to \mathds{1}_{x \in E}$. The goal of this section is to prove the following result. 

\begin{lem}\label{lem:extension_pb_forwards_linf}
Let $\widetilde{M}$ be a solution to the martingale problem associated to $(\lcal_{µ}^{\infty}, \delta_{M^{0}})$. Then, for all $E \in \mathcal{E}^{cf}$, 
\begin{equation*}
\left(
\Psi_{F^{\mathrm{Vol}(E)},f^{E}}(\widetilde{M}_{t})-\Psi_{F^{\mathrm{Vol}(E)},f^{E}}(\widetilde{M}_{0}) - \int_{0}^{t}\lcal_{µ}^{\infty}\Psi_{F^{\mathrm{Vol}(E)},f^{E}}(\widetilde{M}_{s})ds
\right)_{t \geq 0}
\end{equation*}
is a martingale, where $\Psi_{F^{\mathrm{Vol}(E)},f^{E}} : M \in \mcal_{\lambda} \to \Psi_{F^{\mathrm{Vol}(E)},f^{E}}(M)$ is the function defined by
\begin{align*}
\forall M \in \mcal_{\lambda}, 
\Psi_{F^{\mathrm{Vol}(E)},f^{E}}(M) :&= F^{\mathrm{Vol}(E)}\left(\langle \omega_{M},f \rangle \right) \\
&= \mathds{1}_{\{\mathrm{Vol}(E)\}}\left(\langle \omega_{M},f \rangle \right) \\
&= \delta_{0}\left(
\mathrm{Vol}(E) - \langle \omega_{M},f \rangle
\right).
\end{align*}
\end{lem}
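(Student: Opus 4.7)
The plan is to establish the extended martingale property by approximating the discontinuous test functions $F^{\mathrm{Vol}(E)}$ and $f^{E}=\mathds{1}_{E}$ by sequences of admissible test functions for which the martingale identity already holds, and then passing to the limit using dominated convergence.

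First, I construct the approximating sequences. Let $(F_{n})_{n \geq 1} \subset C^{1}(\mathbb{R})$ with $0 \leq F_{n} \leq 1$, $F_{n}(\mathrm{Vol}(E)) = 1$, $F_{n}$ supported in $[\mathrm{Vol}(E)-1/n, \mathrm{Vol}(E)+1/n]$ (a smooth bump). Then for every $z \in \mathbb{R}$, $F_{n}(z) \to F^{\mathrm{Vol}(E)}(z)$. Let $(f_{m})_{m \geq 1} \subset C_{c}(\mathbb{R}^{d})$ with $0 \leq f_{m} \leq 1$, all $f_{m}$ supported in a common neighbourhood of $\mathrm{Supp}(f^{E})$, and $f_{m} \to f^{E}$ pointwise almost everywhere and in $L^{1}$. (If $E$ is unbounded of finite measure one first truncates $E$ to $E \cap B(0,R)$, sends $R \to \infty$, and uses $\mathrm{Vol}(E\cap B(0,R)) \to \mathrm{Vol}(E)$; I will write the argument for bounded $E$, the unbounded case is essentially the same.) Since $\Psi_{F_{n},f_{m}}$ involves $F_{n} \in C^{1}(\mathbb{R})$ and $f_{m} \in C_{c}(\mathbb{R}^{d})$, the martingale property applies to $\widetilde{M}$ for each $n,m$.

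Next, I fix $0 \leq s < t$ and a bounded $\mathcal{F}_{s}$-measurable random variable $G$, and pass to the limit in
\begin{equation*}
\mathbb{E}\left[\Bigl(\Psi_{F_{n},f_{m}}(\widetilde{M}_{t}) - \Psi_{F_{n},f_{m}}(\widetilde{M}_{s}) - \int_{s}^{t} \lcal_{µ}^{\infty} \Psi_{F_{n},f_{m}}(\widetilde{M}_{u})\, du\Bigr)\, G\right] = 0.
\end{equation*}
First send $m \to \infty$ with $n$ fixed. Since $F_{n}$ is continuous and $\langle \omega_{\widetilde{M}_{u}}, f_{m}\rangle \to \langle \omega_{\widetilde{M}_{u}}, f^{E}\rangle$ (by dominated convergence using $|\omega| \leq 1$ and $f_{m} \to f^{E}$ in $L^{1}$), the pointwise terms converge. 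The integrand $\lcal_{µ}^{\infty}\Psi_{F_{n},f_{m}}(\widetilde{M}_{u})$ converges pointwise (for each $(x,\rcal)$ inside the defining integral, $F_{n}\circ \langle \cdot, f_{m}\rangle$ is continuous and $\Theta_{x,\rcal}^{-}(\omega_{\widetilde{M}_{u}})$ depends only on $\omega_{\widetilde{M}_{u}}$ outside $\bxr$, so the $L^{1}$-convergence of $f_{m}$ gives convergence of both $\langle \Theta_{x,\rcal}^{-}(\omega),f_{m}\rangle$ and $\langle \omega,f_{m}\rangle$). Dominated convergence applies because of the uniform bound from Lemma~\ref{lem:Linfty_well_defined}: $|\lcal_{µ}^{\infty}\Psi_{F_{n},f_{m}}(M)| \leq 2\int_{0}^{\infty}\mathrm{Vol}(\mathrm{Supp}^{\rcal}(f_{m}))\, µ(d\rcal)$, which is bounded uniformly in $m$ since the supports of the $f_{m}$ lie in a common bounded set and since $µ$ satisfies Condition~(\ref{eqn:condition_intensite}). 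This yields the martingale identity with $\Psi_{F_{n},f^{E}}$ in place of $\Psi_{F_{n},f_{m}}$.

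Finally, I let $n \to \infty$ in
\begin{equation*}
\mathbb{E}\left[\Bigl(\Psi_{F_{n},f^{E}}(\widetilde{M}_{t}) - \Psi_{F_{n},f^{E}}(\widetilde{M}_{s}) - \int_{s}^{t} \lcal_{µ}^{\infty} \Psi_{F_{n},f^{E}}(\widetilde{M}_{u})\, du\Bigr)\, G\right] = 0.
\end{equation*}
Since $F_{n} \to F^{\mathrm{Vol}(E)}$ pointwise, $\Psi_{F_{n},f^{E}}(\widetilde{M}_{r}) \to \Psi_{F^{\mathrm{Vol}(E)},f^{E}}(\widetilde{M}_{r})$ pointwise for $r = s,t$, and the dominating function $1$ is trivially integrable. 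For each $M \in \mcal_{\lambda}$ and almost every $(x,\rcal)$, pointwise convergence $F_{n} \to F^{\mathrm{Vol}(E)}$ gives convergence of the integrand in $\lcal_{µ}^{\infty}\Psi_{F_{n},f^{E}}(M)$ to the corresponding integrand for $F^{\mathrm{Vol}(E)}$; a second application of dominated convergence (again with bound $2$ and finite envelope $2\int_{0}^{\infty}\mathrm{Vol}(\mathrm{Supp}^{\rcal}(f^{E}))\,µ(d\rcal)$) produces the pointwise convergence of $\lcal_{µ}^{\infty}\Psi_{F_{n},f^{E}}(\widetilde{M}_{u}) \to \lcal_{µ}^{\infty}\Psi_{F^{\mathrm{Vol}(E)},f^{E}}(\widetilde{M}_{u})$, and a third dominated convergence (uniform bound in $n,u,\omega$) swaps limit and $\mathbb{E}\int_{s}^{t}$. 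The conclusion follows.

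The main obstacle is the last step, because $F^{\mathrm{Vol}(E)}$ is discontinuous at the single point $\mathrm{Vol}(E)$ and the integrand of $\lcal_{µ}^{\infty}$ also contains the discontinuous factor $\delta_{0}(\int_{\bxr}(1-\omega_{M})dz)$. What makes the passage to the limit work is that the pointwise convergence $F_{n}\to F^{\mathrm{Vol}(E)}$ is \emph{unconditional} (the limit exists at the point of discontinuity too, where $F_{n}(\mathrm{Vol}(E)) = 1$ for all $n$), and that the uniform bound $|F_{n}| \leq 1$ together with Lemma~\ref{lem:Linfty_well_defined} (which shows $\lcal_{µ}^{\infty}$ is well-defined even without the Condition~(\ref{eqn:condition_infty_SLFV})) provides integrable envelopes at each stage; no regularity of $F^{\mathrm{Vol}(E)}$ is actually needed once the envelopes are in place.
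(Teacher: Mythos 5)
Your argument is correct, but it follows a genuinely different route from the paper's. The paper (Lemma~\ref{lem:reformulation_extension_pb_forwards_linf}) works with a single diagonal sequence $(F_{n}^{\mathrm{Vol}(E)},f_{n}^{E})$ and splits the error as $[\Psi_{F,f}-\Psi_{F_{n},f}]+[\Psi_{F_{n},f}-\Psi_{F_{n},f_{n}}]$; the second bracket is the delicate one, because $F_{n}$ is evaluated at a moving argument while simultaneously concentrating at its point of discontinuity, and controlling it (Lemma~\ref{lem:cvg_Fnfn}) forces the one-sided approximation $f_{n}^{E}\geq \mathds{1}_{E}$ together with the rate-matching hypotheses (E)--(G) (monotonicity of $F_{n}^{\mathrm{Vol}(E)}$ on either side of $\mathrm{Vol}(E)$, $\mathrm{Vol}(\mathrm{Supp}(f_{n}^{E})\backslash E)\leq n^{-1}$, $F_{n}^{\mathrm{Vol}(E)}(\mathrm{Vol}(E)\pm n^{-1})\geq 1-n^{-1}$) and a case distinction on whether $\int_{E}\omega_{M}(z)dz$ equals $\mathrm{Vol}(E)$. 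Your iterated limit --- first $m\to+\infty$ with $F_{n}$ fixed and continuous, so only the argument moves; then $n\to+\infty$ with the argument $\langle\omega,f^{E}\rangle$ fixed, so only the function moves --- decouples the two sources of discontinuity, and each stage reduces to dominated convergence with the envelopes of Lemma~\ref{lem:Linfty_well_defined}. This buys a real simplification: none of the monotonicity or rate conditions are needed, at the modest cost of checking that the intermediate operator $\lcal_{µ}^{\infty}\Psi_{F_{n},f^{E}}$ (with $f^{E}$ merely measurable) is well-defined, which the paper also uses implicitly. The only place where your sketch is thinner than it claims is the reduction of unbounded $E\in\mathcal{E}^{cf}$ to bounded $E$: truncating to $E\cap B(0,R)$ moves the discontinuity point of $F^{\mathrm{Vol}(E\cap B(0,R))}$, so the limit $R\to+\infty$ is not literally ``the same argument'' and would need its own justification; note, however, that the bound~(\ref{eqn:bound_sre}) on $\mathrm{Vol}(S^{\rcal}(E))$ used in the paper presupposes comparable control, so this is not a gap specific to your approach.
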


This lemma is a direct consequence of the following lemma.
\begin{lem}\label{lem:reformulation_extension_pb_forwards_linf}
Let $\widetilde{M}$ be a solution to the martingale problem associated to $(\lcal_{µ}^{\infty},\delta_{M^{0}})$. Then, for all $E \in \mathcal{E}^{cf}$, for all $l \geq 1$, for all $0 \leq t_{1} < ... < t_{l} \leq t < t+s$, for all $h_{1},...,h_{l} \in C_{b}(\mcal_{\lambda})$,
\begin{equation*}
\esp\left[
\left(
\Psi_{F^{\mathrm{Vol}(E)},f^{E}}(\widetilde{M}_{t+s}) - \Psi_{F^{\mathrm{Vol}(E)},f^{E}}(\widetilde{M}_{t}) - \int_{t}^{t+s}\lcal_{µ}^{\infty} \Psi_{F^{\mathrm{Vol}(E)},f^{E}}(\widetilde{M}_{u})du\right)\times \left(
\prod_{i = 1}^{l} h_{i}(\widetilde{M}_{t_{i}})
\right)
\right] = 0
\end{equation*}
\end{lem}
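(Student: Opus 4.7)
The plan is to extend the martingale identity, which holds by assumption on $\widetilde{M}$ for all smooth test functions $\Psi_{F,f}$ with $F \in C^{1}(\mathbb{R})$ and $f \in C_{c}(\rd)$, to the non-smooth pair $(F^{\mathrm{Vol}(E)}, f^{E})$ via a double approximation argument followed by dominated convergence. The main difficulty is to coordinate the approximations $F_{n} \to F^{\mathrm{Vol}(E)}$ and $f_{n} \to f^{E}$ so that $\Psi_{F_{n}, f_{n}}(M) \to \Psi_{F^{\mathrm{Vol}(E)}, f^{E}}(M)$ pointwise on $\mcal_{\lambda}$ despite the jump of $F^{\mathrm{Vol}(E)}$ at $\mathrm{Vol}(E)$.

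Assuming without loss of generality that $E$ is bounded (otherwise one first truncates using $\mathrm{Vol}(E \setminus B(0,R)) \to 0$ as $R \to \infty$), I would fix a compact $K \supseteq E$ and pick $f_{n} \in C_{c}(\rd)$ with $\mathrm{supp}(f_{n}) \subseteq K$, $0 \leq f_{n} \leq 1$, and $\eta_{n} := \|\mathds{1}_{E} - f_{n}\|_{L^{1}(\rd)} < 1/n$ (possible by density of $C_{c}$ in $L^{1}$). For $F_{n}$, I would take a $C^{1}$ bump with $0 \leq F_{n} \leq 1$, $F_{n} \equiv 1$ on $[\mathrm{Vol}(E) - 1/n, \mathrm{Vol}(E) + 1/n]$, and $F_{n} \equiv 0$ outside $[\mathrm{Vol}(E) - 2/n, \mathrm{Vol}(E) + 2/n]$. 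The crucial point is the coordination $\eta_{n} < 1/n$: the plateau radius of $F_{n}$ strictly exceeds the $L^{1}$-defect of $f_{n}$.

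Pointwise convergence $\Psi_{F_{n}, f_{n}}(M) \to \Psi_{F^{\mathrm{Vol}(E)}, f^{E}}(M)$ then follows from a dichotomy exploiting $\omega_{M} \in \{0,1\}$: if $\omega_{M} = 1$ a.e.\ on $E$, then $|\langle \omega_{M}, f_{n} \rangle - \mathrm{Vol}(E)| \leq \eta_{n} < 1/n$, so $F_{n}(\langle \omega_{M}, f_{n} \rangle) = 1 = F^{\mathrm{Vol}(E)}(\langle \omega_{M}, f^{E} \rangle)$; otherwise $\delta_{M} := \int_{E}(1 - \omega_{M}) > 0$ and $\langle \omega_{M}, f_{n} \rangle \leq \mathrm{Vol}(E) - \delta_{M} + \eta_{n} < \mathrm{Vol}(E) - 2/n$ for $n$ large enough, so $F_{n}(\langle \omega_{M}, f_{n} \rangle) = 0 = F^{\mathrm{Vol}(E)}(\langle \omega_{M}, f^{E} \rangle)$. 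The same dichotomy applied to $\langle \Theta_{x,\rcal}^{-}(\omega_{M}), f_{n} \rangle$ (with $\int_{E \setminus \bxr}(1 - \omega_{M})$ in place of $\delta_{M}$) gives pointwise convergence of the integrand of $\lcal_{\mu}^{\infty}\Psi_{F_{n}, f_{n}}(M)$ to that of $\lcal_{\mu}^{\infty}\Psi_{F^{\mathrm{Vol}(E)}, f^{E}}(M)$. Since $\mathrm{Supp}^{\rcal}(f_{n})$ is contained in the $\rcal$-thickening of the fixed $K$, whose volume is at most $C(1 + \rcal^{d})$ uniformly in $n$, and $|F_{n}| \leq 1$, one obtains the uniform estimate
\begin{equation*}
|\lcal_{\mu}^{\infty}\Psi_{F_{n}, f_{n}}(M)| \leq 2C \int_{0}^{+\infty}(1 + \rcal^{d})\,\mu(d\rcal),
\end{equation*}
which is finite by Condition~(\ref{eqn:condition_intensite}); dominated convergence inside the $dx\,\mu(d\rcal)$ integral then yields $\lcal_{\mu}^{\infty}\Psi_{F_{n}, f_{n}}(M) \to \lcal_{\mu}^{\infty}\Psi_{F^{\mathrm{Vol}(E)}, f^{E}}(M)$ for each $M \in \mcal_{\lambda}$.

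To conclude, I would apply the martingale identity of the hypothesis to the smooth pair $(F_{n}, f_{n})$, multiply by the bounded functional $\prod_{i=1}^{l} h_{i}(\widetilde{M}_{t_{i}})$, take expectation, and send $n \to +\infty$; dominated convergence (the two boundary terms bounded by $1$, and the time integral bounded via the uniform estimate above) transfers the vanishing of the expectation from the smooth approximations to the limit, yielding the claimed identity. The main obstacle throughout is the coordination $\eta_{n} < 1/n$, which is what cleanly handles the discontinuity of $F^{\mathrm{Vol}(E)}$; a secondary but necessary technical point is to keep all the $f_{n}$'s supported in a common compact, which is what renders the bound on $\mathrm{Supp}^{\rcal}(f_{n})$ uniform in $n$.
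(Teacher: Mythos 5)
Your proof is correct and rests on the same overall strategy as the paper's: approximate the discontinuous pair $(F^{\mathrm{Vol}(E)},f^{E})$ by admissible test functions, apply the martingale identity to the approximants, and pass to the limit by dominated convergence, with the key pointwise convergences coming from the dichotomy that, $\omega_{M}$ being $\{0,1\}$-valued, either $\omega_{M}=1$ a.e.\ on $E$ or $\int_{E}(1-\omega_{M}(z))dz$ is bounded away from $0$. The implementation is genuinely different, however. The paper approximates in two separate stages --- first $F^{\mathrm{Vol}(E)}\to F_{n}^{\mathrm{Vol}(E)}$ with $f^{E}=\mathds{1}_{E}$ held fixed, then $f^{E}\to f_{n}^{E}$ --- and telescopes the resulting differences (Lemmas~\ref{lem:cvg_Fnfn} and~\ref{lem:cvg_linf}); the control there comes from the one-sided approximation $f_{n}^{E}\geq\mathds{1}_{E}$ with $\mathrm{Vol}(\mathrm{Supp}(f_{n}^{E})\setminus E)\leq n^{-1}$, together with unimodality of $F_{n}^{\mathrm{Vol}(E)}$ and the quantitative bound of Hypothesis~(G); no plateau is required. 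You instead approximate simultaneously, playing a two-sided $L^{1}$ bound $\eta_{n}<1/n$ against a plateau of radius $1/n$ inside a support of radius $2/n$. This is arguably tidier: the single inequality $\left|\langle g,f_{n}\rangle-\int_{E}g(z)dz\right|\leq\eta_{n}$, valid for any $\{0,1\}$-valued $g$, handles the boundary terms and the generator terms in one stroke, whereas the paper needs the monotonicity hypotheses~(E) and~(G) to compensate for the absence of a plateau. The domination and the final passage to the limit are identical in both arguments.

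Two small points to repair when writing this up. First, in the generator term the relevant defect for $g=\Theta_{x,\rcal}^{-}(\omega_{M})$ is
\begin{equation*}
\int_{E}\left(1-\Theta_{x,\rcal}^{-}(\omega_{M})(z)\right)dz=\mathrm{Vol}(E\cap\bxr)+\int_{E\setminus\bxr}\left(1-\omega_{M}(z)\right)dz,
\end{equation*}
not merely $\int_{E\setminus\bxr}(1-\omega_{M}(z))dz$ as in your parenthetical: when $\mathrm{Vol}(E\cap\bxr)>0$ one is automatically in the second branch of the dichotomy even if $\omega_{M}=1$ a.e.\ on $E$, which is exactly what makes the limiting bracket nonzero there. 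Second, the reduction to bounded $E$ (needed so that all the $f_{n}$ share a compact support, which is what makes your bound on $\mathrm{Vol}(\mathrm{Supp}^{\rcal}(f_{n}))$ uniform in $n$) deserves a line of justification; note that the paper's Hypotheses~(D) and~(F) implicitly require $E$ to be bounded as well, so you lose nothing relative to the published argument by making this explicit.
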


Let $E \in \mathcal{E}^{cf}$. Let $(F_{n}^{\mathrm{Vol}(E)})_{n \in \mathbb{N}} \in C^{1}(\mathbb{R})$ and $(f_{n}^{E})_{n \in \mathbb{N}} \in C_{c}(\rd)$ be two sequences satisfying the following conditions.
\begin{align*}
\text{(A) }& F_{n}^{\mathrm{Vol}(E)} \xrightarrow[n \to + \infty]{} F^{\mathrm{Vol}(E)}\text{ pointwise and in }L^{1}, \\
\text{(B) }& f_{n}^{E} \xrightarrow[n \to + \infty]{} f^{E} \text{ pointwise and in }L^{1}, \\
\text{(C) }& \forall n \in \mathbb{N}, \forall x \in \mathbb{R}, 0 \leq F_{n}^{\mathrm{Vol}(E)}(x) \leq 1 \text{ and } F_{n}^{\mathrm{Vol}(E)}(\mathrm{Vol}(E)) = 1, \\
\text{(D) }& \forall n \in \mathbb{N}, \forall x \in \rd, 0 \leq f_{n}^{E}(x) \leq 1 \text{ and } \forall z \in E, f_{n}^{E}(z) = 1, \\
\text{(E) }& \forall n \in \mathbb{N}, F_{n}^{\mathrm{Vol}(E)} \text{ is increasing over } (-\infty, \mathrm{Vol}(E)] \text{ and decreasing over } [\mathrm{Vol}(E),+\infty), \\
\text{(F) }& \forall n \in \mathbb{N}, \mathrm{Vol}(Supp(f_{n}^{E})\backslash E) \leq n^{-1}, \text{ and } Supp(f_{n+1}^{E}) \subseteq Supp(f_{n}^{E})\\
\text{(G) }& \forall n \in \mathbb{N}, F_{n}^{\mathrm{Vol}(E)}\left(\mathrm{Vol}(E)+n^{-1}\right) \geq 1 - n^{-1} \text{ and } F_{n}^{\mathrm{Vol}(E)}\left(\mathrm{Vol}(E)-n^{-1}\right) \geq 1 - n^{-1}.
\end{align*}

First, we observe that since $F^{\mathrm{Vol}(E)}$ and $(F_{n}^{\mathrm{Vol}(E)})_{n \in \mathbb{N}^{*}}$ are bounded by one (by Hypothesis (C)), for all $M \in \mcal_{\lambda}$ and $n \in \mathbb{N}^{*}$
\begin{align}
\left|\Psi_{F^{\mathrm{Vol}(E)},f^{E}}(M)\right| &\leq 1 \label{eqn:eq_psi_1} \\
\left|\Psi_{F_{n}^{\mathrm{Vol}(E)},f^{E}}(M)\right| &\leq 1 \label{eqn:eq_psi_2} \\
\left|\Psi_{F_{n}^{\mathrm{Vol}(E)},f_{n}^{E}}(M)\right| &\leq 1 \label{eqn:eq_psi_3}
\end{align}
Moreover, there exists $C_{E} > 0$ such that for all $\rcal > 0$, 
\begin{equation}\label{eqn:bound_sre}
\mathrm{Vol}(S^{\rcal}(E)) \leq C^{E} \times \left(\rcal^{d} \vee 1 \right),
\end{equation}
where we recall that $S^{\rcal}(E)$ is defined by
\begin{equation*}
S^{\rcal}(E) := \{ x \in \rd : \exists y \in E, ||x-y|| \leq \rcal\}.
\end{equation*}
Therefore, we have the following lemma.

\begin{lem}\label{lem:bound_linfty}
There exists $C_{2}^{E} > 0$ such that for all $E \in \mathcal{E}^{cf}$, $M \in \mcal_{\lambda}$ and $n \in \mathbb{N}^{*}$, 
\begin{align*}
\left|\lcal_{µ}^{\infty} \Psi_{F^{\mathrm{Vol}(E)},f^{E}}(M)\right|
&\leq C_{2}^{E} \\
\left|\lcal_{µ}^{\infty} \Psi_{F_{n}^{\mathrm{Vol}(E)},f^{E}}(M)\right|
&\leq C_{2}^{E} \\
\left|\lcal_{µ}^{\infty} \Psi_{F_{n}^{\mathrm{Vol}(E)},f_{n}^{E}}(M)\right|
&\leq C_{2}^{E}.
\end{align*}
\end{lem}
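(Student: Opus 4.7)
The plan is to bound each of the three quantities by directly estimating the integrand defining $\lcal_{\mu}^{\infty} \Psi_{F,f}(M)$, using only the sup-norm bound on $F$ from Hypothesis (C), the indicator structure of the factor $1-\delta_{0}\bigl(\int_{B(x,\rcal)}(1-\omega_{M}(z))dz\bigr)$, and the support control (\ref{eqn:bound_sre}). The resulting estimate will be independent of $M \in \mcal_{\lambda}$ and $n \in \mathbb{N}^{*}$, which yields the desired constant $C_{2}^{E}$.

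First, Hypothesis (C) gives $0 \leq F \leq 1$ in all three cases, so that $|F(\langle \Theta_{x,\rcal}^{-}(\omega_{M}),f\rangle) - F(\langle \omega_{M},f\rangle)| \leq 2$ pointwise. Combined with the fact that the prefactor $1-\delta_{0}\bigl(\int_{B(x,\rcal)}(1-\omega_{M}(z))dz\bigr)$ always lies in $\{0,1\}$, the integrand is bounded pointwise by $2\,\mathds{1}_{x \in Supp^{\rcal}(f)}$, hence
$$|\lcal_{\mu}^{\infty}\Psi_{F,f}(M)| \leq 2 \int_{0}^{\infty} \mathrm{Vol}(Supp^{\rcal}(f))\,\mu(d\rcal).$$

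Next I control $\mathrm{Vol}(Supp^{\rcal}(f))$ uniformly in $n$. When $f = f^{E}$, $Supp^{\rcal}(f^{E}) = S^{\rcal}(E)$, and (\ref{eqn:bound_sre}) gives $\mathrm{Vol}(Supp^{\rcal}(f^{E})) \leq C^{E}(\rcal^{d} \vee 1)$. When $f = f_{n}^{E}$, Hypothesis (F) yields $Supp(f_{n}^{E}) \subseteq Supp(f_{1}^{E})$ for every $n$, and since $\mathrm{Vol}(Supp(f_{1}^{E}) \setminus E) \leq 1$, the set $Supp(f_{1}^{E})$ has finite Lebesgue measure; the analogue of (\ref{eqn:bound_sre}) then furnishes a constant $\widetilde{C}^{E}$ such that $\mathrm{Vol}(Supp^{\rcal}(f_{n}^{E})) \leq \widetilde{C}^{E}(\rcal^{d} \vee 1)$ for every $n \geq 1$. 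Setting $C_{2}^{E} := 2\max(C^{E},\widetilde{C}^{E}) \int_{0}^{\infty} (\rcal^{d} \vee 1)\mu(d\rcal)$ then handles all three cases simultaneously.

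The main obstacle is establishing finiteness of $\int_{0}^{\infty} (\rcal^{d} \vee 1)\mu(d\rcal)$. Condition~(\ref{eqn:condition_intensite}) directly controls $\int_{1}^{\infty} \rcal^{d}\mu(d\rcal) < \infty$, but local finiteness of $\mu$ near zero must be separately ensured to control $\mu(0,1]$; this is granted by the $n = 1$ term of Condition~(\ref{eqn:condition_infty_SLFV}) (which is the regime under which Theorem~\ref{thm:characterization_infty_SLFV} is ultimately proved through the dual), or alternatively absorbed into the finer estimates underlying Lemma~\ref{lem:Linfty_well_defined} in Section~\ref{sec:5}. Once integrability of $\mu$ near zero is in hand, the proof reduces to the two mechanical estimates above.
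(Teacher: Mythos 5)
Your proof is correct and follows essentially the same route as the paper's: bound the integrand by $2$ using $0\leq F\leq 1$, bound the volume of the integration domain by a constant times $\rcal^{d}\vee 1$ uniformly in $n$, and integrate against $\mu$. You are in fact slightly more careful than the paper on two points it glosses over, namely the uniformity in $n$ of the volume bound for $\mathrm{Supp}^{\rcal}(f_{n}^{E})$ (which the paper dispatches with ``similarly''; note the right justification is compactness of $\mathrm{Supp}(f_{1}^{E})$ rather than mere finiteness of its measure), and the finiteness of $\int_{0}^{\infty}(\rcal^{d}\vee 1)\,\mu(d\rcal)$ near $\rcal=0$, which the paper attributes to Condition~(\ref{eqn:condition_intensite}) alone but which, as you observe, really uses local finiteness of $\mu$ near $0$, available under Condition~(\ref{eqn:condition_infty_SLFV}), the regime in which the lemma is actually applied.
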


\begin{proof}
Let $M \in \mcal_{\lambda}$.
\begin{align*}
\left|\lcal_{µ}^{\infty} \Psi_{F^{\mathrm{Vol}(E)},f^{E}}(M)\right| 
\leq &\int_{0}^{\infty}\int_{S^{\rcal}(E)}
\left|1 - \delta_{0}\left(
\int_{\bxr}\left(1-\omega_{M}(z)\right)dz
\right)\right| \\
&\quad \quad  \quad \quad \quad \times
\left|
F^{\mathrm{Vol}(E)}\left(\langle \Theta_{x,\rcal}^{-}(\omega_{M}),f^{E} \rangle\right) - F^{\mathrm{Vol(E)}}\left(\langle \omega_{M},f^{E} \rangle\right)
\right| dx µ(d\rcal) \\
\leq &\int_{0}^{\infty} \int_{S^{\rcal}(E)} 2 dx µ(d\rcal)  \\
\leq & 2 \, \int_{0}^{\infty} C^{E} \, \left(\rcal^{d} \vee 1\right)µ(d\rcal) \\
< & + \infty
\end{align*}
since $µ$ satisfies Condition (\ref{eqn:condition_intensite}). Here we passed from line $1$ to line $2$ using the fact that $F^{\mathrm{Vol}(E)}$ is bounded by $1$, and from line $2$ to line $3$ using Eq. (\ref{eqn:bound_sre}).

Setting $C_{2}^{E} = 2 C^{E} \times \int_{0}^{\infty} \left(\rcal^{d} \vee 1 \right) µ(d\rcal)$, we obtain
\begin{equation*}
\left|\lcal_{µ}^{\infty} \Psi_{F^{\mathrm{Vol}(E)},f^{E}}(M)\right|
\leq C_{2}^{E}.
\end{equation*}

Similarly, we can show that for all $n \in \mathbb{N}^{*}$,
\begin{align*}
\left|\lcal_{µ}^{\infty} \Psi_{F_{n}^{\mathrm{Vol}(E)},f^{E}}(M)\right|
&\leq C_{2}^{E} \\
\text{and \quad} \left|\lcal_{µ}^{\infty} \Psi_{F_{n}^{\mathrm{Vol}(E)},f_{n}^{E}}(M)\right|
&\leq C_{2}^{E}.
\end{align*}
\end{proof}

This lemma, along with Eqs. (\ref{eqn:eq_psi_1}, \ref{eqn:eq_psi_2}, \ref{eqn:eq_psi_3}, \ref{eqn:bound_sre}), will allow us to use the dominated convergence theorem in the proof of Lemma \ref{lem:reformulation_extension_pb_forwards_linf}. 

Since by Hypothesis (A) the sequence $(F_{n}^{\mathrm{Vol}(E)})_{n \in \mathbb{N}^{*}}$ converges pointwise to $F^{\mathrm{Vol}(E)}$, we obtain that
\begin{equation}\label{eqn:cvg_Fnf}
\forall M \in \mcal_{\lambda}, \Psi_{F_{n}^{\mathrm{Vol}(E)},f^{E}}(M)
\xrightarrow[n \to + \infty]{}
\Psi_{F^{\mathrm{Vol}(E)},f^{E}}(M).
\end{equation}
We want to show a similar result regarding $\left(\Psi_{F_{n}^{\mathrm{Vol}(E)},f_{n}^{E}}(M)\right)_{n \in \mathbb{N}^{*}}$. 

\begin{lem}\label{lem:cvg_Fnfn}
For all $M \in \mcal_{\lambda}$ and $E \in \mathcal{E}^{cf}$, 
\begin{equation*}
\Psi_{F_{n}^{\mathrm{Vol}(E)},f_{n}^{E}}(M) - \Psi_{F_{n}^{\mathrm{Vol}(E)}, f^{E}}(M) \xrightarrow[n \to + \infty]{} 0. 
\end{equation*}
\end{lem}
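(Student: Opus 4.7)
The plan is to reduce the convergence to a simple monotonicity argument about $F_n^{\mathrm{Vol}(E)}$ evaluated at two nearby points, where the distance between the points is controlled by property (F) and the size of the density $\omega_M$.

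First, I would set $a := \langle \omega_M, f^E \rangle = \int_E \omega_M(x)\,dx$ and $a_n := \langle \omega_M, f_n^E \rangle$, so that the quantity to estimate is simply $F_n^{\mathrm{Vol}(E)}(a_n) - F_n^{\mathrm{Vol}(E)}(a)$. Using (D), the function $f_n^E - f^E$ is nonnegative and bounded by $1$, with support contained in $\mathrm{Supp}(f_n^E) \setminus E$. Combined with (F) and the fact that $0 \leq \omega_M \leq 1$, this gives the two-sided bound
\begin{equation*}
a \;\leq\; a_n \;\leq\; a + \mathrm{Vol}(\mathrm{Supp}(f_n^E) \setminus E) \;\leq\; a + n^{-1}.
\end{equation*}
Since $\omega_M \leq 1$, we also have $a \in [0, \mathrm{Vol}(E)]$, so only two cases need to be treated: $a = \mathrm{Vol}(E)$ and $a < \mathrm{Vol}(E)$.

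In the first case, $a_n \in [\mathrm{Vol}(E), \mathrm{Vol}(E) + n^{-1}]$. By the decreasing part of (E) and by (G),
\begin{equation*}
1 - n^{-1} \;\leq\; F_n^{\mathrm{Vol}(E)}(\mathrm{Vol}(E) + n^{-1}) \;\leq\; F_n^{\mathrm{Vol}(E)}(a_n) \;\leq\; F_n^{\mathrm{Vol}(E)}(\mathrm{Vol}(E)) = 1,
\end{equation*}
so the difference is bounded by $n^{-1}$ and tends to zero. In the second case, I would fix any $b \in (a, \mathrm{Vol}(E))$, independent of $n$. For $n$ large enough so that $a + n^{-1} < b$, both $a$ and $a_n$ lie in $(-\infty, b] \subseteq (-\infty, \mathrm{Vol}(E)]$, where $F_n^{\mathrm{Vol}(E)}$ is nondecreasing by (E). Hence
\begin{equation*}
0 \;\leq\; F_n^{\mathrm{Vol}(E)}(a_n) - F_n^{\mathrm{Vol}(E)}(a) \;\leq\; F_n^{\mathrm{Vol}(E)}(b) - F_n^{\mathrm{Vol}(E)}(a),
\end{equation*}
and the pointwise convergence in (A) applied at the fixed points $a$ and $b$ (both distinct from $\mathrm{Vol}(E)$) yields $F_n^{\mathrm{Vol}(E)}(b) \to 0$ and $F_n^{\mathrm{Vol}(E)}(a) \to 0$. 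This concludes the argument.

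I do not expect a real obstacle here; the only subtle point is recognising that one must not try to compare $F_n^{\mathrm{Vol}(E)}(a_n)$ to the limiting value $F^{\mathrm{Vol}(E)}(a_n)$ directly (the argument $a_n$ moves with $n$ and could in principle approach $\mathrm{Vol}(E)$), but rather to sandwich $a_n$ between $a$ and an $n$-independent point $b$ and exploit monotonicity on $(-\infty, \mathrm{Vol}(E)]$ from (E). The structural property $a \leq \mathrm{Vol}(E)$, which forces the argument to sit in the increasing part of $F_n^{\mathrm{Vol}(E)}$, is what makes the argument go through cleanly.
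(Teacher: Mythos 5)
Your proof is correct and follows essentially the same route as the paper's: the same case split on whether $\langle \omega_{M}, f^{E} \rangle$ equals $\mathrm{Vol}(E)$ or is strictly smaller, the same bound $a \leq a_{n} \leq a + n^{-1}$ from (D) and (F), and the same use of (C), (E), (G) in the first case and of a fixed point $b < \mathrm{Vol}(E)$ combined with monotonicity from (E) and pointwise convergence from (A) in the second (the paper simply takes $b$ to be the midpoint between $\langle \omega_{M}, f^{E}\rangle$ and $\mathrm{Vol}(E)$).
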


\begin{proof}
Let $M \in \mcal_{\lambda}$. We distinguish two cases. 

\underline{\textsl{Case 1 :}}  $\int_{E} \omega_{M}(z)dz = \mathrm{Vol}(E)$. 

Let $n \in \mathbb{N}^{*}$. Then, since by Hypothesis (D) we have $E \subseteq Supp(f_{n}^{E})$, 
\begin{align*}
\mathrm{Vol}(E) \leq \langle \omega_{M},f_{n}^{E} \rangle 
&\leq \mathrm{Vol}(E) + \int_{Supp(f_{n}^{E})\backslash E} f_{n}^{E}(z)\omega_{M}(z)dz \\
&\leq \mathrm{Vol}(E) + \mathrm{Vol}(Supp(f_{n}^{E})\backslash E)  \\
&\leq \mathrm{Vol}(E) + \frac{1}{n} 
\end{align*}
using Hypotheses (D) and (F).
Therefore, since $F_{n}^{\mathrm{Vol}(E)}$ is decreasing over $[\mathrm{Vol}(E),+\infty)$ by Hypothesis (E),
\begin{align*}
F_{n}^{\mathrm{Vol}(E)}(\mathrm{Vol}(E)) \geq \Psi_{F_{n}^{\mathrm{Vol}(E)},f_{n}^{E}}(M) &\geq F_{n}^{\mathrm{Vol}(E)}(\mathrm{Vol}(E) + \frac{1}{n}) \\
\intertext{or, in other words,}
1 \geq \Psi_{F_{n}^{\mathrm{Vol}(E)},f_{n}^{E}}(M) &\geq 1 - \frac{1}{n}
\end{align*}
by Hypothesis (C) and (G). 
Moreover, 
\begin{align*}
\Psi_{F_{n}^{\mathrm{Vol}(E)},f^{E}}(M) &= F_{n}^{\mathrm{Vol}(E)}\left(\int_{E} \omega_{M}(z)dz\right) \\
&= F_{n}^{\mathrm{Vol}(E)}(\mathrm{Vol}(E)) \\
&= 1 
\end{align*}
by Hypothesis (C), and we can conclude.

\underline{\textsl{Case 2 :}}  $\int_{E} \omega_{M}(z)dz < \mathrm{Vol}(E)$. 

Let $N \in \mathbb{N}^{*}$ be such that 
$N^{-1} \leq 2^{-1} \, \left(
\mathrm{Vol}(E) - \int_{E} \omega_{M}(z)dz
\right)$. Then, for all $n \geq N$, using Hypotheses (D) and (F),
\begin{align*}
0 \leq \langle \omega_{M},f_{n}^{E} \rangle
\leq & \int_{E} \omega_{M}(z)dz 
+ \int_{Supp(f_{n}^{E})\backslash E} \omega_{M}(z)dz \\
\leq & \int_{E} \omega_{M}(z)dz + \mathrm{Vol}(Supp(f_{n}^{E}) \backslash E) \\
\leq & \int_{E} \omega_{M}(z)dz + \frac{1}{n} \\
\leq & \int_{E} \omega_{M}(z)dz + \frac{1}{N} \\
\leq & \frac{1}{2} \, \int_{E} \omega_{M}(z)dz + \frac{1}{2} \, \mathrm{Vol}(E) \\
< & \mathrm{Vol}(E),
\end{align*}
so by Hypothesis (E),
\begin{equation*}
\Psi_{F_{n}^{\mathrm{Vol}(E)},f_{n}^{E}}(M) \xrightarrow[n \to + \infty]{} 0.
\end{equation*}
Moreover, since $\langle \omega_{M},f^{E} \rangle < \mathrm{Vol}(E)$, again by Hypothesis (E),
\begin{equation*}
\Psi_{F_{n}^{\mathrm{Vol}(E)},f^{E}}(M) \xrightarrow[n \to + \infty]{} 0,
\end{equation*}
and we can conclude.
\end{proof}

We now prove a similar result involving $\lcal_{µ}^{\infty}$.
\begin{lem}\label{lem:cvg_linf}
For all $M \in \mcal_{\lambda}$ and $E \in \mathcal{E}^{cf}$,
\begin{align*}
\lcal_{µ}^{\infty}\Psi_{F_{n}^{\mathrm{Vol}(E)},f^{E}}(M)
- \lcal_{µ}^{\infty}\Psi_{F^{\mathrm{Vol}(E)},f^{E}}(M)
&\xrightarrow[n \to + \infty]{} 0, \\
\lcal_{µ}^{\infty}\Psi_{F_{n}^{\mathrm{Vol}(E)},f^{E}}(M)
- \lcal_{µ}^{\infty}\Psi_{F_{n}^{\mathrm{Vol}(E)},f_{n}^{E}}(M)
&\xrightarrow[n \to + \infty]{} 0.
\end{align*}
\end{lem}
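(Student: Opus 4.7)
The plan is to prove both convergences by dominated convergence applied to the $dx\,µ(d\rcal)$ integrals defining $\lcal_{µ}^{\infty}$. For a common dominating function, Hypotheses (C) and (F) give $|F_{n}^{\mathrm{Vol}(E)}| \leq 1$ and $\mathrm{Supp}^{\rcal}(f_{n}^{E}) \subseteq \mathrm{Supp}^{\rcal}(f_{1}^{E})$, so every integrand involved is dominated by $2\,\mathds{1}_{x \in \mathrm{Supp}^{\rcal}(f_{1}^{E})}$. Exactly as in the proof of Lemma~\ref{lem:bound_linfty}, $\mathrm{Vol}(\mathrm{Supp}^{\rcal}(f_{1}^{E})) \leq C_{f_{1}^{E}}(\rcal^{d} \vee 1)$ and Condition~(\ref{eqn:condition_intensite}) then make this dominating function integrable against $dx\,µ(d\rcal)$. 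It therefore suffices in each case to establish pointwise convergence of the integrand.

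For the first convergence, pointwise convergence is immediate from Hypothesis (A): for every fixed $M$, $x$, $\rcal$, $F_{n}^{\mathrm{Vol}(E)}(y) \to F^{\mathrm{Vol}(E)}(y)$ when evaluated at $y = \langle \omega_{M}, f^{E}\rangle$ and at $y = \langle \Theta_{x,\rcal}^{-}(\omega_{M}), f^{E}\rangle$, and the prefactor $(1 - \delta_{0}(\int_{\bxr}(1-\omega_{M})dz))$ is independent of $n$.

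For the second convergence, I would extend both integrands by $0$ to $\mathrm{Supp}^{\rcal}(f_{1}^{E})$ and compare them pointwise. When $x \in S^{\rcal}(E) = \mathrm{Supp}^{\rcal}(f^{E})$, Lemma~\ref{lem:cvg_Fnfn}, applied to $M$ and to the element of $\mcal_{\lambda}$ with density $\Theta_{x,\rcal}^{-}(\omega_{M})$, yields $F_{n}(\langle \omega_{M}, f_{n}^{E}\rangle) - F_{n}(\langle \omega_{M}, f^{E}\rangle) \to 0$ together with the analogous statement obtained by replacing $\omega_{M}$ with $\Theta_{x,\rcal}^{-}(\omega_{M})$, so the pointwise difference of the two integrands vanishes. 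When $x \in \mathrm{Supp}^{\rcal}(f_{n}^{E}) \setminus S^{\rcal}(E)$, the ball $\bxr$ has Lebesgue-null intersection with $E$, hence $\langle\omega_{M}, f^{E}\rangle = \langle\Theta_{x,\rcal}^{-}(\omega_{M}), f^{E}\rangle$ and the $f^{E}$-contribution is exactly $0$; only the $f_{n}^{E}$-contribution remains and must be shown to decay.

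The main obstacle is precisely this decay on $\mathrm{Supp}^{\rcal}(f_{n}^{E}) \setminus S^{\rcal}(E)$, which need not shrink to a Lebesgue-null set and therefore cannot be dismissed by a pure domain argument. I plan a case split based on whether $\int_{E} \omega_{M}(z)dz = \mathrm{Vol}(E)$. If equality holds, Hypotheses (D) and (F) confine both $\langle\omega_{M}, f_{n}^{E}\rangle$ and $\langle\Theta_{x,\rcal}^{-}(\omega_{M}), f_{n}^{E}\rangle$ to $[\mathrm{Vol}(E), \mathrm{Vol}(E) + 1/n]$, and Hypotheses (E) and (G) then confine the two $F_{n}$-values to $[1-1/n, 1]$, giving a pointwise difference of order $1/n$. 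Otherwise, setting $\epsilon := \mathrm{Vol}(E) - \int_{E}\omega_{M}(z)dz > 0$, for $n \geq 2/\epsilon$ both evaluation points lie in $(-\infty, \mathrm{Vol}(E) - \epsilon/2]$; the monotonicity from (E) upgrades the pointwise convergence $F_{n} \to 0$ coming from (A) into uniform convergence on this half-line, so both $F_{n}$-values decay to $0$. Combining these two cases with the pointwise vanishing on $S^{\rcal}(E)$ and the dominating function above, dominated convergence concludes the proof.
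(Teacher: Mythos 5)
Your argument is correct and follows the same overall strategy as the paper: write each difference as a single integral in $dx\,µ(d\rcal)$, obtain pointwise convergence of the integrand (from Hypothesis (A), i.e.\ Eq.~(\ref{eqn:cvg_Fnf}), for the first limit and from Lemma~\ref{lem:cvg_Fnfn} applied both to $\omega_{M}$ and to $\Theta_{x,\rcal}^{-}(\omega_{M})$ for the second), and conclude by dominated convergence using the uniform bounds of Lemma~\ref{lem:bound_linfty}. Where you go beyond the paper is on the second limit: the paper disposes of it with ``we can similarly show\dots using Lemma~\ref{lem:cvg_Fnfn}'', which silently ignores that the two operators are integrals over different domains, $Supp^{\rcal}(f^{E})$ versus $Supp^{\rcal}(f_{n}^{E})$, and that the excess region $Supp^{\rcal}(f_{n}^{E})\setminus S^{\rcal}(E)$ need not shrink to a null set even though $\mathrm{Vol}(Supp(f_{n}^{E})\setminus E)\le n^{-1}$. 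Your explicit treatment of that region --- observing that there the $f^{E}$-integrand vanishes because $\bxr\cap E=\emptyset$, and then splitting on whether $\int_{E}\omega_{M}=\mathrm{Vol}(E)$ to control the residual $f_{n}^{E}$-integrand via Hypotheses (C)--(G) --- is exactly the detail needed to make the paper's ``similarly'' rigorous, and the monotonicity-plus-pointwise-convergence argument you use to get uniform decay of $F_{n}$ on $(-\infty,\mathrm{Vol}(E)-\epsilon/2]$ is sound. So this is the same route, executed more carefully; nothing is missing.
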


\begin{proof}
Let $M \in \mcal_{\lambda}$, and let $n \in \mathbb{N}^{*}$. We have
\begin{align*}
\lcal_{µ}^{\infty}\Psi_{F_{n}^{\mathrm{Vol}(E)},f^{E}}&(M)
- \lcal_{µ}^{\infty}\Psi_{F^{\mathrm{Vol}(E)},f^{E}}(M) \\
= \int_{0}^{\infty}\int_{S^{\rcal}(E)}&
\left(
1 - \delta_{0}\left(\int_{\bxr}\left(1-\omega_{M}(z)\right)dz\right)
\right)\\
&\times \left(
F_{n}^{\mathrm{Vol}(E)}\left(\langle \Theta_{x,\rcal}^{-}(\omega_{M}),f^{E}\rangle\right)-F_{n}^{\mathrm{Vol}(E)}\left(\langle \omega_{M},f^{E} \rangle\right)
\right)dxµ(d\rcal) \\
 -\int_{0}^{\infty}\int_{S^{\rcal}(E)}&
\left(
1 - \delta_{0}\left(\int_{\bxr}\left(1-\omega_{M}(z)\right)dz\right)
\right)\\
&\times \left(
F^{\mathrm{Vol}(E)}\left(\langle \Theta_{x,\rcal}^{-}(\omega_{M}),f^{E}\rangle\right)-F^{\mathrm{Vol}(E)}\left(\langle \omega_{M},f^{E} \rangle\right)
\right)dxµ(d\rcal) \\
= \int_{0}^{\infty}\int_{S^{\rcal}(E)} &\left(
1 - \delta_{0}\left(\int_{\bxr}\left(1-\omega_{M}(z)\right)dz\right)
\right)\\
&\times \left(
F_{n}^{\mathrm{Vol}(E)}\left(\langle \Theta_{x,\rcal}^{-}(\omega_{M}),f^{E}\rangle\right) - F^{\mathrm{Vol}(E)}\left(\langle \Theta_{x,\rcal}^{-}(\omega_{M}),f^{E}\rangle\right)
\right)dxµ(d\rcal) \\
+ \int_{0}^{\infty}\int_{S^{\rcal}(E)} &\left(
1 - \delta_{0}\left(\int_{\bxr}\left(1-\omega_{M}(z)\right)dz\right)
\right)\\
&\times \left(
F^{\mathrm{Vol}(E)}\left(\langle \omega_{M},f^{E}\rangle\right) - F_{n}^{\mathrm{Vol}(E)}\left(\langle \omega_{M},f^{E} \rangle\right)
\right)dxµ(d\rcal).
\end{align*}
By Eq. (\ref{eqn:cvg_Fnf}), for all $x \in \rd$ and $\rcal > 0$, 
\begin{align*}
F_{n}^{\mathrm{Vol}(E)}\left(\langle \Theta_{x,\rcal}^{-}(\omega_{M}),f^{E}\rangle\right) - F^{\mathrm{Vol}(E)}\left(\langle \Theta_{x,\rcal}^{-}(\omega_{M}),f^{E}\rangle\right)
&\xrightarrow[n \to + \infty]{} 0\\
\text{and \quad\quad \quad\quad \quad\quad \quad} 
F^{\mathrm{Vol}(E)}\left(\langle \omega_{M},f^{E}\rangle\right) - F_{n}^{\mathrm{Vol}(E)}\left(\langle \omega_{M},f^{E} \rangle\right) 
&\xrightarrow[n \to + \infty]{} 0.
\end{align*}
Therefore, using the bounds from the proof of Lemma \ref{lem:bound_linfty}, we can apply the dominated convergence theorem and obtain
\begin{equation*}
\lcal_{µ}^{\infty}\Psi_{F_{n}^{\mathrm{Vol}(E)},f^{E}}(M)
- \lcal_{µ}^{\infty}\Psi_{F^{\mathrm{Vol}(E)},f^{E}}(M)
\xrightarrow[n \to + \infty]{} 0.
\end{equation*}

We can similarly show that
\begin{equation*}
\lcal_{µ}^{\infty}\Psi_{F_{n}^{\mathrm{Vol}(E)},f^{E}}(M)
- \lcal_{µ}^{\infty}\Psi_{F_{n}^{\mathrm{Vol}(E)},f_{n}^{E}}(M)
\xrightarrow[n \to + \infty]{} 0
\end{equation*}
using Lemma \ref{lem:cvg_Fnfn} instead of Eq. (\ref{eqn:cvg_Fnf}).
\end{proof}

We can now prove Lemma \ref{lem:reformulation_extension_pb_forwards_linf}, from which we will directly deduce Lemma \ref{lem:extension_pb_forwards_linf}. 

\begin{proof}(Lemma \ref{lem:reformulation_extension_pb_forwards_linf}) 
Let $l \geq 1$, let $0 \leq t_{1} < ... < t_{l} \leq t < t+s$ and let $h_{1},...,h_{l} \in C_{b}(\mcal_{\lambda})$. Let $n \in \mathbb{N}^{*}$. Then,
\begin{align*}
\Psi_{F^{\mathrm{Vol}(E)},f^{E}}(\widetilde{M}_{t+s})
=  &\Psi_{F^{\mathrm{Vol}(E)},f^{E}}(\widetilde{M}_{t+s}) - \Psi_{F_{n}^{\mathrm{Vol}(E)},f^{E}}(\widetilde{M}_{t+s}) \\
&+ \Psi_{F_{n}^{\mathrm{Vol}(E)},f^{E}}(\widetilde{M}_{t+s}) - \Psi_{F_{n}^{\mathrm{Vol}(E)},f_{n}^{E}}(\widetilde{M}_{t+s}) + \Psi_{F_{n}^{\mathrm{Vol}(E)},f_{n}^{E}}(\widetilde{M}_{t+s}) \\
\Psi_{F^{\mathrm{Vol}(E)},f^{E}}(\widetilde{M}_{t})
= &\Psi_{F^{\mathrm{Vol}(E)},f^{E}}(\widetilde{M}_{t}) - \Psi_{F_{n}^{\mathrm{Vol}(E)},f^{E}}(\widetilde{M}_{t}) \\
&+ \Psi_{F_{n}^{\mathrm{Vol}(E)},f^{E}}(\widetilde{M}_{t}) - \Psi_{F_{n}^{\mathrm{Vol}(E)},f_{n}^{E}}(\widetilde{M}_{t}) + \Psi_{F_{n}^{\mathrm{Vol}(E)},f_{n}^{E}}(\widetilde{M}_{t}), \\
\intertext{and for all $u \in [t,t+s]$, }
\lcal_{µ}^{\infty}\Psi_{F^{\mathrm{Vol}(E)},f^{E}}(\widetilde{M}_{u})
= &\lcal_{µ}^{\infty}\Psi_{F^{\mathrm{Vol}(E)},f^{E}}(\widetilde{M}_{u}) - \lcal_{µ}^{\infty}\Psi_{F_{n}^{\mathrm{Vol}(E)},f^{E}}(\widetilde{M}_{u}) \\
&+ \lcal_{µ}^{\infty}\Psi_{F_{n}^{\mathrm{Vol}(E)},f^{E}}(\widetilde{M}_{u}) - \lcal_{µ}^{\infty}\Psi_{F_{n}^{\mathrm{Vol}(E)},f_{n}^{E}}(\widetilde{M}_{u}) + \lcal_{µ}^{\infty}\Psi_{F_{n}^{\mathrm{Vol}(E)},f_{n}^{E}}(\widetilde{M}_{u}). 
\end{align*}

Since $\widetilde{M}$ is a solution of the martingale problem associated to $(\lcal_{µ}^{\infty},\delta_{M^{0}})$, for all $n \in \mathbb{N}^{*}$, 
\begin{equation*}
\esp\left[
\left(
\Psi_{F_{n}^{\mathrm{Vol}(E)},f_{n}^{E}}(\widetilde{M}_{t+s}) - \Psi_{F_{n}^{\mathrm{Vol}(E)},f_{n}^{E}}(\widetilde{M}_{t}) - \int_{t}^{t+s}\lcal_{µ}^{\infty} \Psi_{F_{n}^{\mathrm{Vol}(E)},f_{n}^{E}}(\widetilde{M}_{u})du\right)\times \left(
\prod_{i = 1}^{l} h_{i}(\widetilde{M}_{t_{i}})
\right)
\right] = 0.
\end{equation*}
Therefore, since all the equations written above are true for all $n \in \mathbb{N}^{*}$, 
\begin{align*}
&\esp\left[
\left(
\Psi_{F^{\mathrm{Vol}(E)},f^{E}}(\widetilde{M}_{t+s}) - \Psi_{F^{\mathrm{Vol}(E)},f^{E}}(\widetilde{M}_{t}) - \int_{t}^{t+s}\lcal_{µ}^{\infty} \Psi_{F^{\mathrm{Vol}(E)},f^{E}}(\widetilde{M}_{u})du\right)\times \left(
\prod_{i = 1}^{l} h_{i}(\widetilde{M}_{t_{i}})
\right)
\right] \\
= &\lim\limits_{n \to + \infty}
\esp\left[
\left(
\Psi_{F^{\mathrm{Vol}(E)},f^{E}}(\widetilde{M}_{t+s}) - \Psi_{F_{n}^{\mathrm{Vol}(E)},f^{E}}(\widetilde{M}_{t+s})\right) \times \left(
\prod_{i = 1}^{l} h_{i}(\widetilde{M}_{t_{i}})
\right)
\right] \\
&+ \lim\limits_{n \to + \infty}
\esp\left[
\left(
\Psi_{F_{n}^{\mathrm{Vol}(E)},f_{n}^{E}}(\widetilde{M}_{t+s}) - \Psi_{F_{n}^{\mathrm{Vol}(E)},f^{E}}(\widetilde{M}_{t+s})\right) \times \left(
\prod_{i = 1}^{l} h_{i}(\widetilde{M}_{t_{i}})
\right)
\right] \\
&- \lim\limits_{n \to + \infty}
\esp\left[
\left(
\Psi_{F^{\mathrm{Vol}(E)},f^{E}}(\widetilde{M}_{t}) - \Psi_{F_{n}^{\mathrm{Vol}(E)},f^{E}}(\widetilde{M}_{t})\right) \times \left(
\prod_{i = 1}^{l} h_{i}(\widetilde{M}_{t_{i}})
\right)
\right] \\
&- \lim\limits_{n \to + \infty}
\esp\left[
\left(
\Psi_{F_{n}^{\mathrm{Vol}(E)},f^{E}}(\widetilde{M}_{t}) - \Psi_{F_{n}^{\mathrm{Vol}(E)},f_{n}^{E}}(\widetilde{M}_{t})\right) \times \left(
\prod_{i = 1}^{l} h_{i}(\widetilde{M}_{t_{i}})
\right)
\right] \\
&- \lim\limits_{n \to + \infty}
\esp\left[\left(\int_{t}^{t+s}\lcal_{µ}^{\infty} \Psi_{F^{\mathrm{Vol}(E)},f^{E}}(\widetilde{M}_{u})- \lcal_{µ}^{\infty} \Psi_{F_{n}^{\mathrm{Vol}(E)},f^{E}}(\widetilde{M}_{u})du\right)\times \left(
\prod_{i = 1}^{l} h_{i}(\widetilde{M}_{t_{i}})
\right)
\right] \\
&- \lim\limits_{n \to + \infty}
\esp\left[\left(\int_{t}^{t+s}\lcal_{µ}^{\infty} \Psi_{F_{n}^{\mathrm{Vol}(E)},f^{E}}(\widetilde{M}_{u})- \lcal_{µ}^{\infty} \Psi_{F_{n}^{\mathrm{Vol}(E)},f_{n}^{E}}(\widetilde{M}_{u})du\right)\times \left(
\prod_{i = 1}^{l} h_{i}(\widetilde{M}_{t_{i}})
\right)
\right]
\end{align*}
under the condition that all these limits exist. 

By Eq. (\ref{eqn:cvg_Fnf}), Lemma \ref{lem:cvg_Fnfn} and Lemma \ref{lem:cvg_linf}, all the terms inside the expectations converge to $0$ when $n \to + \infty$. Using the bounds given by Eq. (\ref{eqn:eq_psi_1}), (\ref{eqn:eq_psi_2}), (\ref{eqn:eq_psi_3}) and Lemma \ref{lem:bound_linfty}, we can apply the dominated convergence theorem and obtain the desired result. 

\end{proof}

\subsection{Extended martingale problem for the $\infty$-parent ancestral process}\label{subsec:4.2}
In this section we prove the following result. 
\begin{lem}\label{lem:extension_pb_backwards_ginf}
Let $µ$ be a $\sigma$-finite measure on $(0,+\infty)$ satisfying Condition (\ref{eqn:condition_infty_SLFV}). Let $\Xi^{0} \in \mathcal{M}^{cf}$, and let $(\Xi_{t}^{\infty})_{t \geq 0} = (m(E_{t}^{\infty}))_{t \geq 0}$ be the $\infty$-parent ancestral process associated to $µ$ and with initial condition $\Xi^{0}$. 

Then, for all measurable function $f : \rd \to \{0,1\}$,  
\begin{equation*}
\left(\Phi_{\delta_{0},f}(\Xi_{t}^{\infty}) - \Phi_{\delta_{0},f}(\Xi_{0}^{\infty}) - \int_{0}^{t}\gcal_{µ}^{\infty}\Phi_{\delta_{0},f}(\Xi_{s}^{\infty})ds
\right)_{t \geq 0}
\end{equation*}
is a martingale, where $\Phi_{\delta_{0},f} : M \in \mcal^{cf} \to \Phi_{\delta_{0},f}(\Xi)$ is the function defined by
\begin{equation*}
\forall m(E) \in \mcal^{cf}, \Phi_{\delta_{0},f}(m(E)) := \delta_{0}\left( \int_{E} f(x)dx\right).
\end{equation*}
\end{lem}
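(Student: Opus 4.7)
The plan is to mirror the approach of Section~\ref{subsec:4.1}: approximate the discontinuous function $\delta_{0} = \mathds{1}_{\{0\}}$ by a sequence $(F_{n})_{n \geq 1}$ of $C_{b}^{1}(\mathbb{R})$ functions, apply Proposition~\ref{prop:backwards_kinf} to each $(F_{n},f)$ to obtain a martingale, then pass to the limit $n \to +\infty$ via dominated convergence. A convenient choice is $F_{n}(x) := e^{-n^{2} x^{2}}$, which belongs to $C_{b}^{1}(\mathbb{R})$, satisfies $0 \leq F_{n} \leq 1$ and $F_{n}(0) = 1$, and converges pointwise to $\delta_{0}$ on $\mathbb{R}$. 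Crucially, since $f : \rd \to \{0,1\}$ already lies in $\mathcal{B}(\rd)$, no approximation of $f$ is required, so the construction is structurally simpler than in Lemma~\ref{lem:extension_pb_forwards_linf}.

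By Proposition~\ref{prop:backwards_kinf}, for each $n \geq 1$, the process
\[
N_{t}^{(n)} := \Phi_{F_{n},f}(\Xi_{t}^{\infty}) - \Phi_{F_{n},f}(\Xi_{0}^{\infty}) - \int_{0}^{t} \gcal_{µ}^{\infty}\Phi_{F_{n},f}(\Xi_{s}^{\infty})\, ds
\]
is a martingale. For each fixed $\Xi = m(E) \in \mcal^{cf}$, the pointwise convergence $\Phi_{F_{n},f}(\Xi) = F_{n}(\langle \Xi,f \rangle) \to \delta_{0}(\langle \Xi,f \rangle) = \Phi_{\delta_{0},f}(\Xi)$ is immediate from $F_{n} \to \delta_{0}$ pointwise. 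For the generator term, the integrand in $\gcal_{µ}^{\infty}\Phi_{F_{n},f}(m(E))$ is bounded in absolute value by $2 \cdot \mathds{1}_{x \in S^{\rcal}(E)}$ uniformly in $n$; since $\mathrm{Vol}(E) < +\infty$, the bound $\mathrm{Vol}(S^{\rcal}(E)) \leq C_{E}(\rcal^{d} \vee 1)$ together with Condition~(\ref{eqn:condition_infty_SLFV}) gives $\int_{0}^{\infty}\mathrm{Vol}(S^{\rcal}(E))\,\mu(d\rcal) < +\infty$, so dominated convergence inside the defining integral yields
\[
\gcal_{µ}^{\infty}\Phi_{F_{n},f}(m(E)) \xrightarrow[n \to + \infty]{} \gcal_{µ}^{\infty}\Phi_{\delta_{0},f}(m(E))
\]
for every $E \in \mathcal{E}^{cf}$, so in particular for $E = E_{s}^{\infty}$ almost surely.

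To conclude, one tests the martingale property against bounded functionals of the past: for every $l \geq 1$, every $0 \leq t_{1} < \ldots < t_{l} \leq t < t+s$, and every $h_{1},\ldots,h_{l} \in C_{b}(\mcal^{cf})$, the martingale property of $N^{(n)}$ yields
\[
\mathbf{E}\left[\left(N_{t+s}^{(n)} - N_{t}^{(n)}\right) \prod_{i = 1}^{l} h_{i}(\Xi_{t_{i}}^{\infty})\right] = 0.
\]
The boundary terms $\Phi_{F_{n},f}(\Xi_{t+s}^{\infty})$ and $\Phi_{F_{n},f}(\Xi_{t}^{\infty})$ are bounded by $1$, so dominated convergence on them is immediate. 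The main obstacle is producing an integrable dominating function for $|\gcal_{µ}^{\infty}\Phi_{F_{n},f}(\Xi_{u}^{\infty})|$ valid uniformly in $n$ and in $u \in [t,t+s]$: the upper bound
\[
|\gcal_{µ}^{\infty}\Phi_{F_{n},f}(\Xi_{u}^{\infty})| \leq 2 \int_{0}^{\infty}\mathrm{Vol}(S^{\rcal}(\Xi_{u}^{\infty}))\,\mu(d\rcal)
\]
is already independent of $n$, and combining the monotonicity $\Xi_{u}^{\infty} \subseteq \Xi_{t+s}^{\infty}$ with the branching-process comparison of Lemma~\ref{lem:border_covering_bounded} (which controls both the expected size of $\Xi_{t+s}^{\infty}$ and the number of radius-$\widetilde{\rcal}$ balls covering it) together with Condition~(\ref{eqn:condition_infty_SLFV}) yields the required integrable bound. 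A final application of dominated convergence then gives
\[
\mathbf{E}\left[\left(N_{t+s} - N_{t}\right)\prod_{i = 1}^{l} h_{i}(\Xi_{t_{i}}^{\infty})\right] = 0,
\]
where $N_{t} := \Phi_{\delta_{0},f}(\Xi_{t}^{\infty}) - \Phi_{\delta_{0},f}(\Xi_{0}^{\infty}) - \int_{0}^{t}\gcal_{µ}^{\infty}\Phi_{\delta_{0},f}(\Xi_{s}^{\infty})\,ds$, and this suffices to establish the martingale property.
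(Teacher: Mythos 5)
Your proposal is correct and follows essentially the same route as the paper's proof: approximate $\delta_{0}$ by a pointwise-convergent sequence of $C_{b}^{1}(\mathbb{R})$ functions bounded by $1$, apply Proposition~\ref{prop:backwards_kinf} to each, and pass to the limit by dominated convergence using the uniform bound on the integrand and the volume bound on $S^{\rcal}(\Xi_{u}^{\infty})$. The only (cosmetic) differences are your explicit Gaussian choice of $F_{n}$ in place of the paper's abstract sequence with monotonicity and support conditions, and testing the martingale property against bounded functionals of the past rather than via conditional expectations with respect to the natural filtration.
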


\begin{proof}
Let $(\fcal_{t})_{t \geq 0}$ be the filtration generated by $(\Xi_{t}^{\infty})_{t \geq 0}$. 
Let $(F_{n})_{n \in \mathbb{N}^{*}} \in C_{b}^{1}(\mathbb{R})$ be a sequence of functions converging pointwise to $\mathds{1}_{0}$ such that 
\begin{align*}
&\text{(A) } \forall n \in \mathbb{N}^{*}, F_{n} \text{ is increasing on }\mathbb{R}_{-}\text{ and decreasing on } \mathbb{R}_{+}, \\
&\text{(B) } \forall n \in \mathbb{N}^{*}, F_{n}(0) = 1 \text{ and } \forall x \in \mathbb{R}, 0 \leq F_{n}(x) \leq 1, \\
&\text{(C) } \forall n \in \mathbb{N}^{*}, Supp(F_{n}) \subseteq [-n^{-3},n^{-3}].
\end{align*}
The interest of this sequence lies in the fact that for all $n \in \mathbb{N}^{*}$ and for all measurable function $f : \rd \to \{0,1\}$, 
\begin{equation*}
\left(\Phi_{F_{n},f}(\Xi_{t}^{\infty}) - \Phi_{F_{n},f}(\Xi_{0}^{\infty}) - \int_{0}^{t}\gcal_{µ}^{\infty}\Phi_{F_{n},f}(\Xi_{s}^{\infty})ds
\right)_{t \geq 0}
\end{equation*}
is a martingale.  

Let $f : \rd \to \{0,1\}$ be a measurable function, and let $0 \leq s \leq t$. $\Phi_{\delta_{0},f}$ is bounded by $1$, and by Hypothesis (B), the functions $(\Phi_{F_{n},f})_{n \in \mathbb{N}^{*}}$ are bounded by $1$ as well. Moreover, since $u \to \mathrm{Vol}(\Xi_{u}^{\infty}))$ is increasing, and as there exists $C_{t} > 0$ such that for all $\rcal > 0$,
\begin{equation*}
\mathrm{Vol}(S^{\rcal}(\Xi_{t}^{\infty})) \leq C_{t} \, \left(\rcal^{d} \vee 1\right),
\end{equation*}
we can deduce that for all $s \in [0,t]$ and for all $n \in \mathbb{N}^{*}$, by Hypothesis (B), 
\begin{align*}
\left|\gcal_{µ}^{\infty} \Phi_{F_{n},f} (\Xi_{s}^{\infty})\right|
&\leq \int_{0}^{\infty} 2  \, \mathrm{Vol}(S^{\rcal}(\Xi_{s}^{\infty}))µ(d\rcal) \\
&\leq \int_{0}^{\infty} 2  \, \mathrm{Vol}(S^{\rcal}(\Xi_{t}))µ(d\rcal) \\
&\leq 2C_{t}  \, \int_{0}^{\infty} \left(\rcal^{d} \vee 1 \right) µ(d\rcal).
\end{align*}
Similarly, we obtain that
\begin{equation*}
\left|\gcal_{µ}^{\infty} \Phi_{\delta_{0},f} (\Xi_{s}^{\infty})\right|
\leq 2C_{t}  \, \int_{0}^{\infty} \left(\rcal^{d} \vee 1 \right) µ(d\rcal).
\end{equation*}
Since $µ$ satisfies Condition (\ref{eqn:condition_intensite}), both quantities are finite. Therefore, by Fubini's theorem, for all $n \in \mathbb{N}^{*}$, 
\begin{align*}
&\mathbf{E}\left[\left.
\Phi_{\delta_{0}, f}(\Xi_{t}^{\infty}) - \Phi_{\delta_{0},f}(\Xi_{0}^{\infty}) - \int_{0}^{t}\gcal_{µ}^{\infty}\Phi_{\delta_{0},f}(\Xi_{u}^{\infty})du\right|\fcal_{s}
\right] \\
= &\mathbf{E}\left[\left.
\Phi_{\delta_{0},f}(\Xi_{t}^{\infty}) - \Phi_{F_{n},f}(\Xi_{t}^{\infty})
\right|\fcal_{s}\right] + \mathbf{E}\left[\left.
\Phi_{F_{n},f}(\Xi_{0}^{\infty}) - \Phi_{\delta_{0},f}(\Xi_{0}^{\infty})\right|\fcal_{s}
\right] \\
&+ \int_{0}^{t}\mathbf{E}\left[\left.
\gcal_{µ}^{\infty}\Phi_{F_{n},f}(\Xi_{u}^{\infty}) - \gcal_{µ}^{\infty}\Phi_{\delta_{0},f}(\Xi_{u}^{\infty})\right|\fcal_{s}
\right]du \\
&+ \mathbf{E}\left[\left.
\Phi_{F_{n},f}(\Xi_{t}^{\infty}) - \Phi_{F_{n},f}(\Xi_{0}^{\infty}) - \int_{0}^{t}\gcal_{µ}^{\infty}\Phi_{F_{n},f}(\Xi_{u}^{\infty})du\right|\fcal_{s}
\right].
\end{align*}
Using Proposition \ref{prop:backwards_kinf}, we obtain that
\begin{align*}
&\mathbf{E}\left[\left.
\Phi_{\delta_{0}, f}(\Xi_{t}^{\infty}) - \Phi_{\delta_{0},f}(\Xi_{0}^{\infty}) - \int_{0}^{t}\gcal_{µ}^{\infty}\Phi_{\delta_{0},f}(\Xi_{u}^{\infty})du\right|\fcal_{s}
\right] \\
= &\Phi_{F_{n},f}(\Xi_{s}^{\infty}) - \Phi_{F_{n},f}(\Xi_{0}^{\infty})
- \int_{0}^{s} \gcal_{µ}^{\infty} \Phi_{F_{n},f}(\Xi_{u}^{\infty})du\\
& + \mathbf{E}\left[\left.
\Phi_{\delta_{0},f}(\Xi_{t}^{\infty}) - \Phi_{F_{n},f}(\Xi_{t}^{\infty})\right|\fcal_{s}
\right] + \mathbf{E}\left[\left.
\Phi_{F_{n},f}(\Xi_{0}^{\infty}) - \Phi_{\delta_{0},f}(\Xi_{0}^{\infty})
\right|\fcal_{s}\right] \\
&+ \int_{0}^{t}\mathbf{E}\left[\left.
\gcal_{µ}^{\infty}\Phi_{F_{n},f}(\Xi_{u}^{\infty}) - \gcal_{µ}^{\infty}\Phi_{\delta_{0},f}(\Xi_{u}^{\infty})\right|\fcal_{s}
\right]du .
\end{align*}
Since this is true for all $n \in \mathbb{N}^{*}$, 
\begin{align*}
&\mathbf{E}\left[\left.
\Phi_{\delta_{0}, f}(\Xi_{t}^{\infty}) - \Phi_{\delta_{0},f}(\Xi_{0}^{\infty}) - \int_{0}^{t}\gcal_{µ}^{\infty}\Phi_{\delta_{0},f}(\Xi_{u}^{\infty})du\right|\fcal_{s}
\right] \\
= &\lim\limits_{n \to + \infty} \Phi_{F_{n},f}(\Xi_{s}^{\infty}) - \lim\limits_{n \to + \infty} \Phi_{F_{n},f}(\Xi_{0}^{\infty})
- \lim\limits_{n \to + \infty}\int_{0}^{s} \gcal_{µ}^{\infty} \Phi_{F_{n},f}(\Xi_{u}^{\infty})du \\
& + \lim\limits_{n \to + \infty}\mathbf{E}\left[\left.
\Phi_{\delta_{0},f}(\Xi_{t}^{\infty}) - \Phi_{F_{n},f}(\Xi_{t}^{\infty})\right|\fcal_{s}
\right] + \lim\limits_{n \to + \infty}\mathbf{E}\left[\left.
\Phi_{F_{n},f}(\Xi_{0}^{\infty}) - \Phi_{\delta_{0},f}(\Xi_{0}^{\infty})
\right|\fcal_{s}\right] \\
&+ \lim\limits_{n \to + \infty}\int_{0}^{t}\mathbf{E}\left[\left.
\gcal_{µ}^{\infty}\Phi_{F_{n},f}(\Xi_{u}^{\infty}) - \gcal_{µ}^{\infty}\Phi_{\delta_{0},f}(\Xi_{u}^{\infty})\right|\fcal_{s}
\right]du ,
\end{align*}
under the condition that all these limits exist. 

First, since $\Phi_{F_{n},f}$ converges pointwise to $\Phi_{\delta_{0},f}$, 
\begin{equation*}
\lim\limits_{n \to + \infty} \Phi_{F_{n},f} (\Xi_{s}^{\infty}) - \Phi_{F_{n},f}(\Xi_{0}^{\infty})
= \Phi_{\delta_{0},f}(\Xi_{s}^{\infty}) - \Phi_{\delta_{0},f}(\Xi_{0}^{\infty}) \quad \text{ almost surely},
\end{equation*}
and by the dominated convergence theorem, 
\begin{align*}
\lim\limits_{n \to + \infty} \mathbf{E}\left[\left.
\Phi_{\delta_{0},f}(\Xi_{t}^{\infty}) - \Phi_{F_{n},f}(\Xi_{t}^{\infty})
\right|\fcal_{s}\right] =
\lim\limits_{n \to + \infty} \mathbf{E}\left[\left.
\Phi_{\delta_{0},f}(\Xi_{0}^{\infty}) - \Phi_{F_{n},f}(\Xi_{0}^{\infty})
\right|\fcal_{s}\right] = 0.
\end{align*}
Moreover, since for all $n \in \mathbb{N}^{*}$, 
\begin{equation*}
\int_{0}^{s} \left|
\gcal_{µ}^{\infty} \Phi_{F_{n},f} (\Xi_{u}^{\infty})
\right|du \leq 2s  \, C_{t} \,\int_{0}^{\infty} \left( \rcal^{d} \vee 1 \right)µ(d\rcal),
\end{equation*}
again by the dominated convergence theorem, we obtain
\begin{equation*}
\lim\limits_{n \to + \infty} \int_{0}^{s} \gcal_{µ}^{\infty} \Phi_{F_{n},f} (\Xi_{u}^{\infty})du
= \int_{0}^{s} \gcal_{µ}^{\infty} \Phi_{\delta_{0},f}(\Xi_{u}^{\infty})du. 
\end{equation*}

Then, let $n \in \mathbb{N}^{*}$. Recalling that $\Xi_{u}^{\infty}$ is also denoted $m(E_{u}^{\infty})$, 
\begin{align*}
&\int_{0}^{t} \egras \left[\left.\gcal_{µ}^{\infty}\Phi_{F_{n},f}(\Xi_{u}^{\infty}) - \gcal_{µ}^{\infty}\Phi_{\delta_{0},f} (\Xi_{u}^{\infty})\right|\fcal_{s}\right]du \\
= &\int_{0}^{t} \egras \left[\left.
\int_{0}^{\infty}\int_{S^{\rcal}(E_{u}^{\infty})} \left(
F_{n}\left(\langle m(E_{u}^{\infty} \cup \bxr),f \rangle \right) 
- \delta_{0} \left(\langle m(E_{u}^{\infty} \cup \bxr),f \rangle \right)
\right)
dxµ(d\rcal)\right|\fcal_{s}\right]du\\
&+ \int_{0}^{t} \egras \left[\left.
\int_{0}^{\infty}\int_{S^{\rcal}(E_{u}^{\infty})} \left(\delta_{0} \left(\langle m(E_{u}^{\infty}),f \rangle \right)
-F_{n}\left(\langle m(E_{u}^{\infty}),f \rangle \right) 
\right)
dxµ(d\rcal)\right|\fcal_{s}\right]du.
\end{align*}
Since for all $x \in \rd$, $u \in [0,t]$ and $\rcal > 0$, 
\begin{align*}
\lim\limits_{n \to + \infty} F_{n}\left(\langle m(E_{u}^{\infty} \cup \bxr),f \rangle\right) &= \delta_{0}\left(\langle m(E_{u}^{\infty} \cup \bxr),f \rangle\right) \\
\text{and \quad \quad \quad\quad} \lim\limits_{n \to + \infty} F_{n}\left(\langle m(E_{u}^{\infty}),f \rangle\right) &= \delta_{0}\left(\langle m(E_{u}^{\infty}),f \rangle\right),
\end{align*}
using the dominated convergence theorem, we obtain that
\begin{equation*}
\lim\limits_{n \to + \infty} \int_{0}^{t} \egras\left[\left.
\gcal_{µ}^{\infty} \Phi_{F_{n},f}(\Xi_{u}^{\infty}) - \gcal_{µ}^{\infty} \Phi_{\delta_{0},f}(\Xi_{u}^{\infty})
\right|\fcal_{s}\right] = 0,
\end{equation*}
and we can conclude that
\begin{align*}
&\egras\left[\left.
\Phi_{\delta_{0},f}(\Xi_{t}^{\infty}) - \Phi_{\delta_{0},f}(\Xi_{0}^{\infty}(\Xi_{0}^{\infty}) 
- \int_{0}^{t} \gcal_{µ}^{\infty} \Phi_{\delta_{0},f}(\Xi_{u}^{\infty})du
\right|\fcal_{s}\right] \\
= &\Phi_{\delta_{0},f}(\Xi_{s}^{\infty}) - \Phi_{\delta_{0},f}(\Xi_{0}^{\infty}(\Xi_{0}^{\infty}) 
- \int_{0}^{s} \gcal_{µ}^{\infty} \Phi_{\delta_{0},f}(\Xi_{u}^{\infty})du.
\end{align*}

\end{proof}

\subsection{Uniqueness of the solution to the martingale problem characterizing the $\infty$-parent SLFV}\label{subsec:4.3}
We now use the extended martingale problem in order to prove Proposition \ref{prop:dualite_kinf}, i.e, that the $\infty$-parent ancestral process is the dual of the $\infty$-parent SLFV. 

\begin{proof}(Proposition \ref{prop:dualite_kinf}) 
In order to ease notation, for all $t \geq 0$, we will denote as $\omega_{t}$ the density of $M_{t}^{\infty}$ we will be considering, and
let $(E_{t})_{t \geq 0}$ be such that 
\begin{equation*}
(\Xi_{t}^{\infty})_{t \geq 0} = (m(E_{t}))_{t \geq 0}.
\end{equation*}  

For all $s, t \geq 0$, we set :
\begin{align*}
F(s,t) &= \esp_{M^{0}}\left[\mathbf{E}_{m(E^{0})}\left[
D(M_{s}^{\infty}, \Xi_{t}^{\infty})
\right]\right]. \\
\intertext{Then, } 
F(s,t) &= \esp_{M^{0}}\left[\mathbf{E}_{m(E^{0})}\left[
\Phi_{\delta_{0},1-\omega_{s}}(\Xi_{t}^{\infty})
\right]\right] \\
\intertext{and by Lemma \ref{lem:extension_pb_backwards_ginf},}
F(s,t) &= \esp_{M^{0}}\left[\mathbf{E}_{m(E^{0})}\left[
\Phi_{\delta_{0},1-\omega_{s}}(\Xi_{0})\right]\right]
+ \esp_{M^{0}}\left[\mathbf{E}_{m(E^{0})}\left[
\int_{0}^{t}\gcal_{µ}^{\infty} \Phi_{\delta_{0},1 - \omega_{s}}(\Xi_{u}^{\infty})du \right]\right].\\
\intertext{By Fubini's theorem, we obtain }
F(s,t) &= F(s,0) + \int_{0}^{t}\esp_{M^{0}}\left[\mathbf{E}_{m(E^{0})}\left[\gcal_{µ}^{\infty}\Phi_{\delta_{0},1-\omega_{s}}(\Xi_{u}^{\infty})\right]\right]du. \\
\intertext{Then, } 
F(s,t) &= \mathbf{E}_{m(E^{0})}\left[\esp_{M^{0}}\left[\widetilde{D}(M_{s}^{\infty},\Xi_{t}^{\infty})\right]\right] \\
&= \mathbf{E}_{m(E^{0})}\left[\esp_{M^{0}}\left[\Psi_{F^{\mathrm{Vol}(E_{t})},f^{E_{t}}}(M_{s}^{\infty})\right]\right], \\
\intertext{and by Lemma \ref{lem:extension_pb_forwards_linf},}
F(s,t) &= \mathbf{E}_{m(E^{0})}\left[\esp_{M^{0}}\left[
\Psi_{F^{\mathrm{Vol}(E_{t})},f^{E_{t}}}(M_{0}^{\infty})\right]\right] + \mathbf{E}_{m(E^{0})}\left[\esp_{M^{0}}\left[
\int_{0}^{s}\lcal_{µ}^{\infty}\Psi_{F^{\mathrm{Vol}(E_{t})},f^{E_{t}}}(M_{u}^{\infty})du
\right]\right] \\
&= F(0,t) + \mathbf{E}_{m(E^{0})}\left[\esp_{M^{0}}\left[
\int_{0}^{s} \lcal_{µ}^{\infty}\Psi_{F^{\mathrm{Vol}(E_{t})},f^{E_{t}}}(M_{u}^{\infty})du
\right]\right]. \\
\intertext{Again by Fubini's theorem, we obtain }
F(s,t) &= F(0,t) + \int_{0}^{s}\esp_{M^{0}}\left[\mathbf{E}_{m(E^{0})}\left[ \lcal_{µ}^{\infty} \Psi_{F^{\mathrm{Vol}(E_{t})},f^{E_{t}}}(M_{u}^{\infty})\right]\right]du.
\end{align*}
Combining both expressions for $F(s,t)$, by Lemma 4.4.10 in \cite{ethier1986}, we obtain :
\begin{align*}
&F(t,0)-F(0,t) \\
= &\int_{0}^{t} \left(\esp_{M^{0}}\left[\mathbf{E}_{m(E^{0})}\left[\lcal_{µ}^{\infty} \Psi_{F^{\mathrm{Vol}(E_{t-u})},f^{E_{t-u}}}(M_{u}^{\infty})\right]\right]
- \esp_{M^{0}}\left[\mathbf{E}_{m(E^{0})}\left[\gcal_{µ}^{\infty}\Phi_{\delta_{0},1-\omega_{u}}(\Xi_{t-u}^{\infty})\right]\right]\right)du.
\end{align*}

Let $u \in [0,t]$. We have
\begin{align*}
&\gcal_{µ}^{\infty}\Phi_{\delta_{0},1-\omega_{u}}(\Xi_{t-u}^{\infty})\\
= &\int_{0}^{\infty}\int_{S^{\rcal}(E_{t-u})} \left(
\delta_{0}\left(\int_{E_{t-u} \cup \bxr} \left(1 - \omega_{u}(z)\right)dz\right) - \delta_{0}\left(\int_{E_{t-u}} \left(1 - \omega_{u}(z)\right)dz\right)
\right)dxµ(d\rcal)
\end{align*}
and
\begin{align*}
\lcal_{µ}^{\infty} \Psi_{F^{\mathrm{Vol}(E_{t-u})},f^{E_{t-u}}}(M_{u}^{\infty}) 
= &\int_{0}^{\infty}\int_{S^{\rcal}(E_{t-u})} \left(1 - \delta_{0}\left(\int_{\bxr} \left(1 - \omega_{u}(z)\right)dz\right)\right) \\
&\times \left[\delta_{0}\left(
\mathrm{Vol}(E_{t-u}) - \langle\Theta_{x,\rcal}^{-}(\omega_{u}),\mathds{1}_{E_{t-u}}\rangle
\right) 
- \delta_{0}\left(\mathrm{Vol}(E_{t-u}) - 
\langle \omega_{u},\mathds{1}_{E_{t-u}} \rangle
\right)\right]dx \\
= &\int_{0}^{\infty}\int_{S^{\rcal}(E_{t-u})} \left(1 - \delta_{0}\left(\int_{\bxr} \left(1 - \omega_{u}(z)\right)dz\right)\right) \\
&\times \left[\delta_{0}\left(
\mathrm{Vol}(E_{t-u}) - \langle\Theta_{x,\rcal}^{-}(\omega_{u}),\mathds{1}_{E_{t-u}}\rangle
\right) 
- \delta_{0}\left(
\int_{E_{t-u}} \left(1 - \omega_{u}(z)\right)dz
\right)\right]dx.
\end{align*}
For all $\rcal > 0$ and $x \in S^{\rcal}(E_{t-u})$, 
\begin{align*}
\delta_{0}\left(
\mathrm{Vol}(E_{t-u}) - \langle\Theta_{x,\rcal}^{-}(\omega_{u}),\mathds{1}_{E_{t-u}}\rangle
\right)
&= \delta_{0}\left(
\mathrm{Vol}(E_{t-u}) - \int_{E_{t-u} \backslash \bxr} \omega_{u}(z)dz
\right) \\
&= \delta_{0}\left(\mathrm{Vol}(E_{t-u} \cap \bxr) + \int_{E_{t-u} \backslash \bxr} \left(1 - \omega_{u}(z)\right)dz\right). \\
\intertext{Since $x \in S^{\rcal}(E_{t-u})$, $\mathrm{Vol}(E_{t-u} \cap \bxr) \neq 0$, and hence }
\delta_{0}\left(
\mathrm{Vol}(E_{t-u}) - \langle\Theta_{x,\rcal}^{-}(\omega_{u}),\mathds{1}_{E_{t-u}}\rangle
\right)
&= 0. \\
\intertext{Moreover, notice that}
\delta_{0}\left(\int_{E_{t-u} \cup \bxr} \left(1 - \omega_{u}(z)\right)dz\right)
&= \delta_{0}\left(\int_{E_{t-u}} \left(1 - \omega_{u}(z)\right)dz\right)
 \, \delta_{0}\left(\int_{\bxr} \left(1 - \omega_{u}(z)\right)dz\right).
\end{align*}
Therefore,
\begin{align*}
&\lcal_{µ}^{\infty} \Psi_{F^{\mathrm{Vol}(E_{t-u})},f^{E_{t-u}}} (M_{u}^{\infty}) \\
= &\int_{0}^{\infty}\int_{S^{\rcal}(E_{t-u})} 
\delta_{0}\left(\int_{\bxr} \left(1 - \omega_{u}(z)\right)dz\right)  \, \delta_{0}\left(\int_{E_{t-u}} \left(1 - \omega_{u}(z)\right)dz\right)dx µ(d\rcal)\\
&-\int_{0}^{\infty}\int_{S^{\rcal}(E_{t-u})} \delta_{0}\left(\int_{E_{t-u}} \left(1 - \omega_{u}(z)\right)dz\right)dx µ(d\rcal)\\
= &\int_{0}^{\infty}\int_{\rd} \mathds{1}_{x \in S^{\rcal}(E_{t-u})}  \, \left[
\delta_{0}\left(\int_{E_{t-u} \cup \bxr} \left(1 - \omega_{u}(z)\right)dz\right) - \delta_{0}\left(\int_{E_{t-u}} \left(1 - \omega_{u}(z)\right)dz\right)
\right]dx µ(d\rcal) ,
\end{align*}
which is equal to $\gcal_{µ}^{\infty} \Phi_{\delta_{0},1-\omega_{u}}(\Xi_{t-u}^{\infty})$.
Thus
\begin{align*}
F(t,0) &= F(0,t) \\
\text{i.e }\quad \quad \quad \quad \quad \quad \quad \quad \esp_{M^{0}}\left[\mathbf{E}_{m(E^{0})}\left[\widetilde{D}(M_{t}^{\infty},\Xi_{0}^{\infty})\right]\right] &= 
\esp_{M^{0}}\left[\mathbf{E}_{m(E^{0})}\left[\widetilde{D}(M_{0}^{\infty},\Xi_{t}^{\infty})\right]\right] \\
\Longleftrightarrow \text{\quad \quad } \esp_{M^{0}}\left[\mathbf{E}_{m(E^{0})}\left[\delta_{0}\left(\int_{E_{0}} \left(1 - \omega_{t}(x)\right)dx\right)\right]\right]
&= \esp_{M^{0}}\left[\mathbf{E}_{m(E_{t})}\left[\delta_{0}\left(\int_{E_{0}} \left(1 - \omega_{0}(x)\right)dx\right)\right]\right]. \\
\intertext{Therefore}
\esp_{M^{0}}\left[
\delta_{0}\left(\int_{E_{0}} \left(1 - \omega_{t}(x)\right)dx
\right)\right]
&= \mathbf{E}_{m(E^{0})}\left[
\delta_{0}\left(\int_{E_{t}} \left(1 - \omega_{0}(x)\right)dx
\right)\right]
\end{align*}
and we can conclude.
\end{proof}

Finally, we can prove the second part of Theorem \ref{thm:characterization_infty_SLFV}, i.e, the uniqueness of the solution to the martingale problem satisfied by the $\infty$-parent SLFV when $µ$ satisfies Condition (\ref{eqn:condition_infty_SLFV}). The first part of this theorem was proved in Section \ref{sec:3} (Proposition \ref{prop:char_infty_slfv}).
\begin{proof}(Theorem \ref{thm:characterization_infty_SLFV}) 

Let $(M_{t}^{a})_{t \geq 0}$ and $(M_{t}^{b})_{t \geq 0}$ be two solutions to the martingale problem $(\lcal_{µ}^{\infty},\delta_{M^{0}})$. Then, due to the form of the operator $\lcal_{µ}^{\infty}$, there exists densities $(\omega_{t}^{a})_{t \geq 0}$ and $(\omega_{t}^{b})_{t \geq 0}$ of $(M_{t}^{a})_{t \geq 0}$ and $(M_{t}^{b})_{t \geq 0}$ such that
\begin{equation*}
\forall t \geq 0, \forall x \in \rd, \omega_{t}^{a}(x) \in \{0,1\} \text{ and } \omega_{t}^{b}(x) \in \{0,1\}.
\end{equation*}
Then, let $t \geq 0$, let $E \in \mathcal{E}^{cf}$ and let $(\Xi_{t}^{\infty})_{t \geq 0}$ be the $\infty$-parent ancestral process started from $m(E)$. We have 
\begin{align*}
\proba_{M^{0}}\left(\delta_{0}\left(\int_{E} \left(1 - \omega_{t}^{a}(x)\right)dx\right) = 1\right) 
&= \esp_{M^{0}}\left[\delta_{0}\left(\int_{E} \left(1 - \omega_{t}^{a}(x)\right)dx\right)\right] \\
&= \esp_{M^{0}}\left[D(M_{t}^{a},\Xi_{0}^{\infty})\right] \\
&= \mathbf{E}_{m(E)}\left[D(M^{0}, \Xi_{t}^{\infty})\right] 
\text{ by Proposition \ref{prop:dualite_kinf} } \\
&= \esp_{M^{0}}\left[D(M_{t}^{b},\Xi_{0}^{\infty})\right] 
\text{ by the same proposition} \\
&= \esp_{M^{0}}\left[\delta_{0}\left(\int_{E}\left( 1 - \omega_{t}^{b}(x)\right)dx\right)\right] \\
&= \proba_{M^{0}}\left(\delta_{0}\left(\int_{E}\left( 1 - \omega_{t}^{b}(x)\right)dx\right) = 1\right),
\end{align*}
using Proposition \ref{prop:dualite_kinf} to pass from line $2$ to line $3$, and from line $3$ to line $4$. We can conclude that $(M_{t}^{a})_{t \geq 0}$ and $(M_{t}^{b})_{t \geq 0}$ have the same distribution.
\end{proof}

\section{Technical lemmas}\label{sec:5}

\subsection{Properties of the operators $\lcal_{µ}^{k}$ and $\lcal_{µ}^{\infty}$}
The goal of this section is to show that the operators $\lcal_{µ}^{k}$ and $\lcal_{µ}^{\infty}$ introduced in Section \ref{sec:nouvelle_section_2} are well-defined, as well as to prove some properties they satisfy. 

In all that follows, let $F \in C^{1}(\mathbb{R})$, $f \in C_{c}(\rd)$, and $M \in \mcal_{\lambda}$. 
Let $\omega : \rd \to \{0,1\}$ be a measurable function, let $µ$ be a $\sigma$-finite measure on $\mathbb{R}_{+}^{*}$ satisfying Condition (\ref{eqn:condition_intensite}), and let $k \geq 2$. Since $f$ is of compact support, there exist constants $C_{1}, C_{2} > 0$ such that for all $\rcal > 0$, 
\begin{equation}\label{eqn:eqn_1}
\mathrm{Vol}(Supp^{\rcal}(f)) \leq C_{2}  \, \left( \rcal^{d} \vee 1\right),
\end{equation}
and for all $\tilde{\omega} : \rd \to \{0,1\}$ measurable,
\begin{equation}\label{eqn:eqn_2}
\left| \langle \mathds{1}_{\bxr} \, \tilde{\omega}, f \rangle \right|
\leq C_{1}  \, ||f||_{\infty} \, \left( \rcal^{d} \wedge 1 \right).
\end{equation}

\begin{lem}\label{lem:appendix_A_1}
For all $x \in \rd$ and for all $\rcal > 0$,
\begin{align*}
\left|\langle \Theta_{x,\rcal}^{+}(\omega),f \rangle - \langle \omega,f \rangle \right|
&\leq ||f||_{\infty}  \,\mathrm{Vol}(Supp(f)) \\
\text{and \quad} 
\left|\langle \Theta_{x,\rcal}^{-}(\omega),f \rangle - \langle \omega,f \rangle \right|
&\leq ||f||_{\infty}  \, \mathrm{Vol}(Supp(f)).
\end{align*}
\end{lem}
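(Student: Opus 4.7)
The plan is to compute the two differences pointwise using the definitions of $\Theta^{+}_{x,\rcal}$ and $\Theta^{-}_{x,\rcal}$, observe that each difference is supported in $\bxr$ and bounded in absolute value by $1$, and then control the resulting integrals against $f$ by the trivial sup-norm/volume estimate.

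More precisely, from the definition $\Theta^{+}_{x,\rcal}(\omega) = \mathds{1}_{\bxr^{c}}\omega + \mathds{1}_{\bxr}$ one immediately obtains $\Theta^{+}_{x,\rcal}(\omega) - \omega = \mathds{1}_{\bxr}(1-\omega)$, and similarly $\Theta^{-}_{x,\rcal}(\omega) - \omega = -\mathds{1}_{\bxr}\omega$. Since $\omega$ is $\{0,1\}$-valued, both $\mathds{1}_{\bxr}(1-\omega)$ and $\mathds{1}_{\bxr}\omega$ lie between $0$ and $\mathds{1}_{\bxr}$. Pairing with $f$ then gives
\begin{equation*}
\bigl|\langle \Theta^{\pm}_{x,\rcal}(\omega),f\rangle - \langle \omega,f\rangle\bigr| \leq \int_{\bxr}|f(y)|\,dy \leq \int_{\bxr \cap \mathrm{Supp}(f)} \|f\|_{\infty}\,dy \leq \|f\|_{\infty}\,\mathrm{Vol}(\mathrm{Supp}(f)),
\end{equation*}
which is the announced inequality in both cases.

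There is no real obstacle here: the lemma is a direct consequence of the definitions of $\Theta^{\pm}_{x,\rcal}$ together with the fact that $\omega$ takes values in $\{0,1\}$, and the only estimate used is the trivial one $|f| \leq \|f\|_{\infty}\mathds{1}_{\mathrm{Supp}(f)}$. Note in passing that the bound is usually far from tight, since the support of $\Theta^{\pm}_{x,\rcal}(\omega) - \omega$ is contained in $\bxr \cap \mathrm{Supp}(f)$; a sharper version involving $C_{1}\|f\|_{\infty}(\rcal^{d}\wedge 1)$ (as in \eqref{eqn:eqn_2}) is also available and is in fact what one needs in Lemma~\ref{lem:appendix_A_2} and the tightness arguments of Section~\ref{subsec:convergence_result_forward}, but the present weaker form suffices for its intended use.
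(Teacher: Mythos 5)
Your proof is correct and follows essentially the same route as the paper: both compute $\Theta^{\pm}_{x,\rcal}(\omega)-\omega$ explicitly, reduce to $\bigl|\int_{\bxr}(1-\omega(y))f(y)\,dy\bigr|$ (resp.\ with $\omega$ in place of $1-\omega$), and conclude with the bound $\int_{\bxr}|f(y)|\,dy \leq \|f\|_{\infty}\,\mathrm{Vol}(\mathrm{Supp}(f))$. Your closing remark about the sharper $(\rcal^{d}\wedge 1)$ bound correctly identifies what Lemma~\ref{lem:appendix_A_2} actually uses via Eq.~(\ref{eqn:eqn_2}).
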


\begin{proof}
Let $x \in \rd$ and $\rcal > 0$. 
\begin{align*}
\left|\langle \Theta_{x,\rcal}^{+}(\omega),f \rangle - \langle \omega,f \rangle\right| 
\leq &\left|
\langle \mathds{1}_{\bxr^{c}}  \,\omega, f \rangle
+ \langle \mathds{1}_{\bxr},f \rangle 
- \langle \mathds{1}_{\bxr^{c}}  \, \omega, f \rangle
- \langle \mathds{1}_{\bxr}  \, \omega, f \rangle \right| \\
\leq &\left|
\langle \mathds{1}_{\bxr}  \, \left(1-\omega \right), f \rangle \right| \\
\leq & \left|
\int_{\bxr} \left(1-\omega(y)\right) \, f(y) dy \right| \\
\leq & \int_{\bxr} |f(y)|dy \\
\leq & ||f||_{\infty} \times \mathrm{Vol}(Supp(f)).
\end{align*}

We can similarly show the corresponding result for
$
\left|\langle \Theta_{x,\rcal}^{-}(\omega),f \rangle - \langle \omega,f \rangle\right|.
$
\end{proof}

\begin{lem}\label{lem:appendix_A_3}
For all $\rcal > 0$, for all $x \in \rd \backslash Supp^{\rcal}(f)$, 
\begin{equation*}
\langle \Theta_{x,\rcal}^{+}(\omega),f \rangle - \langle \omega,f \rangle
= \langle \Theta_{x,\rcal}^{-}(\omega),f \rangle - \langle \omega,f \rangle
= 0
\end{equation*}
\end{lem}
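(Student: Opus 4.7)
The strategy is to observe that both $\Theta_{x,\rcal}^{+}(\omega)$ and $\Theta_{x,\rcal}^{-}(\omega)$ coincide with $\omega$ outside the ball $\bxr$, so the differences $\langle \Theta_{x,\rcal}^{\pm}(\omega),f\rangle - \langle \omega,f\rangle$ reduce to integrals over $\bxr$ of expressions involving $f$. The key is then to exploit the hypothesis $x \in \rd \setminus \mathrm{Supp}^{\rcal}(f)$ to show that $f$ vanishes identically on $\bxr$.

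Concretely, the first step is to unpack the definition of $\mathrm{Supp}^{\rcal}(f)$: $x \notin \mathrm{Supp}^{\rcal}(f)$ means that no point of $\mathrm{Supp}(f)$ lies within distance $\rcal$ of $x$, i.e.\ $\bxr \cap \mathrm{Supp}(f) = \emptyset$, which immediately gives $f(y) = 0$ for every $y \in \bxr$. The second step is to substitute the definitions of $\Theta_{x,\rcal}^{+}(\omega) = \mathds{1}_{\bxr^{c}}\,\omega + \mathds{1}_{\bxr}$ and $\Theta_{x,\rcal}^{-}(\omega) = \mathds{1}_{\bxr^{c}}\,\omega$ into the inner products, which yields
\begin{align*}
\langle \Theta_{x,\rcal}^{+}(\omega),f \rangle - \langle \omega,f \rangle &= \int_{\bxr}\bigl(1-\omega(y)\bigr)f(y)\,dy, \\
\langle \Theta_{x,\rcal}^{-}(\omega),f \rangle - \langle \omega,f \rangle &= -\int_{\bxr}\omega(y)\,f(y)\,dy.
\end{align*}
Both integrals then vanish by the first step.

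There is no real obstacle here: the lemma is a direct consequence of the definitions, and the argument is at most a couple of lines. Its role is really to provide the quantitative observation that localizing $x$ outside a $\rcal$-neighbourhood of $\mathrm{Supp}(f)$ eliminates the contribution of a reproduction event of radius $\rcal$ centred at $x$, which is what is used in Section~\ref{sec:nouvelle_section_2} to restrict the outer integral in $\lcal_{µ}^{k}\Psi_{F,f}$ and $\lcal_{µ}^{\infty}\Psi_{F,f}$ from $\rd$ to $\mathrm{Supp}^{\rcal}(f)$.
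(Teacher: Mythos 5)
Your proof is correct and follows essentially the same route as the paper: both reduce the differences to integrals over $\bxr$ (namely $\langle \mathds{1}_{\bxr}(1-\omega),f\rangle$ and $-\langle \mathds{1}_{\bxr}\,\omega,f\rangle$) and observe that $x \notin \mathrm{Supp}^{\rcal}(f)$ forces $f$ to vanish on $\bxr$, so both integrals are zero. The paper phrases the last step as bounding by $\int_{\bxr}|f(y)|\,dy = 0$, which is the same observation.
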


\begin{proof}
Let $\rcal > 0$, and let $x \in \rd \backslash Supp^{\rcal}(f)$, 
\begin{align*}
\left|\langle \Theta_{x,\rcal}^{+}(\omega),f \rangle - \langle \omega,f \rangle\right|
= &\left|\langle \mathds{1}_{\bxr} \, (1-\omega),f \rangle \right| \\
\leq & \int_{\bxr} |f(y)|dy \\
= & 0
\end{align*}
since $x \in \rd \backslash Supp^{\rcal}(f)$. Similarly,
\begin{align*}
\left|\langle \Theta_{x,\rcal}^{-}(\omega),f \rangle - \langle \omega,f \rangle\right|
= &\left|\langle \mathds{1}_{\bxr} \, \omega,f \rangle \right| \\
\leq & \int_{\bxr} |f(y)|dy \\
= & 0
\end{align*}
for the same reason, and we can conclude. 
\end{proof}

\begin{lem}\label{lem:appendix_A_2}
For all $x \in \rd$ and for all $\rcal > 0$, 
\begin{align*}
\left|F\left(\langle \Theta_{x,\rcal}^{+}(\omega),f \rangle \right) - F\left(\langle \omega,f \rangle \right) \right|
\leq &\quad C_{1}  \, ||f||_{\infty}  \,\left( \rcal^{d} \wedge 1 \right)  \, C(F,f) \\
\text{and \quad}
\left|F\left(\langle \Theta_{x,\rcal}^{-}(\omega),f \rangle \right) - F\left(\langle \omega,f \rangle \right) \right|
\leq &\quad C_{1}  \,||f||_{\infty}  \, \left( \rcal^{d} \wedge 1 \right) \, C(F,f)
\end{align*}
where
\begin{equation*}
C(F,f) = \sup_{z \in [-||f||_{\infty}\mathrm{Vol}(Supp(f)),||f||_{\infty}\mathrm{Vol}(Supp(f))]} \left|F'(z)\right|.
\end{equation*}
\end{lem}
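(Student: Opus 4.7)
The plan is to prove both inequalities by combining the mean value theorem applied to $F$ with a sharpened version of the bounds from Lemma~\ref{lem:appendix_A_1}, which exploits the fact that the perturbations $\Theta_{x,\rcal}^{\pm}(\omega) - \omega$ are supported in the ball $\bxr$. The global bound from Lemma~\ref{lem:appendix_A_1} is replaced by the local bound from Eq.~(\ref{eqn:eqn_2}), giving the factor $\rcal^{d} \wedge 1$ instead of $\mathrm{Vol}(\mathrm{Supp}(f))$.

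Concretely, I would first rewrite the differences in the arguments of $F$ exactly as was done at the start of the proof of Lemma~\ref{lem:appendix_A_1}:
\begin{align*}
\langle \Theta_{x,\rcal}^{+}(\omega), f \rangle - \langle \omega, f \rangle &= \langle \mathds{1}_{\bxr}\,(1-\omega), f \rangle, \\
\langle \Theta_{x,\rcal}^{-}(\omega), f \rangle - \langle \omega, f \rangle &= -\langle \mathds{1}_{\bxr}\,\omega, f \rangle.
\end{align*}
Since $\omega$ and $1-\omega$ are both $\{0,1\}$-valued measurable functions, Eq.~(\ref{eqn:eqn_2}) applies to each of them, yielding
\begin{equation*}
\bigl|\langle \Theta_{x,\rcal}^{\pm}(\omega), f \rangle - \langle \omega, f \rangle\bigr| \leq C_{1}\,||f||_{\infty}\,(\rcal^{d} \wedge 1).
\end{equation*}

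Next, because $\omega$, $\Theta_{x,\rcal}^{+}(\omega)$ and $\Theta_{x,\rcal}^{-}(\omega)$ are all $\{0,1\}$-valued, the three real numbers $\langle \omega, f \rangle$ and $\langle \Theta_{x,\rcal}^{\pm}(\omega), f \rangle$ all lie in the interval
\begin{equation*}
I := [-||f||_{\infty}\,\mathrm{Vol}(\mathrm{Supp}(f)),\, ||f||_{\infty}\,\mathrm{Vol}(\mathrm{Supp}(f))].
\end{equation*}
Since $F \in C^{1}(\mathbb{R})$ and $I$ is compact, the mean value theorem yields, for any $a, b \in I$,
\begin{equation*}
|F(a) - F(b)| \leq C(F,f)\,|a-b|,
\qquad C(F,f) = \sup_{z \in I} |F'(z)|.
\end{equation*}
Applying this to $a = \langle \Theta_{x,\rcal}^{\pm}(\omega), f \rangle$ and $b = \langle \omega, f \rangle$, and combining with the bound from the previous paragraph, gives exactly the two claimed inequalities.

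No real obstacle is anticipated: this is a routine mean value estimate, where the only point worth spelling out is that the argument of $F$ never leaves the compact interval $I$ on which $F'$ is controlled, thanks to the $\{0,1\}$-valued nature of densities in $\mcal_{\lambda}$.
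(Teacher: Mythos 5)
Your proposal is correct and follows essentially the same route as the paper: the paper also writes the difference of the arguments as $\langle \mathds{1}_{\bxr}(1-\omega),f\rangle$ (resp. $-\langle \mathds{1}_{\bxr}\,\omega,f\rangle$), applies the Taylor--Lagrange (mean value) inequality with the constant $C(F,f)$, and bounds the remaining term by Eq.~(\ref{eqn:eqn_2}) to get the factor $C_{1}\,||f||_{\infty}\,(\rcal^{d}\wedge 1)$. Your explicit remark that all arguments of $F$ stay in the compact interval where $F'$ is controlled is a point the paper leaves implicit, but it is the same argument.
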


\begin{proof}
Let $x \in \rd$ and $\rcal > 0$. First, we notice that as in the proof of Lemma \ref{lem:appendix_A_1}, we only need to show the result for $\Theta_{x,\rcal}^{+}(\omega)$. 

By Taylor-Lagrange inequality and by Lemma \ref{lem:appendix_A_1}, 
\begin{align*}
\left|F\left(\langle \Theta_{x,\rcal}^{+}(\omega),f \rangle \right) - F\left(\langle \omega,f \rangle \right) \right|
\leq & \quad \left| \langle \Theta_{x,\rcal}^{+}(\omega),f \rangle - \langle \omega,f \rangle \right|  \,  C(F,f) \\
\leq & \quad \left|\langle \mathds{1}_{\bxr} \times \left(1 - \omega\right),f \rangle \right|  \, C(F,f) \\
\leq & \quad C_{1}  \, ||f||_{\infty} \, \left( \rcal^{d} \wedge 1 \right)  \, C(F,f)
\end{align*}
by Eq. (\ref{eqn:eqn_2}).
\end{proof}

We can now show that the operator $\lcal_{µ}^{k}$ is well-defined. 
\begin{lem}\label{lem:Lk_well_defined}
The operator $\lcal_{µ}^{k}$ is well-defined. Moreover, the function $\lcal_{µ}^{k} \Psi_{F,f} : \mcal_{\lambda} \to \mathbb{R}$ is bounded.
\end{lem}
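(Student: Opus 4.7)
The plan is to reduce the well-definedness to the integrability condition~(\ref{eqn:condition_intensite}) on $\mu$ by combining the pointwise bounds established in Lemmas~\ref{lem:appendix_A_3} and~\ref{lem:appendix_A_2}. First, I would use the trivial identity $1 = \prod_{j=1}^k \omega_M(y_j) + (1 - \prod_{j=1}^k \omega_M(y_j))$ to absorb the $-F(\langle\omega_M,f\rangle)$ term into the bracket, rewriting the integrand as the convex combination (in the $y_j$-weights) of the two increments $F(\langle\Theta_{x,\rcal}^{\pm}(\omega_M),f\rangle) - F(\langle\omega_M,f\rangle)$.

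Next, by Lemma~\ref{lem:appendix_A_3} both increments vanish whenever $x \notin Supp^{\rcal}(f)$, which effectively restricts the $x$-integration to $Supp^{\rcal}(f)$ and, a posteriori, justifies the alternative form of $\lcal_\mu^k \Psi_{F,f}$ given immediately after its definition. On $Supp^{\rcal}(f)$, Lemma~\ref{lem:appendix_A_2} supplies the uniform bound $C_1 \|f\|_\infty \, (\rcal^d \wedge 1) \, C(F,f)$ on each increment, while the weights $\prod_j \omega_M(y_j)$ and $1 - \prod_j \omega_M(y_j)$ lie in $[0,1]$. Hence the absolute value of the integrand is dominated pointwise by $2 C_1 \|f\|_\infty C(F,f) (\rcal^d \wedge 1) V_\rcal^{-k}$ on $Supp^\rcal(f) \times \bxr^k$, independently of $M$.

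Integrating the $y_j$'s out over $\bxr^k$ exactly cancels the $V_\rcal^{-k}$ prefactor; integrating $x$ over $Supp^{\rcal}(f)$ contributes a factor at most $C_2 (\rcal^d \vee 1)$ by Eq.~(\ref{eqn:eqn_1}); and the remaining radial integral then reduces, up to absolute constants depending only on $F$ and $f$, to
\begin{equation*}
\int_0^{\infty} (\rcal^d \wedge 1)(\rcal^d \vee 1)\, \mu(d\rcal) \;=\; \int_0^{\infty} \rcal^d\, \mu(d\rcal),
\end{equation*}
which is finite by Condition~(\ref{eqn:condition_intensite}). This finite majorant simultaneously justifies Fubini, yields well-definedness of $\lcal_\mu^k \Psi_{F,f}(M)$ for every $M \in \mcal_\lambda$, and shows $\sup_{M \in \mcal_\lambda} |\lcal_\mu^k \Psi_{F,f}(M)| < \infty$.

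No serious obstacle is anticipated: the proof is essentially bookkeeping, combining the two auxiliary lemmas with the integrability assumption on $\mu$. The only mildly delicate point is that one needs Fubini to rearrange the integration order into the form displayed just after the definition of $\lcal_\mu^k$, but once the $M$-uniform majorant above has been exhibited this is automatic.
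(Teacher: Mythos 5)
Your proposal is correct and follows essentially the same route as the paper: split the integrand into the convex combination of the two increments $F(\langle\Theta_{x,\rcal}^{\pm}(\omega_M),f\rangle)-F(\langle\omega_M,f\rangle)$, restrict the $x$-integration to $Supp^{\rcal}(f)$ via Lemma~\ref{lem:appendix_A_3}, bound each increment by $C_{1}\|f\|_{\infty}(\rcal^{d}\wedge 1)C(F,f)$ via Lemma~\ref{lem:appendix_A_2}, and conclude with Eq.~(\ref{eqn:eqn_1}) and Condition~(\ref{eqn:condition_intensite}) since $(\rcal^{d}\wedge 1)(\rcal^{d}\vee 1)=\rcal^{d}$. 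The resulting majorant is uniform in $M$, which gives boundedness exactly as in the paper.
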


\begin{proof}
Let $M \in \mcal_{\lambda}$. Then
\begin{align*}
& \left|\lcal_{µ}^{k} \Psi_{F,f}(M)\right| \\
\leq & \left|\int_{\rd}\int_{0}^{\infty}\int_{\bxr^{k}} 
\frac{1}{V_{\rcal}^{k}}  \,\left(\prod_{j = 1}^{k} \omega_{M}(y_{j})\right)  \, \left(
F\left(\langle \Theta_{x,\rcal}^{+}(\omega_{M}),f \rangle \right) - F\left(\langle \omega_{M},f\rangle \right)
\right)dy_{1}...dy_{k}µ(d\rcal)dx\right|\\
 & +  \left|\int_{\rd}\int_{0}^{\infty}\int_{\bxr^{k}} 
\frac{1}{V_{\rcal}^{k}}  \, \left(1-\prod_{j = 1}^{k} \omega_{M}(y_{j})\right) \, \left(
F\left(\langle \Theta_{x,\rcal}^{-}(\omega_{M}),f \rangle \right) - F\left(\langle \omega_{M},f\rangle \right)
\right)dy_{1}...dy_{k}µ(d\rcal)dx\right|\\
\leq & \left|\int_{\rd}\int_{0}^{\infty}\int_{\bxr^{k}} 
\frac{1}{V_{\rcal}^{k}}  \, \left(
F\left(\langle \Theta_{x,\rcal}^{+}(\omega_{M}),f \rangle \right) - F\left(\langle \omega_{M},f\rangle \right)
\right)dy_{1}...dy_{k}µ(d\rcal)dx\right|\\
 & +  \left|\int_{\rd}\int_{0}^{\infty}\int_{\bxr^{k}} 
\frac{1}{V_{\rcal}^{k}} \, \left(
F\left(\langle \Theta_{x,\rcal}^{-}(\omega_{M}),f \rangle \right) - F\left(\langle \omega_{M},f\rangle \right)
\right)dy_{1}...dy_{k}µ(d\rcal)dx\right|\\
\leq & \left|\int_{0}^{\infty}\int_{Supp^{\rcal}(f)}\int_{\bxr^{k}} 
\frac{1}{V_{\rcal}^{k}}  \, \left(
F\left(\langle \Theta_{x,\rcal}^{+}(\omega_{M}),f \rangle \right) - F\left(\langle \omega_{M},f\rangle \right)
\right)dy_{1}...dy_{k}dx µ(d\rcal)\right|\\
 & +  \left|\int_{0}^{\infty}\int_{Supp^{\rcal}(f)}\int_{\bxr^{k}} 
\frac{1}{V_{\rcal}^{k}}  \, \left(
F\left(\langle \Theta_{x,\rcal}^{-}(\omega_{M}),f \rangle \right) - F\left(\langle \omega_{M},f\rangle \right)
\right)dy_{1}...dy_{k}dx µ(d\rcal)\right|.\\
\intertext{Using Lemma \ref{lem:appendix_A_2},} 
& \left|\lcal_{µ}^{k} \Psi_{F,f}(M)\right| \\
\leq & \int_{0}^{\infty}\int_{Supp^{\rcal}(f)}\int_{\bxr^{k}}
\frac{2}{V_{\rcal}^{k}}  \, C_{1}  \, ||f||_{\infty}  \, \left(\rcal^{d} \wedge 1 \right)  \, C(F,f) dy_{1}...dy_{k}dxµ(d\rcal). \\
\leq & \int_{0}^{\infty} 2 \mathrm{Vol}(Supp^{\rcal}(f)) \, C_{1}  \, ||f||_{\infty}  \, \left(\rcal^{d} \wedge 1 \right)  \,C(F,f) µ(d\rcal), \\
\intertext{and by Eq. (\ref{eqn:eqn_1}),}
& \left|\lcal_{µ}^{k} \Psi_{F,f}(M)\right| \\
\leq & \int_{0}^{\infty} 2 C_{1}C_{2}  \, ||f||_{\infty} \,C(F,f)  \, \left(\rcal^{d} \wedge 1 \right)  \,  \left(\rcal ^{d} \vee 1 \right) µ(d\rcal)  \\
\leq & 2 C_{1} C_{2} ||f||_{\infty}  \, C(F,f)  \, \int_{0}^{\infty} \rcal^{d} µ(d\rcal) \\
< & + \infty
\end{align*}
since $µ$ satisfies Condition (\ref{eqn:condition_intensite}). 

The second part of the lemma is a direct consequence of the fact that
\begin{equation*}
2 C_{1} C_{2} ||f||_{\infty}  \, C(F,f)  \, \int_{0}^{\infty} \rcal^{d} µ(d\rcal)
\end{equation*}
does not depend on the choice of $M$.
\end{proof}

A consequence of this lemma and of Lemma \ref{lem:appendix_A_3} is that for all $M \in \mcal_{\lambda}$, $\lcal_{µ}^{k} \Psi_{F,f}(M)$ can be rewritten as :
\begin{align*}
\lcal_{µ}^{k} \Psi_{F,f}(M) = \int_{0}^{\infty}\int_{Supp^{\rcal}(f)}\int_{\bxr^{k}} \frac{1}{V_{\rcal}^{k}} \, 
 & \left[\prod_{j = 1}^{k} \omega_{M}(y_{j})  \, F(\langle\Theta_{x,\rcal}^{+}(\omega_{M}),f\rangle)\right. \\ &+ (1-\prod_{j = 1}^{k}\omega_{M}(y_{j}))  \, F(\langle\Theta_{x,\rcal}^{-}(\omega_{M}),f\rangle) \\
&\left. \vphantom{\prod_{j = 1}^{k}} - F(\langle\omega_{M},f\rangle)
\right]
dy_{1}...dy_{k}dx µ(d\rcal).
\end{align*}

We now prove that the operator $\lcal_{µ}^{\infty}$ is well-defined.

\begin{lem}\label{lem:Linfty_well_defined}
The operator $\lcal_{µ}^{\infty}$ is well-defined. Moreover, the function $\lcal_{µ}^{\infty} \Psi_{F,f} : \mcal_{\lambda} \to \mathbb{R}$ is bounded.
\end{lem}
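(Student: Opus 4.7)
The plan is to mimic the proof of Lemma~\ref{lem:Lk_well_defined} but with the simpler structure offered by the alternative form of $\lcal_{µ}^{\infty}\Psi_{F,f}(M)$ already derived in Section~\ref{subsec:1.2}, namely
\begin{equation*}
\lcal_{µ}^{\infty}\Psi_{F,f}(M) = \int_{0}^{+\infty}\int_{Supp^{\rcal}(f)}\Bigl(1-\delta_{0}\bigl(\textstyle\int_{\bxr}(1-\omega_{M}(z))dz\bigr)\Bigr)\Bigl[F(\langle\Theta_{x,\rcal}^{-}(\omega_{M}),f\rangle) - F(\langle\omega_{M},f\rangle)\Bigr]\,dx\,µ(d\rcal).
\end{equation*}
Thus only one difference $F(\langle\Theta_{x,\rcal}^{-}(\omega_{M}),f\rangle) - F(\langle\omega_{M},f\rangle)$ has to be controlled, and the factor $1-\delta_{0}(\cdot)$ is bounded by~$1$ uniformly in $x$, $\rcal$ and $M$.

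The first step is to take absolute values under the integral and use the uniform bound $|1-\delta_0(\cdot)| \le 1$ together with Lemma~\ref{lem:appendix_A_2}, which gives
\begin{equation*}
\bigl|F(\langle\Theta_{x,\rcal}^{-}(\omega_{M}),f\rangle) - F(\langle\omega_{M},f\rangle)\bigr| \le C_{1}\,\|f\|_{\infty}\,(\rcal^{d}\wedge 1)\,C(F,f),
\end{equation*}
uniformly in $x\in\rd$ and $M\in\mcal_{\lambda}$. This shows that the integrand is measurable and dominated by an $x$-independent function on $Supp^{\rcal}(f)\times(0,+\infty)$. The second step is to integrate in $x$ over $Supp^{\rcal}(f)$ and use Eq.~(\ref{eqn:eqn_1}), which yields $\mathrm{Vol}(Supp^{\rcal}(f)) \le C_{2}(\rcal^{d}\vee 1)$. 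Combining both inequalities, the integrand is bounded above by
\begin{equation*}
C_{1}C_{2}\,\|f\|_{\infty}\,C(F,f)\,(\rcal^{d}\wedge 1)(\rcal^{d}\vee 1)\,µ(d\rcal) = C_{1}C_{2}\,\|f\|_{\infty}\,C(F,f)\,\rcal^{d}\,µ(d\rcal),
\end{equation*}
after first integrating in $x$.

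The third step is to integrate in $\rcal$ and invoke Condition~(\ref{eqn:condition_intensite}), which precisely ensures that $\int_{0}^{+\infty}\rcal^{d}µ(d\rcal)<+\infty$. This gives
\begin{equation*}
\bigl|\lcal_{µ}^{\infty}\Psi_{F,f}(M)\bigr| \le C_{1}C_{2}\,\|f\|_{\infty}\,C(F,f)\int_{0}^{+\infty}\rcal^{d}\,µ(d\rcal) < +\infty,
\end{equation*}
so the integral defining $\lcal_{µ}^{\infty}\Psi_{F,f}(M)$ converges absolutely by Fubini, showing that the operator is well-defined. Since the right-hand side is independent of $M$, the function $M\mapsto\lcal_{µ}^{\infty}\Psi_{F,f}(M)$ is bounded on $\mcal_{\lambda}$.

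No serious obstacle is expected: the proof is a direct domination argument. The only subtlety worth flagging is that the bound is obtained without needing the stronger Condition~(\ref{eqn:condition_infty_SLFV}); Condition~(\ref{eqn:condition_intensite}) alone suffices to define the operator, which is consistent with the remark in Section~\ref{subsec:1.2} that the well-posedness (not merely the definition) of the martingale problem is what requires the stronger condition.
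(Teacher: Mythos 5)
Your proof is correct and follows essentially the same route as the paper: the alternative form of $\lcal_{µ}^{\infty}\Psi_{F,f}$, the trivial bound on $1-\delta_{0}(\cdot)$, Lemma~\ref{lem:appendix_A_2} for the increment of $F$, the volume bound $\mathrm{Vol}(Supp^{\rcal}(f)) \leq C_{2}(\rcal^{d}\vee 1)$, and the identity $(\rcal^{d}\wedge 1)(\rcal^{d}\vee 1)=\rcal^{d}$ combined with Condition~(\ref{eqn:condition_intensite}). The resulting $M$-independent bound $C_{1}C_{2}\,C(F,f)\,\|f\|_{\infty}\int_{0}^{\infty}\rcal^{d}µ(d\rcal)$ matches the paper's.
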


\begin{proof}
Let $M \in \mcal_{\lambda}$. Then,
\begin{align*}
&\left|\lcal_{µ}^{\infty} \Psi_{F,f}(M) \right| \\
\leq & \int_{0}^{\infty} \int_{Supp^{\rcal}(f)} \left|
\left(1 - \delta_{0}\left(\int_{\bxr} 1 - \omega_{M}(z)dz\right)\right)  \, \left[F\left(\langle \Theta_{x,\rcal}^{-}(\omega_{M}),f \rangle \right) - F\left(\langle \omega_{M},f \rangle \right)\right]
\right|dxµ(d\rcal) \\
\leq & \int_{0}^{\infty} \int_{Supp^{\rcal}(f)} C_{1}  \, ||f||_{\infty}  \, \left( \rcal^{d} \wedge 1 \right)  \, C(F,f) dx µ(d\rcal) \\
\leq & \int_{0}^{\infty} Vol(Supp^{\rcal}(f)) C_{1}  \, ||f||_{\infty}  \, \left( \rcal^{d} \wedge 1 \right)  \, C(F,f) dx µ(d\rcal) \\
\leq & C_{1}C_{2}C(F,f)  \, ||f||_{\infty}  \, \int_{0}^{\infty} \left(\rcal^{d} \wedge 1 \right)  \, \left(\rcal^{d} \vee 1 \right) µ(d\rcal)  \\
\leq & C_{1}C_{2}C(F,f)  \, ||f||_{\infty}  \, \int_{0}^{\infty} \rcal^{d}µ(d\rcal) \\
< & + \infty
\end{align*}
since $µ$ satisfies Condition (\ref{eqn:condition_intensite}). Here we used Lemma \ref{lem:appendix_A_2} to pass from the second to the third line, and Lemma \ref{lem:appendix_A_1} to pass from the fourth to the fifth line.

As before, the second part of the lemma is the consequence of the fact that 
\begin{equation*}
C_{1}C_{2}C(F,f)  \, ||f||_{\infty}  \, \int_{0}^{\infty} \rcal^{d}µ(d\rcal)
\end{equation*}
does not depend on the choice of $M$.
\end{proof}

\subsection{Properties of the operator $\gcal_{µ}^{\infty}$}
In all the following, let $µ$ be a $\sigma$-finite measure on $\mathbb{R}_{+}^{*}$ satisfying Condition (\ref{eqn:condition_infty_SLFV}), let $F \in C_{b}^{1}(\mathbb{R})$ and let $f \in \mathcal{B}(\rd)$. 

\begin{lem}\label{lem:appendix_B_1}
The operator $\gcal_{µ}^{\infty}$ introduced at the end of Section~\ref{subsec:3.1} is well-defined, and the function $\gcal_{µ}^{\infty} \Phi_{F,f}$ is bounded.
\end{lem}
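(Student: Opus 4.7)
The plan is to mirror the strategy used for $\lcal_\mu^\infty$ in Lemma~\ref{lem:Linfty_well_defined}: obtain a pointwise bound on the integrand of $\gcal_\mu^\infty\Phi_{F,f}(m(E))$, control $\mathrm{Vol}(S^\rcal(E))$ via a border covering of $E$ as in Definition~\ref{defn:border_covering}, and conclude using the integrability of $\rcal\mapsto\rcal^d$ against $\mu$, which follows from Condition~(\ref{eqn:condition_infty_SLFV}) (since it implies Condition~(\ref{eqn:condition_intensite})).

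For the pointwise bound, since $F \in C_b^1(\mathbb{R})$ and $f$ is bounded, the identity $\langle m(E\cup\bxr),f\rangle - \langle m(E),f\rangle = \int_{\bxr\setminus E} f(y)\,dy$ combined with the mean value inequality and the uniform bound $\|F\|_\infty<\infty$ yield a constant $C(F,f)$, depending only on $\|F\|_\infty$, $\|F'\|_\infty$ and $\|f\|_\infty$, such that
\[
\bigl|F(\langle m(E\cup\bxr),f\rangle) - F(\langle m(E),f\rangle)\bigr| \leq C(F,f)\,(\rcal^d \wedge 1)
\]
uniformly in $x\in\rd$ and $\rcal>0$, by the same trick used in Lemma~\ref{lem:appendix_A_2}.

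For the geometric step I fix a radius $\widetilde{\rcal}>0$ witnessing Condition~(\ref{eqn:condition_infty_SLFV}) and, exactly as in Definition~\ref{defn:border_covering}, a finite family of points $x_1,\ldots,x_N$ such that the border of $E$ is covered by $\bigcup_{i=1}^N B(x_i,\widetilde{\rcal})$. The triangle inequality then gives $S^{\rcal}(E)\subseteq E\cup\bigcup_{i=1}^N B(x_i,\widetilde{\rcal}+\rcal)$, so
\[
\mathrm{Vol}(S^{\rcal}(E)) \leq \mathrm{Vol}(E) + N\,V_{1}\,(\widetilde{\rcal}+\rcal)^d.
\]
Combining with the previous bound via Fubini,
\[
|\gcal_\mu^\infty\Phi_{F,f}(m(E))| \leq C(F,f)\left[\mathrm{Vol}(E)\int_0^\infty(\rcal^d\wedge 1)\,\mu(d\rcal) + N V_{1}\!\int_0^\infty(\rcal^d\wedge 1)(\widetilde{\rcal}+\rcal)^d\,\mu(d\rcal)\right].
\]
The first integral is bounded by $\int_0^\infty\rcal^d\,\mu(d\rcal)$, and the second by $(1+\widetilde{\rcal})^d\int_0^\infty\rcal^d\,\mu(d\rcal)$ — the elementary bound $(\rcal^d\wedge 1)(\widetilde{\rcal}+\rcal)^d\leq(1+\widetilde{\rcal})^d\rcal^d$ being checked by splitting $\rcal\leq 1$ and $\rcal\geq 1$ separately — so both are finite by Condition~(\ref{eqn:condition_intensite}). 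This simultaneously establishes the absolute convergence defining $\gcal_\mu^\infty\Phi_{F,f}(m(E))$ and the claimed finite bound on its values.

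The only mild obstacle is the geometric step: guaranteeing that $\partial E$ admits a finite cover by balls of radius $\widetilde{\rcal}$ for every $E\in\mathcal{E}^{cf}$. This is the same implicit regularity exploited in Definition~\ref{defn:border_covering} when showing that $(\Xi_t^\infty)_{t\geq 0}$ is well-defined; if desired, it can be bypassed by a direct Minkowski-content estimate for $\mathrm{Vol}(S^\rcal(E)\setminus E)$, after which the integrability argument above concludes the proof.
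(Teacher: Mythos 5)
Your route is genuinely different from the paper's, and it leaves a gap with respect to the second assertion of the lemma. The paper's proof does not touch the geometry of $E$ at all: it observes that the integrand vanishes whenever $x \notin Supp^{\rcal}(f)$ (since then $f$ vanishes on $\bxr$ and $\langle m(E\cup\bxr),f\rangle = \langle m(E),f\rangle$), bounds the difference of $F$-values crudely by $2\|F\|_{\infty}$, and then uses $\mathrm{Vol}(S^{\rcal}(E)\cap Supp^{\rcal}(f)) \leq \mathrm{Vol}(Supp^{\rcal}(f)) \leq C_{2}(\rcal^{d}\vee 1)$. The resulting bound $2\|F\|_{\infty} C_{2}\int_{0}^{\infty}(\rcal^{d}\vee 1)\,\mu(d\rcal)$ is uniform in $m(E)$, which is what ``$\gcal_{\mu}^{\infty}\Phi_{F,f}$ is bounded'' asserts and what is actually used downstream (the domination in Lemma~\ref{lem:appendix_B_2} and in Section~\ref{subsec:4.2}). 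Your bound, by contrast, is of the form $C(F,f)\bigl[\mathrm{Vol}(E) + N V_{1}(1+\widetilde{\rcal})^{d}\bigr]\int_{0}^{\infty}\rcal^{d}\mu(d\rcal)$: it grows with $\mathrm{Vol}(E)$ and with the covering number $N$, so it proves well-definedness at each fixed $E$ but not boundedness of the function on $\mcal^{cf}$. The boundary-covering regularity you flag is a second, milder issue (a general element of $\mathcal{E}^{cf}$ need not have a boundary coverable by finitely many $\widetilde{\rcal}$-balls), but the non-uniformity in $E$ is the substantive shortfall.

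That said, your computation is not wasted. The pointwise estimate $|F(\langle m(E\cup\bxr),f\rangle)-F(\langle m(E),f\rangle)|\leq C(F,f)(\rcal^{d}\wedge 1)$ is correct for bounded $f$ and is sharper near $\rcal=0$ than the paper's $2\|F\|_{\infty}$; combined with the paper's volume bound $\mathrm{Vol}(Supp^{\rcal}(f))\leq C_{2}(\rcal^{d}\vee 1)$ from Eq.~(\ref{eqn:eqn_1}) it yields the integrable majorant $\rcal^{d}$ rather than $\rcal^{d}\vee 1$, exactly as in Lemmas~\ref{lem:Lk_well_defined} and~\ref{lem:Linfty_well_defined} (and it avoids the paper's implicit requirement that $\mu$ put finite mass near $0$). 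So the fix is simply to replace your $E$-based volume estimate by the restriction of the $x$-integral to $S^{\rcal}(E)\cap Supp^{\rcal}(f)$; everything else in your argument then goes through and produces a bound depending only on $F$, $f$ and $\mu$. (Note that the paper's trick presupposes $f$ compactly supported, whereas this subsection allows $f\in\bcal(\rd)$; when $f$ is an indicator of full support, as in Section~\ref{subsec:4.2}, the paper itself falls back on an $E$-dependent volume bound much like yours, so your approach is the relevant one in that regime -- but there the uniformity is recovered along trajectories via $\mathrm{Vol}(S^{\rcal}(\Xi_{t}^{\infty}))\leq C_{t}(\rcal^{d}\vee 1)$, not claimed over all of $\mcal^{cf}$.)
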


\begin{proof}
Let $m(E) \in \mcal^{cf}$. Then,
\begin{align*}
 \left|\gcal_{µ}^{\infty} \Phi_{F,f}(m(E)) \right| 
\leq & \int_{0}^{\infty}\int_{S^{\rcal}(E) \cap Supp^{\rcal}(f)} 
\left|F\left(\langle m(E \cup \bxr),f \rangle\right) - F\left(\langle m(E),f \rangle\right)
\right|dx µ(d\rcal) \\
\leq & \int_{0}^{\infty} \int_{S^{\rcal}(E) \cap Supp^{\rcal}(f)} 2 ||F||_{\infty} dx µ(d\rcal) \\
\leq & 2 ||F||_{\infty} \int_{0}^{\infty} \mathrm{Vol}(S^{\rcal}(E) \cap Supp^{\rcal}(f))µ(d\rcal) \\
\leq & 2 ||F||_{\infty} \int_{0}^{\infty} \mathrm{Vol}(Supp^{\rcal}(f))µ(d\rcal) \\
\leq & 2 ||F||_{\infty} \int_{0}^{\infty} C_{2}  \, \left( \rcal^{d} \vee 1 \right) µ(d\rcal) \\
< & + \infty,
\end{align*}
since $µ$ satisfies Condition (\ref{eqn:condition_intensite}).
\end{proof}

\begin{lem}\label{lem:appendix_B_2}
Let $\Xi \in \mcal^{cf}$, and let $(\Xi_{t})_{t \geq 0}$ be the $\infty$-parent ancestral process associated to $µ$ with initial condition $\Xi$. Then, for all $t \geq 0$, 
\begin{equation*}
\egras \left[
\int_{0}^{t} \gcal_{µ}^{\infty} \Phi_{F,f}(\Xi_{s})ds
\right] = \int_{0}^{t} \egras \left[
\gcal_{µ}^{\infty} \Phi_{F,f}(\Xi_{s})
\right]ds.
\end{equation*}
\end{lem}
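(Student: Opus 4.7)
The plan is to deduce this identity as a direct application of Fubini's theorem, using Lemma~\ref{lem:appendix_B_1} as the essential input. The two things to verify are joint measurability of the integrand and an integrable dominating bound; the uniform boundedness already proved in Lemma~\ref{lem:appendix_B_1} takes care of the latter, so the argument is almost mechanical.

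First, I would establish joint measurability of the map $(s,\omega) \mapsto \gcal_{µ}^{\infty}\Phi_{F,f}(\Xi_{s}(\omega))$ on $[0,t] \times \boldsymbol{\Omega}$. Since $(\Xi_{s})_{s \geq 0}$ takes values in $\mcal^{cf}$ and has càdlàg paths, it is progressively measurable with respect to its natural filtration, so $(s,\omega) \mapsto \Xi_{s}(\omega)$ is $\mathcal{B}([0,t]) \otimes \boldsymbol{\mathcal{F}}_{t}$-measurable. The map $m(E) \mapsto \gcal_{µ}^{\infty}\Phi_{F,f}(m(E))$ is itself measurable, as can be seen by inspection of its definition: for each fixed $(x,\rcal)$, the integrand $F(\langle m(E \cup \bxr),f\rangle) - F(\langle m(E),f\rangle)$ depends measurably on $E$, and the $(x,\rcal)$-integration preserves measurability by the standard Fubini/Tonelli argument. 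Composing these two gives the required joint measurability.

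Second, Lemma~\ref{lem:appendix_B_1} provides a constant $C_{F,f} := 2\|F\|_{\infty} C_{2} \int_{0}^{\infty}(\rcal^{d} \vee 1)\,µ(d\rcal) < +\infty$ such that $|\gcal_{µ}^{\infty}\Phi_{F,f}(\Xi)| \leq C_{F,f}$ for every $\Xi \in \mcal^{cf}$. Consequently,
\begin{equation*}
\egras\left[\int_{0}^{t} \bigl|\gcal_{µ}^{\infty}\Phi_{F,f}(\Xi_{s})\bigr|\, ds \right] \leq t\, C_{F,f} < +\infty,
\end{equation*}
so the integrand is in $L^{1}(ds \otimes d\boldsymbol{P})$ on $[0,t] \times \boldsymbol{\Omega}$, and Fubini's theorem yields the stated identity.

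I do not anticipate any genuine obstacle in this argument; it is essentially a bookkeeping check that Fubini applies. The only subtle point is the measurability of $\gcal_{µ}^{\infty}\Phi_{F,f}$ in $m(E) \in \mcal^{cf}$, but this is immediate from its definition as an iterated integral of a measurable integrand.
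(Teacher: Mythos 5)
Your proposal is correct and follows essentially the same route as the paper: invoke the uniform bound $|\gcal_{µ}^{\infty}\Phi_{F,f}| \leq 2\|F\|_{\infty}C_{2}\int_{0}^{\infty}(\rcal^{d}\vee 1)\,µ(d\rcal)$ established in Lemma~\ref{lem:appendix_B_1}, deduce integrability of the integrand on $[0,t]\times\boldsymbol{\Omega}$, and conclude by Fubini's theorem. Your explicit check of joint measurability is a detail the paper leaves implicit, but the argument is the same.
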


\begin{proof}
In the proof of Lemma~\ref{lem:appendix_B_1}, we showed that for all $u \geq 0$, 
\begin{equation*}
\left|
\gcal_{µ}^{\infty} \Psi_{F,f}(\Xi_{u})
\right| \leq 2 ||F||_{\infty} \, \int_{0}^{\infty} C_{2} \left(
\rcal^{d} \vee 1
\right) µ(d\rcal).
\end{equation*}
Therefore, 
\begin{align*}
\esp\left[
\int_{0}^{t} \left|
\gcal_{µ}^{\infty} \Psi_{F,f}(\Xi_{s})
\right|ds
\right]
&\leq 2 ||F||_{\infty} \,t \int_{0}^{\infty} C_{2} \left(
\rcal^{d} \vee 1
\right) µ(d\rcal) \\
&< + \infty. 
\end{align*}
We conclude by applying Fubini's theorem. 
\end{proof}

\begin{lem}\label{lem:appendix_B_3}
Let $\Xi \in \mcal^{cf}$, and let $(\Xi_{t})_{t \geq 0}$ be the $\infty$-parent ancestral process associated to $µ$ with initial condition $\Xi$. Then, for all $t \geq 0$, 
\begin{equation*}
\egras \left[\gcal_{µ}^{\infty} \Phi_{F,f}(\Xi_{t}) \right]
= \left.\frac{d}{du}\egras
\left[\Phi_{F,f}(\Xi_{u})\right]
\right|_{u = t}.
\end{equation*}
\end{lem}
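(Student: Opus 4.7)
\textbf{Proof plan for Lemma~\ref{lem:appendix_B_3}.}
The plan is to derive the identity from the martingale characterization in Proposition~\ref{prop:backwards_kinf} and then differentiate via the fundamental theorem of calculus. First, taking expectations on both sides of the identity stated in Proposition~\ref{prop:backwards_kinf} eliminates the martingale term, yielding, for every $u \geq 0$,
\begin{equation*}
\egras[\Phi_{F,f}(\Xi_{u})] \;=\; \Phi_{F,f}(\Xi_{0}) \;+\; \egras\!\left[\int_{0}^{u} \gcal_{µ}^{\infty}\Phi_{F,f}(\Xi_{s})\,ds\right].
\end{equation*}
By Lemma~\ref{lem:appendix_B_2}, the expectation and the time integral on the right-hand side may be exchanged, which gives the simpler expression
\begin{equation*}
\egras[\Phi_{F,f}(\Xi_{u})] \;=\; \Phi_{F,f}(\Xi_{0}) \;+\; \int_{0}^{u} \egras\!\left[\gcal_{µ}^{\infty}\Phi_{F,f}(\Xi_{s})\right]ds.
\end{equation*}

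Once this representation is obtained, it suffices to show that the integrand $s \mapsto \egras[\gcal_{µ}^{\infty}\Phi_{F,f}(\Xi_{s})]$ is continuous at $s = t$, after which the fundamental theorem of calculus immediately delivers the claim. To prove continuity at a fixed $t \geq 0$, I would argue pathwise as follows. Since $\overleftarrow{\Pi}$ is a Poisson point process, almost surely no atom has time-coordinate equal to $t$, so $t$ is not a jump time of $(\Xi_{s})_{s \geq 0}$. Combined with the finiteness of the jump rate established via the border covering process in Lemma~\ref{lem:border_covering_bounded}, this implies the existence of a (random) neighborhood of $t$ on which $\Xi_{s} = \Xi_{t}$ almost surely; in particular $\gcal_{µ}^{\infty}\Phi_{F,f}(\Xi_{s}) \to \gcal_{µ}^{\infty}\Phi_{F,f}(\Xi_{t})$ as $s \to t$, $\boldsymbol{P}$-a.s.

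Applying the uniform bound on $|\gcal_{µ}^{\infty}\Phi_{F,f}|$ from Lemma~\ref{lem:appendix_B_1} as dominating function, the dominated convergence theorem transfers this a.s.\ convergence to convergence of expectations. Hence $s \mapsto \egras[\gcal_{µ}^{\infty}\Phi_{F,f}(\Xi_{s})]$ is continuous at~$t$, and differentiating the integral representation at $u = t$ yields
\begin{equation*}
\left.\frac{d}{du}\egras\!\left[\Phi_{F,f}(\Xi_{u})\right]\right|_{u=t} \;=\; \egras\!\left[\gcal_{µ}^{\infty}\Phi_{F,f}(\Xi_{t})\right].
\end{equation*}
The only delicate point is the continuity step; it is not a serious obstacle because the monotonicity of the ancestral process together with the a.s.\ finiteness of its jump times on $[0,t+1]$ makes the pathwise convergence essentially trivial, but it must be stated explicitly to justify the final differentiation.
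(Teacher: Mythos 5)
Your proof is correct, but it takes a genuinely different route from the paper's. The paper works ``inside the expectation'': it starts from the generator identity $\left.\frac{d}{du}\egras\left[\Phi_{F,f}(\Xi_{u})\right]\right|_{u=0} = \gcal_{µ}^{\infty}\Phi_{F,f}(\Xi)$ for a deterministic initial state, applies it at the random state $\Xi_{s}$ through the Markov property, and then interchanges the time derivative with the outer expectation by dominated convergence (using the boundedness of $F'$), concluding with the tower property. You instead integrate the generator along the path: taking expectations in the martingale problem of Proposition~\ref{prop:backwards_kinf} and applying the Fubini step of Lemma~\ref{lem:appendix_B_2} gives $\egras[\Phi_{F,f}(\Xi_{u})] = \Phi_{F,f}(\Xi_{0}) + \int_{0}^{u}\egras[\gcal_{µ}^{\infty}\Phi_{F,f}(\Xi_{s})]\,ds$, and you then differentiate at $u=t$ by the fundamental theorem of calculus, which requires your continuity argument (a fixed $t$ is a.s.\ not a jump time since the temporal intensity is non-atomic, jump times are a.s.\ isolated by the border-covering bound of Lemma~\ref{lem:border_covering_bounded}, and dominated convergence applies thanks to the uniform bound of Lemma~\ref{lem:appendix_B_1}). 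What your route buys is that the only delicate analytic step, the differentiation at $u=t$, is made fully explicit, whereas the paper asserts the derivative-at-zero identity and the interchange of $\frac{d}{du}$ with $\egras$ with little detail; the price is the extra (easy) continuity step, which you correctly identify as necessary. Two small caveats: Proposition~\ref{prop:backwards_kinf} is stated only for $\{0,1\}$-valued $f$, while the standing assumptions of Section~\ref{sec:5} allow $f \in \mathcal{B}(\rd)$, so for full generality you should remark that the martingale property extends verbatim to this class (the same Ethier--Kurtz argument applies, $\Phi_{F,f}$ being bounded and the jump rate a.s.\ finite); and at $t=0$ only one-sided continuity is needed, which your argument also provides since the first jump time is a.s.\ positive.
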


\begin{proof}
Let $t \geq 0$. We have
\begin{align*}
\left.\frac{d}{du}\egras \left[\Phi_{F,f}(\Xi_{t})\right]\right|_{t = 0}
= &\gcal_{µ}^{\infty}\Phi_{F,f}(\Xi), \\
\intertext{so for all $s \in [0,t]$,}
\egras\left[\gcal_{µ}^{\infty} \Phi_{F,f}(\Xi_{s}) \right]
= &\egras \left.\left[
\frac{d}{du}\egras\left[ \Phi_{F,f}(\Xi_{t})|\Xi_{s}
\right]\right|_{t = s}\right].\\
\intertext{Since $F'$ is bounded, by the dominated convergence theorem,}
\egras\left[\gcal_{µ}^{\infty} \Phi_{F,f}(\Xi_{s}) \right]
= &\left.\frac{d}{du}\egras\left[\egras\left[
\Phi_{F,f}(\Xi_{t})|\Xi_{s}
\right]\right]\right|_{t = s} \\
= & \left.\frac{d}{du}\egras \left[\Phi_{F,f}(\Xi_{t})\right]\right|_{t = s}
\end{align*}
and we can conclude.
\end{proof}

\subsection{Properties of the densities of coupled $k$-parent SLFVs}

The goal of this section is to prove technical lemmas about the density of coupled $k$-parent SLFVs, which will be used in Section \ref{sec:3} in order to construct the $\infty$-parent SLFV. 

In all that follows, let $µ$ be a $\sigma$-finite measure on $(0,+\infty)$ satisfying Condition (\ref{eqn:condition_intensite}), and let $\Pi^{c}$ be a Poisson point process on $\mathbb{R} \times \rd \times (0,+\infty) \times U$ with intensity 
\begin{equation*}
dt \otimes dx \otimes µ(d\rcal) \otimes \tilde{u}\left(
d(p_{n})_{n \geq 1}
\right).
\end{equation*}
We recall that for all $\Xi = \sum_{i = 1}^{l} \delta_{x_{i}} \in \mcal_{p}(\rd)$, we denote the set of atoms of $\Xi$ by
\begin{equation*}
A(\Xi) := \{x_{i} : i \in \llbracket 1,l \rrbracket\}.
\end{equation*}

\begin{lem}\label{lem:appendixC_1}
For all $k \geq 2$, for all $0 \leq s \leq t$ and for all $x \in \rd$,
\begin{equation*}
A\left(
\Xi_{k,t}^{\Pi^{c},t,\delta_{x}}
\right) = \underset{x' \in A\left(
\Xi_{k,t-s}^{\Pi^{c},t,\delta_{x}}
\right)}{\bigcup}
A\left(
\Xi_{k,s}^{\Pi^{c},s,\delta_{x'}}
\right).
\end{equation*}
\end{lem}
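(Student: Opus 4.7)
I would decompose the proof into two independent observations: a \emph{superposition identity} at the level of atom sets (linearity of the ancestral dynamics in the initial atom set), and a \emph{restart} identity that exploits the fact that the quenched ancestral process depends on the underlying $\Pi^{c}$ through its events only.

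\textbf{Step 1 (Superposition).} I would first establish that, for any $k \geq 2$, any $\tilde t, t' \geq 0$ and any $\Xi^{0} \in \mcal_{p}(\rd)$,
\[
A\bigl(\Xi_{k,t'}^{\Pi^{c},\tilde t,\Xi^{0}}\bigr) \;=\; \bigcup_{y \in A(\Xi^{0})} A\bigl(\Xi_{k,t'}^{\Pi^{c},\tilde t,\delta_{y}}\bigr).
\]
Conditionally on $\Pi^{c}$, the process has only finitely many jumps in $[0,t']$ almost surely (via the Yule bound noted after Definition~\ref{defn:quenched_k_dual}), so it suffices to check that each jump preserves the identity. At a jump generated by $(t,x,\rcal,(p_{n})_{n \geq 1}) \in \Pi^{c}$, the atom set is updated by the map
\[
T(A) := (A \setminus \bxr) \cup \{x + \rcal p_{1}, \ldots, x + \rcal p_{k}\} \quad \text{if } A \cap \bxr \neq \emptyset,
\]
and $T(A) := A$ otherwise. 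A short case analysis on whether each of $A \cap \bxr$ and $B \cap \bxr$ is empty shows $T(A \cup B) = T(A) \cup T(B)$, so the distributive identity is propagated through the (finite) composition of jump maps.

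\textbf{Step 2 (Restart and combine).} Still conditionally on $\Pi^{c}$, the tail $(\Xi_{k,t'}^{\Pi^{c},t,\delta_{x}})_{t' \in [t-s,t]}$ of the original process evolves by applying the reverse-time jumps induced by events of $\Pi^{c}$ with forward time in $[0,s]$, starting from $\Xi_{k,t-s}^{\Pi^{c},t,\delta_{x}}$ at backward time $t-s$. The process $(\Xi_{k,t'}^{\Pi^{c},s,\Xi_{k,t-s}^{\Pi^{c},t,\delta_{x}}})_{t' \in [0,s]}$ starts at backward time $0$ from the same point measure and processes exactly the same events in the same reverse order, according to the same (time-homogeneous) rule. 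The two processes therefore coincide at their common terminal time, yielding
\[
A\bigl(\Xi_{k,t}^{\Pi^{c},t,\delta_{x}}\bigr) \;=\; A\bigl(\Xi_{k,s}^{\Pi^{c},s,\Xi_{k,t-s}^{\Pi^{c},t,\delta_{x}}}\bigr).
\]
Applying the superposition identity of Step~1 to the right-hand side with initial condition $\Xi_{k,t-s}^{\Pi^{c},t,\delta_{x}}$ (whose atom set is precisely $A(\Xi_{k,t-s}^{\Pi^{c},t,\delta_{x}})$) gives the claim.

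\textbf{Main obstacle.} There is no deep difficulty; the argument is essentially the set-theoretic identity $T(A \cup B) = T(A) \cup T(B)$ for the jump map combined with the Markov-type restart. The only step requiring care is the time-parameter bookkeeping in the restart: one must verify that the events of $\Pi^{c}$ driving $(\Xi_{k,t'}^{\Pi^{c},t,\delta_{x}})_{t' \in [t-s,t]}$ and those driving $(\Xi_{k,t'}^{\Pi^{c},s,\,\cdot\,})_{t' \in [0,s]}$ are the same subcollection (namely those with forward time $\leq s$) and are processed in the same reverse order.
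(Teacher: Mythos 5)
Your proof is correct, and the underlying idea (the potential ancestors at time $0$ of $x$ are exactly the potential ancestors at time $0$ of the potential ancestors at the intermediate time $s$) is the same as the paper's. The formalization, however, is genuinely different. The paper argues by a direct double inclusion: a point $y$ lies in $A\bigl(\Xi_{k,t}^{\Pi^{c},t,\delta_{x}}\bigr)$ if and only if there is a ``chain of reproduction events'' linking $x$ at forward time $t$ to $y$ at forward time $0$, and such a chain can be cut at forward time $s$ into two sub-chains, each witnessing membership in one of the two atom sets on the right-hand side; the converse concatenates two chains. You instead isolate two clean structural facts: the distributivity $T(A\cup B)=T(A)\cup T(B)$ of the atom-set jump map, which propagates through the a.s.\ finitely many jumps to give the superposition identity $A\bigl(\Xi_{k,t'}^{\Pi^{c},\tilde t,\Xi^{0}}\bigr)=\bigcup_{y\in A(\Xi^{0})}A\bigl(\Xi_{k,t'}^{\Pi^{c},\tilde t,\delta_{y}}\bigr)$, and a flow/restart identity matching the events of forward time in $[0,s]$ processed by the two parametrizations. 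What your route buys is rigor and reusability: the paper never formally defines a ``chain of reproduction events,'' whereas your superposition lemma is a precise statement (and is implicitly what justifies the step from $\prod_{j}\prod_{y\in A(\Xi_{k,t}^{\Pi^{c},t,\delta_{x_{j}}})}$ to $\prod_{y\in A(\Xi_{k,t}^{\Pi^{c},t,\sum_{j}\delta_{x_{j}}})}$ in the proof of Lemma~\ref{lem:bon_espace_SLFV}). What the paper's route buys is brevity and a more directly genealogical reading. The only points requiring care in your version, both of which you flag, are the a.s.\ finiteness of the number of relevant jumps and the bookkeeping showing that the two processes in Step~2 are driven by the same events in the same reverse order (events at forward time exactly $s$ being a.s.\ absent).
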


\begin{proof}
Let $k \geq 2$, let $0 \leq s \leq t$ and let $x \in \rd$. Let $y \in A\left(\Xi_{k,t}^{\Pi^{c},t,\delta_{x}}\right)$. Then, we can construct a chain of reproduction events linking the point $x$ at time $t$ to the point $y$ at time $0$. We can split it into two chains :
\begin{itemize}
\item one linking the point $x$ at time $t$ to a point $y' \in \rd$ at time $s$,
\item one linking the point $y'$ at time $s$ to the point $y$ at time $0$.
\end{itemize}
Therefore, $y' \in A\left(\Xi_{k,t-s}^{\Pi^{c},t,\delta_{x}}\right)$ and $y \in A\left(\Xi_{k,s}^{\Pi^{c},s,\delta_{y'}}\right)$, which means that
\begin{equation*}
y \in \underset{x' \in A\left(
\Xi_{k,t-s}^{\Pi^{c},t,\delta_{x}}
\right)}{\bigcup}
A\left(
\Xi_{k,s}^{\Pi^{c},s,\delta_{x'}}
\right).
\end{equation*}

Conversely, let $y$ belonging to this set. It means that there exists $x' \in A\left(\Xi_{k,t-s}^{\Pi^{c},t,\delta_{x}}\right)$ such that $y \in A\left(\Xi_{k,s}^{\Pi^{c},s,\delta_{x'}}\right)$. Therefore, we can construct two chains of reproduction events, linking the point $x$ at time $t$ to the point $x'$ at time $s$, and the point $x'$ at time $s$ to the point $y$ at time $0$. Hence $y \in A\left(\Xi_{k,t}^{\Pi^{c},t,\delta_{x}}\right)$, and we can conclude.
\end{proof}

\begin{lem}\label{lem:appendixC_2}
For all $k \geq 2$, for all $0 \leq s \leq t$ and for all $x \in \rd$,
\begin{equation*}
\omega_{k,t}^{\Pi,\omega}(x) = 
\underset{x' \in A\left(
\Xi_{k,t-s}^{\Pi^{c},t,\delta_{x}}\right)}
{\prod} \omega_{k,s}^{\Pi^{c},\omega}(x').
\end{equation*}
\end{lem}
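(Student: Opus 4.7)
The plan is to derive the identity by directly combining the definition of the quenched $k$-parent SLFV (Definition~\ref{defn:quenched_k_SLFV}) with the concatenation property of parental skeletons established in the preceding Lemma~\ref{lem:appendixC_1}. Fix $k \geq 2$, $0 \leq s \leq t$, and $x \in \rd$. I would start from the defining identity
\begin{equation*}
\omega_{k,t}^{\Pi^{c},\omega}(x) = \prod_{y \in A(\Xi_{k,t}^{\Pi^{c},t,\delta_{x}})} \omega(y),
\end{equation*}
and then substitute the description of the indexing set provided by Lemma~\ref{lem:appendixC_1},
\begin{equation*}
A(\Xi_{k,t}^{\Pi^{c},t,\delta_{x}}) = \bigcup_{x' \in A(\Xi_{k,t-s}^{\Pi^{c},t,\delta_{x}})} A(\Xi_{k,s}^{\Pi^{c},s,\delta_{x'}}).
\end{equation*}

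The only real subtlety, and the main (mild) obstacle, is that this union need not be disjoint: two distinct atoms $x', x'' \in A(\Xi_{k,t-s}^{\Pi^{c},t,\delta_{x}})$ can perfectly well share an ancestor at time $0$, so their parental skeletons over $[0,s]$ may overlap. However, because $\omega$ takes values in $\{0,1\}$, we have the idempotence $\omega(y)^m = \omega(y)$ for every integer $m \geq 1$, so repeating a factor in the product does not change its value. Hence
\begin{equation*}
\prod_{y \in A(\Xi_{k,t}^{\Pi^{c},t,\delta_{x}})} \omega(y)
= \prod_{x' \in A(\Xi_{k,t-s}^{\Pi^{c},t,\delta_{x}})} \ \prod_{y \in A(\Xi_{k,s}^{\Pi^{c},s,\delta_{x'}})} \omega(y).
\end{equation*}

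To conclude, I would recognize the inner product as an instance of the defining formula~(\ref{eqn:densite_quenched_k_SLFV}) for the quenched $k$-parent SLFV, applied at time $s$ with initial single atom $\delta_{x'}$, which gives exactly $\omega_{k,s}^{\Pi^{c},\omega}(x')$. Substituting back yields the announced identity. No further technical work is required, since all of the heavy lifting (finiteness of the ancestral process, measurability, and the chain decomposition of the parental skeleton) is already contained in Lemma~\ref{lem:appendixC_1} and in the construction of Definition~\ref{defn:quenched_k_SLFV}.
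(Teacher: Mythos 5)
Your proposal is correct and follows essentially the same route as the paper: it invokes Lemma~\ref{lem:appendixC_1} to decompose $A(\Xi_{k,t}^{\Pi^{c},t,\delta_{x}})$ as the union of the time-$s$ skeletons and handles the possible overlap of these sets via the idempotence of the $\{0,1\}$-valued density $\omega$, exactly as in the paper's argument.
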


\begin{proof}
Let $k \geq 2$, let $0 \leq s \leq t$ and let $x \in \rd$. By definition,
\begin{equation}\label{eqn:appendixC_1}
\omega_{k,t}^{\Pi^{c},\omega}(x) = 
\underset{y \in A\left(\Xi_{k,t}^{\Pi^{c},t,\delta_{x}}
\right)}{\prod}\omega(y)
\end{equation}
and
\begin{equation}\label{eqn:appendixC_2}
\underset{x' \in A\left(
\Xi_{k,t-s}^{\Pi^{c},t,\delta_{x}}\right)}
{\prod} \omega_{k,s}^{\Pi^{c},\omega}(x')
= \underset{x' \in A\left(
\Xi_{k,t-s}^{\Pi^{c},t,\delta_{x}}\right)}
{\prod}
\underset{y \in A \left(
\Xi_{k,s}^{\Pi^{c},s,\delta_{x'}}\right)}
{\prod} \omega(y).
\end{equation}

Since by Lemma \ref{lem:appendixC_1}
\begin{equation*}
A\left(
\Xi_{k,t}^{\Pi^{c},t,\delta_{x}}
\right) = \underset{x' \in A\left(
\Xi_{k,t-s}^{\Pi^{c},t,\delta_{x}}
\right)}{\bigcup}
A\left(
\Xi_{k,s}^{\Pi^{c},s,\delta_{x'}}
\right),
\end{equation*}
the same terms appear in both products. However, some terms may appear more than once in Eq. (\ref{eqn:appendixC_2}), while they can appear only once in Eq. (\ref{eqn:appendixC_1}). But $\omega$ is $\{0,1\}$-valued, so for all $y \in \rd$ and $j \in \mathbb{N}^{*}$, $\omega^{j}(y) = \omega(y)$, and we can conclude.
\end{proof}

\begin{lem}\label{lem:appendixC_3}
For all $\tilde{k} \geq 2$, for all $0 \leq s \leq t$ and for all $x \in \rd$,
\begin{equation*}
\lim\limits_{k \to + \infty} 
\underset{x' \in A\left(\Xi_{\tilde{k},t-s}^{\Pi^{c},t,\delta_{x}}
\right)}{\prod} \omega_{k,s}^{\Pi^{c},\omega}(x')
\leq \underset{x' \in A\left(\Xi_{\tilde{k},t-s}^{\Pi^{c},t,\delta_{x}}
\right)}{\prod} \lim\limits_{k \to + \infty} \omega_{k,s}^{\Pi^{c},\omega}(x').
\end{equation*}
\end{lem}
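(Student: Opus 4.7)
The plan is to exploit the crucial fact that the set $A\left(\Xi_{\tilde{k},t-s}^{\Pi^{c},t,\delta_{x}}\right)$ does not depend on $k$. Once this is noted, the result reduces to interchanging a limit with a finite product, and should in fact yield equality (the weaker inequality $\leq$ being all that is used in Lemma~\ref{lem:infty_markovian}).

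First, I would condition on a realization of $\Pi^{c}$, which makes the set $A\left(\Xi_{\tilde{k},t-s}^{\Pi^{c},t,\delta_{x}}\right)$ deterministic. It is almost surely finite, since as recalled after Definition~\ref{defn:quenched_k_dual} (and invoked earlier in the paper for the regular $k$-parent ancestral process) the total mass $\Xi_{\tilde{k},t-s}^{\Pi^{c},t,\delta_{x}}(\rd)$ is stochastically bounded by the size of a Yule process with $\tilde{k}$ children and individual branching rate $\int_{0}^{\infty} V_{\rcal} µ(d\rcal) < +\infty$, which is finite at any fixed time. Hence, almost surely, the product on both sides of the desired inequality runs over the same finite index set.

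Second, I would apply Lemma~\ref{lem:property_coupling} to each atom $x' \in A\left(\Xi_{\tilde{k},t-s}^{\Pi^{c},t,\delta_{x}}\right)$ separately. Since $\left(\omega_{k,s}^{\Pi^{c},\omega}(x')\right)_{k \geq 2}$ is a non-increasing $\{0,1\}$-valued sequence, it converges to some limit $\omega_{s}^{\infty}(x') \in \{0,1\}$ as $k \to +\infty$. Because the index set is finite (conditionally on $\Pi^{c}$) and the factors are bounded, the limit of the product coincides with the product of the limits:
\[
\lim_{k \to +\infty} \prod_{x' \in A\left(\Xi_{\tilde{k},t-s}^{\Pi^{c},t,\delta_{x}}\right)} \omega_{k,s}^{\Pi^{c},\omega}(x')
= \prod_{x' \in A\left(\Xi_{\tilde{k},t-s}^{\Pi^{c},t,\delta_{x}}\right)} \lim_{k \to +\infty} \omega_{k,s}^{\Pi^{c},\omega}(x'),
\]
which immediately implies the inequality claimed in the statement.

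There is no real obstacle here: the only point requiring care is making sure that the number of atoms over which the product is taken is finite \emph{and} independent of $k$ (it depends only on $\tilde{k}$ and on $\Pi^{c}$), so that no uniform control in $k$ of the factors is needed to interchange the limit with the finite product. The almost sure finiteness provided by the Yule comparison handles this cleanly.
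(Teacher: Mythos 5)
Your proof is correct, but it goes through a different mechanism than the paper's. You reduce the statement to the elementary fact that a limit commutes with a \emph{finite} product, and you pay for this with an almost-sure finiteness claim: conditionally on $\Pi^{c}$, the atom set $A\bigl(\Xi_{\tilde{k},t-s}^{\Pi^{c},t,\delta_{x}}\bigr)$ is a.s.\ finite via the Yule-process comparison. That is a valid justification (the quenched process has the law of the $\tilde k$-parent ancestral process, whose mass is stochastically dominated by a Yule process with finite branching rate), and it even yields equality rather than the stated inequality. The paper's proof, by contrast, never invokes finiteness of the atom set and involves no null-set bookkeeping: since each factor $\omega_{k,s}^{\Pi^{c},\omega}(x')$ is $\{0,1\}$-valued and non-increasing in $k$ (Lemma~\ref{lem:property_coupling}), the only way the inequality could fail is if the right-hand side were $0$ while the left were $1$; but if some factor has limit $0$, monotonicity forces that factor to equal $0$ for all $k$ large enough, so the product itself is $0$ for all large $k$ and the left-hand side is $0$. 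This gives the inequality pathwise (for every realization of $\Pi^{c}$), works verbatim even if the index set were infinite, and requires no appeal to the branching bound. So your argument buys equality at the cost of an a.s.\ qualifier and an extra ingredient, while the paper's buys a surely-valid, finiteness-free argument; both establish what is needed for Lemma~\ref{lem:infty_markovian}.
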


\begin{proof}
Let $\tilde{k} \geq 2$, let $0 \leq s \leq t$ and let $x \in \rd$. Since both quantities are $\{0,1\}$-valued, we only need to show that if
\begin{align*}
\underset{x' \in A\left(\Xi_{\tilde{k},t-s}^{\Pi^{c},t,\delta_{x}}
\right)}{\prod} \lim\limits_{k \to + \infty} \omega_{k,s}^{\Pi^{c},\omega}(x') &= 0 \\
\intertext{then}
\lim\limits_{k \to + \infty} 
\underset{x' \in A\left(\Xi_{\tilde{k},t-s}^{\Pi^{c},t,\delta_{x}}
\right)}{\prod} \omega_{k,s}^{\Pi^{c},\omega}(x') &= 0.
\end{align*}

Assume that the first equality is true. Then, there exists $x' \in A\left(\Xi_{\tilde{k},t-s}^{\Pi^{c},t,\delta_{x}}\right)$ such that $\lim\limits_{k \to + \infty} \omega_{k,s}^{\Pi^{c},\omega}(x') = 0$. But since $\left(\omega_{k,s}^{\Pi^{c},\omega}(x')\right)_{k \geq 2}$ is decreasing and $\{0,1\}$-valued, there exists $k' \geq 2$ such that for all $k \geq k'$, $\omega_{k,s}^{\Pi^{c},\omega}(x') = 0$. Therefore, for all $k \geq k'$,
\begin{align*}
\underset{x' \in A\left(\Xi_{\tilde{k},t-s}^{\Pi^{c},t,\delta_{x}}
\right)}{\prod} \omega_{k,s}^{\Pi^{c},\omega}(x') &= 0, \\
\intertext{which means that}
\lim\limits_{k \to + \infty} 
\underset{x' \in A\left(\Xi_{\tilde{k},t-s}^{\Pi^{c},t,\delta_{x}}
\right)}{\prod} \omega_{k,s}^{\Pi^{c},\omega}(x') &= 0.
\end{align*}
\end{proof}

\begin{acknowledgements}
\quad The author would like to thank her thesis supervisor Amandine Véber for fruitful discussions and helpful comments.
The author is also grateful to the two anonymous reviewers for their valuable comments and suggestions, regarding in particular other possible convergence results of interest and the characterization of the $\infty$-parent SLFV as taking its values in the class of measurable subsets of $\mathbb{R}^{d}$.  
This work was partly supported by the chaire program "Modélisation Mathématique et Biodiversité" of Veolia Environnement-Ecole Polytechnique-Museum National d’Histoire Naturelle-Fondation X.
\end{acknowledgements}

\bibliographystyle{plain}
\bibliography{biblio_definition_infty_SLFV}

\end{document}